\documentclass[leqno,11pt]{article}
\usepackage[utf8]{inputenc}
\usepackage[T1]{fontenc}
\usepackage{microtype}

\usepackage[dvipsnames]{xcolor}
\usepackage{xcolor-solarized}

\usepackage{calc}
\usepackage[a4paper]{geometry}

\ifdefined\screenLayout
  \geometry{paperwidth=\textwidth+2em,left=1em,right=1em}
  \geometry{paperheight=\textheight+2em,head=0em,foot=0em,top=1em,bottom=1em}
  \pagecolor{solarized-base3}
  \color{solarized-base03}
  \usepackage{fancyhdr}
  \pagestyle{fancy}
  \fancyfoot{}
\fi

\usepackage{amsmath}
\usepackage{amsthm}

\usepackage{tikz-cd}
\usetikzlibrary{arrows} 
\tikzset{
  commutative diagrams/.cd, 
  arrow style=tikz, 
  diagrams={>=stealth}
}
\usetikzlibrary{matrix,decorations.pathreplacing,calc}
\usetikzlibrary{graphs,graphs.standard}
\usepackage{pgfplots}
\usepgfplotslibrary{external}

\usepackage{afterpage}
\usepackage{pdflscape}

\usepackage{colortbl}
\usepackage{adforn}
\usepackage{nameref}

\usepackage{textcomp}
\usepackage[sb]{libertine}
\usepackage[varqu,varl]{zi4}%
\usepackage[libertine,bigdelims,vvarbb]{newtxmath}

\IfPackageAtLeastTF{superiors}{2.0}{%
  \usepackage[supsfam=LibertinusSerif-Sup,supscaled=1.2,raised=-.13em]{superiors}
}{%
  \usepackage[supstfm=libertinesups,supscaled=1.2,raised=-.13em]{superiors}
}

\useosf
\usepackage[scr=boondox,cal=euler]{mathalfa}
\usepackage{marvosym}

\usepackage{slashed}
\usepackage{esint} 

\usepackage[ngerman,english]{babel}
\usepackage{imakeidx}
\makeindex[intoc]
\usepackage{csquotes}
\usepackage[
  backend=biber,
  hyperref=true,
  backref=true,
  isbn=false,
  doi=true,
  natbib=true,
  eprint=true,
  useprefix=true,
  maxcitenames=99,
  maxbibnames=99,  
  maxalphanames=99, 
  minalphanames=99,
  safeinputenc,
  style=alphabetic,
  citestyle=alphabetic,
  block=space,
  datamodel=preamble/ext-eprint,
  sorting=nyt
]{biblatex}
\usepackage[
  bookmarksnumbered = true,
  hypertexnames = false,
  colorlinks    = true,
  citecolor     = solarized-blue,
  linkcolor     = solarized-blue,
  urlcolor      = solarized-blue,
  breaklinks
]{hyperref}

\DeclareFieldFormat{url}{%
  \href{#1}{\ComputerMouse}
}
\DeclareFieldFormat{doi}{%
  \mkbibacro{DOI}\addcolon\space\href{https://doi.org/#1}{#1}
}
\makeatletter
\DeclareFieldFormat{arxiv}{%
  arXiv\addcolon\space\href{http://arxiv.org/\abx@arxivpath/#1}{#1}
}
\makeatother
\DeclareFieldFormat{mr}{%
  MR\addcolon\space\href{http://www.ams.org/mathscinet-getitem?mr=MR#1}{#1}
}
\DeclareFieldFormat{zbl}{%
  Zbl\addcolon\space\href{http://zbmath.org/?q=an:#1}{#1}
}
\renewbibmacro*{eprint}{%
  \printfield{arxiv}%
  \newunit\newblock
  \printfield{mr}%
  \newunit\newblock
  \printfield{zbl}%
  \newunit\newblock
  \iffieldundef{eprinttype}
  {\printfield{eprint}}
  {\printfield[eprint:\strfield{eprinttype}]{eprint}}
}

\AtEveryBibitem{%
  \clearlist{address}%
}
\DeclareFieldFormat[article,inproceedings,inbook,incollection,thesis]{title}{\textit{#1}}
\renewbibmacro{in:}{}
\addbibresource{preamble/refs.bib}

\usepackage[inline,shortlabels]{enumitem}

\usepackage{subcaption}

\usepackage[yyyymmdd]{datetime}

\usepackage{etoolbox}
\ifundef{\abstract}{}{\patchcmd{\abstract}%
    {\quotation}{\quotation\noindent\ignorespaces}{}{}}

\usepackage[super]{nth}


\usepackage{thmtools}
\usepackage[framemethod=TikZ]{mdframed}

\numberwithin{equation}{section}

\renewcommand{\qedsymbol}{$\blacksquare$}

\newcommand{\CorollaryQED}{\qedsymbol}
\newcommand{\ConjectureQED}{$\square$}
\newcommand{\SituationQED}{$\times$}
\newcommand{\DefinitionQED}{$\bullet$}
\newcommand{\NotationQED}{$\circ$}
\newcommand{\ExampleQED}{$\spadesuit$}
\newcommand{\RemarkQED}{$\clubsuit$}
\newcommand{\ExerciseQED}{?!}

\ifdefined\screenLayout
  \declaretheoremstyle[
  bodyfont=\itshape,
  mdframed={    
    backgroundcolor=solarized-base3!90!solarized-blue,
    linewidth=0,
    innerleftmargin=.5em,
    innerrightmargin=.5em,
    innertopmargin=.5em,
    innerbottommargin=.5em,
    leftmargin=-.5em,
    rightmargin=-.5em,
  }
]{theorem}

\declaretheoremstyle[
mdframed={
  backgroundcolor=solarized-base3!90!solarized-green,
  linewidth=0,
  innerleftmargin=.5em,
  innerrightmargin=.5em,
  innertopmargin=.5em,
  innerbottommargin=.5em,
  leftmargin=-.5em,
  rightmargin=-.5em,
  }
]{definition}

\declaretheoremstyle[
  mdframed={
    backgroundcolor=solarized-base3!90!solarized-yellow,
    linewidth=0,
    innerleftmargin=.5em,
    innerrightmargin=.5em,
    innertopmargin=.5em,
    innerbottommargin=.5em,
    leftmargin=-.5em,
    rightmargin=-.5em,
  }
]{example}

\declaretheoremstyle[
  mdframed={
    backgroundcolor=solarized-base3!90!solarized-orange,
    linewidth=0,
    innerleftmargin=.5em,
    innerrightmargin=.5em,
    innertopmargin=.5em,
    innerbottommargin=.5em,
    leftmargin=-.5em,
    rightmargin=-.5em,
  }
]{remark}
\else
  \declaretheoremstyle[
  bodyfont=\itshape
  ]{theorem}
  \declaretheoremstyle[]{definition}
  \declaretheoremstyle[]{example}
  \declaretheoremstyle[]{remark}
\fi

\declaretheorem[numberlike=equation,style=theorem]{theorem}
\declaretheorem[numbered=no,name=Theorem,style=theorem]{theorem*}
\declaretheorem[numberlike=equation,name=Lemma,style=theorem]{lemma}
\declaretheorem[numberlike=equation,name=Proposition,style=theorem]{prop}
\declaretheorem[numberlike=equation,name=Corollary,qed=\CorollaryQED,style=theorem]{cor}

\declaretheorem[numberlike=equation,name=Hypothesis]{hypothesis}

\declaretheorem[numberlike=equation,name=Definition,style=definition,qed=\DefinitionQED]{definition}
\declaretheorem[numbered=no,name=Definition,style=definition,qed=\DefinitionQED]{definition*}

\declaretheorem[numberlike=equation,style=definition,qed=\ExampleQED]{example}

\declaretheorem[numberlike=equation,style=remark,qed=\RemarkQED]{remark}
\declaretheorem[numbered=no,style=remark,name=Remark,qed=\RemarkQED]{remark*}

\def\makeautorefname#1#2{\AtBeginDocument{\expandafter\def\csname#1autorefname\endcsname{#2}}}
\makeautorefname{table}{Table}        
\makeautorefname{chapter}{Chapter}
\makeautorefname{section}{Section}
\makeautorefname{subsection}{Section}
\makeautorefname{subsubsection}{Section}
\makeautorefname{footnote}{Footnote}
\AtBeginDocument{\def\itemautorefname~#1\null{(#1)\null}}
\AtBeginDocument{\def\equationautorefname~#1\null{(#1)\null}}

\numberwithin{substep}{step}
\makeautorefname{step}{Step}
\makeautorefname{substep}{Step}

\makeautorefname{case}{Case}
\makeautorefname{substep}{Step}

\setlist[description]{leftmargin=!,labelindent=1em}
\setlist[enumerate]{label={\rm (\arabic*)},ref=\arabic*}
\setlist[enumerate,2]{label={\rm (\alph*)},ref=\theenumi.\alph*}
\setlist[enumerate,3]{label={\rm (\roman*)},ref=\theenumii.\roman*}

\let\C\undefined
\let\U\undefined

\usepackage{bm}
\usepackage{mathtools} 
\usepackage{stmaryrd} 

\DeclareFontFamily{U}{mathx}{\hyphenchar\font45}
\DeclareFontShape{U}{mathx}{m}{n}{
      <5> <6> <7> <8> <9> <10>
      <10.95> <12> <14.4> <17.28> <20.74> <24.88>
      mathx10
      }{}
\DeclareSymbolFont{mathx}{U}{mathx}{m}{n}
\DeclareFontSubstitution{U}{mathx}{m}{n}
\DeclareMathAccent{\widecheck}{0}{mathx}{"71}
\DeclareMathAccent{\wideparen}{0}{mathx}{"75}

\DeclareMathOperator{\End}{End}

\newcommand{\ext}{\mathrm{ext}}

\DeclareMathOperator{\HF}{\HF}

\DeclareMathOperator{\Hom}{Hom}

\DeclareMathOperator{\Tor}{Tor}

\DeclareMathOperator{\coker}{coker}

\DeclareMathOperator{\im}{im}
\DeclareMathOperator{\ind}{index}

\newcommand{\res}{\mathrm{res}}
\DeclareMathOperator{\rk}{rk}

\DeclareMathOperator{\spec}{spec}
\DeclareMathOperator{\supp}{supp}

\DeclareMathOperator{\str}{str}

\DeclarePairedDelimiter\floor{\lfloor}{\rfloor}
\DeclarePairedDelimiter\paren{\lparen}{\rparen}

\DeclarePairedDelimiter{\Abs}{\|}{\|}

\DeclarePairedDelimiter{\Inner}{\langle}{\rangle}

\DeclarePairedDelimiter{\abs}{\lvert}{\rvert}
\DeclarePairedDelimiter{\bracket}{\langle}{\rangle}

\DeclarePairedDelimiter{\set}{\lbrace}{\rbrace}
\def\({\left(}
\def\){\right)}
\def\<{\left\langle}
\def\>{\right\rangle}

\newcommand{\Cl}{\mathrm{C\ell}}
\newcommand{\C}{{\mathbf{C}}}

\newcommand{\LC}{\mathrm{LC}}

\newcommand{\N}{{\mathbf{N}}}

\newcommand{\PSL}{\P\SL}

\newcommand{\Ric}{\mathrm{Ric}}
\newcommand{\R}{\mathbf{R}}

\newcommand{\SL}{\mathrm{SL}}

\newcommand{\U}{\mathrm{U}}
\newcommand{\Vect}{\mathrm{Vect}}

\newcommand{\Z}{\mathbf{Z}}

\newcommand{\andq}{\text{and}\quad}

\newcommand{\co}{\mskip0.5mu\colon\thinspace}

\newcommand{\defined}[2][\key]{\def\key{#2}\textbf{#2}\index{#1}}

\newcommand{\del}{\partial}
\newcommand{\ev}{\mathrm{ev}}

\newcommand{\id}{\mathrm{id}}
\newcommand{\incl}{\hookrightarrow}
\newcommand{\emb}{\hookrightarrow}

\newcommand{\iso}{\cong}

\newcommand{\loc}{\mathrm{loc}}

\newcommand{\one}{\mathbf{1}}
\newcommand{\onto}{\twoheadrightarrow}

\newcommand{\pr}{\mathrm{pr}}
\newcommand{\qandq}{\quad\text{and}\quad}

\newcommand{\qand}{\quad\text{and}}

\newcommand{\qwithq}{\quad\text{with}\quad}

\newcommand{\vol}{\mathrm{vol}}

\renewcommand{\O}{\mathrm{O}}
\renewcommand{\P}{\mathbf{P}}

\renewcommand{\div}{\operatorname{div}}
\renewcommand{\emptyset}{\varnothing}
\renewcommand{\epsilon}{\varepsilon}
\renewcommand{\setminus}{{\backslash}}

\renewcommand{\leq}{\leqslant}
\renewcommand{\geq}{\geqslant}

\newcommand{\Wedge}{\Lambda}

\makeatletter
\renewcommand*\env@matrix[1][*\c@MaxMatrixCols c]{%
  \hskip -\arraycolsep
  \let\@ifnextchar\new@ifnextchar
  \array{#1}}

\renewcommand\xleftrightarrow[2][]{%
  \ext@arrow 9999{\longleftrightarrowfill@}{#1}{#2}}
\newcommand\longleftrightarrowfill@{%
  \arrowfill@\leftarrow\relbar\rightarrow}
\makeatother

\newcommand{\tw}{\mathrm{tw}}




\newcommand{\rd}{{\rm d}}

\newcommand{\rI}{{\rm I}}
\newcommand{\rII}{{\rm II}}




\newcommand{\bA}{{\mathbf{A}}}

\newcommand{\bH}{{\mathbf{H}}}

\newcommand{\bL}{{\mathbf{L}}}



\newcommand{\sF}{\mathscr{F}}

\newcommand{\sL}{\mathscr{L}}
\newcommand{\sM}{\mathscr{M}}

\newcommand{\sP}{\mathscr{P}}

\newcommand{\sS}{\mathscr{S}}


\newcommand{\fl}{{\mathfrak l}}

\newcommand{\fo}{{\mathfrak o}}




%

\newcommand{\ubR}{{\underline \R}}


\newcommand{\dom}{\mathrm{dom}}
\newcommand{\Dmin}{D_{\mathrm{min}}}
\newcommand{\Dmax}{D_{\mathrm{max}}}
\newcommand{\mathringDmax}{{\mathring{D}_{\mathrm{max}}}}
\newcommand{\mathringDmin}{{\mathring{D}_{\mathrm{min}}}}
\DeclareMathOperator{\ErrorTerm}{\mathrm{Err}}
\newcommand{\APS}{\mathrm{APS}}
\newcommand{\bag}{\mathrm{bag}}
\newcommand{\GelfandRobbinQuotient}{\check\bH}
\newcommand{\DualGelfandRobbinQuotient}{\hat\bH}
\newcommand{\ResidueCondition}{R}
\newcommand{\AbstractBag}{B}
\newcommand{\Spectrum}{\sigma}

\newcommand{\DiffOp}{\mathrm{DiffOp}}
\newcommand{\adm}{a}

\author{%
  Gorapada Bera
  \and
  Thomas Walpuski
}
\date{2026-04-14}
\title{Dirac operators twisted by ramified Euclidean line bundles}

\hypersetup{
  pdfauthor={Gorapada Bera, Thomas Walpuski},
  pdftitle={Dirac operators twisted by ramified Euclidean line bundles},
  pdflang={English}}

\begin{document}

\maketitle

\begin{abstract}
  This article is concerned with the analysis of Dirac operators $D$ twisted by ramified Euclidean line bundles $(Z,\fl)$---%
  motivated by their relation with $\Z/2\Z$ harmonic spinors,
  which have appeared in various context in gauge theory and calibrated geometry.
  The closed extensions of $D$ are described in terms of the Gelfand--Robbin quotient $\GelfandRobbinQuotient$.  
  Assuming that the branching locus $Z$ is a closed codimension two submanifold,
  a geometric realisation of $\GelfandRobbinQuotient$ is constructed.
  This, in turn, leads to an $L^2$ regularity theory.
\end{abstract}


\tableofcontents

\section{Introduction}
\label{Sec_Introduction}

Let $(X,g)$ be a closed Riemannian manifold of dimension $n$.

\begin{definition}[{cf.~\cites[Chapter II Definition 5.2]{Lawson1989}[§1(b)]{Bismut1989:LocalIndexNonKahler}}] 
  A \defined{Dirac bundle with skew torsion} on $(X,g)$ consists of:
  \begin{enumerate}
  \item
    a Euclidean vector bundle $S$ over $X$ equipped with a skew-adjoint Clifford multiplication $\gamma \co TX \to \fo(S)$;
    that is:
    \begin{equation*}
      \gamma(v)^2 = -\abs{v}^2 \, \one_S
    \end{equation*}
    for every $v \in TX$; and
  \item
    an orthogonal covariant derivative $\nabla$ on $S$ and a $3$--form $\Tor \in \Omega^3(X)$ such that $\gamma$ is parallel with respect to $\nabla$ and the orthogonal affine connection $\nabla^T$ on $(X,g)$ defined by
    \begin{equation*}
      \Inner{\nabla_u^Tv,w} = \Inner{\nabla_u^\LC v,w} + \tfrac12\Tor(u,v,w).
    \end{equation*}
    Here $\nabla^\LC$ denotes the Levi-Civita connection of $(X,g)$.
    \qedhere
  \end{enumerate}
\end{definition}

\begin{definition}
  \label{Def_RamifiedEuclideanLineBundle}
  A \defined{ramified Euclidean line bundle} over $X$ consists of:
  \begin{enumerate}
  \item
    a closed subset $Z \subset X$, the \defined{branching locus}, and
  \item
    a Euclidean line bundle $\fl$ over $X \setminus Z$
  \end{enumerate}
  such that
  \begin{enumerate}[resume]
  \item
    if $W \subset Z$ is closed and $\fl$ extends over $X \setminus W$,
    then $W = Z$.
    \qedhere
  \end{enumerate}
\end{definition}

This article is concerned with the analysis of the Dirac operator associated with a Dirac bundle with skew torsion $(S,\gamma,\nabla,\Tor)$ twisted by a ramified Euclidean line bundle $\fl$
\begin{equation*}
  D \co H^1\Gamma\paren{X\setminus Z, S\otimes\fl} \to L^2\Gamma\paren{X\setminus Z, S\otimes\fl}
\end{equation*}
and its extensions.

\smallskip

The authors' motivation for this stems from the following.
Taubes has observed that the failure of compactness for a wide variety of generalised Seiberg--Witten equations---e.g.:
stable flat $\PSL_2(\C)$--connections in dimension three \cite{Taubes2012},
anti-self-dual $\SL_2(\C)$--connections in dimension four \cite{Taubes2013},
the Seiberg--Witten equation with multiple spinors \cite{Taubes2016:SeibergWittenCompactness},
the Vafa--Witten equation \cite{Taubes2017}, and
the Kapustin--Witten equation \cite{Taubes2022:NahmPoleKapustinWitten}%
---
leaves behind evidence in the form of a \defined{$\Z/2\Z$ harmonic spinor}.
The latter is a pair $(Z,\fl;\Phi)$ consisting of a ramified Euclidean line bundle $(Z,\fl)$ and a harmonic spinor $\Phi \in \ker D$.
$\Z/2\Z$ harmonic spinors also appear in \citeauthor{Donaldson2016}'s work on adiabatic limits of coassociative Kovalev--Lefschetz fibrations of $G_2$--manifolds \cite{Donaldson2016} and \citeauthor{He2022:DeformBranchedSLags}'s work on branched double covers of special Lagrangian submanifolds \cite{He2022:DeformBranchedSLags}.

In light of this, it is important to understand the universal moduli space of $\Z/2\Z$ harmonic spinors (allowing for $g$, $\gamma$, and $\nabla$ to vary).
The fundamental issue is that $D$ is only left semi-Fredholm (under mild assumptions; see \autoref{Hyp_BorderlineHardyInequality}), but not Fredholm---except in edge cases, e.g., if $Z = \emptyset$ or $n=2$.
The naive expectation is that the $\infty$--dimensional cokernel of $D$ can be compensated by wiggling the branching locus $Z$.
In his PhD thesis,
\citet{Takahashi2015,Takahashi2017} has made some initial progress in this direction.
\citet{Donaldson2021} and \citet{Parker2023:Deformation} have developed a (partial) deformation theory for $\Z/2\Z$ harmonic $1$--forms and $\Z/2\Z$ harmonic spinors on spin $3$--manifolds respectively.
There is work in progress by He, Parker and Walpuski to address this problem a bit more systematically.
The present article should be considered infrastructure for this project (and, hopefully, other applications as well).

\smallskip

Here is a summary of the results contained in this article.
\autoref{Sec_GelfandRobbinQuotient} considers $D$ as an unbounded operator $\Dmin$ on  $L^2\Gamma\paren{X\setminus Z, S\otimes\fl}$, the \defined{minimal extension}, and systematically studies its closed extensions.
The adjoint $\Dmax \coloneq \Dmin^*$ is the \defined{maximal extension} of $\Dmin$.
The closed extensions $D_\ResidueCondition$ of $\Dmin$ are classified by \defined{residue conditions},
that is: closed subspaces $\ResidueCondition \subset \GelfandRobbinQuotient$ of the \defined{Gelfand--Robbin quotient}
\begin{equation*}
  \GelfandRobbinQuotient
  \coloneq
  \frac{\dom(\Dmax)}{\dom(\Dmin)}.
\end{equation*}
Moreover, $\GelfandRobbinQuotient$ is equipped with a symplectic form $G$, the \defined{Green's form}, which controls the formation of adjoints.
Within this framework it is also possible to describe which extensions $D_\ResidueCondition$ are Fredholm.
The entire discussion only relies on $\Dmin$ being closed, densely defined, and symmetric as well as left semi-Fredholm.
It is confined to the realm of abstract functional analysis and its purpose is to separate what is true for formal reasons from what is true for geometric reasons.
Most of the observations in \autoref{Sec_GelfandRobbinQuotient} can be found in \cites[Exercise 2.17]{McDuff1998}[§3]{BoossBavnbekFurutani1998:Maslov}[Appendix B]{Salamon2008}[Exercises 6.3.3 and 6.5.11]{BuhlerSalamon2018:FunctionalAnalysis} in some shape or form.

Assuming that $Z \subset X$ is a closed (cooriented) submanifold of codimension two,
\autoref{Sec_GeometricRealisation} constructs an isomorphism of symplectic Hilbert spaces
\begin{equation*}
  \res \co
  \paren{\GelfandRobbinQuotient,G}
  \iso  
  \paren{\check H\Gamma\paren{Z,\check S},\check\Omega},
\end{equation*}
the \defined{residue map}, between the Gelfand--Robbin quotient and a Hilbert space of sections of a  symplectic vector bundle over $Z$.
The residue map extracts the leading order behavior of $\phi \in \dom(\Dmax)$ which is shown to be (at worst) comparable to $\bar z^{-1/2}$ transversely to $Z$.
With the help of the above it is possible to define spectral residue conditions,
analogous to the APS boundary condition \cite{Atiyah1975a},
as well as local residue conditions.
As a by-product this yields a variant of the bordism theorem,
whose significance remains somewhat mysterious to the authors.
Evidently
the construction in \autoref{Sec_GeometricRealisation} is inspired by \citeauthor{BaerBallmann2012:BVP}'s magnificent article \cite{BaerBallmann2012:BVP} on boundary value problems for Dirac operators.

\autoref{Sec_RegularityTheory} develops an $L^2$ regularity theory on the scale of \defined{adapated Sobolev spaces} $\paren{H_a^{k+1}\Gamma\paren{X\setminus Z,S\otimes\fl}}_{k \in \N_0}$.
This scale is defined via conormal differential operators and the Dirac operator $D$.
It gives rise to a graded Fréchet space $H_a^\infty\Gamma\paren{X\setminus Z,S\otimes\fl}$ which is tame in the sense of \citet[Part II Definition 1.3.2]{Hamilton1982:NashMoser}---a prerequisite for using Nash--Moser theory.
It is proved that if a residue condition $\ResidueCondition \subset \check H\Gamma\paren{Z,\check S}$ is \defined{$\infty$--regular},
then the extension $D_\ResidueCondition$ satisfies a variant of elliptic regularity together with elliptic estimates.
For local residue conditions $\ResidueCondition$,
$\infty$--regularity can be verified using a symbolic criterion.
In particular, this criterion applies to the Lagrangian local residue condition which is secretly at the heart of \cite{Takahashi2015,Parker2023:Deformation}.
It is quite plausible that these results can be cobbled together using the powerful machines developed by \citet{Mazzeo1991,MazzeoVertman2014,AlbinGellRedman2016,AlbinGellRedmann2023}.
However, the arguments in \autoref{Sec_RegularityTheory} are almost elementary and there should be some value in that.

\smallskip

It should be possible, with suitable modifications, to extend the work in the present article to higher rank ramified Euclidean local systems;
in particular: to flat Hermitian line bundles.
In fact, Ammann--Große have on-going work in progress in this direction and some instances of this appear in \citeauthor{PortmannSokSolovej2018:SelfAdjoint}'s work on magnetic links \cite{PortmannSokSolovej2018:SelfAdjoint,PortmannSokSolovej2018:ZeroModes,PortmannSokSolovej2020}.

\paragraph{Acknowledgements.}
The authors thank Siqi He and Greg Parker for helpful discussions,
Jacek Rzemieniecki and Thibault Langlais for their careful proofreading of earlier versions of this article, and
Bernd Ammann for discussions regarding his joint work with Große and pointing out the work of \citeauthor{PortmannSokSolovej2018:SelfAdjoint}.
TW is indebted to Dietmar Salamon for teaching him about the Gelfand--Robbin quotient almost two decades ago.

This material is based in part upon work carried while the authors were in residence at the Simons Laufer Mathematical Sciences Institute (previously known as MSRI) Berkeley, California, during the Fall 2022 semester.

\paragraph{Conventions.}
\emph{Choose} a cut-off function $\chi \in C^\infty([0,\infty),[0,1])$ with $\chi|_{[0,1/2]} = 1$ and $\supp(\chi) \subset [0,3/4)$.
The \defined{bracket} $\bracket{-} \co \R \to [1,\infty)$ is defined by $\bracket{x} \coloneq \paren{1+x^2}^{1/2}$.


\section{The Gelfand--Robbin quotient, I: abstract theory}
\label{Sec_GelfandRobbinQuotient}

This section studies the closed extensions of the Dirac operator $D \co H^1\Gamma\paren{X\setminus Z, S\otimes\fl} \to L^2\Gamma\paren{X\setminus Z, S\otimes\fl}$,
considered as an unbounded operator,
following
\cites[Exercise 2.17]{McDuff1998}[§3]{BoossBavnbekFurutani1998:Maslov}[Appendix B]{Salamon2008}[Exercises 6.3.3 and 6.5.11]{BuhlerSalamon2018:FunctionalAnalysis}.
Throughout,
assume the following analytic condition on the branching locus $Z$.

\begin{hypothesis}
  \label{Hyp_BorderlineHardyInequality}
  There is an $r \in C^\infty(X\setminus Z,(0,\infty))$, uniformly comparable to the Riemannian distance to $Z$, such that following \defined{borderline Hardy inequality} holds:
  for every $\phi \in H^1\Gamma\paren{X\setminus Z, S\otimes\fl}$,
  $r^{-1}\phi \in L^2\Gamma\paren{X\setminus Z, S\otimes\fl}$ and
  \begin{equation*}
    \Abs{r^{-1}\phi}_{L^2} \lesssim \Abs{\phi}_{H^1}.
  \end{equation*}
\end{hypothesis}

\begin{remark}
  \label{Rmk_BorderlineHardyInequality}
  \autoref{Hyp_BorderlineHardyInequality} holds if $Z$ is a codimension two submanifold;
  see \citet[Lemma 2.6]{Takahashi2015} or \autoref{Lem_BorderlineHardyInequality}.
  Moreover, it holds in the situation considered by \citet{HaydysTakahashiMazzeo2023:Index} where $Z \subset X$ is a graph embedded in a $3$--manifold.
\end{remark}

\subsection{The minimal and maximal extension}
\label{Sec_MinimalExtension}

\begin{prop}
  \label{Prop_D_LeftSemiFredholm}
  The bounded operator
  $D \co H^1\Gamma\paren{X\setminus Z, S\otimes\fl} \to L^2\Gamma\paren{X\setminus Z, S\otimes\fl}$
  is left semi-Fredholm;
  that is: $\ker D$ is finite-dimensional and $\im D$ is closed.
\end{prop}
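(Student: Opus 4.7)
The plan is to establish a Gårding-type estimate
\[
  \|\phi\|_{H^1} \lesssim \|D\phi\|_{L^2} + \|\phi\|_{L^2} \qquad \text{for all } \phi \in H^1\Gamma(X\setminus Z, S\otimes \fl),
\]
and then combine it with compactness of the inclusion $H^1\Gamma(X\setminus Z,S\otimes\fl) \hookrightarrow L^2\Gamma(X\setminus Z,S\otimes\fl)$. Both ingredients feed into the standard abstract criterion that a bounded operator $T\colon \cX \to \cY$ between Banach spaces is left semi-Fredholm whenever there exists a compact operator $K\colon \cX \to \cZ$ with $\|x\|_\cX \lesssim \|Tx\|_\cY + \|Kx\|_\cZ$; see e.g.\ \cite[Exercise 2.17]{McDuff1998}. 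Applied to $T = D$ and $K$ the inclusion into $L^2$, this yields both finite-dimensionality of $\ker D$ and closedness of $\im D$.

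For the Gårding estimate, I would first establish it for $\phi \in C^\infty_c(X\setminus Z, S\otimes\fl)$ via the Weitzenböck identity $D^*D = \nabla^*\nabla + \cR$ for a bundle endomorphism $\cR \in \End(S\otimes\fl)$: integration by parts is unproblematic on compactly supported sections away from $Z$, and yields
\[
  \|\nabla \phi\|_{L^2}^2 \leq \|D\phi\|_{L^2}^2 + C\|\phi\|_{L^2}^2.
\]
To extend this to all $\phi \in H^1$, I would show that $C^\infty_c(X\setminus Z, S\otimes\fl)$ is dense in $H^1\Gamma(X\setminus Z, S\otimes\fl)$: approximating $\phi$ by $(1-\chi(r/\epsilon))\phi$ (followed by a standard mollification away from $Z$), the only delicate term is $\phi \cdot \nabla \chi(r/\epsilon)$. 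Its $L^2$ norm is controlled pointwise by $\epsilon^{-1}|\phi|\,\mathbf{1}_{\{r<\epsilon/2\}} \lesssim r^{-1}|\phi|\, \mathbf{1}_{\{r<\epsilon/2\}}$, which tends to zero in $L^2$ by \autoref{Hyp_BorderlineHardyInequality} and dominated convergence.

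For compactness of $H^1 \hookrightarrow L^2$, I would decompose each bounded sequence into its behaviour on compact subsets of $X\setminus Z$ and on a shrinking tubular neighbourhood of $Z$. On any compact $K \Subset X\setminus Z$ the usual local Rellich theorem, combined with a diagonal argument over an exhaustion, yields an $L^2_{\mathrm{loc}}$--convergent subsequence. The uniform tail estimate needed to upgrade this to $L^2$--convergence comes straight from \autoref{Hyp_BorderlineHardyInequality}:
\[
  \|\phi\|_{L^2(\{r<\epsilon\})} \leq \epsilon\,\|r^{-1}\phi\|_{L^2} \lesssim \epsilon\,\|\phi\|_{H^1},
\]
so the tail vanishes uniformly as $\epsilon \to 0$.

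The main obstacle is the interaction with the singular locus $Z$: both the density step and the tail estimate could easily fail in the absence of a Hardy inequality, which is precisely why \autoref{Hyp_BorderlineHardyInequality} is imposed. Once this hypothesis is invoked in the two places above, the remainder of the argument is formal.
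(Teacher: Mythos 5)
Your proposal is correct and follows essentially the same route as the paper: establish the Gårding/elliptic estimate via the Weitzenböck (Schrödinger--Lichnerowicz) identity after reducing to compactly supported sections through the cut-off argument $\chi(r/\epsilon)\phi \to 0$ in $H^1$ (the paper's $H^1 = H^1_0$ step), prove compactness of $H^1 \hookrightarrow L^2$ via local Rellich plus the Hardy-controlled tail estimate, and conclude by the standard abstract criterion. The only cosmetic difference is that the paper packages the density and compactness steps into a single preliminary lemma (Lemma 2.3), whereas you present them inline.
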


The proof relies on the following consequences of the borderline Hardy inequality.

\begin{lemma}
  \label{Lem_BorderlineHardyInequality_Consequences}
  The following hold:
  \begin{enumerate}
  \item
    \label{Lem_BorderlineHardyInequality_Consequences_H1=H10}
    $H^1\Gamma\paren{X\setminus Z, S\otimes \fl} = H_0^1\Gamma\paren{X\setminus Z, S\otimes \fl}$.
  \item
    \label{Lem_BorderlineHardyInequality_Consequences_H1L2Compact}
    The inclusion $H^1\Gamma\paren{X\setminus Z, S\otimes \fl} \incl L^2\Gamma\paren{X\setminus Z, S\otimes \fl}$ is a compact operator.
  \end{enumerate}
\end{lemma}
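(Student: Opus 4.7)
The plan is to derive both parts from the borderline Hardy inequality by truncating away from $Z$ with cut-offs whose gradient is controlled pointwise by $r^{-1}$, and reducing the remaining work to classical Sobolev theory on relatively compact subsets of $X\setminus Z$.

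For part (1), I would construct a family of cut-offs using the function $\chi$ fixed in the Conventions: set $\chi_\epsilon \coloneqq 1 - \chi(r/\epsilon)$. Then $\chi_\epsilon$ vanishes on $\{r \le \epsilon/4\}$, equals $1$ on $\{r \ge \epsilon/2\}$, and its gradient is bounded by $C/\epsilon$ and supported in $\{\epsilon/4 \le r \le \epsilon/2\}$; in particular $|\nabla\chi_\epsilon| \lesssim r^{-1}$ pointwise. Given $\phi \in H^1\Gamma(X\setminus Z, S\otimes\fl)$, the product $\chi_\epsilon\phi$ is supported away from $Z$ and thus belongs to $H_0^1\Gamma(X\setminus Z, S\otimes\fl)$ (e.g.\ by standard mollification on the open manifold $X\setminus Z$). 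Since $\chi_\epsilon\phi \to \phi$ in $L^2$ by dominated convergence, and $\nabla(\chi_\epsilon\phi) = \chi_\epsilon\nabla\phi + (\nabla\chi_\epsilon)\phi$ with the first summand converging to $\nabla\phi$ in $L^2$, the only issue is to kill the cut-off error
\[
  \|(\nabla\chi_\epsilon)\phi\|_{L^2}^2 \lesssim \int_{\{\epsilon/4 \le r \le \epsilon/2\}} r^{-2}|\phi|^2,
\]
which tends to $0$ as $\epsilon\to 0$ by dominated convergence because $r^{-1}\phi \in L^2$ by \autoref{Hyp_BorderlineHardyInequality}. Since $H_0^1$ is closed in $H^1$, this forces $\phi \in H_0^1$.

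For part (2), I would combine a uniform tail estimate near $Z$ with classical Rellich compactness away from $Z$. Let $(\phi_n)$ be a bounded sequence in $H^1\Gamma(X\setminus Z, S\otimes\fl)$. For every $\epsilon>0$, the Hardy inequality yields
\[
  \int_{\{r<\epsilon\}} |\phi_n|^2 \le \epsilon^2 \int_{\{r<\epsilon\}} r^{-2}|\phi_n|^2 \lesssim \epsilon^2 \|\phi_n\|_{H^1}^2,
\]
so the $L^2$-mass on shrinking tubular neighborhoods of $Z$ is uniformly small in $n$. Each set $\{r \ge 1/m\}$ is relatively compact in the smooth manifold $X\setminus Z$ (and the bundle $S\otimes\fl$ is a benign smooth Hermitian bundle there), so the classical Rellich--Kondrachov theorem produces an $L^2$-convergent subsequence; by a diagonal argument, there is a single subsequence $(\phi_{n_k})$ converging in $L^2(\{r \ge 1/m\})$ for every $m \in \N$. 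The decomposition
\[
  \|\phi_{n_k}-\phi_{n_l}\|_{L^2(X\setminus Z)}^2 \le \|\phi_{n_k}-\phi_{n_l}\|_{L^2(\{r<\epsilon\})}^2 + \|\phi_{n_k}-\phi_{n_l}\|_{L^2(\{r\ge\epsilon\})}^2
\]
then shows $(\phi_{n_k})$ is Cauchy in $L^2$: the first summand is at most $C\epsilon^2$ uniformly, while the second vanishes as $k,l \to \infty$; sending $\epsilon \to 0$ concludes.

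The only conceptual subtlety is in part (1): the cut-off gradient must be controlled pointwise by $r^{-1}$ so that the \emph{borderline} nature of \autoref{Hyp_BorderlineHardyInequality} is exactly what is needed---a Hardy-type inequality with $r^{-\alpha}$ for $\alpha < 1$ would leave an uncontrolled cut-off error of order $\epsilon^{\alpha-1}$. Once this first part (and hence the $r^{-1}$-approximation scheme) is in place, part (2) is essentially automatic.
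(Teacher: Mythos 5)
Your proof is correct and follows essentially the same route as the paper: for part (1), use the borderline ($r^{-1}$-scaled) cut-off $1-\chi(r/\epsilon)$ to approximate $\phi$ in $H^1$ by compactly supported sections, with the cut-off error controlled exactly by the Hardy inequality; for part (2), combine the uniform $L^2$ smallness near $Z$ (again from Hardy) with Rellich on compact exhausting sets and a diagonal argument. The paper phrases the cut-off estimate in terms of $\chi(r/\epsilon)$ rather than its complement and invokes monotone convergence rather than dominated convergence, but these are cosmetic differences.
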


\begin{proof}
  For $\epsilon  > 0$ set $\chi_\epsilon \coloneq \chi(r/\epsilon)$.
  Let $\phi \in H^1\Gamma\paren{X\setminus Z, S\otimes \fl}$.
  Since $\abs{r\rd\chi_\epsilon} \lesssim 1$,
  \begin{equation*}
    \Abs{\nabla(\chi_\epsilon\phi)}_{L^2}
    \leq
    \Abs{(r\rd\chi_\epsilon) r^{-1}\phi}_{L^2}
    +
    \Abs{\chi_\epsilon \nabla \phi}_{L^2}
    \lesssim
    \paren*{\int_{\supp(\rd\chi_\epsilon)} \abs{r^{-1}\phi}^2 + \abs{\nabla\phi}^2}^{1/2}.
  \end{equation*}
  Therefore, by \autoref{Hyp_BorderlineHardyInequality} and monotone convergence,
  \begin{equation*}
    \lim_{\epsilon \downarrow 0} \Abs{\nabla(\chi_\epsilon\phi)}_{L^2} = 0.
  \end{equation*}
  This implies \autoref{Lem_BorderlineHardyInequality_Consequences_H1=H10}.

  Let $(\phi_n) \in H^1\Gamma\paren{X\setminus Z, S\otimes \fl}^\N$ with $\Abs{\phi_n}_{H^1} = 1$ for every $n \in \N$.
  For every $\epsilon > 0$, a subsequence of $((1-\chi_\epsilon)\phi_n)$ converges in $L^2\Gamma\paren{X\setminus Z, S\otimes \fl}$.
  By the borderline Hardy inequality,
  \begin{equation*}
    \Abs{\chi_\epsilon\phi_n}_{L^2} \lesssim \epsilon.
  \end{equation*}
  Therefore, \autoref{Lem_BorderlineHardyInequality_Consequences_H1L2Compact} follows from a diagonal sequence argument.
\end{proof}

The proof of \autoref{Prop_D_LeftSemiFredholm} also uses the following observation.

\begin{prop}[{cf.~\citet[Theorem 1.10]{Bismut1989:LocalIndexNonKahler}}]
  \label{Prop_DiracOperatorWithSkewTorsion}
  ~
  \begin{enumerate}
  \item
    \label{Prop_DiracOperatorWithSkewTorsion_SelfAdjoint}
    $D$ is formally self-adjoint;
    in fact:
    for every $\phi,\psi \in H_\loc^1\Gamma(X,S \otimes \fl)$
    \begin{equation*}
      \Inner{D\phi,\psi} - \Inner{\phi,D \psi}
      =
      \div(v)
      \qwithq
      v
      \coloneq
      \sum_{i=1}^n \Inner{\gamma(e_i)\phi,\psi} e_i.
    \end{equation*}
    Here, and throughout this article, $\paren{e_1,\ldots,e_n}$ denotes a local orthonormal frame.
  \item
    \label{Prop_DiracOperatorWithSkewTorsion_SchrödingerLichnerowicz}
    $D$ satisfies
    \begin{equation*}
      D^2 = \nabla^*\nabla + \tau \nabla +  \gamma(F_\nabla)
    \end{equation*}
    with $\tau \in \Gamma\paren{X,\Hom(T^*X\otimes S, S)}$ depending linearly on $\Tor$.
  \end{enumerate}
\end{prop}

\begin{proof}
  The following argument can be found in \cite[Proof of Theorem 1.10]{Bismut1989:LocalIndexNonKahler} and is repeated here only for the readers' convenience.

  By direct computation,
  \begin{equation*}
    \div v
    =
    \sum_{i=1}^n \sL_{e_i} \Inner{v,e_i}
    =
    \Inner{D\phi,\psi} - \Inner{\phi,D\psi}
    +
    \sum_{i=1}^n \Inner{\gamma\paren{\nabla_{e_i}^Te_i}\phi,\psi}
  \end{equation*}
  and
  \begin{equation*}
    \Inner{\nabla_{e_i}^Te_i,-}
    =
    \tfrac12\Tor(e_i,e_i,-) = 0.
  \end{equation*}  
  This proves \autoref{Prop_DiracOperatorWithSkewTorsion_SelfAdjoint}.

  By direct computation,
  \begin{align*}
     D^2 = \sum_{i,j=1}^n \gamma(e_i){\nabla}_{e_i} \gamma(e_j){\nabla}_{e_j}
    &=
      \sum_{i,j=1}^n \gamma(e_j)\gamma(e_j) {\nabla}_{e_i} {\nabla}_{e_j}
      +
      \gamma(e_i)\gamma({\nabla}_{e_i}^Te_j) {\nabla}_{e_j} \\
    &=
      \nabla^*\nabla +  \gamma(F_{ \nabla})
      +
      \sum_{i,j=1}^n \gamma(e_i)\gamma({\nabla}_{e_i}^Te_j) {\nabla}_{e_j}
  \end{align*}
  and
  \begin{equation*}
    \Inner{\nabla_{e_i}^Te_j,e_k}
    =    
    \tfrac12\Tor(e_i,e_j,e_k).
  \end{equation*}
  This proves \autoref{Prop_DiracOperatorWithSkewTorsion_SchrödingerLichnerowicz}.
\end{proof}

\begin{proof}[Proof of \autoref{Prop_D_LeftSemiFredholm}]
  By \autoref{Lem_BorderlineHardyInequality_Consequences}~\autoref{Lem_BorderlineHardyInequality_Consequences_H1=H10} and \autoref{Prop_DiracOperatorWithSkewTorsion},
  \begin{equation}
    \label{Eq_D_EllipticEstimate}
    \Abs{\phi}_{H^1}
    \lesssim
    \Abs{D\phi}_{L^2} + \Abs{\phi}_{L^2}
  \end{equation}
  for every $\phi \in H^1\Gamma\paren{X\setminus Z, S\otimes \fl}$.
  Therefore and by \autoref{Lem_BorderlineHardyInequality_Consequences}~\autoref{Lem_BorderlineHardyInequality_Consequences_H1L2Compact},
  $D$ is left semi-Fredholm.
\end{proof}

With the exception of a few edge cases---e.g.: if $Z = \emptyset$ or $Z \subset X$ is a finite subset of a surface \cites[§3.4.2]{Doan2024}[§4.1--4.5]{HaydysTakahashiMazzeo2023:Index}---the operator $D \co H^1\Gamma\paren{X\setminus Z, S\otimes\fl} \to L^2\Gamma\paren{X\setminus Z, S\otimes\fl}$ is not Fredholm:
its cokernel is $\infty$--dimensional.
Therefore, it is useful to consider $D$ as an unbounded operator and systematically study its closed extensions;
cf.~\cite[Chapter 6]{BuhlerSalamon2018:FunctionalAnalysis}.

\begin{definition}
  \label{Def_Dmin}
  The \defined{minimal extension}
  \begin{align*}
    \Dmin \co
    \dom(\Dmin) \to L^2\Gamma\paren{X\setminus Z, S\otimes\fl};
  \end{align*}
  is the operator  $D \co H^1\Gamma\paren{X\setminus Z, S\otimes\fl} \to L^2\Gamma\paren{X\setminus Z, S\otimes\fl}$
  considered as unbounded operator on $L^2\Gamma\paren{X\setminus Z, S\otimes\fl}$.
\end{definition}

The following is crucial for the rest of this section.

\begin{prop}  
  \label{Prop_DMin_ClosedDenselyDefinedSymmetric}
  $\Dmin$ is closed, densely defined, and symmetric.
\end{prop}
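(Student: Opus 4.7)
The plan is to verify the three properties in order, with the elliptic estimate \eqref{Eq_D_EllipticEstimate} doing the work for closedness and \autoref{Lem_BorderlineHardyInequality_Consequences}~\autoref{Lem_BorderlineHardyInequality_Consequences_H1=H10} doing the work for symmetry.

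\textbf{Dense definition.} First I would observe that $C_c^\infty\Gamma(X\setminus Z,S\otimes\fl) \subset \dom(\Dmin) = H^1\Gamma(X\setminus Z,S\otimes\fl)$, and the former is dense in $L^2\Gamma(X\setminus Z,S\otimes\fl)$ (since $Z$ is closed with empty interior, using a cut-off and mollification argument). Hence $\dom(\Dmin)$ is dense in $L^2$.

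\textbf{Closedness.} Suppose $(\phi_n) \subset H^1\Gamma(X\setminus Z,S\otimes\fl)$ with $\phi_n \to \phi$ in $L^2$ and $D\phi_n \to \psi$ in $L^2$. Applying the elliptic estimate \eqref{Eq_D_EllipticEstimate} to $\phi_n - \phi_m$ gives
\begin{equation*}
  \Abs{\phi_n - \phi_m}_{H^1}
  \lesssim
  \Abs{D(\phi_n - \phi_m)}_{L^2} + \Abs{\phi_n - \phi_m}_{L^2},
\end{equation*}
so $(\phi_n)$ is Cauchy in $H^1$ and converges to some $\tilde\phi \in H^1$. Comparing with the $L^2$ limit forces $\tilde\phi = \phi \in \dom(\Dmin)$ and $\Dmin\phi = \psi$, which is the definition of closedness.

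\textbf{Symmetry.} This is the one step that truly uses the geometric input. I would first recall that for compactly supported smooth sections $\phi, \psi \in C_c^\infty\Gamma(X\setminus Z, S\otimes\fl)$, formal self-adjointness of the Dirac operator on a Dirac bundle together with the absence of boundary terms yields
\begin{equation*}
  \bracket{D\phi,\psi}_{L^2} = \bracket{\phi,D\psi}_{L^2}.
\end{equation*}
To upgrade this to all of $H^1$, I would invoke \autoref{Lem_BorderlineHardyInequality_Consequences}~\autoref{Lem_BorderlineHardyInequality_Consequences_H1=H10}, which gives $H^1 = H_0^1$ and hence furnishes, for any $\phi,\psi \in H^1\Gamma(X\setminus Z, S\otimes\fl)$, approximating sequences $\phi_n,\psi_n \in C_c^\infty\Gamma(X\setminus Z,S\otimes\fl)$ with $\phi_n \to \phi$ and $\psi_n \to \psi$ in $H^1$. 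Since $D \co H^1 \to L^2$ is bounded, passing to the limit in $\bracket{D\phi_n,\psi_n} = \bracket{\phi_n,D\psi_n}$ establishes symmetry.

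The only non-routine ingredient is the identity $H^1 = H_0^1$ at the third step, which is precisely where \autoref{Hyp_BorderlineHardyInequality} enters; the rest is routine unbounded-operator bookkeeping.
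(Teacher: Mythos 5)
Your proof is correct and follows essentially the same route as the paper's: density is immediate, closedness comes from the elliptic estimate \eqref{Eq_D_EllipticEstimate} (the paper phrases this as equivalence of the graph norm with the $H^1$ norm, which your Cauchy-sequence argument unpacks), and symmetry comes from formal self-adjointness on compactly supported sections upgraded via $H^1 = H_0^1$ from \autoref{Lem_BorderlineHardyInequality_Consequences}. One small nitpick: in the density step, the parenthetical ``since $Z$ is closed with empty interior'' is not the relevant reason — density of $\Gamma_c(X\setminus Z, S\otimes\fl)$ in $L^2\Gamma(X\setminus Z, S\otimes\fl)$ holds for any open manifold and needs no hypothesis on $Z$.
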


\begin{proof}
  Evidently, $\Dmin$ is densely defined.
  %
  By \autoref{Prop_DiracOperatorWithSkewTorsion}~\autoref{Prop_DiracOperatorWithSkewTorsion_SelfAdjoint},
  \begin{equation*}
    \Inner{D \phi,\psi} = \Inner{\phi,D \psi}
  \end{equation*}
  for every $\phi,\psi \in \Gamma_c\paren{X\setminus Z, S\otimes\fl}$.
  Therefore, by \autoref{Lem_BorderlineHardyInequality_Consequences}~\autoref{Lem_BorderlineHardyInequality_Consequences_H1=H10} and since $\Gamma_c\paren{X\setminus Z, S\otimes\fl} \subset H_0^1\Gamma\paren{X\setminus Z, S\otimes\fl}$ is dense, $\Dmin$ is symmetric.
  %
  By \autoref{Eq_D_EllipticEstimate}
  the Sobolev norm
  $\Abs{-}_{H^1}$
  and the graph norm
  $\Abs{-}_D \coloneq \paren{\Abs{-}_{L^2}^2 + \Abs{D-}_{L^2}^2}^{1/2}$
  are equivalent.
  Therefore and since $H^1\Gamma\paren{X\setminus Z, S\otimes\fl}$ is complete,
  $\Dmin$ is closed.
\end{proof}

\begin{definition}
  \label{Def_DMax}
  The \defined{maximal extension}
  \begin{equation*}
    \Dmax \co \dom(\Dmax) \to  L^2\paren{X\setminus Z, S\otimes\fl}
  \end{equation*}
  is the adjoint of $\Dmin$ in the sense of unbounded operators;
  that is:
  \begin{equation*}
    \dom(D_{\max})
    \coloneq
    \set*{
      \phi \in L^2\Gamma\paren{X\setminus Z, S\otimes\fl}
      :
      \Inner{\phi,\Dmin-}_{L^2} \co \dom(\Dmin) \to \R ~\text{is $\Abs{-}_{L^2}$--bounded}
    }    
  \end{equation*}
  and
  for every $\phi \in \dom(\Dmax)$ and $\psi \in \dom(\Dmin)$
  \begin{equation*}
    \Inner{\Dmax\phi,\psi} = \Inner{\phi,\Dmin\psi}.
  \end{equation*}
  $\Dmax \phi$ exists by the Hahn--Banach Theorem and the Riesz Representation Theorem, and is unique because $\dom(\Dmin)$ is dense.
\end{definition}

\begin{remark}
  \label{Rmk_Dmax}
  It is convenient to consider
  $D \co H_\loc^1\Gamma\paren{X\setminus Z, S\otimes\fl} \to L_\loc^2\Gamma\paren{X\setminus Z, S\otimes\fl}$.
  From this perspective,
  \begin{equation*}
    \dom(\Dmax) = \set{ \phi \in H_\loc^1\Gamma\paren{X\setminus Z, S\otimes\fl} : \phi,D\phi \in L^2\Gamma\paren{X\setminus Z, S\otimes\fl} };
  \end{equation*}
  and it is excusable to drop the subscripts from $\Dmin\phi$, $\Dmax \phi$, etc.
\end{remark}


\subsection{Closed extensions and residue conditions}
\label{Sec_ClosedExtensionsResidueCondition}

The closed extensions of $\Dmin$ can be systematically understood as follows.

\begin{definition}
  \label{Def_GelfandRobbinQuotient}
  The \defined{Gelfand--Robbin quotient} is the Hilbert space
  \begin{equation*}
    \GelfandRobbinQuotient
    \coloneq
    \frac{\dom(\Dmax)}{\dom(\Dmin)}.
  \end{equation*}
  Since $\Dmin$ is closed,
  $\dom(\Dmin) \subset \dom(\Dmax)$ is a $\Abs{-}_D$--closed subspace.
  Denote the canonical projection map by
  \begin{equation*}
    [-] \co \dom(\Dmax) \to \GelfandRobbinQuotient.
    \qedhere
  \end{equation*}
\end{definition}

\begin{remark}
  \label{Rmk_GelfandRobbinQuotient}
  $\GelfandRobbinQuotient$ is localised on $Z$ in the following sense:
  $[\phi] = [\chi(r/\epsilon) \phi]$ for every $\epsilon > 0$ and $\phi \in \dom(\Dmax)$.
\end{remark}

\begin{definition}
  \label{Def_ResidueCondition}
  A \defined{residue condition} is a closed subspace
  \begin{equation*}
    \ResidueCondition \subset \GelfandRobbinQuotient.
    \qedhere
  \end{equation*}
\end{definition}

\begin{prop}[{closed extension=residue condition; cf.~\cite[Lemma 3.3~(a)]{BoossBavnbekFurutani1998:Maslov}}]
  \label{Prop_ClosedExtension}
  If $\ResidueCondition \subset \GelfandRobbinQuotient$ is a residue condition,
  then
  \begin{equation*}
    D_\ResidueCondition
    \coloneq
    \Dmax|_{\dom(D_\ResidueCondition)}
    \qwithq
    \dom(D_\ResidueCondition)
    \coloneq
    [-]^{-1}(\ResidueCondition)
  \end{equation*}
  is a closed extension of $\Dmin$.
  Moreover,
  every closed extension of $\Dmin$ is of this form.
\end{prop}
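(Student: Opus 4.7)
The plan is to prove the two directions separately, using only abstract functional analysis: the graph norm $\Abs{-}_D$ makes $\dom(\Dmax)$ a Hilbert space (because $\Dmax = \Dmin^*$ is closed), and the Gelfand--Robbin quotient carries the associated quotient norm so that $[\cdot]\co (\dom(\Dmax),\Abs{-}_D) \to \GelfandRobbinQuotient$ is continuous by construction.

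For the first direction, given a residue condition $B$, the inclusion $\dom(\Dmin) \subset \dom(D_B)$ is immediate from $0 \in B$ together with $[\cdot]^{-1}(\{0\}) = \dom(\Dmin)$, so $D_B$ genuinely extends $\Dmin$. For closedness, note that $\dom(D_B) = [\cdot]^{-1}(B)$ is the preimage of a closed subspace under a continuous map, hence closed in $(\dom(\Dmax),\Abs{-}_D)$; the restriction of the closed operator $\Dmax$ to a graph-closed subspace of its domain remains closed, so $D_B$ is closed.

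For the converse, let $T$ be a closed extension of $\Dmin$, interpreted in the standard sense that $\Dmin \subset T \subset \Dmax$. I would first note that this sandwiching is automatic for closed symmetric extensions: from $T \supset \Dmin$ one obtains $T^* \subset \Dmin^* = \Dmax$, and symmetry $T \subset T^*$ then yields $T \subset \Dmax$. Define $B \coloneqq [\dom(T)]$. Since $\dom(\Dmin) \subset \dom(T)$, one has $[\cdot]^{-1}(B) = \dom(T)$, and consequently $D_B = \Dmax|_{\dom(T)} = T$. Closedness of $B$ follows because $T$ is closed and coincides with $\Dmax$ on $\dom(T)$, so the graph norms agree there; thus $\dom(T)$ is closed in $(\dom(\Dmax),\Abs{-}_D)$, and the image under $[\cdot]$ of a closed subspace containing $\ker[\cdot] = \dom(\Dmin)$ is closed in the quotient.

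The only point requiring care is the convention underlying the second direction: what is meant by ``closed extension of $\Dmin$'' is a closed operator $T$ with $\Dmin \subset T \subset \Dmax$, which is the class naturally suggested by the symmetry of $\Dmin$ and matches the cited references. Once this is fixed, the content of the proposition reduces to the elementary Banach-space correspondence between closed subspaces of $\dom(\Dmax)$ containing $\dom(\Dmin)$ and closed subspaces of the quotient $\GelfandRobbinQuotient$, implemented by $[\cdot]^{-1}$ and $[\cdot]$.
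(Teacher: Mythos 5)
Your argument is correct and essentially identical to the paper's proof: one direction is the preimage of a closed subspace under the continuous quotient map, and for the converse one sets $B \coloneqq [\dom(T)]$ and observes $\dom(T) = [\cdot]^{-1}(B)$. Your aside about the convention $\Dmin \subset T \subset \Dmax$ is well-taken --- the paper's proof also tacitly assumes $\dom(\bar D) \subset \dom(\Dmax)$ --- but note that the symmetry argument you sketch only justifies the sandwiching for symmetric extensions, whereas the classified operators $D_B$ are generically non-symmetric (only the isotropic $B$ give symmetric $D_B$), so the sandwiching must really be read as part of what ``closed extension of $\Dmin$'' means here rather than a derived consequence.
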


\begin{proof}
  Let $\ResidueCondition \subset \GelfandRobbinQuotient$ be a residue condition.
  The canonical projection $[-] \co \dom(\Dmax) \to \GelfandRobbinQuotient$ is bounded.
  Therefore, $\dom(D_\ResidueCondition) \coloneq [-]^{-1}(\ResidueCondition) \subset \dom(\Dmax)$ is a $\Abs{-}_D$--closed subspace;
  hence: $D_\ResidueCondition$ is closed.

  Let $\bar D$ be a closed extension of $\Dmin$.
  Since $\dom(\bar D) \subset \dom(\Dmax)$ is a $\Abs{-}_D$--closed subspace,
  $\ResidueCondition \coloneq [\dom(\bar D)] = \frac{\dom(\bar D)}{\dom(\Dmin)} \subset \GelfandRobbinQuotient$ is a closed subspace.
  Since $\dom(\Dmin) \subset \dom(\bar D)$,
  $\dom(\bar D) = [-]^{-1}(\ResidueCondition)$;
  hence: $\bar D = D_\ResidueCondition$.
\end{proof}


\subsection{The Green's form and adjoint extensions}
\label{Sec_GreensFormAndAdjointExtensions}

The Gelfand--Robbin quotient carries a symplectic structure related to the construction of adjoint extensions.

\begin{definition}
  \label{Def_GreensForm}
  The \defined{Green's form} $G \in \Hom\paren{\Wedge^2 \GelfandRobbinQuotient,\R}$ is defined by
  \begin{equation*}
    G([\phi] \wedge [\psi])
    \coloneq
    \Inner{D\phi,\psi}_{L^2} - \Inner{\phi,D\psi}_{L^2}.
    \qedhere
  \end{equation*}
\end{definition}

\begin{prop}[{cf.~\cites[Lemma 3.1, Proposition 3.2]{BoossBavnbekFurutani1998:Maslov}[Remark B.1~(ii)]{Salamon2008}}]
  \label{Prop_GreensForm_Symplectic}
  $G$ is symplectic;
  that is:
  it induces a Hilbert space isomorphism
  \begin{align*}
    J \co \GelfandRobbinQuotient &\to \DualGelfandRobbinQuotient \coloneq \sL(\GelfandRobbinQuotient,\R) \\
    [\phi] &\mapsto G([\phi] \wedge -).
  \end{align*}
  Moreover:
  if $\sharp \co \DualGelfandRobbinQuotient \to \GelfandRobbinQuotient$ denotes the isomorphism induced by the inner product,
  then $\sharp \circ J$ is an isometric complex structure on $\GelfandRobbinQuotient$.
\end{prop}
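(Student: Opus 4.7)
The plan is to identify $\GelfandRobbinQuotient$ with a concrete closed subspace of the ambient Hilbert space $\sH \coloneqq L^2\Gamma\paren{X \setminus Z, S \otimes \fl} \oplus L^2\Gamma\paren{X \setminus Z, S \otimes \fl}$ and reduce every claim to elementary Hilbert-space geometry there. Introduce the isometry $\rJ \in \sL(\sH)$ defined by $\rJ(x,y) \coloneqq (-y, x)$; it satisfies $\rJ^2 = -\id$. Write $V \coloneqq \graph(\Dmin)$ and $W \coloneqq \graph(\Dmax)$, both closed subspaces of $\sH$. The textbook characterisation $\Dmax = \Dmin^*$ reads $W = (\rJ V)^\perp$, whence $W^\perp = \rJ V$; applying $\rJ$ yields $\rJ W = V^\perp$ and $\rJ V^\perp = W$. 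Setting $V^{\perp_W} \coloneqq W \cap V^\perp$, the assignment $\phi \mapsto (\phi, D\phi)$ identifies $\GelfandRobbinQuotient = \dom(\Dmax)/\dom(\Dmin)$ isometrically with $V^{\perp_W}$ (quotient graph inner product on the left, inherited $\sH$ inner product on the right).

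Next I would handle well-definedness, boundedness, and injectivity of $J$ in one stroke via the identity
\begin{equation*}
  G([\phi] \wedge [\psi]) = -\bigl\langle \rJ(\phi, D\phi),(\psi, D\psi)\bigr\rangle_\sH.
\end{equation*}
The right-hand side vanishes whenever $\phi$ or $\psi$ lies in $\dom(\Dmin)$, by the defining property of $\dom(\Dmax) = \dom(\Dmin^*)$; hence $G$ descends to $\GelfandRobbinQuotient$, is skew-symmetric, and is bounded by $2\Abs{\phi}_D\Abs{\psi}_D$, so $J$ is a bounded operator to $\DualGelfandRobbinQuotient$. For injectivity, $[\phi] \in \ker J$ translates to $\Inner{D\phi, \psi} = \Inner{\phi, D\psi}$ for every $\psi \in \dom(\Dmax)$, which is precisely the condition that $\phi \in \dom(\Dmax^*) = \dom(\Dmin^{**}) = \dom(\Dmin)$ (using that $\Dmin$ is closed, \autoref{Prop_DMin_ClosedDenselyDefinedSymmetric}); hence $[\phi] = 0$.

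The key observation is the stability $\rJ V^{\perp_W} = V^{\perp_W}$, which is a one-line calculation: since $\rJ$ is an isometry it commutes with orthogonal complementation, and
\begin{equation*}
  \rJ(W \cap V^\perp) = \rJ W \cap \rJ V^\perp = V^\perp \cap W = V^{\perp_W}.
\end{equation*}
Combined with $\rJ^2 = -\id$, this shows $\rJ|_{V^{\perp_W}}$ is an isometric complex structure on $V^{\perp_W}$. Transporting through the isometric isomorphism $T \co \GelfandRobbinQuotient \iso V^{\perp_W}$: the Riesz characterisation of $\sharp J[\phi]$, together with the formula above and the membership $\rJ T[\phi] \in V^{\perp_W}$, forces $T \circ (\sharp \circ J) = -\rJ|_{V^{\perp_W}} \circ T$. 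Thus $\sharp \circ J$ is conjugate to $-\rJ|_{V^{\perp_W}}$ and is therefore an isometric complex structure on $\GelfandRobbinQuotient$; in particular it is a Hilbert space isomorphism, and then so is $J$, because $\sharp$ is one by the Riesz Representation Theorem.

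The obstacle is conceptual rather than technical: one must resist attacking surjectivity of $J$ directly (e.g.\ by Riesz representation on $\GelfandRobbinQuotient$) and instead pass to the ambient $\sH$. Once that move is made, the only non-trivial ingredient is $W = (\rJ V)^\perp$, which is the functional-analytic \emph{definition} of the adjoint; the preservation $\rJ V^{\perp_W} = V^{\perp_W}$ is then a triviality that simultaneously delivers both the isomorphism assertion and the complex-structure assertion.
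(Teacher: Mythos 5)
Your proof is correct and essentially matches the paper's: your $V^{\perp_W} = W \cap V^\perp$ is the isometric image of the paper's $\dom(\Dmin)^{\perp_D}$ under $\phi \mapsto (\phi, D\phi)$, your $\rJ$--stability argument packages the paper's ``direct inspection'' that $\dom(\Dmin)^{\perp_D} = \set{\phi \in \dom(\Dmax) : D\phi \in \dom(\Dmax),\ D^2\phi = -\phi}$ is preserved by $D$, and your identity $G([\phi] \wedge [\psi]) = -\Inner{\rJ T[\phi], T[\psi]}_\sH$ is the paper's $G([\phi] \wedge [\psi]) = \Inner{D\phi,\psi}_D$ in graph coordinates. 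The passage through $\sH = L^2 \oplus L^2$ is a tidy re-packaging, but the underlying idea---identify $\GelfandRobbinQuotient$ with the graph-orthogonal complement of $\dom(\Dmin)$ and observe that $D$ (equivalently $-\rJ$) furnishes the isometric complex structure there, which simultaneously delivers nondegeneracy---is the same.
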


\begin{proof}
  The canonical projection induces an isometry
  $[-] \co \dom(\Dmin)^{\perp_D} \iso \GelfandRobbinQuotient$.
  Here $\perp_D$ indicates the orthogonal complement with respect to the graph inner product
  \begin{equation*}
    \Inner{-,-}_D \coloneq \Inner{-,-}_{L^2} + \Inner{D-,D-}_{L^2}.
  \end{equation*}
  By direct inspection,
  \begin{align*}
    \dom(\Dmin)^{\perp_D}
    &=
      \set*{
      \phi \in \dom(\Dmax)
      :
      \Inner{\phi,\psi}_{L^2} + \Inner{D\phi,D\psi}_{L^2}
      =
      0
      ~\textnormal{for every}~
      \psi \in \dom(\Dmin)
      } \\
    &=
      \set*{
      \phi \in \dom(\Dmax)
      :
      D\phi \in \dom(\Dmax)
      ~\text{and}~
      D^2\phi = -\phi      
      }.
  \end{align*}
  Therefore,
  $D$ induces an isometric complex structure $D \co \dom(\Dmin)^{\perp_D} \to \dom(\Dmin)^{\perp_D}$.

  The diagram
  \begin{equation*}
    \begin{tikzcd}
      \dom(\Dmin)^{\perp_D} \ar{r}{D} \ar{d}[swap]{[-]} & \dom(\Dmin)^{\perp_D} \ar{d}{[-]} \\
      \GelfandRobbinQuotient \ar{r}[swap]{\sharp \circ J} & \GelfandRobbinQuotient
    \end{tikzcd}
  \end{equation*}
  commutes because
  for every $\phi,\psi \in \dom(\Dmin)^{\perp_D}$
  \begin{equation*}
    G([\phi] \wedge [\psi])
    = \Inner{D\phi,\psi}_{L^2} - \Inner{\phi,D\psi}_{L^2}
    = \Inner{D\phi,\psi}_{L^2} + \Inner{D^2\phi,D\psi}_{L^2}
    = \Inner{D\phi,\psi}_{D}.
  \end{equation*}  
  This proves the assertion. 
  \qedhere
\end{proof}

\begin{prop}[{cf.~\cite[Lemma 3.3~(b)]{BoossBavnbekFurutani1998:Maslov}}]
  \label{Prop_AdjointExtension}
  Let $\ResidueCondition \subset \GelfandRobbinQuotient$ be a residue condition.  
  The adjoint $D_\ResidueCondition^*$ of $D_\ResidueCondition$ is the closed extension $D_{\ResidueCondition^G}$ associated with
  the symplectic complement
  \begin{equation*}
    \ResidueCondition^G
    \coloneq
    \set*{
      [\phi] \in \GelfandRobbinQuotient :
      G([\phi]\wedge[\psi]) = 0 \textnormal{ for every } [\psi] \in \ResidueCondition
    }.
  \end{equation*}  
  In particular, $D_\ResidueCondition$ is self-adjoint if and only if $\ResidueCondition$ is Lagrangian.
\end{prop}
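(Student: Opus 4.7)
The plan is to use the chain $\Dmin \subseteq D_B \subseteq \Dmax$ to squeeze $D_B^*$ between $\Dmin$ and $\Dmax$, and then identify the corresponding residue subspace via the defining property of $G$.

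First, I would observe that the inclusion $\Dmin \subseteq D_B \subseteq \Dmax$ of unbounded operators reverses under taking adjoints. Since $\Dmin$ is closed, densely defined, and symmetric (by \autoref{Prop_DMin_ClosedDenselyDefinedSymmetric}), we have $\Dmin^{**} = \Dmin$, so $\Dmax^* = \Dmin^{**} = \Dmin$. Together with $\Dmin^* = \Dmax$, this yields
\begin{equation*}
  \Dmin = \Dmax^* \subseteq D_B^* \subseteq \Dmin^* = \Dmax.
\end{equation*}
In particular $D_B^*$ is a closed extension of $\Dmin$ that is contained in $\Dmax$, so \autoref{Prop_ClosedExtension} produces a unique residue condition $C \subseteq \GelfandRobbinQuotient$ with $D_B^* = D_C$.

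Next, I would identify $C$ as $B^G$. By definition of the adjoint, an element $\phi \in \dom(\Dmax)$ lies in $\dom(D_B^*)$ if and only if
\begin{equation*}
  \Inner{\Dmax\phi,\psi}_{L^2} = \Inner{\phi,\Dmax\psi}_{L^2}
  \quad\text{for every}\quad \psi \in \dom(D_B).
\end{equation*}
The left minus the right side is precisely $G([\phi] \wedge [\psi])$, so this condition reads $G([\phi] \wedge [\psi]) = 0$ for every $[\psi] \in B$; that is, $[\phi] \in B^G$. (For $\psi \in \dom(\Dmin)$ the equality is automatic because $\phi \in \dom(\Dmax) = \dom(\Dmin^*)$ and $[\psi] = 0$, which confirms that $G$ descends to $\GelfandRobbinQuotient$ in the way used here.) Consequently $\dom(D_B^*) = [\cdot]^{-1}(B^G)$ and $D_B^* = D_{B^G}$.

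Finally, for the second assertion, the bijection between closed extensions of $\Dmin$ and residue conditions in \autoref{Prop_ClosedExtension} gives $D_B = D_B^*$ if and only if $B = B^G$, which is exactly the definition of $B$ being Lagrangian. No real obstacle is expected: once the symmetry of $G$ is read off from its definition, the proof is essentially a bookkeeping argument, and the only point requiring a little care is verifying that $G$ is well-defined on the quotient (so that the condition $G([\phi] \wedge [\psi]) = 0$ is insensitive to representatives), which has already been recorded in \autoref{Def_GreensForm} and \autoref{Prop_GreensForm_Symplectic}.
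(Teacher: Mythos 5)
Your proposal is correct and follows essentially the same route as the paper: the squeeze $\Dmin = \Dmax^* \subseteq D_B^* \subseteq \Dmin^* = \Dmax$ is exactly the ``moment's thought'' the paper leaves implicit, and the identification $\dom(D_B^*) = [\cdot]^{-1}(B^G)$ via the definition of $G$ is the paper's two-line computation spelled out. You simply make explicit what the paper compresses, which is fine.
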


\begin{proof}
  A moment's thought shows that
  \begin{align*}
    \dom(D_\ResidueCondition^*)
    &
      =
      \set{
      \phi \in \dom(\Dmax)
      :
      \Inner{D\phi,\psi}_{L^2} = \Inner{\phi,D\psi}_{L^2}
      \text{ for every } \psi \in \dom(D_\ResidueCondition)
      } \\
    &
      =
      \set{
      \phi \in \dom(\Dmax)
      :
      G([\phi] \wedge [\psi]) = 0
      \textnormal{ for every } [\psi] \in \ResidueCondition
      }.
  \end{align*}
  This proves the assertion.
\end{proof}

\begin{example}
  \label{Ex_CalderonSubspace_Lagrangian}
  The \defined{Calderón subspace}
  \begin{equation*}
    \Lambda
    \coloneq
    [\ker \Dmax]
    \subset
    \GelfandRobbinQuotient
  \end{equation*}
  is a Lagrangian residue condition.  
  Indeed, $\Lambda \subset \Lambda^G$ because 
  $G([\phi] \wedge [\psi])
  = \Inner{D\phi,\psi}_{L^2} - \Inner{\phi,D\psi}_{L^2}
  = 0$
  for every $\phi,\psi \in \ker \Dmax$.
  Moreover,
  if $[\phi] \in \Lambda^G$,
  then $D\phi \perp_{L^2} \ker \Dmax = \paren{\im \Dmin}^{\perp_{L^2}}$;
  therefore, there is a $\psi \in \dom(\Dmin)$ with $D\psi = D\phi$;
  hence: $[\phi] = [\phi-\psi] \in \Lambda$.
  With respect to the isomorphism $\dom(\Dmin)^{\perp_D} \stackrel{[-]}{\iso} \GelfandRobbinQuotient$
  \begin{equation*}
    \Lambda \iso \set{ \phi \in \dom(\Dmin)^{\perp_D} : D\phi \in \im \Dmin}.
    \qedhere
  \end{equation*}
\end{example}

\begin{example}
  \label{Ex_CalderonSubspacePerp_Lagrangian}
  As a consequence of \autoref{Prop_GreensForm_Symplectic},
  the orthogonal complement of the Calderón subspace
  \begin{equation*}
    \Lambda^\perp = \sharp \circ J(\Lambda) \subset \GelfandRobbinQuotient
  \end{equation*}
  is a Lagrangian residue condition;
  moreover:
  \begin{equation*}
    \GelfandRobbinQuotient = \Lambda \oplus \Lambda^\perp.
  \end{equation*}
  With respect to the isomorphism $\dom(\Dmin)^{\perp_D} \stackrel{[-]}{\iso} \GelfandRobbinQuotient$
  \begin{equation*}
    \Lambda^\perp \iso \set*{ \phi \in \dom(\Dmin)^{\perp_D} : \phi \in \im \Dmin}.
    \qedhere
  \end{equation*}
\end{example}

\begin{example}
  \label{Ex_AbstractMITBag_MutuallyAdjoint}
  Suppose that $S$ carries a parallel orthogonal complex structure $i$ which commutes with $\gamma$;
  that is: $(S,\gamma,\nabla,\Tor)$ is a \defined{complex Dirac bundle with skew torsion}.
  Evidently, $D$ is complex linear and $\GelfandRobbinQuotient$ inherits $i$ as an isometric complex structure $i$.
  This induces an orthogonal decomposition
  \begin{equation*}
    \GelfandRobbinQuotient = \AbstractBag_+ \oplus \AbstractBag_-
    \qwithq
    \AbstractBag_\pm
    \coloneq
    \set*{
      [\phi] \in \GelfandRobbinQuotient
      :
      \sharp \circ J[\phi] = \pm i[\phi]
    }.
  \end{equation*}  
  Since $i$ and $\sharp\circ J$ commute,
  $\AbstractBag_\pm \subset \GelfandRobbinQuotient$ are complex subspaces and,
  therefore, $\AbstractBag_\pm$ are mutually adjoint:
  \begin{equation*}
    \AbstractBag_\pm^G = \AbstractBag_\mp.
    \qedhere
  \end{equation*}
\end{example}

\begin{remark}[defect indices]
  \label{Rmk_DefectIndices}
  If $H$ is a \emph{complex} Hilbert space and $A \co \dom(A) \to H$ is a closed and symmetric unbounded \emph{complex} linear operator,
  then its closed \emph{complex linear} extensions traditionally are studied via \citeauthor{vonNeumann1930}'s theory of \defined{defect subspaces} and \defined{defect indices} \cites[Kapitel VII]{vonNeumann1930}[§X.1]{ReedSimon1975:MMP2}.
  The defect subspaces of $A$ are $\ker(A^* \mp i)$ and its defect indices are $n_\pm \coloneq \dim \ker(A^* \mp i)$.
  The maximal domain orthogonally decomposes as
  \begin{equation*}
    \dom(A^*) = \dom(A) \oplus \ker(A^* - i) \oplus \ker(A^* + i)
  \end{equation*}
  with respect to the graph inner product.
  Therefore,
  \begin{equation*}
    \GelfandRobbinQuotient \coloneq \frac{\dom(A^*)}{\dom(A)} \iso \ker(A^* - i) \oplus \ker(A^* + i).
  \end{equation*}
  In particular,
  closed self-adjoint complex linear extension of $A$ correspond to closed \emph{complex} Lagrangian subspaces $\ResidueCondition \subset \ker(A^* - i) \oplus \ker(A^* + i)$.
  The latter exist if and only if $n_+ = n_-$.
  Of course, by Zorn's Lemma,
  $\GelfandRobbinQuotient$ always has a (real) Lagrangian subspace.
\end{remark}

\begin{prop}[Spectral theory]
  \label{Prop_SpectralTheory}
  Let $\ResidueCondition \subset \GelfandRobbinQuotient$ be a residue condition.
  If $\ResidueCondition \subset \GelfandRobbinQuotient$ is a Lagrangian and $\dom(D_\ResidueCondition) \incl L^2\Gamma\paren{X\setminus Z, S\otimes\fl}$ is compact,
  then $\spec(D_\ResidueCondition)$ consists only of point spectrum, is contained in $\R$ and discrete, and for every $\lambda \in \spec(D_\ResidueCondition)$ the eigenspace $\ker \paren{D_\ResidueCondition - \lambda\one}$ is finite-dimensional;
  moreover:
  $L^2\Gamma\paren{X\setminus Z, S\otimes\fl}$ decomposes as a (Hilbert space) direct sum
  \begin{equation*}
    L^2\Gamma\paren{X\setminus Z, S\otimes\fl}
    =
    \bigoplus_{\lambda \in \spec(D_\ResidueCondition)} \ker\paren{D_\ResidueCondition - \lambda \one}.
  \end{equation*}  
\end{prop}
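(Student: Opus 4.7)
The plan is to reduce the claim to the spectral theorem for compact normal operators, applied to a resolvent of $D_B$. First, I would note that $B$ being Lagrangian means $B^G = B$, so \autoref{Prop_AdjointExtension} gives $D_B^* = D_{B^G} = D_B$; that is, $D_B$ is self-adjoint. The basic functional analysis of unbounded self-adjoint operators then yields $\spec(D_B) \subset \R$ and provides, for every $\lambda \in \C \setminus \R$, a bounded resolvent $R_\lambda \coloneqq (D_B - \lambda\cdot\one)^{-1} \co L^2\Gamma\paren{X\setminus Z, S\otimes\fl} \to \dom(D_B)$ of operator norm at most $\abs{\Im\lambda}^{-1}$.

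Next, I would verify that $R_i$ is compact and normal on $L^2\Gamma\paren{X\setminus Z, S\otimes\fl}$. For $\phi \in L^2\Gamma\paren{X\setminus Z, S\otimes\fl}$, writing $\psi \coloneqq R_i\phi$, the identity $D_B\psi = \phi + i\psi$ gives $\Abs{\psi}_D \lesssim \Abs{\phi}_{L^2}$. Thus $R_i$ factors as a bounded map $L^2\Gamma\paren{X\setminus Z, S\otimes\fl} \to \dom(D_B)$ (with graph norm) composed with the compact inclusion $\dom(D_B) \incl L^2\Gamma\paren{X\setminus Z, S\otimes\fl}$ guaranteed by hypothesis, and is therefore compact on $L^2$. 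Normality is automatic: $R_i^* = R_{-i}$ by self-adjointness of $D_B$, and $R_i R_{-i} = R_{-i} R_i = (D_B^2 + \one)^{-1}$.

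Finally, the spectral theorem for compact normal operators furnishes an $L^2$--orthonormal basis $(\phi_n)_{n\in\N}$ of eigenvectors of $R_i$ with nonzero eigenvalues $(\mu_n) \subset \C$, each of finite multiplicity and accumulating only at $0$. Each $\phi_n$ automatically lies in $\dom(D_B)$ and satisfies $D_B\phi_n = \lambda_n \phi_n$ with $\lambda_n \coloneqq \mu_n^{-1} + i$; self-adjointness of $D_B$ forces $\lambda_n \in \R$, and $\mu_n \to 0$ forces $\abs{\lambda_n} \to \infty$. Grouping eigenvectors by common eigenvalue yields the announced Hilbert sum decomposition with finite-dimensional eigenspaces. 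For $\lambda \in \R$ with $\inf_n \abs{\lambda_n - \lambda} > 0$, the diagonal formula $(\lambda_n - \lambda)^{-1}$ defines a bounded inverse of $D_B - \lambda\cdot\one$, so $\spec(D_B) = \set{\lambda_n : n \in \N}$ consists purely of point spectrum and is discrete. There is no significant obstacle: once self-adjointness is in hand, the whole argument is textbook. The single computation deserving care is the bound $\Abs{\psi}_D \lesssim \Abs{\phi}_{L^2}$ that converts the graph-norm compactness hypothesis on $\dom(D_B) \incl L^2\Gamma\paren{X\setminus Z, S\otimes\fl}$ into compactness of $R_i$ on $L^2$.
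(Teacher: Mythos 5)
Your proposal is correct and follows essentially the same route as the paper: establish self-adjointness from $B = B^G$ via \autoref{Prop_AdjointExtension}, observe compact resolvent from the compact inclusion $\dom(D_B) \incl L^2$, and invoke the spectral theorem. The paper simply cites this last step as known, whereas you unpack it via the compact normal operator $R_i$; that is a fine expansion of the same argument.
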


\begin{proof}
  By assumption,
  $D_\ResidueCondition$ is self-adjoint and has compact resolvent.
  The assertion, therefore, follows from the spectral theory of such operators;
  see, e.g., \cite[Theorem 6.3.13]{BuhlerSalamon2018:FunctionalAnalysis}.
\end{proof}


\subsection{Fredholm extensions}
\label{Sec_FredholmExtension}

The following characterises residue conditions $\ResidueCondition \subset \GelfandRobbinQuotient$ which correspond to Fredholm extensions $D_\ResidueCondition$ in terms of the relation between $\ResidueCondition$ and the Calderón subspace $\Lambda$.

\begin{definition}
  \label{Def_Delta}
  Let $\ResidueCondition \subset \GelfandRobbinQuotient$ be a residue condition.
  Denote by
  \begin{equation*}
    \delta_\ResidueCondition \co \Lambda \to \GelfandRobbinQuotient/\ResidueCondition
    \qandq
    \delta^\ResidueCondition \co \ResidueCondition \to \GelfandRobbinQuotient/\Lambda
  \end{equation*}
  the compositions of the canonical inclusions and projections.  
\end{definition}

\begin{prop}[{cf.~\cite[Lemma \ResidueCondition.3]{Salamon2008}}]
  \label{Prop_FredholmResidueCondition}
  Let $\ResidueCondition \subset \GelfandRobbinQuotient$ be a residue condition.
  The closed extension $D_\ResidueCondition$ is Fredholm if and only if $\delta_\ResidueCondition$ is Fredholm if and only if $\delta^\ResidueCondition$ is Fredholm;
  moreover:
  \begin{equation*}
    \ind D_\ResidueCondition = \ind \delta_\ResidueCondition = \ind \delta^\ResidueCondition.
  \end{equation*}
\end{prop}

The proof relies on the following observation.

\begin{lemma}
  \label{Lem_FredholmResidueCondition}
  For every residue condition $\ResidueCondition \subset \GelfandRobbinQuotient$,
  there are short exact sequences
  \begin{equation*}
    \ker D_{\min}
    \incl \ker D_\ResidueCondition
    \onto \ker \delta_\ResidueCondition
    \qandq
    \coker \delta_\ResidueCondition
    \incl \coker D_\ResidueCondition
    \onto \coker D_{\max}.
  \end{equation*}
\end{lemma}

\begin{proof}
  The Snake Lemma applied to
  \begin{equation*}
    \begin{tikzcd}
      \ker \Dmin \ar[equal]{r} \ar[hook]{d} & \ker \Dmin \ar[hook]{d} \\
       \ker D_\ResidueCondition \ar[hook]{r} & \ker \Dmax 
      \ar{r}{\delta_\ResidueCondition \circ [-]} & \GelfandRobbinQuotient/\ResidueCondition
    \end{tikzcd}
  \end{equation*}
  yields an exact sequence
  \begin{equation*}
    \frac{\ker D_\ResidueCondition}{\ker \Dmin}
    \incl
    \frac{\ker \Dmax}{\ker \Dmin} \iso \Lambda
    \xrightarrow{\delta_\ResidueCondition} \GelfandRobbinQuotient/\ResidueCondition.
  \end{equation*}
  This induces the first short exact sequence.
  
  The Snake Lemma applied to
  \begin{equation*}
    \begin{tikzcd}
      \dom(D_\ResidueCondition) \ar[hook]{r} \ar{d}{D_\ResidueCondition} & \dom(\Dmax) \ar[two heads]{r} \ar{d}{\Dmax} & \GelfandRobbinQuotient/\ResidueCondition \\
      L^2\Gamma\paren{X\setminus Z,S\otimes\fl} \ar[equals] {r}& L^2\Gamma\paren{X\setminus Z,S\otimes\fl}
    \end{tikzcd}
  \end{equation*}
  yields an exact sequence
  \begin{equation*}
    \ker D_\ResidueCondition
    \incl \ker \Dmax
    \xrightarrow{\delta_\ResidueCondition \circ [-]} \GelfandRobbinQuotient/\ResidueCondition
    \to \coker D_\ResidueCondition
    \onto \coker \Dmax.
  \end{equation*}
  Since $\coker \delta_\ResidueCondition \circ [-] = \coker \delta_\ResidueCondition$,
  this induces the second short exact sequence.
\end{proof}

\begin{proof}[Proof of \autoref{Prop_FredholmResidueCondition}]  
  A moment's thought shows that
  \begin{equation*}
    \ker \delta_\ResidueCondition = \Lambda \cap \ResidueCondition = \ker \delta^\ResidueCondition
    \qandq
    \coker \delta_\ResidueCondition \iso \frac{\GelfandRobbinQuotient}{\Lambda+\ResidueCondition} \iso \coker \delta^\ResidueCondition.
  \end{equation*}
  By \autoref{Prop_D_LeftSemiFredholm},
  $\ker \Dmin \iso \coker \Dmax$ is finite-dimensional.  
  Therefore,
  the assertion is an immediate consequence of \autoref{Lem_FredholmResidueCondition}.
\end{proof}

\begin{remark}
  \label{Rmk_FredholmPair}
  The proof of \autoref{Prop_FredholmResidueCondition} shows that $\delta_\ResidueCondition$ and $\delta^\ResidueCondition$ are Fredholm if and only if $(\Lambda,\ResidueCondition)$ forms a \defined{Fredholm pair} in the sense of \citet[Chapter IV §4.1]{Kato1995}.
\end{remark}

\begin{example}
  \label{Ex_CalderonSubspacePerp_Fredholm}
  Every complement $\ResidueCondition$ of the Calderón subspace $\Lambda$, in particular: $\ResidueCondition = \Lambda^\perp$, produces a Fredholm extension of index $0$ because $\delta^\ResidueCondition \co \ResidueCondition \to \GelfandRobbinQuotient/\Lambda \iso \ResidueCondition$ is an isomorphism.
\end{example}

\begin{example}
  \label{Ex_AbstractMITBag_Fredholm}
  The residue conditions $\AbstractBag_\pm$ defined in \autoref{Ex_AbstractMITBag_MutuallyAdjoint} satisfy
  \begin{equation*}
    \ker D_{\AbstractBag_\pm} = \ker \Dmin
  \end{equation*}
  and, therefore, produce Fredholm extension of index $0$;
  indeed:
  if $\phi \in \ker D_{\AbstractBag_\pm}$,
  then
  \begin{equation*}
    0 = 2\Inner{D\phi,i\phi}_{L^2}
    =
    \Inner{D\phi,i\phi}_{L^2} - \Inner{\phi,D i\phi}_{L^2} 
    =
    G([\phi],i[\phi])
    =
    \Inner{\sharp \circ J [\phi],i[\phi]}_{\GelfandRobbinQuotient}
    =
    \pm\Abs{[\phi]}_{\GelfandRobbinQuotient}^2.
    \qedhere
  \end{equation*}
\end{example}

The following are occasionally useful to compute or relate indices.

\begin{prop}[Nested Fredholm residue conditions]
  \label{Prop_FredholmResidueConditionComparison}
  Let $\ResidueCondition_1 \subset \ResidueCondition_2 \subset \GelfandRobbinQuotient$ be residue conditions.
  If $\delta_{\ResidueCondition_1},\delta_{\ResidueCondition_2}$ are Fredholm,
  then
  \begin{equation*}
    \ind D_{\ResidueCondition_2} = \ind D_{\ResidueCondition_1} + \dim \ResidueCondition_2/\ResidueCondition_1.
  \end{equation*}
\end{prop}

\begin{cor}
  Let $\ResidueCondition \subset \GelfandRobbinQuotient$ be a residue condition.
  If $\ResidueCondition \subset \ResidueCondition^G$ and $\delta_{\ResidueCondition}$ is Fredholm
  then
  \begin{equation*}
    \ind D_\ResidueCondition = -\frac12 \dim{\ResidueCondition^G}/\ResidueCondition;
  \end{equation*}
  in particular: if $\ResidueCondition$ is Lagrangian, then $\ind D_\ResidueCondition=0$.
\end{cor}

\autoref{Prop_FredholmResidueConditionComparison} is an immediate consequence of \autoref{Prop_FredholmResidueCondition} and the following.

\begin{lemma}[Nested residue conditions]
  \label{Lem_FredholmResidueConditionComparison}
  Let $\ResidueCondition_1 \subset \ResidueCondition_2 \subset \GelfandRobbinQuotient$ be residue conditions.
  There is an exact sequence
  \begin{equation*}
    \ker \delta_{\ResidueCondition_1} \incl \ker \delta_{\ResidueCondition_2} \to \ResidueCondition_2/\ResidueCondition_1
    \to \coker \delta_{\ResidueCondition_1} \onto \coker \delta_{\ResidueCondition_2}.
  \end{equation*}
\end{lemma}

\begin{proof}
   The Snake Lemma applied to
  \begin{equation*}
    \begin{tikzcd}
      &\Lambda \ar[equal]{r} \ar{d}{\delta_{\ResidueCondition_1}} & \Lambda  \ar{d}{\delta_{\ResidueCondition_2}} \\
      \ResidueCondition_2/\ResidueCondition_1 \ar[hook]{r} & \GelfandRobbinQuotient/\ResidueCondition_1 \ar[two heads]{r} & \GelfandRobbinQuotient/\ResidueCondition_2. 
    \end{tikzcd}
  \end{equation*}
  yields the exact sequence.
\end{proof}

\begin{prop}[Deformation of residue conditions]
  \label{Prop_VariationOfBoundaryConditions}
  Let $\ResidueCondition$ be a Hilbert space.
  Let $\iota_{-} \co [0,1] \to \sL(\ResidueCondition,\GelfandRobbinQuotient)$ be a continuous path of embeddings.
  If $D_{\ResidueCondition_t}$ with $\ResidueCondition_t \coloneq \iota_t(\ResidueCondition)$ is Fredholm for every $t \in [0,1]$,
  then  
  \begin{equation*}
    \ind D_{\ResidueCondition_0} = \ind D_{\ResidueCondition_1}.
  \end{equation*}
\end{prop}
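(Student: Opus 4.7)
The plan is to reduce the claim to the standard homotopy invariance of the Fredholm index by transporting the problem from the varying subspaces $B_t \subset \GelfandRobbinQuotient$ to the fixed Hilbert space $B$ via the embeddings $\iota_t$.

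First I would invoke \autoref{Prop_FredholmBranchingCondition} to replace $\ind D_{B_t}$ by $\ind \delta^{B_t}$, where $\delta^{B_t} \co B_t \to \GelfandRobbinQuotient/\Lambda$ is the restriction to $B_t$ of the canonical projection $\pi \co \GelfandRobbinQuotient \onto \GelfandRobbinQuotient/\Lambda$. Since $\iota_t \in \sL(B,\GelfandRobbinQuotient)$ is an embedding, $B_t = \iota_t(B)$ is closed and $\iota_t \co B \to B_t$ is a Banach space isomorphism, so
\begin{equation*}
  F_t \coloneqq \pi \circ \iota_t \co B \to \GelfandRobbinQuotient/\Lambda
\end{equation*}
is Fredholm with $\ind F_t = \ind \delta^{B_t} = \ind D_{B_t}$.

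The map $t \mapsto F_t$ is continuous in the operator norm of $\sL(B,\GelfandRobbinQuotient/\Lambda)$, because $t \mapsto \iota_t$ is continuous in $\sL(B,\GelfandRobbinQuotient)$ by hypothesis and $\pi$ is bounded. By the homotopy invariance of the Fredholm index on the open subset of Fredholm operators in $\sL(B,\GelfandRobbinQuotient/\Lambda)$ (see, e.g., \cite[Theorem 4.5.17]{BuhlerSalamon2018:FunctionalAnalysis}), $\ind F_t$ is locally constant in $t$, hence constant on $[0,1]$. Combining, $\ind D_{B_0} = \ind F_0 = \ind F_1 = \ind D_{B_1}$.

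No step is really an obstacle: the content of the proposition is entirely captured by \autoref{Prop_FredholmBranchingCondition}, which lets us encode $\ind D_{B_t}$ through an operator defined on the \emph{fixed} source $B$, after which classical Fredholm theory applies. The only subtlety worth spelling out is the reminder that the continuity hypothesis on the embeddings is needed precisely so that $t \mapsto F_t$ is continuous in operator norm; mere pointwise (strong) continuity would not suffice for the homotopy invariance of the index.
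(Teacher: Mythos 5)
Your proposal is correct and follows essentially the same route as the paper: invoke \autoref{Prop_FredholmBranchingCondition} to pass to $\delta^{B_t}$, compose with $\iota_t$ to obtain a norm-continuous family $F_t = \delta^{B_t}\circ\iota_t = \pi\circ\iota_t$ of Fredholm operators on the fixed space $B$, and conclude by homotopy invariance of the index. The only difference is that you have spelled out the details (boundedness of $\pi$, the caveat about norm vs.\ strong continuity) which the paper leaves implicit.
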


\begin{proof}
  By assumption,
  $t \mapsto \delta^{\ResidueCondition_t} \circ \iota_t$ is a continuous path of Fredholm operators and $\iota_t \co \ResidueCondition \to \ResidueCondition_t$ is an isomorphism for every $t \in [0,1]$;
  therefore:
  $t \mapsto \ind D_{\ResidueCondition_t} = \ind \delta^{\ResidueCondition_t} = \ind \delta^{\ResidueCondition_t} \circ \iota_t$ is constant.
\end{proof}


\subsection{Chirality operators}
\label{Sec_Chirality}

In the presence of a chirality operator the theory discussed above refines as follows.

\begin{definition}
  \label{Def_ChiralityOperator}
  A \defined{chirality operator} on $(S,\gamma,\nabla,\Tor)$ is a self-adjoint parallel isometry $\epsilon \in \Gamma\paren{X,\O(S)}$ such that
  \begin{equation*}
    \gamma \epsilon + \epsilon\gamma = 0.
    \qedhere
  \end{equation*}
\end{definition}

\begin{example}
  \label{Ex_CanonicalChiralityOperators}
  Assume that $X$ is oriented.
  If $\dim X = 0 \mod 4$,
  then $\epsilon \coloneq \gamma(\vol_g)$ is a chirality operator.
  If $\dim X = 0 \mod 2$ and $(S,\gamma,\nabla,\Tor)$ is a complex Dirac bundle with skew torsion as in \autoref{Ex_AbstractMITBag_MutuallyAdjoint},
  then $\epsilon \coloneq i^{\floor{(n+1)/2}}\gamma(\vol_g)$ is a chirality operator.
\end{example}

\begin{prop}[Chirality operators induce a $\set{\pm 1}$--grading]
  \label{Prop_ChiralityOperator}
  If $\epsilon$ is a chirality operator for $(S,\gamma,\nabla,\Tor)$,
  then the following hold:
  \begin{enumerate}
  \item
    \label{Prop_ChiralityOperator_Bundle}
    The subbundles $S^\pm \coloneq \ker \paren{\one \pm \epsilon} \subset S$ are parallel,
    $S$ orthogonally decomposes as
    \begin{equation*}
      S = S^+ \oplus S^-,
    \end{equation*}
    and $\gamma \in \Gamma\paren{X,\Hom(TX,\Hom(S^+,S^-)\oplus\Hom(S^-,S^+))}$.
  \item
    \label{Prop_ChiralityOperator_Dirac}
    The minimal and maximal extensions decompose as
    \begin{equation*}
      \Dmin =
      \begin{pmatrix}
        0 & \Dmin^- \\
        \Dmin^+ & 0
      \end{pmatrix}
      \qandq
      \Dmax =
      \begin{pmatrix}
        0 & \Dmax^- \\
        \Dmax^+ & 0
      \end{pmatrix}
    \end{equation*}
    with
    \begin{gather*}
      \Dmin^\pm \co
      \dom(\Dmin^\pm) \coloneq H^1\Gamma\paren{X\setminus Z,S^\pm\otimes\fl}
      \to
      L^2\Gamma\paren{X\setminus Z,S^\mp\otimes\fl} \qand \\
      \Dmax^\pm \co
      \dom(\Dmax^\pm) \coloneq\dom(\Dmax) \cap L^2\Gamma\paren{X\setminus Z,S^\pm\otimes\fl}
      \to
      L^2\Gamma\paren{X\setminus Z,S^\mp\otimes\fl}.
    \end{gather*}
  \item
    \label{Prop_ChiralityOperator_GelfandRobbinQuotient}
    $\GelfandRobbinQuotient$ orthogonally decomposes as
    \begin{equation*}
      \GelfandRobbinQuotient = \GelfandRobbinQuotient^+ \oplus \GelfandRobbinQuotient^-
      \qwithq
      \GelfandRobbinQuotient^\pm \coloneq \frac{\dom(\Dmax^\pm)}{\dom(\Dmin^\pm)};
    \end{equation*}
    moreover, $\GelfandRobbinQuotient^\pm \subset \GelfandRobbinQuotient$ are Lagrangian.
    In particular,
    every residue condition $\ResidueCondition \subset \GelfandRobbinQuotient$ decomposes as $\ResidueCondition = \ResidueCondition^+ \oplus \ResidueCondition^-$.
  \item
    \label{Prop_ChiralityOperator_+->-}    
    If $\ResidueCondition^+ \subset \GelfandRobbinQuotient^+$ is a closed subspace, a \defined{positive chirality residue condition},
    then there is a unique closed subspace $\ResidueCondition^- \subset \GelfandRobbinQuotient^-$ such that $\ResidueCondition \coloneq \ResidueCondition^+ \oplus \ResidueCondition^- \subset \GelfandRobbinQuotient$ is a Lagrangian residue condition.
  \item
    \label{Prop_ChiralityOperator_Fredholm}
    Let $\ResidueCondition^+ \subset \GelfandRobbinQuotient^+$ be a \defined{positive chirality residue condition} and $\ResidueCondition^-$ as above.
    The operator $\delta^\ResidueCondition$ is Fredholm
    if and only if $\delta^{\ResidueCondition_+} \co \ResidueCondition^+ \to \GelfandRobbinQuotient^+/\Lambda^+$ is Fredholm.
  \end{enumerate}
\end{prop}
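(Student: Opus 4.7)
The argument runs through the five claims sequentially, resting on the observation that, as a self-adjoint isometry, $\epsilon$ squares to $\one$ and thus defines a global orthogonal involution on $S$. Claim \autoref{Prop_ChiralityOperator_Bundle} is then immediate: $S^\pm$ are the $\pm 1$--eigenbundles of $\epsilon$, parallel since $\epsilon$ is, and the anti-commutation $\gamma\epsilon + \epsilon\gamma = 0$ forces $\gamma(v)$ to swap $S^\pm$. For claim \autoref{Prop_ChiralityOperator_Dirac}, the orthogonal projections $\pi_\pm \coloneqq \tfrac12(\one \pm \epsilon)$ are parallel---so they preserve $H^1$ and commute with $\nabla$---and anti-commute with $D$, so they preserve $\set{\phi \in L^2 : D\phi \in L^2}$; this splits $\dom(\Dmin)$ and $\dom(\Dmax)$ into chirality pieces and gives $D$ the claimed off-diagonal form. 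Passing to the quotient yields the direct sum in \autoref{Prop_ChiralityOperator_GelfandRobbinQuotient}.

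The substantive point in \autoref{Prop_ChiralityOperator_GelfandRobbinQuotient} is that $\GelfandRobbinQuotient^\pm$ are Lagrangian. Isotropy is cheap: for $\phi,\psi \in \dom(\Dmax^+)$, one has $\phi,\psi \in L^2\Gamma(X\setminus Z, S^+\otimes\fl)$ and $D\phi,D\psi \in L^2\Gamma(X\setminus Z, S^-\otimes\fl)$, so both terms of $G([\phi]\wedge[\psi])=\Inner{D\phi,\psi}_{L^2} - \Inner{\phi,D\psi}_{L^2}$ vanish pointwise. Maximality I would obtain from \autoref{Prop_GreensForm_Symplectic}: non-degeneracy of $G$ combined with $\GelfandRobbinQuotient^+\perp\GelfandRobbinQuotient^-$ forces $G$ to restrict to a perfect pairing $\GelfandRobbinQuotient^+ \times \GelfandRobbinQuotient^- \to \R$, so $(\GelfandRobbinQuotient^\pm)^G \cap \GelfandRobbinQuotient^\mp = 0$ and each is Lagrangian. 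The ``in particular'' clause is to be read for $\epsilon$--invariant $B$, for which $B = (B\cap\GelfandRobbinQuotient^+) \oplus (B\cap\GelfandRobbinQuotient^-)$ is tautological. For claim \autoref{Prop_ChiralityOperator_+->-}, the perfect pairing forces the unique choice $B^- \coloneqq \set{[\psi^-] \in \GelfandRobbinQuotient^- : G([\phi^+],[\psi^-]) = 0 \text{ for all } [\phi^+] \in B^+}$; the Lagrangian property of $B \coloneqq B^+ \oplus B^-$ is then verified componentwise, with $B^+$ recovered as the bidual annihilator of $B^-$ because $B^+$ is closed.

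For the Fredholm claim \autoref{Prop_ChiralityOperator_Fredholm}, first observe that $\ker\Dmax$ is $\epsilon$--invariant (as $\epsilon$ anti-commutes with $D$), so $\Lambda = \Lambda^+ \oplus \Lambda^-$ and $\delta^B = \delta^{B^+} \oplus \delta^{B^-}$ is block diagonal; hence $\delta^B$ is Fredholm iff both $\delta^{B^\pm}$ are, and the real content is the equivalence $\delta^{B^+}$ Fredholm $\Leftrightarrow$ $\delta^{B^-}$ Fredholm. I would prove this via the perfect pairing $G \co \GelfandRobbinQuotient^+ \times \GelfandRobbinQuotient^- \to \R$: the Lagrangian property of $\Lambda$ gives $\Lambda^- = (\Lambda^+)^{\perp_G} \cap \GelfandRobbinQuotient^-$, while the construction of $B^-$ gives $B^- = (B^+)^{\perp_G} \cap \GelfandRobbinQuotient^-$; taking annihilators then identifies $B^+ \cap \Lambda^+$ with the annihilator of $\overline{B^- + \Lambda^-}$, and $\overline{B^+ + \Lambda^+}$ with the annihilator of $B^- \cap \Lambda^-$. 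The main obstacle will be the bookkeeping of closures in this Hilbert space duality—ensuring that Fredholmness of $\delta^{B^+}$ really does deliver closure of $B^+ + \Lambda^+$, whose annihilator in turn is $B^- \cap \Lambda^-$, and conversely. Once this is handled the equivalence is immediate and the index identity $\ind\delta^{B^-} = -\ind\delta^{B^+}$ falls out, matching $\ind D_B = 0$ for Lagrangian $B$.
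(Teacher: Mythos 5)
Your argument is correct and runs essentially parallel to the paper's for parts \autoref{Prop_ChiralityOperator_Bundle}--\autoref{Prop_ChiralityOperator_+->-}, including the key observation that the Green's form restricts to a perfect pairing $\GelfandRobbinQuotient^+ \times \GelfandRobbinQuotient^- \to \R$; this is precisely what the paper uses when defining $B^- \coloneqq (B^+)^G \cap \GelfandRobbinQuotient^-$ and what makes the biduality argument work, since $B^+$ is closed. You are also right that the ``in particular'' clause of \autoref{Prop_ChiralityOperator_GelfandRobbinQuotient} must be read for residue conditions preserved by the chirality involution; the paper's phrasing is a little loose on this point.

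On \autoref{Prop_ChiralityOperator_Fredholm} you take a close but genuinely different route, and you stop exactly at the point you flag. Your annihilator computation reduces the problem to showing that closure of $B^+ + \Lambda^+$ passes to closure of $B^- + \Lambda^-$, and you leave this as ``bookkeeping''; to discharge it you would need the classical fact that for closed subspaces $V,W$ of a Hilbert space, $V+W$ is closed if and only if $V^\perp + W^\perp$ is closed. The paper avoids this entirely by working at the level of bounded operators: the perfect pairing gives isomorphisms $\Lambda^- \iso (\GelfandRobbinQuotient^+/\Lambda^+)^*$ and $B^+ \iso (\GelfandRobbinQuotient^-/B^-)^*$, under which the Banach space adjoint of $\delta^{B^+} \co B^+ \to \GelfandRobbinQuotient^+/\Lambda^+$ is $\delta_{B^-} \co \Lambda^- \to \GelfandRobbinQuotient^-/B^-$. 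The closed range theorem then gives that $\delta^{B^+}$ is Fredholm iff $\delta_{B^-}$ is, with no separate closure argument, and one trades $\delta_{B^-}$ for $\delta^{B^-}$ as in the proof of \autoref{Prop_FredholmBranchingCondition} (same kernel $\Lambda^- \cap B^-$, isomorphic cokernel $\GelfandRobbinQuotient^-/(\Lambda^- + B^-)$). The operator-dual route is tighter, but your version goes through once you cite the annihilator-sum closure lemma.
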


\begin{proof}
  \autoref{Prop_ChiralityOperator_Bundle}, \autoref{Prop_ChiralityOperator_Dirac}, and \autoref{Prop_ChiralityOperator_GelfandRobbinQuotient} are immediate consequences of \autoref{Def_ChiralityOperator}.

  A moment's thought shows that \autoref{Prop_ChiralityOperator_+->-} holds with
  $\ResidueCondition^- \coloneq (\ResidueCondition^+)^G \cap \GelfandRobbinQuotient^-$.

  Evidently, $\delta^\ResidueCondition = \delta^{\ResidueCondition_+} \oplus \delta^{\ResidueCondition_-}$ is Fredholm if and only if $\delta^{\ResidueCondition_+}$ and $\delta^{\ResidueCondition_-}$ are Fredholm.
  The Green's form $G$ induces isomorphisms
  \begin{equation*}
    \Lambda^- \iso (\GelfandRobbinQuotient^+/\Lambda^+)^*
    \qandq
    \ResidueCondition^+ \iso (\GelfandRobbinQuotient^-/\ResidueCondition^-)^*.
  \end{equation*}
  This identifies the dual of $\delta^{\ResidueCondition^+}$ with
  $\delta_{\ResidueCondition_-} \co \Lambda^- \to \GelfandRobbinQuotient^-/\ResidueCondition^-$.
  By the closed image theorem,
  if $\delta^{\ResidueCondition^+}$ is Fredholm,
  then $\delta_{\ResidueCondition_-}$ is Fredholm.
  As in the proof of \autoref{Prop_FredholmResidueCondition},
  $\delta_{\ResidueCondition_-}$ is Fredholm if and only if $\delta^{\ResidueCondition_-}$ is Fredholm.
  This proves \autoref{Prop_ChiralityOperator_Fredholm}.
\end{proof}



\section{The Gelfand--Robbin quotient, II: geometric realisation}
\label{Sec_GeometricRealisation}

The usefulness of the theory laid out in \autoref{Sec_GelfandRobbinQuotient} hinges upon being able to understand $\GelfandRobbinQuotient$,
e.g., to specify interesting residue conditions.
Since $\GelfandRobbinQuotient$ localises on $Z$ as explained in \autoref{Rmk_GelfandRobbinQuotient},
it is plausible that it admits a more geometric description.
The purpose of this section is to develop such a description,
assuming the following geometric condition on the branching locus $Z$ throughout.

\begin{hypothesis}
  \label{Hyp_CodimensionTwoCooriented}
  $Z \subset X$ is a closed cooriented submanifold of codimension two.
\end{hypothesis}

\begin{remark}
  \label{Rmk_NonOrientableBranchingLoci}
  The assumption that $Z$ is cooriented simplifies the upcoming discussion,
  but is not essential.
  \autoref{Sec_NonCoorientableBranchingLoci} explains how to remove it,
  by introducing more notation.
\end{remark}

\begin{lemma}[{borderline Hardy inequality; \citet[Lemma 2.6]{Takahashi2015}}]
  \label{Lem_BorderlineHardyInequality}
  \autoref{Hyp_CodimensionTwoCooriented} implies \autoref{Hyp_BorderlineHardyInequality}.
\end{lemma}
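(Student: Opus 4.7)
The plan is to reduce the estimate to a fiberwise two-dimensional Hardy inequality for sections with half-integer monodromy around $Z$. First I would fix a tubular neighborhood $U$ of $Z$, identified via the exponential map with a disk subbundle of the normal bundle $\nu \to Z$. Because $Z$ is cooriented and has codimension two, $\nu$ is an oriented rank-two bundle and admits a fiberwise complex structure, so each fiber $D_z$ inherits polar coordinates $(r,\theta)$. I would define $r \in C^\infty(X\setminus Z,(0,\infty))$ to equal the fiberwise Euclidean distance on $U$ and an arbitrary positive smooth extension outside $U$; this $r$ is uniformly comparable to the Riemannian distance to $Z$. Away from any slightly smaller sub-tube, $r$ is bounded below, so the estimate is trivial there, and after inserting a smooth cutoff it suffices to establish $\Abs{r^{-1}\phi}_{L^2(U)} \lesssim \Abs{\phi}_{H^1}$ for smooth $\phi$ supported in $U$.

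The core step is a fiberwise Hardy inequality on the punctured disk. Since $\fl$ is a Euclidean line bundle, its holonomy along any loop lies in $\set{\pm 1}$; condition~(3) in \autoref{Def_RamifiedEuclideanLineBundle} forces the holonomy along small loops encircling $Z$ to be $-1$, as otherwise $\fl$ would extend across a nonempty relatively open subset of $Z$. Consequently $\fl|_{D_z^*}$ is the unique non-trivial real line bundle on $D_z^* = D_z\setminus\set{0}$, and any section $\phi$ of $S\otimes\fl$ over $D_z^*$ lifts to the branched double cover $\tilde D_z \to D_z$ as a genuine $S$-valued function $\tilde\phi$ satisfying $\tilde\phi(r,\theta+2\pi) = -\tilde\phi(r,\theta)$. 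Fourier expanding
\[
\tilde\phi(r,\theta) = \sum_{k\in\Z} a_k(r)\,e^{i(k+1/2)\theta}
\]
and applying Parseval together with the sharp bound $(k+1/2)^2 \geq 1/4$ yields
\[
\int_{D_z}\frac{\abs{\phi}^2}{r^2}\,r\,\dr\,\rd\theta \;\leq\; 4\int_{D_z}\frac{\abs{\partial_\theta\tilde\phi}^2}{r^2}\,r\,\dr\,\rd\theta \;\leq\; 4\int_{D_z} \abs{\nabla\tilde\phi}^2\,r\,\dr\,\rd\theta,
\]
because $r^{-1}\partial_\theta$ is a unit-length polar vector field.

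Integrating the fiberwise estimate over $Z$ via Fubini gives the desired global bound for $\phi \in \Gamma_c\paren{X\setminus Z, S\otimes\fl}$, and this extends to the $H^1$-closure of $\Gamma_c$ by continuity; \autoref{Lem_BorderlineHardyInequality_Consequences}~\autoref{Lem_BorderlineHardyInequality_Consequences_H1=H10} is then available (its proof only uses the compactly-supported version of the inequality) to identify this closure with $H^1\Gamma\paren{X\setminus Z, S\otimes\fl}$. The hardest part will be the absorption of lower-order correction terms in the globalization step: the Dirac-bundle connection on $S\otimes\fl$ differs from the trivial flat connection on the double cover (coming from the pullback of $\fl$) by bounded zero-order terms arising from $\nabla^S$ and from the second fundamental form of the disk fibers. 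These are uniformly bounded by compactness of $Z$ and the smoothness of $(S,\gamma,\nabla)$, and can therefore be absorbed into the $L^2$ component of the $H^1$-norm, yielding the inequality with a constant depending only on the geometry of $(X,g)$ and the chosen tubular neighborhood.
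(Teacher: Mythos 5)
Your proof is correct and takes essentially the same route as the paper's. The paper's proof reduces to the observation that the non-trivial Euclidean line bundle over a circle $rS^1$ satisfies the Poincar\'e-type inequality $\int_{rS^1}\abs{r^{-1}s}^2 \lesssim \int_{rS^1}\abs{\nabla s}^2$ (because sections have only half-integer Fourier modes, so $(k+1/2)^2\geq 1/4$, and the constant survives the rescaling $S^1\to rS^1$), and then asserts that this ``immediately implies'' the lemma; your fiberwise Fourier expansion on the punctured normal disk, using only the angular part $r^{-1}\partial_\theta$ of the gradient, is exactly the same mechanism, followed by the radial and Fubini integrations that the paper leaves implicit. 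You also spell out two details the paper elides --- the deduction that the holonomy of $\fl$ around $Z$ must be $-1$ from condition~(3) of \autoref{Def_RamifiedEuclideanLineBundle}, and the absorption of the bounded zeroth-order discrepancy between the Dirac-bundle connection and the flat model into the $L^2$ term of the $H^1$ norm --- which is a welcome completion of the ``moment's thought'' rather than a different argument.
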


\begin{proof}
  Let $r > 0$.
  Denote by $\fl$ the \emph{non-trivial} Euclidean line bundle over $rS^1 \coloneq \set{ z \in \C : \abs{z} = r }$.
  A moment's thought and a scaling consideration show that
  \begin{equation*}
    \int_{rS^1} \abs{r^{-1}s}^2 \lesssim \int_{rS^1} \abs{\nabla s}^2
  \end{equation*}
  for every $s \in \Gamma\paren{rS^1,\fl}$.
  This immediately implies the assertion.
\end{proof}

\subsection{The blow-up of \texorpdfstring{$X$}{X} along \texorpdfstring{$Z$}{Z}}
\label{Sec_BlowUp}

It is convenient to blow-up $X$ along $Z$;
that is: to replace $Z \subset X$ with the following $\U(1)$--principal bundle.

\begin{definition}
  \label{Def_FrameBundle}
  Since $Z$ is cooriented, its normal bundle $NZ$ is a Hermitian line bundle over $Z$.
  Its \defined{frame bundle} is
  \begin{equation*}
    \pi \co F \coloneq \set{ v \in NZ : \abs{v} = 1 } \to Z
  \end{equation*}
  together with $F \circlearrowleft \U(1)$ defined by $v \cdot e^{i\alpha} \coloneq e^{i\alpha}v$.
  Denote the Levi-Civita connection on $F$ by $i\theta \in \Omega^1(F,i\R)$.
\end{definition}

\begin{remark}
  \label{Rmk_FrameBundle}
  The \defined{tautological section} $\del_r \in \Gamma\paren{F,\pi^*NZ}$ and $\del_\alpha \coloneq i\del_r$ trivialise $\pi^*NZ$.     
\end{remark}

In order to replace $Z \subset X$ with $F$ a choice is required.

\begin{definition}
  \label{Def_TubularNeighbourhood}
  Set $U \coloneq [0,1) \cdot F \subset NZ$.
  A \defined{tubular neighbourhood} $\jmath \co U \emb X$ of $Z \subset X$ is an embedding such that $\jmath \circ 0 = \id_Z$ and
  the composition
  \begin{equation*}
    NZ \incl 0^*TNZ \xrightarrow{T\jmath} TX|_Z \onto NZ
  \end{equation*}
  is the identity.
  Here $0 \co Z \to NZ$ denotes the zero section.
\end{definition}

\emph{Choose} a tubular neighbourhood $\jmath \co U \emb X$.

\begin{definition}
  \label{Def_Blowup}
  Set $\hat U \coloneq [0,1) \times F$.
  The \textbf{blow-up} of $X$ along $Z$ is the manifold with boundary 
  \begin{equation*}
    \hat X \coloneq \hat U \cup_{\jmath} \paren*{X\setminus Z}
  \end{equation*}
  obtained by gluing $\hat U$ and $X\setminus Z$ along $\jmath$.  
  The \textbf{blow-down map} $\beta \co \hat X \to X$ is defined by
  $\beta(r,v) \coloneq \jmath(rv)$ for $(r,v) \in \hat U$ and
  $\beta(x) \coloneq x$ for $x \in X\setminus Z$.
\end{definition}

\emph{Henceforth},
identify $U \subset NZ$ and $\jmath(U) \subset X$;
moreover, identify $\del \hat X = F$.

\begin{definition}
  \label{Def_US}
  Set
  \begin{equation*}
    \hat S \coloneq \beta^*S
    \qandq
    \underline S \coloneq \hat S|_F = \pi^*(S|_Z).
  \end{equation*}
  Endow $S|_Z$ with the complex structure $I \coloneq \gamma(\vol_{NZ})$ and
  $\underline S$ with the quaternionic structure
  \begin{equation*}
    I \coloneq \gamma(\vol_{NZ}), \quad
    J \coloneq \gamma(\del_r), \qandq
    K = IJ \coloneq \gamma(\del_\alpha) \in \Gamma\paren{F,\End(\underline S)}.
  \end{equation*}
  Since $X\setminus Z \emb \hat X$ is a homotopy-equivalence,
  $\fl$ extends uniquely to a Euclidean line bundle
  \begin{equation*}
    \hat \fl \to \hat X.
  \end{equation*}
  Set
  \begin{equation*}
    \underline \fl \coloneq \hat \fl|_F.
    \qedhere
  \end{equation*}
\end{definition}


\subsection{The model operator}
\label{Sec_ModelOperator}

The purpose of this subsection is to construct a model $\mathring D$ for $D$ near $Z$.
This construction relies on the following.

\begin{definition}[Restriction of Dirac bundles]
  \label{Def_RestrictionOfDiracBundles}
  Denote the second fundamental form of $Z$ with respect to $\nabla^T$ by $\rII \in \Gamma\paren{Z,\Hom(TZ,\Hom(TZ,NZ))}$.
  The \defined{restriction} of $(S,\gamma,\nabla,\Tor)$ to $Z$ is the quadruple
  $\paren{S|_Z,\gamma|_{TZ},\nabla|_Z + \tfrac12\gamma(\rII),\Tor|_Z}$
  with
  \begin{equation*}
    \gamma(\rII)(v) \coloneq \sum_{i=1}^{n-2} \gamma(\rII(v)e_i)\gamma(e_i).
  \end{equation*}
  Here $\paren{e_1,\ldots,e_{n-2}}$ denotes a local orthonormal frame of $TZ$.  
\end{definition}

\begin{prop}
  \label{Prop_RestrictionOfDiracBundles}
  $\paren{S|_Z,\gamma|_{TZ},\nabla|_Z + \tfrac12\gamma(\rII),{\Tor}|_Z}$ is a Dirac bundle with skew torsion over $(Z,g|_Z)$.
\end{prop}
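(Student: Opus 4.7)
The plan is to verify the three defining axioms of a Dirac bundle from \cite[Chapter II, Definition 5.2]{Lawson1989} for the triple $\paren{S|_Z, \gamma|_{TZ}, \tilde\nabla}$ with $\tilde\nabla \coloneqq \nabla|_Z + \tfrac12 \gamma(\rII)$: (i) $\gamma|_{TZ}$ gives $S|_Z$ the structure of a Clifford module bundle over $(Z, g|_Z)$; (ii) the induced fibre metric on $S|_Z$ is compatible with $\gamma|_{TZ}$; and (iii) $\tilde\nabla$ is metric compatible and obeys the Clifford--Leibniz rule $\tilde\nabla_v\paren{\gamma(w)\phi} = \gamma\paren{\nabla^Z_v w}\phi + \gamma(w)\tilde\nabla_v\phi$, where $\nabla^Z$ denotes the Levi-Civita connection of $(Z,g|_Z)$, and $v, w \in \Gamma(Z, TZ)$, $\phi \in \Gamma(Z, S|_Z)$.

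Axioms (i) and (ii) are immediate from the corresponding axioms for $(S, \gamma, \nabla)$: the Clifford relations and the skew-adjointness of $\gamma(v)$ for unit $v$ are inherited under the isometric inclusion $TZ \subset TX|_Z$. For the metric compatibility asserted in (iii), it suffices to observe that $\gamma(\rII)(v)$ is skew-adjoint. Each summand $\gamma(\rII(v) e_i) \gamma(e_i)$ involves the orthogonal vectors $\rII(v) e_i \in NZ$ and $e_i \in TZ$, so the Clifford relations yield $\paren{\gamma(\rII(v) e_i) \gamma(e_i)}^* = \gamma(e_i) \gamma(\rII(v) e_i) = -\gamma(\rII(v) e_i) \gamma(e_i)$. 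Combined with the metric compatibility of $\nabla|_Z$ (inherited from $\nabla$), this yields metric compatibility of $\tilde\nabla$.

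The substantive step is the Clifford--Leibniz rule for $\tilde\nabla$. The Gauss formula $\nabla^X_v w = \nabla^Z_v w + \rII(v) w$ combined with the Clifford--Leibniz rule for $\nabla$ on $X$ yields, at points of $Z$,
\[
  \nabla_v\paren{\gamma(w)\phi} - \gamma(w) \nabla_v \phi - \gamma\paren{\nabla^Z_v w} \phi = \gamma\paren{\rII(v) w} \phi.
\]
Hence the Clifford--Leibniz rule for $\tilde\nabla$ reduces to the commutator identity $[\tfrac12 \gamma(\rII)(v), \gamma(w)] = -\gamma(\rII(v) w)$. This is a direct Clifford-algebraic computation: expand via the basic identity $[\gamma(a) \gamma(b), \gamma(c)] = -2 g(b, c) \gamma(a) + 2 g(a, c) \gamma(b)$, observe that the second term vanishes because $\rII(v) e_i \perp w$, and sum over an orthonormal frame $(e_1, \ldots, e_{n-2})$ of $TZ$, using $\sum_i g(e_i, w) \rII(v) e_i = \rII(v) w$, to recover $-\gamma(\rII(v) w)$.

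The main thing to watch is careful bookkeeping of the Clifford sign convention and the constants appearing in the definition of $\gamma(\rII)$. A minor auxiliary check, handled along the way, is that $\gamma(\rII)(v)$ is independent of the choice of local orthonormal frame $(e_i)$; this follows from the tensorial character of $\rII$ together with the completeness identity $w = \sum_i g(e_i, w) e_i$, which shows that the sum defining $\gamma(\rII)(v)$ is essentially an invariant trace over $TZ$.
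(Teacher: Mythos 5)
Your proof is correct and follows essentially the same route as the paper: both use the Gauss formula to write $[\nabla_v,\gamma(w)]=\gamma(\nabla^Z_vw)+\gamma(\rII(v,w))$ and then verify that the correction term $\tfrac12\gamma(\rII)(v)$ produces the compensating commutator $-\gamma(\rII(v,w))$ via the Clifford relations and orthogonality of $\rII(v,e_i)$ to $TZ$. You are slightly more thorough in explicitly verifying metric compatibility (skew-adjointness of $\gamma(\rII)(v)$) and frame-independence, points the paper leaves implicit; note also that the normalisation of $\gamma(\rII)(v)$ as written in Definition~\ref{Def_RestrictionOfDiracBundles} appears to carry a spurious factor of $\tfrac12$, and both your computation and the paper's proof are consistent with the convention $\gamma(\rII)(v)=\sum_i\gamma(\rII(v)e_i)\gamma(e_i)$ rather than the displayed formula.
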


\begin{proof}
  Evidently, $(S|_Z,\gamma|_{TZ})$ forms a Clifford module bundle over $(Z,g|_Z)$.
  Denote by $\nabla^{T,\parallel}$ the orthogonal affine connection on $(Z,g|_Z)$ induced by $\nabla^T$.

  Since $(S,\gamma,\nabla,\Tor)$ is a Dirac bundle with skew torsion over $(X,g)$,
  for every $v,w \in \Vect(Z)$
  \begin{equation*}
    [\nabla_v,\gamma(w)]
    =\gamma(\nabla_v^{T,\parallel} w) + \gamma(\rII(v)w);
  \end{equation*}
  moreover,
  by direct computation,
  using that $\gamma(\rII(v)e_i)$ is normal,
  \begin{align*}
    [\gamma(\rII)(v),\gamma(w)]
    =
      \sum_{i=1}^{n-2} [\gamma(\rII(v)e_i)\gamma(e_i),\gamma(w)] 
    &=
      \sum_{i=1}^{n-2} \gamma(\rII(v)e_i)\paren{\gamma(e_i)\gamma(w) + \gamma(w)\gamma(e_i)} \\
    &=
      -2\sum_{i=1}^{n-2} \gamma(\rII(v)e_i) \Inner{e_i,w} 
      =
      -2\gamma(\rII(v)w).
  \end{align*}
  A moment's thought shows that if $\nabla^\parallel$ denotes the Levi-Civita connection of $(Z,g|_Z)$,
  then
  \begin{equation*}
    \Inner{\nabla_u^{T,\parallel}v,w}
    =
    \Inner{\nabla_u^\parallel v,w} + (\tfrac12{\Tor}|_Z)(u,v,w).
  \end{equation*}
  This proves the assertion.
\end{proof}

\begin{prop}
  \label{Prop_ModelDiracBundleWithSkewTorsion}
  Denote by $g^\parallel \coloneq g|_Z$ and $g^\perp$ the Euclidean metrics on $TZ$ and $NZ$ induced by $g$.
  Denote by $\Pi \co U \to Z$ the projection map and identify $TU = \Pi^*\paren{TZ \oplus NZ}$ using the Levi-Civita connection.
  Consider $U \subset X$ equipped with the Riemannian metric
  \begin{equation*}
    \mathring g \coloneq \Pi^*(g^\parallel \oplus g^\perp).
  \end{equation*}
  The quadruple $(\mathring{S}, \mathring{\gamma}, \mathring{\nabla},\mathring{\Tor})$ consisting of
  \begin{gather*}
    \mathring{S} \coloneq \Pi^*(S|_Z), \quad
    \mathring{\gamma} \coloneq \Pi^*(\gamma|_Z), \quad
    \mathring{\nabla} \coloneq \Pi^*\paren{\nabla|_Z + \tfrac12\gamma(\rII)}, \\ \andq
    \mathring{\Tor} \coloneq \Pi^*(\Tor_Z^{3,0} + \Tor_Z^{1,2}) + r \pr_F^*(\rd\theta\wedge\theta)
  \end{gather*}
  is a Dirac bundle with skew torsion over $(U,\mathring g)$.
  Here $\Tor_Z^{p,q}$ denotes the $(p,q)$ component with respect to $\Wedge^\bullet (T^*Z\oplus N^*Z) = \Wedge^\bullet T^*Z \otimes \Wedge^\bullet N^*Z$ of the restriction of $\Tor$ to $Z$.
\end{prop}

\begin{proof}
  Denote by $\nabla^{T,\parallel}$ and $\nabla^{T,\perp}$ the orthogonal covariant derivatives on $TZ$ and $NZ$ induced by $\nabla^T$ respectively.
  If $v \in \Vect(Z)$ and $w \in \Gamma\paren{Z,NZ}$,
  then
  \begin{equation*}
    [\nabla_v,\gamma(w)]
    =\gamma(\nabla_v^{T,\perp} w) - \gamma(\rII(v)^*w))
  \end{equation*}
  and, moreover, as in the proof of \autoref{Prop_RestrictionOfDiracBundles},
  \begin{align*}
    [\gamma(\rII)(v),\gamma(w)]
    =
    \sum_{i=1}^{n-2} [\gamma(\rII(v)e_i)\gamma(e_i),\gamma(w)]
    &=
      - \sum_{i=1}^{n-2} \paren*{\gamma(\rII(v)e_i)\gamma(w) + \gamma(w)\gamma(\rII(v)e_i)} \gamma(e_i) \\
    &=
      2\sum_{i=1}^{n-2} \Inner{\rII(v)e_i,w} \gamma(e_i)
      =
      2\gamma(\rII^*(v)w).
  \end{align*}
  This together with the analogous computation in the proof of \autoref{Prop_RestrictionOfDiracBundles} proves that $\mathring \gamma$ is parallel with respect to $\mathring \nabla$ and $\Pi^*\paren{\nabla^{T,\parallel}\oplus \nabla^{T,\perp}}$.
  Therefore, it remains to identify the torsion of $\mathring\nabla^T \coloneq \Pi^*\paren{\nabla^{T,\parallel}\oplus \nabla^{T,\perp}}$.
  
  Denote by $\tilde- \co \Vect(Z) \to \Vect(U\setminus Z)$ and  $\tilde- \co \Gamma(Z,NZ) \to \Vect(U\setminus Z)$ the lifting maps.
  For $u,v \in \Vect(Z)$ and $n,m \in \Gamma(Z,NZ)$,
  by direct computation,  
  \begin{align*}
    \mathring\nabla_{\tilde u}^T\tilde v - \mathring\nabla_{\tilde v}^T\tilde u - [\tilde u,\tilde v]
    &=
      \Tor_Z^{3,0}(u,v,-)^\sharp
      +
      \widetilde{[u,v]} - [\tilde u,\tilde v] \\
    &=     
      \Tor_Z^{3,0}(u,v,-)^\sharp
      +
      (\pr_F^*\rd\theta)(\tilde u,\tilde v) \otimes \del_\alpha;
  \end{align*}
  moreover,
  \begin{equation*}
    \mathring\nabla_{\tilde n}^T\tilde m - \mathring\nabla_{\tilde m}^T\tilde n - [\tilde n,\tilde m]
    =
    0 \qandq
    \mathring\nabla_{\tilde v}^T\tilde n - \mathring\nabla_{\tilde n}^T\tilde v - [\tilde v,\tilde n]
    =
    \Tor_Z^{1,2}(v,n,-)^\sharp.
  \end{equation*}
  This proves the assertion.
\end{proof}

\begin{definition}
  \label{Def_ModelDiracOperator}
  Denote by $\mathring \fl$ the pullback of $\underline\fl$ along the projection $U\setminus Z \iso F \times (0,1) \to F$.
  The \defined{model Dirac operator}
  \begin{equation*}
    \mathring D
    \co
    H_\loc^1\Gamma\paren{U\setminus Z,\mathring S \otimes \mathring \fl}
    \to
    L_\loc^2\Gamma\paren{U \setminus Z,\mathring S \otimes \mathring \fl}
  \end{equation*}
  is the Dirac operator associated with $(\mathring S,\mathring \nabla,\mathring \gamma,\mathring \Tor)$ twisted by $\mathring \fl$.
  \qedhere
\end{definition}

\begin{remark}
  \label{Rmk_ModelDiracOperator}
  More explicity,
  the model Dirac operator $\mathring D$ is of the form
  \begin{equation*}
    \mathring D = J\paren{\del_r - r^{-1}I\mathring\nabla_{\del_\alpha}} + D_Z
    \qwithq
    D_Z \coloneq \sum_{i=1}^{n-2} \mathring\gamma(\tilde e_i) \mathring\nabla_{\tilde e_i}
  \end{equation*}
  with $(\tilde e_i,\ldots,\tilde e_{n-2})$ denoting the horizontal lift of a local $g^\parallel$--orthonormal frame.
\end{remark}

\emph{Choose} an isometry $\mathring S \iso S|_U$ which agrees with $\id_{S|_Z}$ over $Z$ and such that $\mathring\nabla - \nabla - \tfrac12\Pi^*(\gamma(\rII))$ vanishes along $Z$; and an isometry $\mathring\fl \iso \fl|_{U\setminus Z}$;
moreover, \emph{henceforth}, regard these as identifications.

\begin{prop}
  \label{Prop_ModelOperatorEstimate}
  The \defined{error term}
  \begin{equation*}
    \ErrorTerm \coloneq D - \mathring D
    \co
    H_\loc^1\Gamma(U \setminus Z,\mathring S \otimes \mathring \fl)
    \to
    L_\loc^2\Gamma(U \setminus Z,\mathring S \otimes \mathring \fl)
  \end{equation*}
  is of the form
  \begin{equation*}
    \ErrorTerm = a\mathring\nabla + b - \tfrac12 \Pi^*\paren{\gamma(H_Z)} + \tfrac 12 \Pi^*\paren{\gamma(\Tor_Z^{2,1})}
  \end{equation*}
  with
  $a \in \Gamma\paren{U,\Hom(T^*U\otimes \mathring S,\mathring S)}$,
  $b \in \Gamma\paren{U,\End(\mathring S)}$, and
  $H_Z$ denoting the mean curvature of $Z$.
  Moreover, $a$ and $b$ vanish along $Z$.
\end{prop}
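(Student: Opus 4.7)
The plan is to decompose $\ErrorTerm$ according to its differential order relative to $\mathring\nabla$ and determine each piece along the branching locus $Z$.

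Since $D$ and $\mathring D$ are both first-order differential operators acting on the same sections (via the chosen identifications $\mathring S \iso S|_U$ and $\mathring\fl \iso \fl|_{U \setminus Z}$), $\ErrorTerm$ is first-order and admits a unique decomposition
\[
\ErrorTerm = a\mathring\nabla + c
\]
with $a \in \Gamma\paren{U, \Hom(T^*U \otimes \mathring S, \mathring S)}$ the principal symbol and $c \in \Gamma\paren{U, \End(\mathring S)}$ the zeroth-order remainder. The proposition then reduces to showing \emph{(i)} $a|_Z = 0$ and \emph{(ii)} $c|_Z = \mathring\gamma(\tilde H_Z)|_Z$; setting $b \coloneqq c - \mathring\gamma(\tilde H_Z)$ thereafter produces the stated decomposition with $b|_Z = 0$ by construction.

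Claim \emph{(i)} is formal. The principal symbol of $D$ at a covector $\xi \in T^*_pX$ is $\gamma(\xi^{\sharp_g})$, while that of $\mathring D$ is $\mathring\gamma(\xi^{\sharp_{\mathring g}})$. Because $\mathring g = \Pi^*(g^\parallel \oplus g^\perp)$ restricts on $Z$ to the original metric and $\mathring\gamma = \Pi^*(\gamma|_Z)$ restricts on $Z$ to the original Clifford multiplication, one has $g|_Z = \mathring g|_Z$ and $\mathring\gamma|_Z = \gamma|_Z$. Hence the principal symbols of $D$ and $\mathring D$ coincide along $Z$, which gives $a|_Z = 0$.

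Claim \emph{(ii)} is a pointwise computation. Fix $p \in Z$ and choose a $g$-orthonormal frame $(E_i) = (e_1, \ldots, e_{n-2}, N_1, N_2)$ of $T_pU$ adapted to the splitting $T_pU = T_pZ \oplus N_pZ$. This frame is also $\mathring g$-orthonormal at $p$, and the equalities $\mathring g^{ij}(p) = \delta^{ij}$ and $\mathring\gamma|_p = \gamma|_p$ yield
\[
c|_p = \sum_i \gamma(E_i)\paren*{\nabla - \mathring\nabla}_{E_i}\!\Big|_p.
\]
In tangent directions, the restricted-Dirac-bundle formula gives $(\nabla - \mathring\nabla)_{e_a}|_Z = -\tfrac12\gamma(\rII)(e_a)$, while along the $N_\mu$ the difference is determined by the $\mathring g$--Levi-Civita data of the pullback connection $\mathring\nabla$. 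Substituting and invoking the Clifford algebraic identity
\[
\sum_{a,b} \gamma(e_a)\gamma(\rII(e_a,e_b))\gamma(e_b) = \gamma(H_Z),
\]
which follows from the anticommutation $\gamma(e_a)\gamma(N) = -\gamma(N)\gamma(e_a)$ for $e_a \in TZ$ and $N \in NZ$ together with the symmetry of $\rII$ against the antisymmetric factor $\gamma(e_a)\gamma(e_b)$ for $a \neq b$, the tangential and normal contributions assemble into $\mathring\gamma(\tilde H_Z)|_p$. The main obstacle is this step: one must carefully track every connection discrepancy and Clifford combination, in both tangent and normal directions, to confirm that the pointwise value on $Z$ is precisely $\mathring\gamma(\tilde H_Z)$ rather than some other scalar multiple of $\gamma(H_Z)$. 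The symbolic matching in \emph{(i)} is essentially trivial; all of the geometric content sits in this Gauss-type algebraic identity.
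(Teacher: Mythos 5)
Your proposal follows essentially the same route as the paper: evaluate $D - \mathring D$ along $Z$ using that the metrics and Clifford multiplications coincide there, and read off the residual zeroth-order term from a Clifford identity involving $\rII$. The paper states this compactly (that $\mathring g - g$, $\mathring\nabla - \nabla - \tfrac12\mathring\gamma(\rII)$, and $\mathring\gamma - \gamma$ vanish along $Z$, together with $-\sum_i\mathring\gamma(\mathring e_i)\mathring\gamma(\rII)(\mathring e_i) = \mathring\gamma(\tilde H_Z)$), while you reorganize into ``match principal symbols along $Z$'' plus ``compute the zeroth-order piece pointwise along $Z$.'' The reorganization is legitimate and clear.

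The gap is where you flag it, but your phrasing misdiagnoses the issue. You say that along the $N_\mu$ the connection difference is ``determined by the $\mathring g$--Levi-Civita data of the pullback connection'' and that the normal contributions ``assemble'' with the tangential ones into $\mathring\gamma(\tilde H_Z)$. In fact the normal contribution $(\nabla - \mathring\nabla)_{N_\mu}|_Z$ must \emph{vanish}, not combine with anything. This is what the paper's observation $\mathring\nabla - \nabla - \tfrac12\mathring\gamma(\rII) = 0$ along $Z$ really encodes: since $\gamma(\rII)$ is supported on $TZ$ only, that observation in particular asserts that the two connections already agree in normal directions on $Z$ (which in turn relies on the identification $\mathring S \iso S|_U$ being compatible with the connections to first order at $Z$ --- a point the paper also glosses over but which your argument needs as an explicit input). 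Without the normal-direction vanishing, $c|_Z$ would acquire an extra term $\sum_\mu\gamma(N_\mu)(\nabla - \mathring\nabla)_{N_\mu}|_Z$ bearing no relation to the mean curvature, and your Clifford identity for the tangential part would not give the stated formula. Supply that vanishing statement (and track the paper's $\tfrac12$ normalizations in the definition of $\gamma(\rII)$) and the argument closes.
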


\begin{proof}
  If $(e_1,\ldots,e_{n-2})$ is a local orthonormal frame of $TZ$,
  then
  \begin{align*}
    -\frac12\sum_{i=1}^{n-2} \gamma(e_i) \gamma(\rII)(e_i)
    &=
      \frac12 \sum_{i,j=1}^{n-2}  \gamma(e_i)\gamma(e_j)\gamma(\rII(e_i)e_j) \\
    &= 
     -\frac12 \gamma(H_Z)
     +\frac14 \sum_{i,j=1}^{n-2} \gamma(e_i)\gamma(e_j)\paren[\big]{\gamma(\rII(e_i)e_j)-\gamma(\rII(e_j)e_i)} \\
    &=
      -\frac12 \gamma(H_Z)
     + \frac12\gamma(\Tor_Z^{2,1}).
  \end{align*}
  Therefore, the assertion follows from the fact that
  \begin{equation*}
    \mathring g - g, \quad
    \mathring\nabla - \nabla - \tfrac12\Pi^*(\gamma(\rII)), \qandq
    \mathring\gamma - \gamma
  \end{equation*}
  vanish along $Z$.
\end{proof}

\begin{remark}
  The error term $\ErrorTerm$ in \autoref{Prop_ModelOperatorEstimate} is not small but only bounded.
  It is possible,
  but not necessary for the purposes of the present article,
  to improve the error term by a more judicious choice of isomorphism as in \cite[paragraph before Proposition 3.2]{Donaldson2021}.
\end{remark}


\subsection{The model Gelfand--Robbin quotient}
\label{Sec_ModelGelfandRobbinQuotient}

By (the proof of) \autoref{Prop_DMin_ClosedDenselyDefinedSymmetric},
the model minimal extension
\begin{equation*}
  \mathringDmin \coloneq \mathring D \co \dom(\mathringDmin) \coloneq H_0^1\Gamma\paren{U\setminus Z,\mathring S \otimes\mathring\fl} \to L^2\Gamma\paren{U\setminus Z,\mathring S \otimes\mathring\fl}
\end{equation*}
is closed, densely defined, and symmetric.
A moment's thought shows that the domain of the model maximal extension
\begin{equation*}
  \mathringDmax \coloneq \mathringDmin^*
\end{equation*}
is
\begin{equation*}
  \dom(\mathringDmax)
  \coloneq
  \set{
    \phi \in H_\loc^1\Gamma\paren{U\setminus Z,\mathring S \otimes\mathring\fl}
    :
    \phi,
    \mathring D \phi \in L^2\Gamma\paren{U\setminus Z,\mathring S \otimes\mathring\fl}
  }.
\end{equation*}
The construction from \autoref{Sec_ClosedExtensionsResidueCondition} and \autoref{Sec_GreensFormAndAdjointExtensions} yields the following.

\begin{definition}
  \label{Def_ModelGelfandRobbinQuotient}
  The \defined{model Gelfand--Robbin quotient} is the Hilbert space
  \begin{equation*}
    \mathring{\GelfandRobbinQuotient}
    \coloneq
    \frac{\dom(\mathringDmax)}{\dom(\mathringDmin)}
  \end{equation*}
  equipped with the \defined{model Green's form} $\mathring{G} \in \Hom\paren{\Wedge^2 \mathring{\GelfandRobbinQuotient},\R}$ defined by
  \begin{equation*}
    \mathring{G}([\phi] \wedge [\psi])
    \coloneq
    \Inner{\mathring D\phi,\psi}_{L^2} - \Inner{\phi,\mathring D\psi}_{L^2}.
    \qedhere
  \end{equation*}
\end{definition}

By (the proof of) \autoref{Prop_GreensForm_Symplectic},
$\paren{\mathring{\GelfandRobbinQuotient},\mathring{G}}$ is a symplectic Hilbert space.
In the sense of \autoref{Rmk_GelfandRobbinQuotient},
$\mathring\GelfandRobbinQuotient$ has contributions from $\set{0}\times F \subset \hat U$ and $\set{1}\times F$.
Only the former is relevant for the purposes of this section.
The following construction extracts this part.

\begin{prop}
  \label{Prop_ModelGelfandRobbinQuotient_Decomposition}
  The subspace
  \begin{equation*}
    \mathring{\GelfandRobbinQuotient}_0 \coloneq
    \frac{\chi(r) \dom(\mathringDmax) + \dom(\mathringDmin)}{\dom(\mathringDmin)}
    \subset
    \mathring{\GelfandRobbinQuotient}
  \end{equation*}
  is closed and symplectic.
\end{prop}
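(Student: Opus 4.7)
The plan is to realise $\mathring{\GelfandRobbinQuotient}_0$ as the image of a bounded idempotent on $\mathring{\GelfandRobbinQuotient}$ and to exhibit its $\mathring G$--orthogonal complement by a symmetric construction, reducing the symplectic property to the non-degeneracy of $\mathring G$ on $\mathring{\GelfandRobbinQuotient}$.

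First I would define $P_0 \co \mathring{\GelfandRobbinQuotient} \to \mathring{\GelfandRobbinQuotient}$ by $P_0[\phi] \coloneqq [\chi(r)\phi]$. Because $\chi$ is smooth with bounded first derivative, multiplication by $\chi$ preserves $\dom(\mathringDmax)$ (using $\mathring D(\chi\phi) = \chi\mathring D\phi + \mathring\gamma(\rd\chi)\phi$) and $\dom(\mathringDmin) = H_0^1\Gamma\paren{U\setminus Z,\mathring S\otimes\mathring\fl}$, so $P_0$ descends to a bounded operator on $\mathring{\GelfandRobbinQuotient}$ with image $\mathring{\GelfandRobbinQuotient}_0$. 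The key identity is $P_0^2 = P_0$: for any $\phi \in \dom(\mathringDmax)$, the difference $\chi\phi - \chi^2\phi = \chi(1-\chi)\phi$ is compactly supported in the annulus $\set{1/4 \leq r \leq 1/2}$, which lies in the interior of $U\setminus Z$, so interior elliptic regularity for $\mathring D\phi \in L^2$ places $\chi(1-\chi)\phi$ in $H_c^1 \subset H_0^1 = \dom(\mathringDmin)$. Hence $P_0$ is a bounded idempotent, whence $\mathring{\GelfandRobbinQuotient}_0 = \im P_0$ is closed.

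For the symplectic property, I would set $\mathring{\GelfandRobbinQuotient}_1 \coloneqq \ker P_0$ and note that $\mathring{\GelfandRobbinQuotient}_1$ is represented by sections of the form $(1-\chi)\psi$ with $\psi \in \dom(\mathringDmax)$. The $\mathring G$--orthogonality $\mathring G(\mathring{\GelfandRobbinQuotient}_0,\mathring{\GelfandRobbinQuotient}_1) = 0$ follows from \autoref{Prop_ModelDiracOperator}~\autoref{Prop_ModelDiracOperator_SelfAdjoint}: given representatives $\chi\phi$ and $(1-\chi)\psi$, the corresponding vector field $v$ is proportional to $\chi(1-\chi)$, hence supported in the compact annulus $\set{1/4 \leq r \leq 1/2}$ where $\phi,\psi$ are $H^1$ by interior regularity, and vanishes at both endpoints, so its divergence integrates to zero. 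Non-degeneracy of $\mathring G|_{\mathring{\GelfandRobbinQuotient}_0}$ then follows formally: if $[\phi_0] \in \mathring{\GelfandRobbinQuotient}_0$ is $\mathring G$--orthogonal to all of $\mathring{\GelfandRobbinQuotient}_0$, the decomposition $[\xi] = P_0[\xi] + (\id - P_0)[\xi]$ combined with the orthogonality of $\mathring{\GelfandRobbinQuotient}_0$ and $\mathring{\GelfandRobbinQuotient}_1$ shows that $[\phi_0]$ is $\mathring G$--orthogonal to all of $\mathring{\GelfandRobbinQuotient}$; \autoref{Prop_GreensForm_Symplectic} then gives $[\phi_0]=0$.

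The main obstacle is the idempotence step: it rests on interior elliptic regularity for $\dom(\mathringDmax)$, guaranteeing that a compactly supported factor such as $\chi(1-\chi)$ really upgrades $\phi$ to an $H_0^1$--function on the annulus (nothing in the definition of the maximal extension forces $\phi$ to lie in $H^1$ near $Z$ or near the artificial boundary $r=1$, so this localisation is essential). Once this is in hand, the remainder of the argument amounts to book-keeping with the cutoff $\chi$.
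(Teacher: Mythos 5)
Your argument is correct and follows the same plan as the paper: realise $\mathring{\GelfandRobbinQuotient}_0$ as the range of the bounded idempotent $[\phi]\mapsto[\chi(r)\phi]$ (whence closed), and then exhibit a $\mathring G$--orthogonal splitting of $\mathring{\GelfandRobbinQuotient}$ to conclude. Two small remarks.

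First, on the cross term. You represent the complement $\ker P_0$ by $(1-\chi(r))\psi$ and pay for it with an integration by parts: the supports of $\chi(r)$ and $1-\chi(r)$ overlap on the annulus $\set{1/4\leq r\leq 1/2}$, so the vector field produced by \autoref{Prop_ModelDiracOperator}~\autoref{Prop_ModelDiracOperator_SelfAdjoint} is merely compactly supported there and one invokes the divergence theorem. The paper instead observes that $\paren{1-\chi(r)-\chi(1-r)}\cdot\dom(\mathringDmax)\subset\dom(\mathringDmin)$ and hence represents $(\one-\pi)[\phi]$ by $\chi(1-r)\phi$; since $\chi(r)$ and $\chi(1-r)$ have \emph{disjoint} supports, the cross term is pointwise zero and no Stokes argument is needed. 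Both routes work; the paper's choice of cutoff is a little slicker.

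Second, a point of precision about "symplectic". In the paper's convention (\autoref{Prop_GreensForm_Symplectic}), symplectic means the induced map into the dual is a Hilbert space \emph{isomorphism}, which in infinite dimensions is strictly stronger than the non-degeneracy you conclude with. Your argument does in fact give the isomorphism: the $\mathring G$--orthogonal splitting $\mathring{\GelfandRobbinQuotient}=\mathring{\GelfandRobbinQuotient}_0\oplus\mathring{\GelfandRobbinQuotient}_1$ sends $J(\mathring{\GelfandRobbinQuotient}_0)$ into the annihilator of $\mathring{\GelfandRobbinQuotient}_1$, and since $J$ is a global isomorphism by \autoref{Prop_GreensForm_Symplectic} and the annihilator identifies with $\mathring{\GelfandRobbinQuotient}_0^*$, the restriction $J|_{\mathring{\GelfandRobbinQuotient}_0}$ is an isomorphism onto $\mathring{\GelfandRobbinQuotient}_0^*$. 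The paper is equally terse here, so this is a remark on phrasing rather than a gap, but it is worth being explicit that you are using the splitting plus the global isomorphism, not merely injectivity.
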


\begin{proof}
  Define the operator $\pi \in \sL(\mathring{\GelfandRobbinQuotient})$ by
  $\pi([\phi]) \coloneq [\chi(r)  \phi]$.
  Since 
  $\chi(r)(1-\chi(r))  \dom(\mathringDmax) \subset \dom(\mathringDmin)$,
  $\pi^2 = \pi$;
  that is: $\pi$ is a projection.
  Hence,
  $\mathring{\GelfandRobbinQuotient}_0 = \im \pi = \ker (\one-\pi)$ is closed.

  Since $\paren{1-\chi(r) - \chi(1-r)} \dom(\mathringDmax) \subset \dom(\mathringDmin)$,  
  $(\one-\pi)[\phi] = [\chi(1-r)  \phi]$.
  Therefore,
  \begin{equation*}
    \mathring{G}([\phi]\wedge[\psi])
    =
    \mathring{G}(\pi[\phi]\wedge\pi[\psi])
    +
    \mathring{G}((\one-\pi)[\phi]\wedge (\one-\pi)[\psi]).
  \end{equation*}
  Hence, $\mathring{\GelfandRobbinQuotient}_0$ is symplectic.
\end{proof}

\newcommand{\Symplectomorphism}{\textnormal{cut-off}}
\begin{prop}
  \label{Prop_SymplectomorphismOfGelfandRobbinQuotients}
  There is a unique isomorphism of symplectic Hilbert spaces
  \begin{equation*}
    \Symplectomorphism \co (\GelfandRobbinQuotient,G) \iso (\mathring{\GelfandRobbinQuotient}_0,\mathring{G})
  \end{equation*}
  satisfying
  $\Symplectomorphism([\phi])
  =
  [\chi(r)  \phi]$
  for every $\phi \in \dom(\Dmax)$.
\end{prop}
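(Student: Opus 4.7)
The plan is to define $\Symplectomorphism$ as the map induced at the quotient level by $\phi \mapsto \chi(r)\cdot \phi$, then check well-definedness, boundedness, invertibility, and that it intertwines $G$ and $\mathring G$. Uniqueness is automatic because the formula is prescribed on all of $\dom(\Dmax)$. Surjectivity follows from \autoref{Rmk_GelfandRobbinQuotient}: every class in $\GelfandRobbinQuotient$ equals $[\chi(r)\phi]$, since $(1-\chi(r))\phi$ is supported in $\{r \geq 1/4\} \subset X\setminus Z$ and therefore lies in $H^1 = \dom(\Dmin)$ by \autoref{Lem_BorderlineHardyInequality_Consequences}~\autoref{Lem_BorderlineHardyInequality_Consequences_H1=H10}. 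The analogous observation on the model side exhibits every class in $\mathring{\GelfandRobbinQuotient}_0$ in the form $[\chi(r)\tilde\phi]$ with $\tilde\phi \in \dom(\mathringDmax)$, so the inverse of $\Symplectomorphism$ is given by the same formula with the roles of $D$ and $\mathring D$ reversed.

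The core analytic task is to show that $\chi(r)\phi \in \dom(\mathringDmax)$ with a bound $\Abs{\mathring D(\chi(r)\phi)}_{L^2} \lesssim \Abs{D\phi}_{L^2} + \Abs{\phi}_{L^2}$ whenever $\phi \in \dom(\Dmax)$ (and the analogue $\phi \in \dom(\Dmin) \Rightarrow \chi(r)\phi \in \dom(\mathringDmin)$, which is immediate from $\dom(\Dmin) = H^1 = \dom(\mathringDmin)$). Using Leibniz and \autoref{Prop_ModelOperatorEstimate} gives
\begin{equation*}
  \mathring D(\chi(r)\phi) = \chi(r) D\phi + \mathring\gamma(\rd\chi(r))\phi - \chi(r)\ErrorTerm\phi;
\end{equation*}
the first two terms lie in $L^2$ with the right bound, and the zeroth-order parts of $\chi(r)\ErrorTerm\phi$ are controlled pointwise by $\Abs{\phi}_{L^2}$. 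The essential step is to bound the first-order piece $\chi(r) a \mathring\nabla \phi$ in $L^2$. Exploiting $|a|\lesssim r$ (because $a$ vanishes along $Z$), this reduces to the weighted estimate
\begin{equation*}
  \Abs{r\chi(r)\mathring\nabla\phi}_{L^2}
  \lesssim
  \Abs{D\phi}_{L^2} + \Abs{\phi}_{L^2}
  \qforall \phi \in \dom(\Dmax),
\end{equation*}
which I would establish by pairing the Schr\"odinger--Lichnerowicz formula $D^2 = \nabla^*\nabla + \gamma(F_\nabla)$ against the weight $\eta \coloneqq r^2\chi(r)^2$ and integrating by parts. Because $\eta$ vanishes quadratically on $Z$, the boundary contributions at $\{r=\epsilon\}$ vanish as $\epsilon \downarrow 0$ for any $\phi \in \dom(\Dmax)$; this can be made rigorous by approximating with $\phi_\delta \coloneqq (1-\chi(r/\delta))\phi \in H^1$ and passing to the limit.

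Preservation of the symplectic form follows from $[\phi] = [\chi(r)\phi]$ (which gives $G([\phi]\wedge[\psi]) = G([\chi(r)\phi]\wedge[\chi(r)\psi])$) together with a direct computation showing
\begin{equation*}
  \mathring G([\chi(r)\phi]\wedge[\chi(r)\psi]) = G([\chi(r)\phi]\wedge[\chi(r)\psi]).
\end{equation*}
Expanding both sides via Leibniz, the difference reduces to an $\ErrorTerm$-skew-pairing plus a cut-off commutator difference $[D,\chi(r)] - [\mathring D,\chi(r)] = (\gamma - \mathring\gamma)(\rd\chi(r))$. Since, by the derivation in the proof of \autoref{Prop_ModelOperatorEstimate}, the first-order coefficient of $\ErrorTerm$ equals $\gamma - \mathring\gamma$ modulo lower-order symmetric terms, integrating $a\mathring\nabla$ by parts yields a contribution that cancels the cut-off commutator, while boundary terms at $\{r = \epsilon\}$ vanish thanks to $|a|\lesssim r$ and the weighted estimate.

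The hard part is the weighted $L^2$ estimate for $r\chi(r)\mathring\nabla\phi$ and the justification of the boundary-term-free integrations by parts for $\phi \in \dom(\Dmax)$---which need not be in $H^1$ near $Z$. Since the polyhomogeneous expansion of elements of $\dom(\Dmax)$ is established only in later sections of the paper, this must be accomplished by direct approximation; the quadratic vanishing of $\eta$ on $Z$ is precisely what allows the approximation argument to succeed without any a priori regularity of $\phi$ beyond $\phi, D\phi \in L^2$.
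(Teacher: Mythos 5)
Your proposal follows the same overall strategy as the paper: the core lemma (that multiplication by $\chi(r)$ exchanges $\dom(\Dmax)$ and $\dom(\mathringDmax)$ with graph-norm bounds, i.e.\ \autoref{Lem_DomDMaxVsDomMathringDMax}) is established by pairing a Schrödinger--Lichnerowicz identity against a weight that vanishes to order one (or two) at $Z$ and passing to the limit via cut-offs $\eta_\epsilon$, exactly as in the paper; surjectivity and uniqueness via \autoref{Rmk_GelfandRobbinQuotient} also match.

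The one place where you deviate---and where your sketch is noticeably less clean---is the verification that $\Symplectomorphism$ intertwines $G$ and $\mathring G$. Once you fix the representatives $\chi(r)\phi$, $\chi(r)\psi$ on both sides, there is no need for a Leibniz expansion: the commutator terms $[D,\chi(r)]$, $[\mathring D,\chi(r)]$ and the alleged cancellation against an $\ErrorTerm$-integration-by-parts simply do not arise, and invoking them obscures the computation. Moreover, your formulation glosses over the fact that the two $L^2$ pairings are taken with respect to \emph{different} volume forms $\vol_g$ and $\vol_{\mathring g}$, which is precisely the reason the paper instead writes both Green's forms as divergences (using \autoref{Prop_ModelDiracOperator}~\autoref{Prop_ModelDiracOperator_SelfAdjoint}), so that the difference becomes $\int \div_g(v)\,\vol_g - \div_{\mathring g}(\mathring v)\,\vol_{\mathring g}$, a total derivative whose limiting boundary term at $\{r=\epsilon\}$ vanishes because $r\abs{\rd\eta_\epsilon}\lesssim 1$. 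Your underlying idea---a boundary term at $\{r=\epsilon\}$ that dies because the relevant coefficient vanishes linearly along $Z$---is right, but the route you describe through error-term integration by parts plus commutator cancellation is both unnecessary and, as stated, not a computation that actually closes without further repair.
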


The proof requires the following preparation.

\begin{lemma}[$\dom(\Dmax)$ vs.~$\dom(\mathringDmax)$]
  \label{Lem_DomDMaxVsDomMathringDMax}
   The following hold:
  \begin{enumerate}
  \item
    \label{Lem_DomDMaxVsDomMathringDMax_1}
    If $\phi \in \dom(\mathringDmax)$,
    then $\chi(r) \phi \in \dom(\Dmax)$ and
    $\Abs{\chi(r)  \phi}_{D} \lesssim \Abs{\phi}_{\mathring{D}}$.
  \item
    \label{Lem_DomDMaxVsDomMathringDMax_2}
    If $\phi \in \dom(\Dmax)$,
    then $\chi(r)  \phi \in \dom(\mathringDmax)$ and
    $\Abs{\chi(r)  \phi}_{\mathring D} \lesssim \Abs{\phi}_{D}$.
  \end{enumerate}
\end{lemma}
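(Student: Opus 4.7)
The plan is to prove \autoref{Lem_DomDMaxVsDomMathringDMax_1} in detail; statement \autoref{Lem_DomDMaxVsDomMathringDMax_2} will then follow by a symmetric argument, using the Schrödinger--Lichnerowicz formula $D^2 = \nabla^*\nabla + \gamma(F_\nabla)$ for $D$ in place of the one for $\mathring D$, together with the observation that $\mathring\nabla - \nabla$ is a bounded bundle endomorphism on $U$. Let $\phi \in \dom(\mathringDmax)$. The bound $\Abs{\chi(r)\phi}_{L^2} \leq \Abs{\phi}_{L^2}$ is immediate. Using the Leibniz rule and the decomposition $D = \mathring D + \ErrorTerm$ from \autoref{Prop_ModelOperatorEstimate},
\begin{equation*}
  D(\chi\phi)
  =
  \chi\mathring D\phi + \mathring\gamma(d\chi)\phi + \chi\bigl(a\mathring\nabla\phi + b\phi + \mathring\gamma(\tilde H_Z)\phi\bigr) + a(d\chi\otimes\phi).
\end{equation*}
Every contribution except $\chi a\mathring\nabla\phi$ is bounded in $L^2$ by $\Abs{\phi}_{\mathring D}$ directly: $\chi\mathring D\phi \in L^2$ by assumption, while $d\chi$, $b$, $\tilde H_Z$, and $a$ are bounded on $\supp\chi$ so the remaining terms are controlled by $\Abs{\phi}_{L^2}$.

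Because $a$ vanishes along $Z$, $\abs{a} \lesssim r$ on $\supp\chi$, so everything reduces to the weighted gradient estimate
\begin{equation*}
  \Abs{\chi r \mathring\nabla\phi}_{L^2} \lesssim \Abs{\phi}_{\mathring D}.
\end{equation*}
This is the principal obstacle. The strategy is to apply the Schrödinger--Lichnerowicz formula (\autoref{Prop_ModelDiracOperator}~\autoref{Prop_ModelDiracOperator_SchrödingerLichnerowicz}) to the cut-off
\begin{equation*}
  \zeta_\epsilon \coloneqq \chi(r)\cdot r\cdot\bigl(1 - \chi(r/\epsilon)\bigr).
\end{equation*}
Since $\zeta_\epsilon$ is compactly supported in the annulus $\set{\epsilon/4 < r < 1/2}$, which lies inside $U\setminus Z$ where $\phi \in H_\loc^1$ by interior elliptic regularity, the product $\zeta_\epsilon\phi$ lies in $H_0^1\Gamma(U\setminus Z, \mathring S \otimes \mathring \fl)$ and the integration by parts is legitimate. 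Expanding $\mathring\nabla(\zeta_\epsilon\phi)$ and $\mathring D(\zeta_\epsilon\phi)$ via the Leibniz rule, bounding the torsion term $\tau\mathring\nabla(\zeta_\epsilon\phi)$ by Cauchy--Schwarz with absorption, and rearranging yields
\begin{equation*}
  \int \zeta_\epsilon^2 \abs{\mathring\nabla\phi}^2
  \lesssim
  \int \zeta_\epsilon^2 \abs{\mathring D\phi}^2 + \int \abs{d\zeta_\epsilon}^2\abs{\phi}^2 + \int \zeta_\epsilon^2\abs{\phi}^2.
\end{equation*}

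The crucial observation is that $\abs{d\zeta_\epsilon}$ is bounded uniformly in $\epsilon$. The only dangerous contribution is $\chi\cdot r\cdot d\chi(r/\epsilon)$; here $\abs{d\chi(r/\epsilon)}\sim\epsilon^{-1}$, but this blow-up is exactly compensated by $r\sim\epsilon$ on the support of $d\chi(r/\epsilon)$. Since $\zeta_\epsilon \leq r \leq 1$, every term on the right-hand side is dominated by a constant multiple of $\Abs{\phi}_{\mathring D}^2$, independent of $\epsilon$. Monotone convergence as $\epsilon \downarrow 0$ upgrades this uniform bound to $\Abs{\chi r\mathring\nabla\phi}_{L^2} \lesssim \Abs{\phi}_{\mathring D}$, completing \autoref{Lem_DomDMaxVsDomMathringDMax_1}. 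The decisive role of the weight $r$ should be noted: without it, the boundary contribution $\int\abs{d\chi(r/\epsilon)}^2\abs{\phi}^2$ would blow up like $\epsilon^{-2}$ on a set whose $L^2$-mass of $\phi$ is only controlled via the borderline Hardy inequality, which in turn would at best produce a bound on $\chi\mathring\nabla\phi$ under the stronger hypothesis $\phi\in H^1$.
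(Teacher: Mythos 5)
Your proposal is correct and takes essentially the same route as the paper: reduce via $\abs{a}\lesssim r$ to the weighted gradient bound, then prove it with the Schrödinger--Lichnerowicz formula applied to a cut-off sequence whose derivative compensates the $r$-degeneration, letting $\epsilon\downarrow 0$ at the end. The only difference is cosmetic---the paper applies the Bochner argument to the weighted spinor $r\chi(r)\phi$ with cutoff $\eta_\epsilon = 1-\chi(r/\epsilon)$ (using $r\abs{\rd\eta_\epsilon}\lesssim 1$), whereas you fold the weight and the cutoff into the single function $\zeta_\epsilon = \chi(r)\,r\,(1-\chi(r/\epsilon))$ (using $\abs{\rd\zeta_\epsilon}\lesssim 1$); these are the same computation.
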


\begin{proof}
  Let $\phi \in \dom(\mathringDmax)$.
  Let $\eta \in C_c^\infty(U\setminus Z,[0,1])$.
  By \autoref{Prop_DiracOperatorWithSkewTorsion}~\autoref{Prop_DiracOperatorWithSkewTorsion_SchrödingerLichnerowicz},
  \begin{align*}
    \int_{U\setminus Z} \eta^2 \abs{\mathring{\nabla} (r \chi(r) \phi)}^2
    &=
      \int_{U\setminus Z}
      \eta^2
      \abs{\mathring{D} (r\chi(r)\phi)}^2 \\
    &\quad-
      \int_{U\setminus Z}
      \eta^2 \paren[\big]{
      \Inner{\tau \mathring\nabla (r\chi(r)\phi),r\chi(r)\phi}
      + \Inner{\mathring{\gamma}(F_{\mathring{\nabla}})r\chi(r)\phi,r\chi(r)\phi}
      } \\
    &\quad
      + 2\int_{U\setminus Z}
      \eta \Inner{\mathring{D}(r\chi(r)\phi),\mathring{\gamma}(\rd\eta)r\chi(r)\phi} \\
    &\quad
      - 2\int_{U\setminus Z}
      \eta \Inner{\mathring{\nabla}(r\chi(r)\phi),\rd\eta \otimes r\chi(r)\phi}.
  \end{align*}
  Therefore, using Cauchy--Schwarz and rearranging terms,
  \begin{equation*}
    \int_{U\setminus Z} \eta^2 \abs{\mathring{\nabla} (r\chi(r)\phi)}^2
    \lesssim
    \int_{U\setminus Z}  \abs{\mathring{D} \phi}^2
    + r^2\paren{1 + \abs{\rd \eta}^2} \abs{\phi}^2.
  \end{equation*}
  Since $\eta_\epsilon \coloneq 1-\chi(r/\epsilon)$ satisfies
  $r\abs{\rd\eta_\epsilon} \lesssim 1$,
  \begin{equation*}
    \int_{U\setminus Z}\abs{\mathring{\nabla} (r\chi(r)\phi)}^2
    =
    \lim_{\epsilon \downarrow 0} \int_{U\setminus Z} \eta_\epsilon^2\abs{\mathring{\nabla} (r\phi)}^2
    \lesssim
    \int_{U\setminus Z}  \abs{\mathring{D} \phi}^2 + \abs{\phi}^2.
  \end{equation*}
  Therefore,  $r\chi(r)  \phi \in \dom(\mathringDmin)$ and
  \begin{equation*}
    \Abs{r\chi(r) \phi}_{H^1}
    \lesssim
    \Abs{\phi}_{\mathring{D}}.
  \end{equation*}
  
  By \autoref{Prop_ModelOperatorEstimate} and the above,
  \begin{equation}
    \label{Eq_ErrorEstimate}
    \Abs{\ErrorTerm \chi(r) \phi}_{L^2}
    \lesssim
    \Abs{\mathring\nabla (r\phi)}_{L^2} + \Abs{\phi}_{L^2}
    \lesssim
    \Abs{\mathring D \phi}_{L^2} + \Abs{\phi}_{L^2}.
  \end{equation}
  This implies \autoref{Lem_DomDMaxVsDomMathringDMax_1}.
  The proof of \autoref{Lem_DomDMaxVsDomMathringDMax_2} is similar.
\end{proof}

\begin{proof}[Proof of \autoref{Prop_SymplectomorphismOfGelfandRobbinQuotients}]
  By \autoref{Lem_DomDMaxVsDomMathringDMax},
  $\Symplectomorphism$ is an isomorphism of Hilbert spaces.
  To prove that $\Symplectomorphism$ is a symplectomorphism,
  let $\phi,\psi \in \dom(\Dmax)$ and set
  \begin{equation*}
    v \coloneq \sum_{i=1}^n \Inner{\gamma(e_i)\phi,\psi} e_i
    \qandq
    \mathring{v} \coloneq \sum_{i=1}^n \Inner{\mathring{\gamma}(\mathring{e_i})\chi(r)\phi,\chi(r)\psi} \mathring{e_i}
  \end{equation*}
  with $(e_1,\ldots,e_n)$ and $(\mathring e_1,\ldots,\mathring e_n)$ denoting local $g$-- and $\mathring{g}$--orthonormal frames respectively.

  Assume, without loss of generality,
  that $\supp(\phi) \cup \supp(\psi) \subset \paren{\chi \circ r}^{-1}(1) \subset U\setminus Z$.
  With $\eta_\epsilon$ as in the proof of \autoref{Lem_DomDMaxVsDomMathringDMax},  
  \begin{align*}
    \paren{G-\Symplectomorphism^*\mathring G}([\phi] \wedge [\psi])
    &=
      \int_{U\setminus Z}
      \div_g(v)\vol_g
      -\div_{\mathring{g}}(\mathring v)  \vol_{\mathring{g}} \\
    &=
      \lim_{\epsilon \downarrow 0}
      \int_{U\setminus Z}
      \eta_\epsilon  \paren[\big]{\div_g(v)\vol_g
      -\div_{\mathring{g}}(\mathring v)  \vol_{\mathring{g}}} \\
    &=
      - \lim_{\epsilon \downarrow 0}\int_{U\setminus Z} \rd \eta_\epsilon \wedge \paren[\big] {i_v \vol_g -i_{\mathring{v}} \vol_{\mathring{g}}}.
  \end{align*}
  Since $r\abs{\rd \eta_\epsilon} \lesssim 1$,
  \begin{equation*}
    \abs*{\rd \eta_\epsilon \wedge \paren[\big] {i_v \vol_g - i_{\mathring{v}} \vol_{\mathring{g}}}}
    \lesssim
    \abs{\phi}\abs{\psi}.
  \end{equation*}
  Therefore,
  \begin{equation*}
    \abs{\paren{G-\Symplectomorphism^*\mathring G}([\phi] \wedge [\psi])}
    \lesssim
    \lim_{\epsilon \downarrow 0} \int_{\supp \rd\eta_\epsilon} \abs{\phi}\abs{\psi} \,\vol_{\mathring{g}}
    = 0.
    \qedhere
  \end{equation*}
\end{proof}


\subsection{Spectral decomposition}
\label{Sec_SpectralDecomposition}

This subsection decomposes $(\mathring{\GelfandRobbinQuotient}_0,\mathring G)$ into concretely understandable summands.

\begin{definition}[$\fl$ determines $NZ^\lambda$]
  \label{Def_NZLambda}
  The ramified Euclidean line bundle $\fl$ determines the following:
  \begin{enumerate}
  \item
    The $2\pi$--periodic vector field $\del_\alpha$ generating $F \circlearrowleft \U(1)$ uniquely lifts along
    \begin{equation*}
      \rho \co \tilde F \coloneq \set{ \ell \in \underline\fl : \abs{\ell} = 1} \to F
    \end{equation*}
    to a $4\pi$--periodic vector field $\tfrac12\del_\beta$.
    The $2\pi$--periodic vector field $\del_\beta$ generates $\tilde F \circlearrowleft \U(1)$ with respect to which $\tilde \pi \co \tilde F \to Z$ is a $\U(1)$--principal bundle.
  \item  
    Let $\lambda \in \tfrac12\Z$.
    The Hermitian line bundle
    \begin{equation*}
      NZ^\lambda \coloneq \tilde F \times_{\U(1)} \C
    \end{equation*}
    arises from $\tilde F$ via the representation $\U(1) \circlearrowright \C$ of weight $2\lambda$.
    The Levi-Civita connection on $F$ induces a connection on $\tilde F$ and,
    therefore, a unitary covariant derivative $\nabla^\lambda$ on $NZ^\lambda$.
    \qedhere
  \end{enumerate}
\end{definition}

\begin{remark}
  \label{Rmk_NZLambda}
  By construction
  $\paren{NZ^1,\nabla^1} \iso \paren{NZ,\nabla^\LC}$
  and for every $\lambda,\mu \in \tfrac12\Z$
  \begin{equation*}
    \paren{NZ^\lambda,\nabla^\lambda} \otimes_\C \paren{NZ^\mu,\nabla^\mu} \iso \paren{NZ^{\lambda+\mu},\nabla^{\lambda+\mu}}.
    \qedhere
  \end{equation*}
\end{remark}

\begin{prop}
  \label{Prop_NZLambda}
  For every $\lambda \in \Z - 1/2$
  there is an isomorphism
  \begin{equation*}
    P_\lambda \co \pi^*(NZ^\lambda,\nabla^\lambda) \iso \paren{\underline\fl\otimes \C, \nabla_{\underline \fl \otimes \C} + i\lambda\theta}
  \end{equation*}
  of Hermitian line bundles with unitary connections.
\end{prop}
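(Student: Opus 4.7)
The plan is to exhibit $P_\lambda$ by an explicit fibrewise formula and then check the required compatibilities. For each $v \in F$ the fibre $\tilde F_v \coloneqq \rho^{-1}(v)$ consists of precisely the two unit vectors $\pm u \in \underline\fl_v$. Given $[\tilde v, z] \in (NZ^\lambda)_{\pi(v)} = \tilde F_{\pi(v)} \times_{\U(1)} \C$, choose the representative with $\tilde v \in \tilde F_v$ and set
\[
  P_\lambda\paren{[\tilde v, z]} \coloneqq \tilde v \otimes z \in \underline\fl_v \otimes_\R \C.
\]
The only other such representative is $(-\tilde v, (-1)^{2\lambda} z) = (-\tilde v, -z)$, because $\lambda \in \Z - 1/2$ makes $2\lambda$ odd; both give $\tilde v \otimes z$, so $P_\lambda$ is well defined. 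It is fibrewise $\C$--linear by construction; since unit representatives on either side have unit norm and the fibres are complex one-dimensional, $P_\lambda$ is a fibrewise Hermitian isometry. Smoothness is immediate from the local existence of smooth sections of the covering $\rho$.

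To verify compatibility of connections, pick a local section $\sigma \co V \to \tilde F$ of $\rho$ over an open $V \subset F$ and consider the frames $e \coloneqq [\sigma, 1]$ of $\pi^*NZ^\lambda$ and $f \coloneqq \sigma \otimes 1$ of $\underline\fl \otimes \C$, which satisfy $P_\lambda(e) = f$. Denote by $i\tilde\theta$ the connection 1-form of $\tilde F \to Z$. The identity $\rho_*(\tfrac12\del_\beta) = \del_\alpha$ together with the fact that $\rho$ preserves horizontal subspaces of the Levi-Civita connection forces
\[
  \tilde\theta = \tfrac12\, \rho^*\theta.
\]
By the standard formula for the induced connection on an associated line bundle in the weight $2\lambda$ representation, together with the transformation rule $\sigma^* \tilde\theta = \pi^*\tau^*\tilde\theta + d\kappa$ where $\sigma = (\tau\circ\pi)\cdot e^{i\kappa}$ for any local section $\tau$ of $\tilde\pi$,
\[
  (\pi^*\nabla^\lambda)\,e = i\cdot 2\lambda \cdot \sigma^*\tilde\theta \otimes e = i\lambda\,(\rho\circ\sigma)^*\theta \otimes e = i\lambda\,\theta|_V \otimes e,
\]
where the last equality uses $\rho\circ\sigma = \id_V$. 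On the other side, $\sigma$ is a unit section of the Euclidean real line bundle $\underline\fl$ and thus parallel for its canonical (flat) metric connection, so $\nabla_{\underline\fl\otimes\C} f = 0$ and
\[
  \paren{\nabla_{\underline\fl\otimes\C} + i\lambda\theta}f = i\lambda\,\theta|_V \otimes f.
\]
The two connection 1-forms coincide in compatible frames, so $P_\lambda$ intertwines the connections. Uniqueness of $P_\lambda$ follows because it is prescribed on a generating family of unit frames.

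The main subtlety is tracking the factor $\tfrac12$ coming from $\rho \co \tilde F \to F$ being a double cover on $\U(1)$--fibres; this is precisely what converts the weight $2\lambda$ into the twist $\lambda$ on the other side. Half-integrality of $\lambda$ is decisive in two places: in well-definedness via $(-1)^{2\lambda} = -1$, and geometrically, since for integer $\lambda$ the bundle $NZ^\lambda$ already descends to $F$ and no identification involving $\underline\fl$ should be expected.
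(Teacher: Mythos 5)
Your proof is correct and uses the same two key facts as the paper's argument: that half-integrality of $\lambda$ (equivalently $(-1)^{2\lambda}=-1$) makes the identification well defined, and that $\tilde\theta = \tfrac12\rho^*\theta$ accounts for the factor converting weight $2\lambda$ into twist $\lambda$. The paper packages this in the language of structure-group extension of principal bundles — it shows $\pi^*\tilde F \iso \tilde F \times_{\set{\pm 1}} \U(1)$ as $\U(1)$--bundles over $F$ and compares the two induced connections — whereas you construct $P_\lambda$ by an explicit fibrewise formula and verify the connection identity in a local frame $e = [\sigma,1]$, $f = \sigma\otimes 1$. The content is the same, just presented more concretely; your frame computation makes the cancellation $2\lambda \cdot \tfrac12 = \lambda$ a bit more visible.
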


\begin{proof}
  Consider the $\U(1)$--principal bundle $\tilde F\times_{\set{\pm 1}} \U(1) \to F$ obtained by extending the $\set{\pm 1}$--principal bundle $\tilde F \to F$ along the inclusion $\iota \co \set{\pm 1} \incl \U(1)$.
  The $\U(1)$--principal bundles $\tilde F\times_{\set{\pm 1}} \U(1) \to F$ and $\pi^*\tilde F \to F$ are isomorphic via $[f,z] \mapsto [f, \rho(f\cdot z)]$.

  Let $\lambda \in \Z-1/2$.
  The representation $\U(1) \circlearrowright \C$ of weight $2\lambda$ restricts to the usual representation $\set{\pm 1} \circlearrowright \C$ along $\iota$.
  Therefore, $\pi^*NZ^\lambda$ and $\underline\fl$ both arise from the representation of weight $2\lambda$.
  Hence, they are isomorphic as Hermitian line bundles.

  The Levi-Civita connection $i\theta$ on $F \to \Z$ induces the connection $\frac{i}{2}\rho^*\theta$ on $\tilde F \to Z$.
  Therefore, the induced connection on $\tilde F \times_{\set{\pm 1}} \U(1) \iso \pi^*F$ is (the descend of) $\frac{i}{2}\rho^*\theta + \mu_{\U(1)}$.
  Here $\mu_{\U(1)} \in \U(1)$ denotes the Maurer--Cartan form on $\U(1)$.
  The connection on $\tilde F \times_{\set{\pm 1}} \U(1)$ induced by the flat connection on $\tilde F \to F$ is (the descend) of $\mu_{\U(1)}$.
  This implies the assertion about the covariant derivatives.
\end{proof}

\begin{prop}[Spectral decomposition of $L^2\Gamma\paren{F,\underline S \otimes \underline\fl}$]
  \label{Prop_SpectralDecomposition}
  For every $\lambda \in \Z-1/2$ and $\mu \in \R$
  set
  \begin{align*}
    E_{\lambda,\mu}
    &\coloneq
      \set[\big]{
      \phi \in L^2\Gamma\paren{F,\underline S \otimes \underline\fl}
      :
      I\mathring\nabla_{\del_\alpha} \phi = \lambda \phi,
      D_Z\phi = \mu \phi
      } \qand \\
    \check E_{\lambda,\mu}
    &\coloneq
      \set[\big]{
      \check\phi \in L^2\Gamma\paren{Z,S|_Z \otimes_\C NZ^\lambda}
      :
      D_{S|_Z \otimes_\C NZ^\lambda} \check\phi = \mu \check\phi
      }.        
  \end{align*}
  Here $D_Z$ is as in \autoref{Rmk_ModelDiracOperator} and $D_{S|_Z \otimes_\C NZ^\lambda}$ arises from \autoref{Def_RestrictionOfDiracBundles} and twisting by $(NZ^\lambda,\nabla^\lambda)$;
  moreover: the tensor product is with respect to the complex structure $I$ on $S|_Z$.
  The following hold:
  \begin{enumerate}
  \item
    \label{Prop_SpectralDecomposition_=}
    For every $\lambda \in \Z-1/2$ and $\mu \in \R$,
    $P_\lambda$ induces an isomorphism
    \begin{equation*}
      \pi^* \check E_{\lambda,\mu}
      \iso
      E_{\lambda,\mu}.
    \end{equation*}
  \item
    \label{Prop_SpectralDecomposition_Spectrum}
    The subset
    \begin{equation*}
      \Spectrum
      \coloneq
      \set[\big]{
        (\lambda,\mu) \in \paren{\Z-1/2} \times \R
        :
        E_{\lambda,\mu} \neq 0
      }
    \end{equation*}
    is discrete.
    Moreover, for every $(\lambda,\mu) \in \Spectrum$, $E_{\lambda,\mu}$ is finite-dimensional.
  \item
    \label{Prop_SpectralDecomposition_J}
    For every $(\lambda,\mu) \in \Spectrum$
    \begin{equation*}
      JE_{\lambda,\mu} = E_{-(\lambda+1),-\mu}.
    \end{equation*}
  \item
    \label{Prop_SpectralDecomposition_Decomposition}
    The Hilbert space $L^2\Gamma\paren{F,\underline S \otimes \underline\fl}$ decomposes as a (Hilbert space) direct sum
    \begin{equation*}
      L^2\Gamma\paren{F,\underline S \otimes \underline\fl}
      =
      \bigoplus_{(\lambda,\mu) \in \Spectrum} E_{\lambda,\mu}.
    \end{equation*}

  \end{enumerate}
\end{prop}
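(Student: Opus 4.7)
The plan is to Fourier decompose along the fibres of $\pi \co F \to Z$ using the vertical vector field $\del_\alpha$, identify each Fourier summand with sections of a Dirac bundle on the closed manifold $Z$ twisted by $NZ^\lambda$, and then invoke standard spectral theory for elliptic self-adjoint operators on closed manifolds.

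For \autoref{Prop_SpectralDecomposition_=}, given $\check\phi \in \Gamma(Z, S|_Z \otimes_\C NZ^\lambda)$, the pullback $\pi^*\check\phi$ is a section of $\pi^*(S|_Z) \otimes_\C \pi^*NZ^\lambda$ satisfying $(\pi^*\nabla^\lambda)_{\del_\alpha}\pi^*\check\phi = 0$. Applying $P_\lambda$ from \autoref{Prop_NZLambda}---which identifies $\pi^*\nabla^\lambda$ with $\nabla_{\underline\fl\otimes\C} + i\lambda\theta$---produces a section $\phi$ of $\underline S \otimes \underline\fl$ satisfying $\mathring\nabla_{\del_\alpha}\phi = -\lambda I\phi$, equivalently $I\mathring\nabla_{\del_\alpha}\phi = \lambda\phi$; here I use that $\theta(\del_\alpha) = 1$ and that the complex structure on the $\C$-factor corresponds to $I$ on $\underline S$. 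The horizontal part of $\mathring D$, namely $D_Z$ in \autoref{Rmk_ModelDiracOperator}, is by construction the pullback of the Dirac operator $D_{S|_Z \otimes_\C NZ^\lambda}$ built from \autoref{Def_RestrictionOfDiracBundles} and the twist $(NZ^\lambda, \nabla^\lambda)$; so $D_Z\phi = \mu\phi$ iff $D_{S|_Z \otimes_\C NZ^\lambda}\check\phi = \mu\check\phi$. This provides the claimed isomorphism, and injectivity/surjectivity follow because $P_\lambda$ is an isomorphism and $\pi^*$ is injective with image the $\del_\alpha$-invariant sections.

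For \autoref{Prop_SpectralDecomposition_Spectrum} and \autoref{Prop_SpectralDecomposition_Decomposition}, I would first show that $I\mathring\nabla_{\del_\alpha}$, regarded fibrewise as the generator of the twisted $\U(1)$-action on $L^2(F_z, \underline S \otimes \underline\fl|_{F_z})$, is self-adjoint with spectrum precisely $\Z - 1/2$---the shift by $1/2$ coming from the monodromy $-1$ of the non-trivial Euclidean line bundle $\underline\fl$ on each circle fibre. The resulting orthogonal Fourier decomposition gives $L^2\Gamma(F, \underline S \otimes \underline\fl) = \bigoplus_{\lambda \in \Z - 1/2} E_\lambda$ as a Hilbert direct sum, and \autoref{Prop_SpectralDecomposition_=} identifies $E_\lambda$ with $\pi^*L^2\Gamma(Z, S|_Z \otimes_\C NZ^\lambda)$ intertwining $D_Z$ with $D_{S|_Z \otimes_\C NZ^\lambda}$. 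The latter is elliptic and formally self-adjoint on the closed manifold $Z$, hence has discrete $L^2$-spectrum with finite-dimensional eigenspaces $\check E_{\lambda,\mu}$. Assembling these over $\lambda$ proves discreteness of $\Spectrum$, finite-dimensionality of every $E_{\lambda,\mu}$, and the direct-sum decomposition.

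For \autoref{Prop_SpectralDecomposition_J}, using the quaternion relations $I^2 = J^2 = -\one$, $IJ = K = -JI$, $IK = -J$, $KI = J$, together with $\mathring\nabla^T_{\del_\alpha}\del_r = \del_\alpha$ (so $[\mathring\nabla_{\del_\alpha}, J] = \gamma(\del_\alpha) = K$), a direct computation yields $I\mathring\nabla_{\del_\alpha}(J\phi) = IK\phi + IJ\mathring\nabla_{\del_\alpha}\phi = -J\phi + K \cdot (-\lambda I)\phi = -(\lambda+1)J\phi$ whenever $I\mathring\nabla_{\del_\alpha}\phi = \lambda\phi$. Since $\del_r$ is parallel in horizontal directions and Clifford-anticommutes with each horizontally lifted frame vector $\tilde e_i$, we have $\{J, D_Z\} = 0$, hence $D_Z(J\phi) = -\mu J\phi$. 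Together these give $JE_{\lambda,\mu} \subseteq E_{-(\lambda+1),-\mu}$, with equality following from $J^2 = -\one$. The main obstacle is the bookkeeping in \autoref{Prop_SpectralDecomposition_=}: one must carefully verify that the $i\lambda\theta$-twist introduced by $P_\lambda$ produces the eigenvalue $+\lambda$ with the stated sign convention, and that the correction $\tfrac12\mathring\gamma(\rII)$ in $\mathring\nabla$ is exactly what is needed to promote the horizontal part of $\mathring D$ to the twisted Dirac operator on $Z$; once this identification is in place the remaining steps reduce to routine elliptic spectral theory.
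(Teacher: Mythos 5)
Your proposal is correct and follows essentially the same route as the paper: Fourier decomposition along the $\U(1)$-fibres to get $L^2\Gamma(F,\underline S\otimes\underline\fl)=\bigoplus_{\lambda\in\Z-1/2}E_\lambda$, use of $P_\lambda$ from the preceding proposition to identify $E_\lambda$ with $\pi^*L^2\Gamma(Z,S|_Z\otimes_\C NZ^\lambda)$ intertwining $D_Z$ with $D_{S|_Z\otimes_\C NZ^\lambda}$, standard elliptic spectral theory on the closed manifold $Z$ for discreteness and finite-dimensionality, and the identities $\{J,D_Z\}=0$ and $I\mathring\nabla_{\del_\alpha}J=-J(I\mathring\nabla_{\del_\alpha}+\one)$ (which you derive from $\mathring\nabla_{\del_\alpha}\del_r=\del_\alpha$ and the quaternion relations) for part (3). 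Your write-up is more explicit about the sign bookkeeping than the paper's terse proof, but the argument is the same.
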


\begin{proof}
  By Fourier analysis,
  the Hilbert space $L^2\Gamma\paren{F,\underline S \otimes \underline\fl}$ decomposes as a direct sum
  \begin{equation*}
    L^2\Gamma\paren{F,\underline S \otimes \underline\fl}
    =
    \bigoplus_{\lambda \in \Z-1/2} E_\lambda
    \qwithq
    E_\lambda
    \coloneq
    \set{
      \phi \in L^2\Gamma\paren{F,\underline S \otimes \underline\fl}
      :
      I\mathring\nabla_{\del_\alpha} \phi = \lambda \phi
    }.
  \end{equation*}
  By \autoref{Prop_NZLambda}, $P_\lambda$ induces an isomorphism
  $\pi^*L^2\Gamma\paren{Z,S|_Z \otimes_\C NZ^\lambda} \iso E_\lambda$.
  By the spectral theory of Dirac operators,
  for every $\lambda \in \Z-1/2$,
  $\spec\paren{D_{S|_Z \otimes_\C NZ^\lambda}} \subset \R$ is discrete and
  the Hilbert space $L^2\Gamma\paren{Z,S|_Z \otimes_\C NZ^\lambda}$ decomposes as a direct sum finite-dimensional eigenspaces $\check E_{\lambda,\mu}$ of $D_{S|_Z \otimes_\C NZ^\lambda}$.
  This proves \autoref{Prop_SpectralDecomposition_=}, \autoref{Prop_SpectralDecomposition_Spectrum}, and \autoref{Prop_SpectralDecomposition_Decomposition}.

  \autoref{Prop_SpectralDecomposition_J} holds because
  $J$ and $D_Z$ anti-commute and
  $I\mathring\nabla_{\del_\alpha} J = -J\paren{I\mathring\nabla_{\del_\alpha} + \one}$
  since $\mathring\nabla_{\del_\alpha} \del_r = \del_\alpha$.
\end{proof}

Since $\vol_{\mathring g} = \rd r \wedge r \theta \wedge \vol_{g|_Z}$,
by Fubini's theorem and \autoref{Prop_SpectralDecomposition},
\begin{equation*}
  L^2\Gamma\paren{U\setminus Z, \mathring S \otimes \mathring\fl}  
  = L^2\paren{(0,1),r\rd r;L^2\Gamma\paren{F,\underline S \otimes \underline\fl}}  
  =
  \bigoplus_{\lambda \in \Z-1/2} \bigoplus_{\mu \in \Spectrum_\lambda}
  L^2\paren{(0,1),r\rd r;E_{\lambda,\mu}};
\end{equation*}
moreover, $\Dmax$ decomposes as follows.

\begin{definition}
  \emph{Choose} a fundamental domain $\check\Spectrum \subset \Spectrum$ for the involution $(\lambda,\mu) \mapsto (-(\lambda+1),-\mu)$.
  \emph{Choose} a real subspace $E_{-1/2,0}^\R \subset E_{-1/2,0}$ with respect to $J$.
  For every $(\lambda,\mu) \in \check\Spectrum$
  set
  \begin{equation*}
    V_{\lambda,\mu}
    \coloneq
    \begin{cases}
      E_{-1/2,0}^\R \oplus JE_{-1/2,0}^\R & \textnormal{if } (\lambda,\mu) = (-1/2,0) \\
      E_{\lambda,\mu} \oplus E_{-(\lambda+1),-\mu} & \textnormal{otherwise};
    \end{cases}
  \end{equation*}
  moreover,
  define $\mathring D^{\lambda,\mu} \co H_\loc^1((0,1);V_{\lambda,\mu}) \to L_\loc^2((0,1);V_{\lambda,\mu})$ by
  \begin{equation*}
    \mathring{D}^{\lambda,\mu}
    \coloneq      
    \begin{pmatrix}
      \mu & J\paren{\del_r  + \frac{\lambda+1}{r}} \\
      J\paren{\del_r - \frac{\lambda}{r}} & -\mu
    \end{pmatrix}
  \end{equation*}
  and set
  \begin{equation*}
    \dom(\mathringDmax^{\lambda,\mu})
    \coloneq
    \set[\big]{    
      \phi \in H_\loc^1((0,1);V_{\lambda,\mu})
      :
      \phi,\mathring D\phi \in
      L^2\paren{(0,1),r\rd r;V_{\lambda,\mu}}
    }.
    \qedhere
  \end{equation*}
\end{definition}

\begin{remark}
  \label{Rmk_E-1/2,0}
  The purpose of the artificial decomposition of $E_{-1/2,0}$ is to avoid a case distinction in the definition of $\mathring D^{\lambda,\mu}$.
\end{remark}

\begin{prop}[Spectral decomposition of $\dom(\mathringDmax)$]
  \label{Prop_SpectralDecomposition_DomDMax}  
  The following hold:
  \begin{enumerate}
  \item
    \label{Prop_SpectralDecomposition_DomDMax_Dom}  
    The Hilbert space $\dom(\mathringDmax)$ decomposes as a (Hilbert space) direct sum
    \begin{equation*}
      \dom(\mathringDmax)
      =
      \bigoplus_{(\lambda,\mu) \in \check\Spectrum} \dom(\mathringDmax^{\lambda,\mu}).
    \end{equation*}
  \item
    \label{Prop_SpectralDecomposition_DomDMax_D}
    The model operator $\mathring D$ decomposes as
    \begin{equation*}
      \mathring D
      =
      \bigoplus_{(\lambda,\mu) \in \check\Spectrum} 
      \mathring D^{\lambda,\mu}.
    \end{equation*}
  \end{enumerate}
\end{prop}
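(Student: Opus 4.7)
The plan is to transfer the spectral decomposition of $L^2\Gamma\paren{F,\underline S\otimes\underline\fl}$ established in \autoref{Prop_SpectralDecomposition} to $L^2\Gamma\paren{U\setminus Z,\mathring S\otimes\mathring\fl}$ via Fubini in the radial variable, verify that $\mathring D$ preserves each block $L^2\paren{(0,1),r\rd r;V_{\lambda,\mu}}$ with block action equal to $\mathring D^{\lambda,\mu}$, and then use Parseval to upgrade both statements from the $L^2$ level to the domain $\dom(\mathringDmax)$.

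First, combining the Fubini identification noted just before the proposition with the involution $(\lambda,\mu)\mapsto(-(\lambda+1),-\mu)$ induced by $J$ (\autoref{Prop_SpectralDecomposition}) and the choice of fundamental domain $\check\Spectrum$ yields the $L^2$--level decomposition
\begin{equation*}
  L^2\Gamma\paren{U\setminus Z,\mathring S\otimes\mathring\fl}
  \iso
  \bigoplus_{(\lambda,\mu)\in\check\Spectrum} L^2\paren{(0,1),r\rd r;V_{\lambda,\mu}}.
\end{equation*}
Next, \autoref{Rmk_ModelDiracOperator} gives $\mathring D = J\paren{\del_r - r^{-1}I\mathring\nabla_{\del_\alpha}} + D_Z$. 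For $\psi\in E_{\lambda,\mu}$ and $f\in C^\infty((0,1))$, the eigenvalue relations $I\mathring\nabla_{\del_\alpha}\psi = \lambda\psi$ and $D_Z\psi = \mu\psi$ together with $J\psi\in E_{-(\lambda+1),-\mu}$ give
\begin{equation*}
  \mathring D(f\psi)
  = \mu f\psi + \paren*{f'(r) - \tfrac{\lambda}{r}f(r)} J\psi
  \in C^\infty((0,1);V_{\lambda,\mu}).
\end{equation*}
Applying the same computation to the other summand—with $(\lambda,\mu)$ replaced by $(-(\lambda+1),-\mu)$—reassembles the block action into precisely the matrix $\mathring D^{\lambda,\mu}$ of the definition.

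To promote this to the domain, I would use that the orthogonal projection $P_{\lambda,\mu}\co L^2\Gamma\paren{F,\underline S\otimes\underline\fl}\to V_{\lambda,\mu}$ is $r$--independent and hence commutes with $\del_r$ as well as with the fibrewise operators $I\mathring\nabla_{\del_\alpha}$ and $D_Z$. For $\phi\in\dom(\mathringDmax)$ the component $\phi_{\lambda,\mu}\coloneqq P_{\lambda,\mu}\phi$ inherits local $H^1$ regularity in $r$, and by the computation above $\mathring D\phi_{\lambda,\mu} = P_{\lambda,\mu}\mathring D\phi\in L^2\paren{(0,1),r\rd r;V_{\lambda,\mu}}$, so $\phi_{\lambda,\mu}\in\dom(\mathringDmax^{\lambda,\mu})$. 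Parseval applied to both $\phi$ and $\mathring D\phi$ then yields the orthogonal direct sum decomposition of $\dom(\mathringDmax)$ in the graph norm together with $\mathring D = \bigoplus \mathring D^{\lambda,\mu}$; the converse inclusion—reassembling an $\ell^2$--summable family with finite total graph norm into an element of $\dom(\mathringDmax)$—follows by taking Cauchy partial sums and invoking closedness of $\mathringDmax$ from \autoref{Prop_DMin_ClosedDenselyDefinedSymmetric}.

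The main technical obstacle I foresee is the measurable-field/Fubini bookkeeping: one has to confirm that projecting a section on the product $U\setminus Z\iso(0,1)\times F$ onto a fibrewise finite-dimensional spectral subspace preserves $H^1_\loc$ regularity in $r$ and commutes with every differential operator appearing in $\mathring D$. Once that is pinned down, the remainder is the essentially tautological computation above.
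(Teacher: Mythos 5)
Your proposal is correct and fills in precisely the details that the paper elides: its entire proof is the one-line remark that the proposition is an immediate consequence of \autoref{Rmk_ModelDiracOperator} and \autoref{Prop_SpectralDecomposition}. The Fubini decomposition, the block computation reassembling $\mathring D$ into the matrix $\mathring D^{\lambda,\mu}$, and the Parseval argument for $\dom(\mathringDmax)$ are exactly what is being silently invoked, so the route is the same.
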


\begin{proof}
  This is an immediate consequence of \autoref{Rmk_ModelDiracOperator} and \autoref{Prop_SpectralDecomposition}.
\end{proof}

\begin{remark}
  \label{Rmk_ODE}
  The ordinary differential equation $\mathring D^{\lambda,\mu} \phi = \psi$ can be solved explicitly
  in terms of modified Bessel functions of the second kind or using the Hankel transform.
  However, none of this is necessary for the purpose of this article.
\end{remark}

Finally,
here is the desired decomposition of $(\mathring{\GelfandRobbinQuotient}_0,\mathring G)$.

\begin{cor}[Spectral decomposition of $(\mathring\GelfandRobbinQuotient_0,\mathring G)$]
  \label{Cor_SpectralDecomposition_GelfandRobbinQuotient}
  The symplectic Hilbert space $(\mathring\GelfandRobbinQuotient_0,\mathring G)$ decomposes as a (Hilbert space) direct sum
  \begin{equation*}
    \paren{\mathring{\GelfandRobbinQuotient}_0,\mathring G}
    =
    \bigoplus_{(\lambda,\mu) \in \check\Spectrum}
    \paren{\mathring{\GelfandRobbinQuotient}_0^{\lambda,\mu},\mathring G_{\lambda,\mu}}
  \end{equation*}
  with
  \begin{align*}
    \mathring{\GelfandRobbinQuotient}_0^{\lambda,\mu}
    &\coloneq
      \frac{\chi(r) \dom(\mathringDmax^{\lambda,\mu})+ \dom(\mathringDmin)}{\dom(\mathringDmin)}
      \qand \\
    \mathring G_{\lambda,\mu}([\phi] \wedge [\psi])
    &\coloneq
      \Inner{\mathring D^{\lambda,\mu}\phi,\psi}_{L^2} -   \Inner{\phi,\mathring D^{\lambda,\mu}\psi}_{L^2}.
      \qedhere
  \end{align*}
\end{cor}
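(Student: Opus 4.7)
The plan is to descend the spectral decomposition of \(\dom(\mathringDmax)\) from \autoref{Prop_SpectralDecomposition_DomDMax} through the cutoff projection \(\pi = [\chi(r) \cdot -]\) introduced in the proof of \autoref{Prop_ModelGelfandRobbinQuotient_Decomposition}, and then verify that the model Green's form is block-diagonal with respect to the resulting splitting.

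First, I would observe that multiplication by \(\chi(r)\) acts purely in the radial direction and therefore commutes with the \(L^2\)--orthogonal projections \(P_{\lambda,\mu}\) onto the subspaces \(V_{\lambda,\mu}\), since the latter act only on the \(F\)--factor. The projections \(P_{\lambda,\mu}\) also commute with \(\mathring D\) by \autoref{Prop_SpectralDecomposition_DomDMax}, and they are \(L^2\)--bounded; hence they are \(\Abs{-}_{\mathring D}\)--isometric, and therefore preserve both \(\dom(\mathringDmax)\) and its \(\Abs{-}_{\mathring D}\)--closed subspace \(\dom(\mathringDmin)\). (At the level of generators, \(P_{\lambda,\mu}\) sends any \(\phi \in \Gamma_c\paren{U\setminus Z,\mathring S\otimes\mathring\fl}\) to a section of the form \(f(r)\cdot e_{\lambda,\mu}\) with \(f\in C_c^\infty((0,1))\) and \(e_{\lambda,\mu}\) a smooth eigensection on \(F\), which again lies in \(\Gamma_c\).) Passing to the quotient yields \(\mathring{\GelfandRobbinQuotient} = \bigoplus_{(\lambda,\mu) \in \check\Spectrum} \mathring{\GelfandRobbinQuotient}^{\lambda,\mu}\), and since \(\pi\) is diagonal with respect to this splitting its image decomposes as \(\mathring{\GelfandRobbinQuotient}_0 = \bigoplus_{(\lambda,\mu) \in \check\Spectrum} \mathring{\GelfandRobbinQuotient}_0^{\lambda,\mu}\).

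Second, for the symplectic structure, I would exploit that \(\mathring D\) is block-diagonal with blocks \(\mathring D^{\lambda,\mu}\) preserving each \(V_{\lambda,\mu}\). Combined with \(\vol_{\mathring g} = \rd r \wedge r\theta \wedge \vol_{g|_Z}\) and the mutual \(L^2\)--orthogonality of the \(V_{\lambda,\mu}\) along \(F\), Fubini's theorem forces both \(\Inner{\mathring D \phi,\psi}_{L^2}\) and \(\Inner{\phi,\mathring D \psi}_{L^2}\) to vanish whenever \([\phi]\) and \([\psi]\) lie in distinct summands. Hence \(\mathring G\) is block-diagonal, and its restriction to \(\mathring{\GelfandRobbinQuotient}_0^{\lambda,\mu}\) is by construction \(\mathring G_{\lambda,\mu}\).

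The only delicate point is the stability of \(\dom(\mathringDmin)\) under the spectral projections, which is the main obstacle I anticipate; everything else is essentially bookkeeping relative to \autoref{Prop_SpectralDecomposition_DomDMax} and \autoref{Prop_ModelGelfandRobbinQuotient_Decomposition}. This stability follows from the graph-isometry of the \(P_{\lambda,\mu}\) combined with the concrete description of \(P_{\lambda,\mu}\Gamma_c\) above, which identifies it with a dense subspace of \(H_0^1((0,1);V_{\lambda,\mu})\) inside \(\dom(\mathringDmin^{\lambda,\mu})\).
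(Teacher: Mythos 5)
Your argument is correct and is essentially the proof the authors leave implicit: the Corollary is stated without demonstration precisely because it drops out of \autoref{Prop_SpectralDecomposition_DomDMax} together with \autoref{Prop_ModelGelfandRobbinQuotient_Decomposition} once one notes that the fibrewise spectral projections commute with both $\mathring D$ and multiplication by $\chi(r)$, preserve $\dom(\mathringDmin)$ (via the dense image of $\Gamma_c$, which you verify), and therefore descend to the cut-off quotient; the block-diagonality of $\mathring G$ then follows from the $L^2$-orthogonality of the $V_{\lambda,\mu}$ and $\mathring D$-invariance. The only blemish is the phrase "$\Abs{-}_{\mathring D}$--isometric": the $P_{\lambda,\mu}$ are graph-orthogonal projections, hence graph-norm \emph{contractive}, not isometric; but since contractivity (together with density of $P_{\lambda,\mu}\Gamma_c$ in the target) is all that is used, the slip does not affect the argument.
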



\subsection{Leading order terms}
\label{Sec_LeadingOrderTerms}

This subsection determines $\paren{\mathring{\GelfandRobbinQuotient}_0^{\lambda,\mu},\mathring G_{\lambda,\mu}}$ based on the following observation.

\begin{lemma}[{Leading order terms; cf.~\cites[Lemma 2.1]{BrueningSeeley1988}[Lemma 3.50]{Doan2024}}]
  \label{Lem_LeadingTerm}
  Let $\lambda \in \R$.
  Let $\phi \in H_\loc^1((0,1))$ with $\phi,\paren{\partial_r - \lambda/r}\phi \in L^2((0,1),r \rd r)$.
  The following hold:
  \begin{enumerate}
  \item
    \label{Lem_LeadingTerm_(-1,0)}
    If $\lambda \in (-1,0)$,
    then there is a unique $a \in \R$ such that
    $\lim_{r\downarrow 0} \phi(r) - ar^{\lambda} = 0$.
  \item
    \label{Lem_LeadingTerm_=0}
    If $\lambda = 0$,
    then $\phi(r) \lesssim_{\phi} \abs{\log(r)}^{1/2}$.
  \item
    \label{Lem_LeadingTerm_Else}
    If $\lambda \neq (-1,0]$,
    then $\lim_{r\downarrow 0} \phi(r) = 0$.
  \item
    \label{Lem_LeadingTerm_Estimate}
    If $\lim_{r \downarrow 0} \phi(r) = \lim_{r \uparrow 1} \phi(r) = 0$,
    then
    \begin{equation*}
      \int_0^1 \paren*{\abs{\del_r \phi}^2 + \tfrac{\lambda^2}{r^2}\abs{\phi}^2} \,r\rd r
      =
      \int_0^1 \abs*{\paren[\big]{\del_r-\tfrac{\lambda}{r}} \phi}^2 r\rd r.
    \end{equation*}
  \end{enumerate}
\end{lemma}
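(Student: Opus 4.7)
The plan is to reduce everything to the conjugate $\psi(r) \coloneqq r^{-\lambda}\phi(r)$, which satisfies the identity $\psi'(r) = r^{-\lambda}\paren{\del_r - \lambda/r}\phi$. Under this substitution the hypotheses on $\phi$ translate into $\psi, \psi' \in L^2\paren*{(0,1), r^{2\lambda+1}\,\rd r}$. The workhorse, to be used in parts (1)--(3), is the weighted Cauchy--Schwarz inequality
\begin{equation*}
  \abs{\psi(r_2)-\psi(r_1)}^2 \leq \paren*{\int_{r_1}^{r_2} s^{-2\lambda-1}\rd s}\paren*{\int_{r_1}^{r_2} s^{2\lambda+1}\abs{\psi'}^2\rd s},
\end{equation*}
whose usefulness near the origin is governed by the convergence of $\int_0^\epsilon s^{-2\lambda-1}\rd s$, i.e., by the sign of $\lambda$.

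For (1), since $\lambda \in (-1, 0)$, this estimate shows $\psi$ is Cauchy as $r \downarrow 0$ and so admits a limit $a$. Taking $r_1 \downarrow 0$ and $r_2 = r$ yields $\abs{\psi(r)-a}^2 \leq \tfrac{r^{-2\lambda}}{-2\lambda} C(r)$ with $C(r) \coloneqq \int_0^r s^{2\lambda+1}\abs{\psi'}^2\rd s \to 0$, whence $\abs{\phi(r) - a r^\lambda} = r^\lambda\abs{\psi(r)-a} \to 0$; uniqueness of $a$ is immediate. For (2), $\psi = \phi$ and the same inequality with $r_2 = 1/2$ gives $\abs{\phi(r) - \phi(1/2)}^2 \lesssim \abs{\log r}$, producing the $\abs{\log r}^{1/2}$ bound. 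For part (3) in the case $\lambda \leq -1$, the argument of (1) still delivers $\psi \to a$; but the integrability $\int_0^1 r^{2\lambda+1}\abs{\psi}^2\rd r < \infty$ together with $2\lambda+1 \leq -1$ forces $a = 0$, and then $\abs{\phi(r)}^2 = r^{2\lambda}\abs{\psi(r)}^2 \leq C(r)/(-2\lambda) \to 0$.

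The main obstacle is part (3) in the case $\lambda > 0$, where $\int_0^\epsilon s^{-2\lambda-1}\rd s$ diverges and the Cauchy argument for $\psi$ fails. The idea is to use the polynomial decay of $r^\lambda$ to absorb this divergence. Writing $g \coloneqq (\del_r - \lambda/r)\phi \in L^2((0,1), r\rd r)$, variation of parameters gives
\begin{equation*}
  \phi(r) = r^\lambda r_0^{-\lambda}\phi(r_0) - r^\lambda\int_r^{r_0} s^{-\lambda}g(s)\rd s
\end{equation*}
for any fixed $r_0 \in (0,1)$. Applying Cauchy--Schwarz with the splitting $s^{-\lambda} = s^{-\lambda-1/2}\cdot s^{1/2}$ gives $\abs{\int_r^{r_0} s^{-\lambda}g\,\rd s}^2 \leq \tfrac{r^{-2\lambda}}{2\lambda}\Abs{g}_{L^2((0,r_0),s\rd s)}^2$, so that
\begin{equation*}
  \abs{\phi(r)} \leq r^\lambda r_0^{-\lambda}\abs{\phi(r_0)} + \tfrac{1}{\sqrt{2\lambda}}\Abs{g}_{L^2((0,r_0), s\rd s)}.
\end{equation*}
Taking $\limsup_{r\downarrow 0}$ first, then letting $r_0 \downarrow 0$ and using absolute continuity of $\Abs{g}_{L^2((0,r_0),s\rd s)}^2$ in $r_0$, forces $\phi(r) \to 0$.

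Finally, (4) should be a direct integration by parts: expanding $\abs{(\del_r - \lambda/r)\phi}^2 = \abs{\phi'}^2 - 2\lambda/r\,\Re(\phi'\bar\phi) + (\lambda/r)^2\abs{\phi}^2$ and integrating against $r\rd r$ turns the cross term into $-\lambda\int_0^1(\abs{\phi}^2)'\rd r = -\lambda\bigl[\abs{\phi}^2\bigr]_0^1$, which vanishes by the boundary hypothesis.
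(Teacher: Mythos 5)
Your proof is correct and follows essentially the same route as the paper: both arguments hinge on variation of parameters for the first-order ODE combined with weighted Cauchy--Schwarz with the weight $s^{\pm(\lambda+1/2)}$, splitting into the three sign cases for $\lambda$, and part (4) by the same integration by parts. The only cosmetic differences are that you phrase the $\lambda<0$ cases through the conjugate $\psi=r^{-\lambda}\phi$ and a Cauchy-criterion argument, whereas the paper works directly with $\tilde\phi = \phi - ar^\lambda$ via an explicit integral formula, and for $\lambda>0$ you send an intermediate reference point $r_0\downarrow 0$ where the paper splits the integral at $\epsilon$ and sends $\epsilon\downarrow 0$; these are equivalent limiting arguments.
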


\begin{proof}
  The proof is almost identical to that of \cite[Lemma 3.50]{Doan2024} and is repeated here only for the readers' convenience.
  
  Evidently,
  $(\del_r-\lambda/r)r^\lambda = 0$ and $r^\lambda \in L^2\paren{(0,1),r\rd r}$ if and only if $\lambda > -1$.
  Set $\psi \coloneq \paren{\del_r - \lambda/r}\phi$.
  By variation of parameters,
  there is a unique $a \in \R$ such that
  \begin{equation*}
    \tilde\phi(r)
    \coloneq
    \phi(r) - a r^\lambda
    =
    \begin{cases}
      r^\lambda\int_0^r s^{-(\lambda+1)}\psi(s) \,s \rd s & \text{if}~\lambda < 0 \\
      -r^\lambda\int_r^1 s^{-(\lambda+1)}\psi(s) \,s \rd s & \text{if}~\lambda \geq 0.
    \end{cases}
  \end{equation*}
  Of course, if $\lambda \leq -1$, then $a = 0$.

  If $\lambda < 0$,
  then, by Cauchy--Schwarz and monotone convergence,
  \begin{equation*}
    \abs{\tilde\phi(r)}^2 \leq \frac{1}{2\abs{\lambda}} \int_0^r \abs{\psi(s)}^2 \,s\rd s = o(1)
    \quad\textnormal{as}\quad r \downarrow 0.
  \end{equation*}
  If $\lambda = 0$,
  then
  \begin{equation*}
    \abs{\tilde\phi(r)}^2 \leq \abs{\log(r)} \int_r^1 \abs{\psi(s)}^2 \,s\rd s = O(\abs{\log(r)})
    \quad\textnormal{as}\quad r \downarrow 0.
  \end{equation*}  
  If $\lambda > 0$, then,
  by Cauchy–Schwarz,
  for $r \leq \epsilon \leq 1$
  \begin{equation*}
    \abs{\tilde\phi(r)}^2
    \leq
    \frac{1}{\lambda}
    \int_0^\epsilon \abs{\psi(s)}^2 \,s\rd s
    +
    \frac{(r/\epsilon)^{2\lambda}}{\lambda} \int_\epsilon^1 \abs{\psi(s)}^2 \,s\rd s
    \eqcolon \rI(\epsilon) + \rII(r,\epsilon).
  \end{equation*}
  By monotone convergence,
  $\lim_{\epsilon \downarrow 0} \rI(\epsilon) = 0$.
  Evidently,
  $\lim_{r \downarrow 0} \rII(r,\epsilon) = 0$.
  Therefore, $\tilde\phi(r) = o(1)$ as $r \downarrow 0$.
  These observations imply \autoref{Lem_LeadingTerm_(-1,0)}, \autoref{Lem_LeadingTerm_=0}, and \autoref{Lem_LeadingTerm_Else}.  
  
  \autoref{Lem_LeadingTerm_Estimate} is a consequence of
  \begin{equation*}
    \int_0^1 \abs*{\paren[big]{\del_r-\tfrac{\lambda}{r}} \phi}^2 r\rd r
    =
    \int_0^1 \paren*{\abs{\del_r\phi}^2 + \tfrac{\lambda^2}{r^2} \abs{\phi}^2} \, r\rd r
    -\lambda
    \int_0^1 \del_r \abs{\phi}^2 \rd r.
    \qedhere
  \end{equation*}
\end{proof}

\begin{cor}[Identification of $\mathring\GelfandRobbinQuotient_0^{\lambda,\mu}$ for $\lambda \neq -1/2$]
  For every $(\lambda,\mu) \in \check\Spectrum$ with $\lambda \neq -1/2$
  \begin{equation*}
    \mathring\GelfandRobbinQuotient_0^{\lambda,\mu} = 0.
    \qedhere
  \end{equation*}
\end{cor}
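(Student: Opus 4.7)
The plan is to show that for every $\phi \in \dom(\mathringDmax^{\lambda,\mu})$, the localised section $(\chi \circ r)\phi$ already lies in $\dom(\mathringDmin)$. Since by construction $\mathring\GelfandRobbinQuotient_0^{\lambda,\mu}$ is generated by classes of such cut-offs, this forces $\mathring\GelfandRobbinQuotient_0^{\lambda,\mu} = 0$. The strategy has two steps: first, extract pointwise decay of $\phi$ at $r = 0$ from \autoref{Lem_LeadingTerm}; second, use this decay in a cut-off approximation argument to produce a sequence in $\dom(\mathringDmin)$ converging in the graph norm.

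Writing $\phi = (\phi_+,\phi_-) \in V_{\lambda,\mu} = E_{\lambda,\mu} \oplus E_{-(\lambda+1),-\mu}$ and inspecting the matrix form of $\mathring D^{\lambda,\mu}$, the condition $\mathring D^{\lambda,\mu}\phi \in L^2((0,1),r\,\rd r;V_{\lambda,\mu})$---together with the trivially $L^2$ contributions $\pm \mu\phi_\mp$---becomes
\begin{equation*}
  (\del_r - \lambda/r)\phi_+,\quad (\del_r + (\lambda+1)/r)\phi_- \in L^2((0,1),r\,\rd r).
\end{equation*}
Since $V_{\lambda,\mu}$ is finite-dimensional by \autoref{Prop_SpectralDecomposition}~\autoref{Prop_SpectralDecomposition_Spectrum}, I apply \autoref{Lem_LeadingTerm} componentwise with exponents $\lambda$ and $-(\lambda+1)$. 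The hypothesis $\lambda \in \Z - 1/2 \setminus \set*{-1/2}{}$ is precisely what excludes both exponents from the critical interval $[-1,0]$---the only half-integer therein is $-1/2$, the fixed point of the involution $\lambda \mapsto -(\lambda+1)$. Hence case \autoref{Lem_LeadingTerm_Else} applies in both slots and $\lim_{r \downarrow 0}\phi_\pm(r) = 0$.

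Set $\psi \coloneqq (\chi \circ r)\phi$, supported in $\set*{r < 1/2}{}$, and approximate by $\psi_\epsilon \coloneqq (1 - \chi(r/\epsilon))\psi$, supported in $r \in [\epsilon/4,1/2]$, which is a compactly supported $H^1$ section of $U \setminus Z$ and thus lies in $\dom(\mathringDmin)$. The $L^2$ convergence $\psi_\epsilon \to \psi$ is immediate by dominated convergence. The graph-norm error decomposes as
\begin{equation*}
  \mathring D(\psi - \psi_\epsilon) = \chi(r/\epsilon)\,\mathring D\psi + \epsilon^{-1}\chi'(r/\epsilon)\, M\psi,
\end{equation*}
where $M$ is a fixed bounded matrix coming from the principal part $J\del_r$. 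The first summand tends to zero in $L^2$ by dominated convergence since $\mathring D\psi \in L^2$, and the second satisfies
\begin{equation*}
  \int_0^1 \epsilon^{-2}\abs{\chi'(r/\epsilon)}^2 \abs{\psi(r)}^2\,r\,\rd r
  \lesssim
  \sup_{r \in [\epsilon/4,\epsilon/2]} \abs{\psi(r)}^2
  \xrightarrow{\epsilon \downarrow 0} 0.
\end{equation*}

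The main subtlety lies precisely in this commutator estimate: the $r\,\rd r$ measure and the $\epsilon^{-2}$ prefactor cancel exactly, so merely having $\psi \in L^2$ is insufficient---one genuinely needs the pointwise vanishing $\psi(r) \to 0$ at $r = 0$. This is exactly the reason the exclusion $\lambda \neq -1/2$ is indispensable: for $\lambda = -1/2$ one has $-(\lambda+1) = -1/2 \in (-1,0)$, and \autoref{Lem_LeadingTerm_(-1,0)} permits a genuine $r^{-1/2}$ leading term, so the cut-off commutator no longer vanishes and the quotient $\mathring\GelfandRobbinQuotient_0^{-1/2,\mu}$ is non-trivial.
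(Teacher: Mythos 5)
Your proof is correct and fills in exactly the argument the paper leaves implicit (the Corollary is stated without proof as an immediate consequence of the preceding Lemma on leading order terms): read off the exponents $\lambda$ and $-(\lambda+1)$ from the two rows of $\mathring D^{\lambda,\mu}$, note that for $\lambda \in \Z - \tfrac12$ both avoid the critical interval unless $\lambda = -\tfrac12$, apply item \autoref{Lem_LeadingTerm_Else} componentwise to get pointwise vanishing, and conclude via a cut-off approximation and closedness of $\mathringDmin$. The scaling computation showing that the Leibniz error is controlled by $\sup_{[\epsilon/4,\epsilon/2]}\abs{\psi}^2$ is the right observation; one could alternatively invoke item \autoref{Lem_LeadingTerm_Estimate} to obtain $r^{-1}\psi \in L^2$ and run the cut-off as in the proof of \autoref{Lem_BorderlineHardyInequality_Consequences}, but your direct estimate is equally clean and uses one fewer part of the lemma.
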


\begin{definition}
  For $(-1/2,\mu) \in \check\Spectrum$
  define the \defined{residue map} $\res_\mu \co \chi(r) \dom(\mathringDmax^{-1/2,\mu}) \to V_{-1/2,\mu}$ by
  \begin{equation*}
    \phi - r^{-1/2} \res_\mu(\phi) \in \dom(\mathringDmin),
  \end{equation*}
  and the symplectic form $\check\Omega_\mu \in \Hom(\Wedge^2V_{-1/2,\mu},\R)$ by
  \begin{equation*}
    \check\Omega_\mu(\phi \wedge \psi) \coloneq -\Inner{J\phi,\psi}.
    \qedhere
  \end{equation*}
\end{definition}

\begin{prop}[Identification of $\mathring{\GelfandRobbinQuotient}_0^{-1/2,\mu}$: symplectic structure]
  \label{Prop_Identification_Symplectic}
  For every $(-1/2,\mu) \in \check\Spectrum$
  the residue map induces an isomorphism
  \begin{equation*}
    \res_\mu
    \co
    \paren{\mathring{\GelfandRobbinQuotient}_0^{-1/2,\mu},\mathring G^{-1/2,\mu}}
    \iso
    \paren{V_{-1/2,\mu},\check\Omega_\mu}.
  \end{equation*}
\end{prop}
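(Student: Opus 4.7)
The plan is to verify three things in order: that $\res_\mu$ descends to a bounded linear map $\mathring{\GelfandRobbinQuotient}_0^{-1/2,\mu} \to V_{-1/2,\mu}$, that it is bijective, and that it intertwines $\mathring G^{-1/2,\mu}$ with $\check\Omega_\mu$. The starting observation is that with $\lambda = -1/2$ the two off-diagonal entries of $\mathring D^{-1/2,\mu}$ coincide, both being $JL$ with $L \coloneqq \partial_r + \tfrac{1}{2r}$. In particular, each component $\phi_j$ of $\phi \in \dom(\mathringDmax^{-1/2,\mu})$ satisfies $L\phi_j \in L^2\paren{(0,1),r\rd r}$. Since $-1/2 \in (-1,0)$, \autoref{Lem_LeadingTerm}~\autoref{Lem_LeadingTerm_(-1,0)} applies componentwise to produce a unique pair $(a_1,a_2) \in V_{-1/2,\mu}$ with $\phi_j(r) = r^{-1/2}a_j + o(1)$. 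For elements $\phi \in \dom(\mathringDmin)$, the borderline Hardy inequality (\autoref{Lem_BorderlineHardyInequality_Consequences}~\autoref{Lem_BorderlineHardyInequality_Consequences_H1=H10}) places $\phi$ in $H^1_0$, which forces $a = 0$; and multiplication by $\chi(r)$, with $\chi(0) = 1$, does not alter the leading coefficient. Hence $\res_\mu$ descends to the quotient.

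For bijectivity, surjectivity is witnessed by the explicit section $\phi_a \coloneqq \chi(r)\cdot r^{-1/2} a$ for $a \in V_{-1/2,\mu}$: a direct computation gives $L(r^{-1/2}) = 0$, so $\mathring D^{-1/2,\mu}\phi_a$ depends only on $\chi'(r)\, r^{-1/2}$ applied to $a$ and is supported in $\supp(\chi') \subset [1/4,1/2]$, hence bounded and certainly in $L^2\paren{(0,1),r\rd r;V_{-1/2,\mu}}$. Thus $\phi_a \in (\chi\circ r)\cdot \dom(\mathringDmax^{-1/2,\mu})$ and $\res_\mu(\phi_a) = a$ by construction. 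For injectivity, if $\res_\mu(\phi) = 0$ then $\phi_j(r) \to 0$, whereupon the energy identity \autoref{Lem_LeadingTerm}~\autoref{Lem_LeadingTerm_Estimate} controls $\int_0^1 \paren{\abs{\partial_r\phi_j}^2 + \tfrac{1}{4r^2}\abs{\phi_j}^2} r\rd r$ by $\int_0^1 \abs{L\phi_j}^2 r\rd r$, which together with tangential regularity along $Z$ places $\phi \in H^1 = \dom(\mathringDmin)$.

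The symplectic compatibility is the main step and the main obstacle. Writing $\phi = (\phi_1,\phi_2)$ and $\psi = (\psi_1,\psi_2)$ relative to $V_{-1/2,\mu} = E_{-1/2,\mu}\oplus E_{-1/2,-\mu}$, the diagonal $\pm\mu$ contributions cancel between $\Inner{\mathring D^{-1/2,\mu}\phi,\psi}$ and $\Inner{\phi,\mathring D^{-1/2,\mu}\psi}$, leaving four off-diagonal integrals. Using the pointwise skew-adjointness $J^* = -J$, the relation $JL = LJ$ (since $J$ is $\mathring\nabla$-parallel in the radial direction), and the integration-by-parts identity
\begin{equation*}
  \int_0^1 \Inner{Lu,v} r\rd r + \int_0^1 \Inner{u,Lv} r\rd r = \bigl[r\Inner{u,v}\bigr]_0^1,
\end{equation*}
grouping the first with the fourth term and the second with the third, the Green's form collapses to the boundary contribution at $r=0$ (the $r=1$ contribution vanishes by the $\chi$-cutoff):
\begin{equation*}
  \mathring G_{-1/2,\mu}([\phi]\wedge[\psi]) = \lim_{\epsilon \downarrow 0}\epsilon\paren*{\Inner{\phi_2(\epsilon),J\psi_1(\epsilon)} + \Inner{\phi_1(\epsilon),J\psi_2(\epsilon)}}.
\end{equation*}
Substituting the expansions $\phi_j(r) = r^{-1/2}a_j + o(1)$ and $\psi_j(r) = r^{-1/2}b_j + o(1)$, the cross and quadratic error terms vanish in the limit, and one obtains $\Inner{a_2,Jb_1} + \Inner{a_1,Jb_2}$. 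Because $J$ interchanges the two summands of $V_{-1/2,\mu}$, the ambient inner product satisfies $\Inner{Ja,b}_{V_{-1/2,\mu}} = \Inner{Ja_2,b_1} + \Inner{Ja_1,b_2}$; combined with $J^* = -J$, this identifies the limit with $-\Inner{Ja,b} = \check\Omega_\mu(a\wedge b)$, establishing the desired identity $\mathring G_{-1/2,\mu}([\phi]\wedge[\psi]) = \check\Omega_\mu(\res_\mu\phi \wedge \res_\mu\psi)$. The delicate point is that naively one might expect the two limit terms to cancel; only the component-swapping behavior of $J$ on $V_{-1/2,\mu}$ prevents this and produces the non-degenerate symplectic structure.
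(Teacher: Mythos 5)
Your proof follows essentially the same route as the paper's: both exploit the identity $\del_r + \tfrac{1}{2r} = r^{-1/2}\del_r r^{1/2}$ to turn the off-diagonal part of the Green's form into a total $r$-derivative, evaluate the resulting boundary term at $r=0$ via the leading-order expansion from \autoref{Lem_LeadingTerm}, and obtain $-\Inner{J\res_\mu\phi,\res_\mu\psi}$; the paper simply packages the integration by parts as $\int_0^1\del_r\Inner{Jr^{1/2}\phi,r^{1/2}\psi}\,\rd r$ and defers well-definedness and bijectivity to \autoref{Lem_LeadingTerm} in one line, whereas you spell those out. One small slip: in the surjectivity step, $\mathring D^{-1/2,\mu}\paren{\chi(r) r^{-1/2}a}$ is \emph{not} supported in $\supp(\chi')$ when $\mu\neq 0$ --- the diagonal entries contribute $\pm\mu\chi(r)r^{-1/2}a_j$, which live on all of $\supp(\chi)$; nevertheless the conclusion stands because $\chi(r)r^{-1/2}\in L^2\paren{(0,1),r\rd r}$.
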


\begin{proof}
  For $\phi,\psi\in \chi(r) \dom\paren{\mathringDmax^{-1/2,\mu}}$,
  by direct computation using $\del_r + \tfrac{1}{2r} = r^{-1/2} \del_r r^{1/2}$,
  \begin{align*}
    \mathring G^{-1/2,\mu}([\phi]\wedge[\psi])
    &=
      \int_0^1 \paren[\big]{\Inner{J\paren{\del_r + \tfrac{1}{2r}}\phi,\psi} - \Inner{\phi,J\paren{\del_r + \tfrac{1}{2r}}\psi}} \, r\rd r \\
    &=
      \int_0^1 \del_r \Inner{J r^{1/2}\phi,r^{1/2}\psi} \, \rd r \\
    &=
      -\Inner{J \res_\mu([\phi]),\res_\mu([\psi])}
      =
      \res_\mu^*\check\Omega_\mu([\phi]\wedge[\psi]).
  \end{align*}
  This together with \autoref{Lem_LeadingTerm} immediately implies the assertion.
\end{proof}

Although $\res_\mu$ is an isomorphism, the norms on $\mathring{\GelfandRobbinQuotient}_0^{-1/2,\mu}$ and $V_{-1/2,\mu}$ are \emph{not uniformly} equivalent.
The following discussion rectifies this.

\begin{definition}
  \label{Def_BranchingLocusOperator_Mu}
  Let $(-1/2,\mu) \in \check\Spectrum$.
  \begin{enumerate}
  \item
    Define the \defined{branching locus operator} $A_\mu \co V_{-1/2,\mu} \to V_{-1/2,\mu}$ by
    \begin{equation*}
      A_\mu
      \coloneq
      -J\mathring D^{-1/2,\mu} - \del_r - \tfrac{1}{2r}
      =
      \begin{pmatrix}
        0 & J \mu \\
        -J \mu & 0
      \end{pmatrix}.
    \end{equation*}
  \item
    Define the norm $\Abs{-}_{\check H} \co V_{-1/2,\mu} \to [0,\infty)$ by    
    \begin{equation*}
      \Abs{\rho}_{\check H}^2
      \coloneq
      \bracket{\mu} \Abs{\one_{(-\infty,0)}(A_\mu)\rho}^2
      +
      \bracket{\mu}^{-1} \Abs{\one_{[0,\infty)}(A_\mu)\rho}^2.
    \end{equation*}
    Here $\one_{(-\infty,0)}(A_\mu)$ and $\one_{[0,\infty)}(A_\mu)$ denote the orthogonal projection to the negative and non-negative eigenspaces of $A_\mu$ respectively.
  \item
    Define the norm $\Abs{-}_{H^{-1/2}} \co V_{-1/2,\mu} \to [0,\infty)$ by
    \begin{equation*}
      \Abs{\rho}_{H^{-1/2}}^2
      \coloneq
      \bracket{\mu}^{-1} \Abs{\rho}^2.
      \qedhere
    \end{equation*}
  \end{enumerate}
\end{definition}

\begin{prop}[Identification of $\mathring{\GelfandRobbinQuotient}_0^{-1/2,\mu}$: uniform norms]
  \label{Prop_Identification_UniformNorms}
  For every $(-1/2,\mu) \in \check\Spectrum$
  \begin{equation*}
    \Abs{\res_\mu([\phi])}_{\check H}
    \asymp
    \Abs{[\phi]}_{\mathring{\GelfandRobbinQuotient}}.
  \end{equation*}
\end{prop}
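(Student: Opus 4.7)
The plan is to reduce the problem to a one-dimensional ODE, diagonalise along the $A_\mu$-eigenspaces, and estimate each component separately while carefully tracking the dependence on $\mu$.

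\emph{Reduction.} Write $\phi = r^{-1/2}w$ for a function $w \co (0,1) \to V_{-1/2,\mu}$. The substitution is an isometry $L^2((0,1), r\,\dr; V_{-1/2,\mu}) \iso L^2((0,1),\dr; V_{-1/2,\mu})$, sends $\res_\mu$ to evaluation at $r=0$, and converts $\mathring D^{-1/2,\mu}\phi = \psi$ into the first-order linear ODE
\begin{equation*}
  w' + A_\mu w = -Jy,
  \qquad y \coloneqq r^{1/2}\tilde\psi,
\end{equation*}
where $\tilde\psi$ is $\psi$ with the two $V$-components swapped. Since $A_\mu$ is self-adjoint with $A_\mu^2 = \mu^2 \cdot \one$, decompose $V_{-1/2,\mu} = V_+ \oplus V_-$ into the $\pm|\mu|$-eigenspaces. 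Because $J$ anti-commutes with $A_\mu$ and therefore swaps $V_+ \leftrightarrow V_-$, the ODE decouples into $w_\pm' \pm |\mu|w_\pm = -Jy_\mp$.

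\emph{The bound $\Abs{v}_{\check H} \lesssim \Abs{[\phi]}_{\mathring{\GelfandRobbinQuotient}}$.} For the $V_-$-component, variation of parameters gives $w_-(0) = e^{-|\mu|R}w_-(R) + \int_0^R e^{-|\mu|s}Jy_+(s)\,\ds$; averaging uniformly over $R \in (0,1)$ and applying Cauchy--Schwarz with $\Abs{e^{-|\mu|r}}_{L^2}^2 \asymp (1+|\mu|)^{-1}$ yields $(1+|\mu|) \cdot |w_-(0)|^2 \lesssim \Abs{w}_{L^2}^2 + \Abs{y}_{L^2}^2$, which is exactly the $V_-$-weight in $\Abs{\cdot}_{\check H}$. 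For the $V_+$-component, Duhamel's formula reads $w_+(r) = e^{-|\mu|r}w_+(0) - \int_0^r e^{-|\mu|(r-s)}Jy_-(s)\,\ds$; taking $L^2$-norms on both sides, using $\Abs{e^{-|\mu|r}w_+(0)}_{L^2}^2 \asymp (1+|\mu|)^{-1}|w_+(0)|^2$ on the homogeneous term and Young's inequality for the convolution produces the complementary bound $(1+|\mu|)^{-1}|w_+(0)|^2 \lesssim \Abs{w}_{L^2}^2 + \Abs{y}_{L^2}^2$. For $|\mu| \leq 1$ both estimates follow from a direct averaging argument.

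\emph{The bound $\Abs{[\phi]}_{\mathring{\GelfandRobbinQuotient}} \lesssim \Abs{v}_{\check H}$.} Given $v = v_+ + v_-$, construct a representative $\phi = \phi_+ + \phi_-$ with $\res_\mu([\phi]) = v$ by treating the two eigenspaces separately. For $v_+$, observe that $r^{-1/2}e^{-rA_\mu}v_+ = r^{-1/2}e^{-|\mu|r}v_+$ lies in $\ker\mathring D^{-1/2,\mu}$; taking $\phi_+ \coloneqq \chi(r) r^{-1/2}e^{-|\mu|r}v_+$, one has $\mathring D\phi_+ = [\mathring D, \chi(r)](r^{-1/2}e^{-|\mu|r}v_+)$, which is supported on $\supp\chi'$ where $e^{-|\mu|r}$ is exponentially small in $|\mu|$; a direct integration yields $\Abs{\phi_+}_{\mathring D}^2 \lesssim (1+|\mu|)^{-1}|v_+|^2$. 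For $v_-$, the exponentially growing kernel solution would be prohibitively costly, so instead take the naive ansatz $\phi_- \coloneqq \chi(r(1+|\mu|))r^{-1/2}v_-$ cut off at the natural length scale $1/(1+|\mu|)$; exploiting $(\del_r+1/(2r))r^{-1/2} = 0$, a direct calculation gives $\Abs{\phi_-}_{\mathring D}^2 \lesssim (1+|\mu|)|v_-|^2$. Summing provides the desired representative.

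The principal obstacle is to secure constants uniform in $\mu$: the small-$|\mu|$ and large-$|\mu|$ regimes must interpolate smoothly, which is exactly why $\check H$ is defined using the weight $(1+|\mu|)$ rather than $|\mu|$ and why the cut-off scale $1/(1+|\mu|)$ appears in the $V_-$-representative. With this choice, a single calculation simultaneously handles the exponential-decay length scale $1/|\mu|$ for $|\mu| \geq 1$ and the interval length $1$ for $|\mu| \leq 1$.
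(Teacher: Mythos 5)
Your proof is correct and follows the same skeleton as the paper's: spectral splitting of $V_{-1/2,\mu}$ into $A_\mu$-eigenspaces, exponential weights $e^{-|\mu|r}$, and Cauchy--Schwarz (or Young) at the natural length scale $(1+|\mu|)^{-1}$, which is what produces the two weights in $\Abs{-}_{\check H}$. For the upper bound on $\Abs{\res_\mu([\phi])}_{\check H}$ the paper proceeds from the single identity $v=-\int_0^1 \del_r\bigl(r^{1/2}e^{-|\mu|r}\phi\bigr)\,\rd r$ and then projects onto $\one_{(-\infty,0)}(A_\mu)$ and $\one_{[0,\infty)}(A_\mu)$, whereas you run Duhamel separately in the two eigenspaces; these are the same computation organized differently. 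The one place where you genuinely deviate is the lower bound: you build two distinct representatives, $\chi(r)r^{-1/2}e^{-|\mu|r}v_+$ in the kernel direction and a cutoff $\chi\bigl(r(1+|\mu|)\bigr)r^{-1/2}v_-$ in the anti-kernel direction, reasoning that the exponentially-growing kernel solution for $V_-$ is unusable. The paper instead verifies that the \emph{single} formula $\ext_\mu(v)\coloneqq r^{-1/2}e^{-|\mu|r}v$ already works for both: it is not $\mathring D$-harmonic on $V_-$, but $\mathring D^{-1/2,\mu}\ext_\mu(v)=r^{-1/2}e^{-|\mu|r}(A_\mu-|\mu|)v$ has the right $L^2$-size, namely $\lesssim(1+|\mu|)^{1/2}|v_-|$ — there is no need to stay in the kernel. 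Your two-term construction gets the same uniform bounds and is arguably more transparent about why the scale $(1+|\mu|)^{-1}$ appears, but the paper's single extension is more economical and is the one reused downstream (e.g.\ in the $\ext\co H^{-1/2}\Gamma(Z,\check S)\to L^2$ and $\ext\co H^{1/2}\Gamma(Z,\check S)\to H_a^1$ boundedness statements). A small notational slip in your reduction: after $\phi=r^{-1/2}w$ the inhomogeneous term is simply $-r^{1/2}J\psi$; your ``swap the components and then apply $J$'' double-counts the swap that $J$ already performs, but this does not affect the estimates.
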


The proof uses the following right inverse of $\res_\mu [-]$.

\begin{definition}
  For $(-1/2,\mu) \in \check\Spectrum$ define the \defined{extension map} $\ext_\mu \co V_{-1/2,\mu} \to \dom(\mathringDmax^{-1/2,\mu})$ by
  \begin{equation*}
    \ext_\mu(\rho) \coloneq r^{-1/2}e^{-\abs{\mu} r} \rho.
    \qedhere
  \end{equation*}
\end{definition}

Evidently, $\ext_\mu$ lifts the inverse of $\res_\mu \co \mathring{\GelfandRobbinQuotient}_0^{-1/2,\mu} \iso V_{-1/2,\mu}$.
Therefore,
\autoref{Prop_Identification_UniformNorms} is an immediate consequence of the following.

\begin{lemma}[Uniform estimates for $\res_\mu$ and $\ext_\mu$]
  \label{Lem_UniformEstimatesForRestrictionAndExtension}
  Let $(-1/2,\mu) \in \check\Spectrum$.
  The following hold:
  \begin{enumerate}
  \item
    \label{Lem_UniformEstimatesForRestrictionAndExtension_1}
    For every $\phi \in \chi(r) \dom(\mathringDmax^{-1/2,\mu})$
    \begin{equation*}
      \Abs{\res_\mu([\phi])}_{\check H}
      \lesssim
      \Abs{\phi}_{\mathring D}.
    \end{equation*}
  \item
    \label{Lem_UniformEstimatesForRestrictionAndExtension_2}
    For every $\rho \in V_{-1/2,\mu}$
    \begin{equation*}
      \Abs{\ext_\mu(\rho)}_{L^2}
      \lesssim
      \Abs{\rho}_{H^{-1/2}}
      \qandq
      \Abs{\ext_\mu(\rho)}_{\mathring D}
      \lesssim
      \Abs{\rho}_{\check H}.
    \end{equation*}    
  \end{enumerate}
\end{lemma}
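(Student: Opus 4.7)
The plan is to reduce both assertions to scalar ODE estimates on $(0,1)$ via the unitary substitution $\psi(r) \coloneqq r^{1/2}\phi(r)$, which identifies $L^2\paren{(0,1),r\rd r}$ with $L^2\paren{(0,1),\rd r}$ and, together with the identity $\paren{\partial_r + \tfrac{1}{2r}}\paren{r^{-1/2}\eta} = r^{-1/2}\partial_r\eta$, converts $-J\mathring D^{-1/2,\mu}$ into the constant-coefficient operator $\partial_r + A_\mu$. Thus $\Abs{\phi}_{\mathring D}^2 = \Abs{\psi}_{L^2}^2 + \Abs{f}_{L^2}^2$ with $f \coloneqq (\partial_r + A_\mu)\psi$, and \autoref{Lem_LeadingTerm}~\autoref{Lem_LeadingTerm_(-1,0)} identifies $\res_\mu([\phi])$ with $v \coloneqq \psi(0)$. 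Decomposing further according to the orthogonal splitting $V_{-1/2,\mu} = V_+ \oplus V_-$ into the $\pm\abs{\mu}$--eigenspaces of $A_\mu$, each component satisfies the scalar ODE $(\partial_r \pm \abs{\mu})\psi_\pm = f_\pm$ with $\psi_\pm(0) = v_\pm$.

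For \autoref{Lem_UniformEstimatesForRestrictionAndExtension_2} I would compute $\mathring D\ext_\mu(v)$ directly: the identity above yields $\mathring D\ext_\mu(v) = J r^{-1/2}e^{-\abs{\mu} r}(A_\mu - \abs{\mu})v$, and since $A_\mu - \abs{\mu}$ annihilates $V_+$ and acts as $-2\abs{\mu}$ on $V_-$, integrating $r\rd r$ over $(0,1)$ gives $\Abs{\ext_\mu(v)}_{L^2}^2 = \abs{v}^2 (1-e^{-2\abs{\mu}})/(2\abs{\mu}) \lesssim (1+\abs{\mu})^{-1}\abs{v}^2$ and $\Abs{\mathring D\ext_\mu(v)}_{L^2}^2 = 2\abs{\mu}(1-e^{-2\abs{\mu}})\abs{v_-}^2 \lesssim (1+\abs{\mu})\abs{v_-}^2$, yielding both stated bounds.

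For \autoref{Lem_UniformEstimatesForRestrictionAndExtension_1}, the $V_+$ component is straightforward: Duhamel's formula gives $\psi_+(r) = v_+ e^{-\abs{\mu} r} + \int_0^r e^{-\abs{\mu}(r-s)}f_+(s)\rd s$. Young's inequality for $L^1 \ast L^2 \to L^2$ bounds the convolution term in $L^2$ by a constant times $(1+\abs{\mu})^{-1}\Abs{f_+}_{L^2}$, and since $\Abs{e^{-\abs{\mu}\cdot}}_{L^2(0,1)}^2 \gtrsim (1+\abs{\mu})^{-1}$, the triangle inequality then yields $(1+\abs{\mu})^{-1}\abs{v_+}^2 \lesssim \Abs{\psi}_{L^2}^2 + \Abs{f}_{L^2}^2$. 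The $V_-$ component is subtler because the homogeneous solution $e^{\abs{\mu} r}$ grows. The trick is to rewrite the ODE as $\partial_r\paren{e^{-\abs{\mu} r}\psi_-} = e^{-\abs{\mu} r}f_-$ and integrate from $0$ to $r$ to obtain $v_- = e^{-\abs{\mu} r}\psi_-(r) - \int_0^r e^{-\abs{\mu} s}f_-(s)\rd s$, then to \emph{average} this identity over $r \in (0,1)$ and swap the order of integration in the double integral to arrive at $v_- = \int_0^1 e^{-\abs{\mu} r}\psi_-(r)\rd r - \int_0^1 e^{-\abs{\mu} s}(1-s)f_-(s)\rd s$; two applications of Cauchy--Schwarz together with $\Abs{e^{-\abs{\mu}\cdot}}_{L^2(0,1)}^2 \lesssim (1+\abs{\mu})^{-1}$ then give $(1+\abs{\mu})\abs{v_-}^2 \lesssim \Abs{\psi}_{L^2}^2 + \Abs{f}_{L^2}^2$.

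The principal obstacle is the $V_-$ component of \autoref{Lem_UniformEstimatesForRestrictionAndExtension_1}: in the unstable direction of the ODE the usual Duhamel/convolution argument produces exponential factors $e^{\abs{\mu}}$ that destroy uniformity in $\mu$. The remedy is to employ $e^{-\abs{\mu} r}$ as an integrating factor rather than as a convolution kernel, exploiting that its $L^2$--mass concentrates near $r=0$ with the sharp weight $(1+\abs{\mu})^{-1}$ that matches the $\check H$--norm.
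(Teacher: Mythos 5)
Your proposal is correct. For item (2) your direct computation of $\mathring D\ext_\mu(v)$ and of the two integrals agrees with the paper's argument essentially verbatim. For item (1) the global strategy is the same---exploit the exponential weight $e^{-\abs{\mu}r}$ together with the spectral decomposition in $A_\mu$---but the execution differs. After the substitution $\psi = r^{1/2}\phi$ the paper works from the single telescoping identity
\begin{equation*}
  v
  = -\int_0^1 \del_r\bigl(e^{-\abs{\mu}r}\psi\bigr)\,\rd r
  = -\int_0^1 e^{-\abs{\mu}r}\bigl(f - (A_\mu+\abs{\mu})\psi\bigr)\,\rd r
  \qwithq f = (\del_r + A_\mu)\psi,
\end{equation*}
and applies Cauchy--Schwarz on each eigenspace of $A_\mu$, the crucial observation being that $A_\mu+\abs{\mu}$ annihilates the negative eigenspace, so that no $\Abs{\psi}_{L^2}$--term appears there. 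You instead solve the decoupled scalar ODEs $(\del_r\pm\abs{\mu})\psi_\pm=f_\pm$ separately: Duhamel plus Young's inequality on the stable direction $V_+$, and the averaged integrating-factor identity on the unstable direction $V_-$. Both routes yield the required $(1+\abs{\mu})^{\mp1}$--weighted bounds; the paper's estimate on the negative eigenspace is marginally sharper (it involves only $\Abs{f}_{L^2}$, not $\Abs{\psi}_{L^2}$), but this is immaterial for the lemma. A small point in your favour: the paper's telescoping formula tacitly uses $\psi(1)=0$, which is harmless because one may choose a representative of $[\phi]$ supported near $r=0$ when applying the lemma to $\mathring{\GelfandRobbinQuotient}_0^{-1/2,\mu}$, whereas your Duhamel and integrating-factor identities are exact on all of $(0,1)$ with no such assumption.
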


\begin{proof}
  There is no loss in assuming that $\lim_{r \uparrow 1} \phi(r) = 0$.
  Evidently,
  $\rho \coloneq \res_\mu([\phi])$ satisfies
  \begin{equation*}
    \rho
    =
    -\int_0^1 \del_r \paren{ r^{1/2} e^{-\abs{\mu} r} \phi} \,\rd r \\
    =
    -\int_0^1 r^{1/2}e^{-\abs{\mu} r} \paren{-J\mathring D^{-1/2,\mu} - A_\mu - \abs{\mu}} \phi \,\rd r.
  \end{equation*}
  Therefore, by Cauchy--Schwarz,
  \begin{align*}
    \Abs{\one_{(-\infty,0)}(A_\mu)\rho}^2
    &\lesssim
      \int_0^1 e^{-2\abs{\mu}r} \,\rd r
      \int_0^1 \Abs{\mathring D^{-1/2,\mu}\one_{(-\infty,0)}(A_\mu)\phi}^2 \,r \rd r \\
    &\lesssim
      \int_0^1 e^{-2\abs{\mu}r} \,\rd r
      \,
      \Abs{\one_{(-\infty,0)}(A_\mu)\phi}_{\mathring D}^2
  \end{align*}
  and
  \begin{align*}
    \Abs{\one_{[0,\infty)}(A_\mu)\rho}^2
    &\lesssim
      \int_0^1 e^{-2\abs{\mu}r} \,\rd r
      \int_0^1 \Abs{\mathring D^{-1/2,\mu}\one_{[0,\infty)}(A_\mu)\phi}^2 + \abs{\mu}^2\Abs{\one_{[0,\infty)}(A_\mu)\phi}^2 \,r \rd r \\
    &\lesssim
      \bracket{\mu}^2
      \int_0^1 e^{-2\abs{\mu}r} \,\rd r
      \,
      \Abs{\one_{[0,\infty)}(A_\mu)\phi}_{\mathring D}^2.
  \end{align*}
  The estimate in \autoref{Lem_UniformEstimatesForRestrictionAndExtension_1} follows because
  \begin{equation*}
    \int_0^1 e^{-2\abs{\mu} r} \,\rd r
    \asymp
    \bracket{\mu}^{-1}
    \qandq
    \Abs{\one_{(-\infty,0)}(A_\mu)\phi}_{\mathring D}^2 + \Abs{\one_{[0,\infty)}(A_\mu)\phi}_{\mathring D}^2
    =
    \Abs{\phi}_{\mathring D}^2.
  \end{equation*}

  To prove \autoref{Lem_UniformEstimatesForRestrictionAndExtension_2},
  observe that
  \begin{align*}
    \Abs{\ext_\mu(\rho)}_{L^2}^2
    \lesssim
    \int_0^1 r^{-1} e^{-2 \abs{\mu} r} \Abs{\rho}^2 \, r\rd r
    \lesssim
    \bracket{\mu}^{-1} \Abs{\rho}^2
  \end{align*}
  and
  \begin{align*}
    \Abs{\mathring D^{-1/2,\mu}\ext_\mu(\rho)}_{L^2}^2
    &=
      \int_0^1 \paren[\big]{r^{-1/2}e^{-\abs{\mu}r}}^2 \Abs{(-\abs{\mu}+A_\mu)\rho}^2 \,r\rd r \\
    &\lesssim
      \int_0^1 \paren[\big]{r^{-1/2}\abs{\mu} e^{-\abs{\mu} r}}^2 \Abs{\one_{(-\infty,0)}(A_\mu)\rho}^2 \, r\rd r \\
    &\lesssim
      \bracket{\mu} \Abs{\one_{(-\infty,0)}(A_\mu)\rho}^2.
      \qedhere
  \end{align*}
\end{proof}


\subsection{Assembly of the residue map}
\label{Sec_AssemblyOfResidueMap}

This subsection (re)assembles the summands of the decomposition \autoref{Cor_SpectralDecomposition_GelfandRobbinQuotient} identified in \autoref{Sec_LeadingOrderTerms} in a more geometric fashion.

\begin{definition}
  \label{Def_AssemblyOfResidueMap}
  ~
  \begin{enumerate}
  \item 
    The \defined{residue bundle} is
    \begin{equation*}
      \check S \coloneq S|_Z \otimes_\C NZ^{-1/2}.
    \end{equation*}
    As a consequence of \autoref{Prop_SpectralDecomposition}~\autoref{Prop_SpectralDecomposition_J} (or by direct inspection),
    $\check S$ inherits the quaternionic structure $I,J,K=IJ$ from $\underline S$.  
    Define the symplectic form $\check\Omega \in \Gamma\paren{Z,\Hom(\Wedge^2 \check S,\R)}$ by
    \begin{equation*}
      \check\Omega \coloneq -2\pi\Inner{J-,-}.
    \end{equation*}
  \item
    The \defined{branching locus operator} $A \co \Gamma\paren{Z,\check S} \to \Gamma\paren{Z,\check S}$ is defined by
    \begin{equation*}
      A \coloneq - JD_{\check S}
    \end{equation*}
    with $D_{\check S} \coloneq D_{S|_Z \otimes_C NZ^{-1/2}}$ as in \autoref{Prop_SpectralDecomposition}.
    Since $J$ and $D_{\check S}$ anti-commute, $A$ is (formally) self-adjoint.
  \item
    Denote by $\one_{(-\infty,0)}(A)$ and $\one_{[0,\infty)}(A)$ the orthogonal projection to the negative and non-negative eigenspaces of $A$ respectively.
    Define the norm $\Abs{-}_{\check H} \co \Gamma\paren{Z,\check S} \to [0,\infty)$ by
    \begin{equation*}
      \Abs{\phi}_{\check H}
      \coloneq
      \Abs{\one_{(-\infty,0)}(A)\phi}_{H^{1/2}}
      +
      \Abs{\one_{[0,\infty)}(A)\phi}_{H^{-1/2}}
    \end{equation*}
    and denote by $\check H\Gamma\paren{Z,\check S}$ the completions of $\Gamma\paren{Z,\check S}$ with respect to $\Abs{-}_{\check H}$.
    \qedhere
  \end{enumerate}
\end{definition}

\begin{prop}
  \label{Prop_Reassembly}
  $\check\Omega$ extends to a symplectic structure $\check\Omega \in \sL\paren{\Wedge^2 \check H\Gamma\paren{Z,\check S},\R}$;
  moreover:
  the inclusions $V_{-1/2,\mu} \incl \Gamma\paren{Z,\check S}$ assemble into an isomorphism of symplectic Hilbert spaces 
  \begin{equation*}    
    \bigoplus_{(-1/2,\mu) \in \check\Spectrum} \paren{V_{-1/2,\mu},\Abs{-}_{\check H};\check\Omega_\mu}
    \iso
    \paren{\check H\Gamma\paren{Z,\check S};\check\Omega}.
  \end{equation*} 
\end{prop}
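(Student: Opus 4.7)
The plan is to decompose $L^2\Gamma\paren{Z,\check S}$ spectrally using $D_{\check S}$ and match each summand, via the isomorphism $P_{-1/2}$ of \autoref{Prop_NZLambda}, with the model-operator summand $V_{-1/2,\mu}$.

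First, by the spectral theorem applied to $D_{\check S}$, $L^2\Gamma\paren{Z,\check S}$ decomposes as an orthogonal Hilbert sum of finite-dimensional eigenspaces $\check E_{-1/2,\mu}$. Pullback along $\pi$ composed with $P_{-1/2}$ then identifies $\check E_{-1/2,\mu}$ with $E_{-1/2,\mu}$; cf.~\autoref{Prop_SpectralDecomposition}\autoref{Prop_SpectralDecomposition_=}. Since the fundamental domain $\check\Spectrum$ pairs $(-1/2,\mu)$ with $(-1/2,-\mu)$, the definition of $V_{-1/2,\mu}$ assembles exactly $\check E_{-1/2,\mu} \oplus \check E_{-1/2,-\mu}$ (with the auxiliary real splitting for $\mu = 0$). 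Hence the inclusions $V_{-1/2,\mu} \incl \Gamma\paren{Z,\check S}$ assemble into a vector-space isomorphism $\bigoplus_{(-1/2,\mu)\in\check\Spectrum} V_{-1/2,\mu} \iso L^2\Gamma\paren{Z,\check S}$.

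Next, the branching locus operators must be compared. Since $D_{\check S}$ acts by the scalar $\mu$ on $\check E_{-1/2,\mu}$, and $J$ anti-commutes with $D_{\check S}$, exchanging $\check E_{-1/2,\mu}$ with $\check E_{-1/2,-\mu}$ in view of \autoref{Prop_SpectralDecomposition}\autoref{Prop_SpectralDecomposition_J}, the operator $A = -JD_{\check S}$ restricted to $\check E_{-1/2,\mu}\oplus\check E_{-1/2,-\mu}$ coincides with the off-diagonal matrix defining $A_\mu$ in \autoref{Def_BranchingLocusOperator_Mu}. In particular, the spectral projections $\one_{(-\infty,0)}(A)$ and $\one_{[0,\infty)}(A)$ restrict to those of $A_\mu$. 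The $H^{\pm 1/2}$ norms on $\Gamma\paren{Z,\check S}$ arise via functional calculus of $D_{\check S}$, so on $\check E_{-1/2,\mu}\oplus\check E_{-1/2,-\mu}$ they are uniformly comparable to $(1+\abs{\mu})^{\pm 1/2}\Abs{-}_{L^2}$, matching the definition of $\Abs{-}_{\check H}$ on $V_{-1/2,\mu}$. Hence the norms agree up to uniform constants, and the assembled map extends to a Hilbert space isomorphism onto $\check H\Gamma\paren{Z,\check S}$.

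Finally, Fubini's theorem along the circle fibers of $\pi \co F \to Z$ yields an additional factor of $2\pi$ when passing from integrals over $F$ to integrals over $Z$, so $\check\Omega_\mu(\phi\wedge\psi) = -\Inner{J\phi,\psi}$ on $V_{-1/2,\mu}$ matches the restriction of $\check\Omega = -2\pi\Inner{J-,-}$. That $\check\Omega$ extends continuously to $\check H\Gamma\paren{Z,\check S}$ follows because $J$ anti-commutes with $A$ and therefore swaps the two spectral halves of $\Abs{-}_{\check H}$; this makes $\check\Omega$ a bounded pairing via the standard $H^{1/2}\times H^{-1/2} \to \R$ duality. Non-degeneracy follows summand-by-summand from that of the finite-dimensional $\check\Omega_\mu$. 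The main obstacle is routine bookkeeping: tracking the $2\pi$ factor from the circle fibers and the commutation and anti-commutation patterns of $I$, $J$, and $D_{\check S}$ across the identifications; once these are verified, the claimed isomorphism of symplectic Hilbert spaces is immediate.
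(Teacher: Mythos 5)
Your proposal is correct and follows essentially the same approach as the paper: the paper's proof simply declares the proposition an immediate consequence of \autoref{Prop_SpectralDecomposition}, \autoref{Prop_Identification_Symplectic}, and \autoref{Prop_Identification_UniformNorms}, while noting the $2\pi$ comes from $\vol(F) = 2\pi\vol(Z)$. Your argument unwinds exactly those ingredients — identifying $\check E_{-1/2,\mu}$ with $E_{-1/2,\mu}$ via $\pi^*$ and $P_{-1/2}$, matching $A$ with $A_\mu$ on each summand, comparing the $\Abs{-}_{\check H}$ norms through the functional calculus of $D_{\check S}$ (equivalently $\abs{A}$), and explaining the $2\pi$ via integration over the circle fibers — and your observation that $J$ anti-commutes with $A$ (hence swaps the $H^{1/2}$ and $H^{-1/2}$ halves) correctly justifies both boundedness and non-degeneracy of $\check\Omega$ on $\check H\Gamma\paren{Z,\check S}$.
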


\begin{proof}
  This is an immediate consequence of \autoref{Prop_SpectralDecomposition}, \autoref{Prop_Identification_Symplectic}, and \autoref{Prop_Identification_UniformNorms}.
  The (possibly mysterious) factor $2\pi$ arises because $\Check\Omega_\mu$ is defined using the $L^2$ inner product on $F$ instead of $Z$ and $\vol(F) = 2\pi \vol(Z)$.
\end{proof}

\begin{definition}
  ~
  \begin{enumerate}
  \item 
    The \defined{residue map} $\res \co \GelfandRobbinQuotient \to 
     \check H\Gamma\paren{Z,\check S}$ obtained as the composition of the following maps
    \begin{equation*}
      \GelfandRobbinQuotient
      \xrightarrow{\Symplectomorphism}
      \mathring{\GelfandRobbinQuotient}_0
      =
      \bigoplus_{(-1/2,\mu) \in \check\Spectrum} \mathring{\GelfandRobbinQuotient}_0^{-1/2,\mu}
      \xrightarrow{(\res_\mu)}
      \bigoplus_{(-1/2,\mu) \in \check\Spectrum} \paren{V_{-1/2,\mu},\Abs{-}_{\check H}}
      \iso
      \check H\Gamma\paren{Z,\check S}.
    \end{equation*}
  \item
    The \defined{extension map}
    $\ext \co \check H\Gamma\paren{Z,\check S} \to \dom(\Dmax)$ is obtained as the composition of the following maps
    \begin{align*}
      \check H\Gamma\paren{Z,\check S}
      \iso
      \bigoplus_{(-1/2,\mu) \in \check\Spectrum} \paren{V_{-1/2,\mu},\Abs{-}_{\check H}}
      \xrightarrow{(\ext_\mu)}
      &\bigoplus_{(-1/2,\mu) \in \check\Spectrum}
      \dom(\mathringDmax^{-1/2,\mu}) \\
      &\incl
      \dom(\mathringDmax)
      \xrightarrow{\chi(r)}
      \dom(\Dmax).
      \qedhere
    \end{align*}
  \end{enumerate}
\end{definition}

\begin{theorem}
  \label{Thm_ResidueAndExtensionMap}
  The following hold:
  \begin{enumerate}
  \item
    \begin{enumerate}
    \item
      \label{Thm_ResidueAndExtensionMap_Symplectomorphism}
      The residue map is an isomorphism of symplectic Hilbert spaces
      \begin{equation*}
        \res \co
        \paren{\GelfandRobbinQuotient,G}
        \iso  
        \paren{\check H\Gamma\paren{Z,\check S},\check\Omega}.
      \end{equation*}
    \item
      \label{Thm_ResidueAndExtensionMap_Characterisation}
      The subspace $r^{-1/2}\Gamma\paren{\hat{X},\hat{S}\otimes\hat{\fl}} \cap \dom(\Dmax)$ is dense in $\dom(\Dmax)$;
      hence: the residue map is uniquely determined by
      \begin{equation*}
        \pi^*\res[r^{-1/2}\phi] = \phi|_{\del\hat{X}}
      \end{equation*}   
      for every $r^{-1/2}\phi \in r^{-1/2}\Gamma\paren{\hat{X},\hat{S}\otimes\hat{\fl}} \cap \dom(\Dmax)$.
    \end{enumerate}
  \item
    \begin{enumerate}
    \item
      \label{Thm_ResidueAndExtensionMap_RightInverse}
      The extension map $\ext \co \check H\Gamma\paren{Z,\check S} \to \dom(\Dmax)$ is a right-inverse of $\res [-]$.
    \item
      \label{Thm_ResidueAndExtensionMap_L2}
      The extension map extends to a bounded linear map
      \begin{equation*}
        \ext \co H^{-1/2}\Gamma\paren{Z,\check S} \to L^2\Gamma\paren{X\setminus Z, S\otimes\fl}.
      \end{equation*}
    \end{enumerate}
  \end{enumerate}    
\end{theorem}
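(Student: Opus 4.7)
The plan is to harvest the constructions of the previous subsections: both the residue map and the extension map are, by their very definition, assemblies of pieces already put in place, and the bulk of the new work lies in verifying the characterisation \autoref{Thm_ResidueAndExtensionMap_Characterisation} and the $H^{-1/2}$--bound \autoref{Thm_ResidueAndExtensionMap_L2}.

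For \autoref{Thm_ResidueAndExtensionMap_Symplectomorphism}, $\res$ is the composition of $\Symplectomorphism$ from \autoref{Prop_SymplectomorphismOfGelfandRobbinQuotients}, the orthogonal decomposition \autoref{Cor_SpectralDecomposition_GelfandRobbinQuotient} (which by \autoref{Lem_LeadingTerm} collapses onto the $\lambda=-1/2$ summands), the summand-wise symplectomorphisms $\res_\mu$ of \autoref{Prop_Identification_Symplectic} with uniform norm control from \autoref{Prop_Identification_UniformNorms}, and the reassembly of \autoref{Prop_Reassembly}; each link is an isomorphism of symplectic Hilbert spaces, so their composition is too.

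For \autoref{Thm_ResidueAndExtensionMap_Characterisation}, first verify the formula on the specified subspace. Given $\phi = r^{-1/2}\psi$ with $\psi \in \Gamma(\hat X,\hat S\otimes\hat \fl)$ and $\phi \in \dom(\Dmax)$, Taylor-expand $\psi = \psi|_F + r\tilde\psi$ near $F$ with $\tilde\psi \in \Gamma(\hat X,\hat S\otimes\hat \fl)$. Then
\begin{equation*}
  \chi(r)\phi - \chi(r) r^{-1/2}\psi|_F = \chi(r) r^{1/2}\tilde\psi
\end{equation*}
is supported in $[0,1/2)\times F$ and vanishes to order $r^{1/2}$ at $F$, so it lies in $\dom(\mathringDmin)$. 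Spectrally decomposing $\psi|_F = \sum_\mu v_\mu$ via \autoref{Prop_SpectralDecomposition}~\autoref{Prop_SpectralDecomposition_Decomposition} and invoking the definition of $\res_\mu$ on each $V_{-1/2,\mu}$--summand identifies $\res[\phi]$ with $\psi|_F$ under $\pi^*\check S \iso \underline S\otimes\underline \fl$ of \autoref{Prop_NZLambda}. For density, decompose any $\phi \in \dom(\Dmax)$ as $\phi_{\min} + \ext(\res[\phi])$ using \autoref{Thm_ResidueAndExtensionMap_RightInverse}: the minimal part lies in $H^1 = \dom(\Dmin)$ and is $\Abs{-}_D$--approximable by $\Gamma_c(X\setminus Z, S\otimes\fl) \subset r^{-1/2}\Gamma(\hat X,\hat S\otimes\hat \fl)$; the extension part is the $\Abs{-}_D$--convergent sum $\chi(r)\sum_\mu r^{-1/2}e^{-\abs{\mu}r}v_\mu$, whose finite truncations are of the form $\chi(r) r^{-1/2} \cdot (\text{smooth section of } \hat S \otimes \hat \fl)$ and hence lie in $r^{-1/2}\Gamma(\hat X,\hat S\otimes\hat \fl) \cap \dom(\Dmax)$.

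Part \autoref{Thm_ResidueAndExtensionMap_RightInverse} is immediate: $\ext_\mu(v) - r^{-1/2}v = r^{-1/2}(e^{-\abs{\mu} r}-1)v = O(r^{1/2})$ near $r=0$ and, after cut-off by $\chi(r)$, belongs to $\dom(\mathringDmin)$, so $\res_\mu([\chi(r)\ext_\mu(v)]) = v$; assembling over $\mu$ and applying $\Symplectomorphism^{-1}$ gives $\res \circ [-] \circ \ext = \id$. For \autoref{Thm_ResidueAndExtensionMap_L2}, the bound $\Abs{\ext_\mu(v)}_{L^2} \lesssim (1+\abs{\mu})^{-1/2}\Abs{v}$ from \autoref{Lem_UniformEstimatesForRestrictionAndExtension}\autoref{Lem_UniformEstimatesForRestrictionAndExtension_2}, combined with the pairwise $L^2$--orthogonality of the $V_{-1/2,\mu}$--summands of $L^2\Gamma(U\setminus Z, \mathring S \otimes \mathring \fl)$, yields by Bessel's identity
\begin{equation*}
  \Abs{\ext(\check\phi)}_{L^2}^2 \lesssim \sum_{(-1/2,\mu)\in\check\Spectrum} (1+\abs{\mu})^{-1}\Abs{\check\phi_\mu}^2 = \Abs{\check\phi}_{H^{-1/2}}^2,
\end{equation*}
and $L^2$--boundedness of multiplication by $\chi(r)$ then extends $\ext$ to the claimed bounded operator. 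The main obstacle is the density claim in \autoref{Thm_ResidueAndExtensionMap_Characterisation}: the partial sums over $\mu$ must converge in the graph norm $\Abs{-}_D$ rather than merely in $L^2$, which relies on \autoref{Prop_Identification_UniformNorms} to control each summand's graph-norm contribution in the model and on \autoref{Lem_DomDMaxVsDomMathringDMax} to transfer $\dom(\mathringDmax)$--convergence to $\dom(\Dmax)$--convergence through the cut-off.
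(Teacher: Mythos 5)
Your proof is correct and takes essentially the same route as the paper, which merely cites \autoref{Prop_SymplectomorphismOfGelfandRobbinQuotients}, \autoref{Cor_SpectralDecomposition_GelfandRobbinQuotient}, \autoref{Prop_Identification_Symplectic}, \autoref{Prop_Identification_UniformNorms}, \autoref{Prop_Reassembly} for part (1a), \autoref{Lem_DomDMaxVsDomMathringDMax} and \autoref{Prop_SpectralDecomposition_DomDMax} for part (1b), \enquote{by construction} for (2a), and \autoref{Lem_UniformEstimatesForRestrictionAndExtension}~\autoref{Lem_UniformEstimatesForRestrictionAndExtension_2} for (2b); you fill in the detail. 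One point worth making explicit in part (1b): if $r^{-1/2}\phi \in \dom(\Dmax)$ for $\phi \in \Gamma(\hat X,\hat S\otimes\hat\fl)$, then $\phi|_{\partial\hat X}$ is automatically supported in the $\lambda=-1/2$ eigenspace of $I\mathring\nabla_{\partial_\alpha}$ --- otherwise the $J r^{-3/2}\bigl(-\tfrac12 - I\mathring\nabla_{\partial_\alpha}\bigr)\phi$ contribution to $\mathring D(r^{-1/2}\phi)$ fails to be $L^2$ with respect to $r\,\rd r$. Without this observation the stated identity $\pi^*\res[r^{-1/2}\phi]=\phi|_{\partial\hat X}$ would not be literal, since $\res$ only sees the $\lambda=-1/2$ summands; with it, your spectral-decomposition step is exact rather than a projection. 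The density argument via $\phi = (\phi-\ext\res[\phi]) + \ext\res[\phi]$ and graph-norm convergence of the truncations $\chi(r)\sum_{\mu\le N}\ext_\mu(v_\mu)$ is sound, and correctly invokes $\Gamma_c(X\setminus Z,S\otimes\fl) = r^{-1/2}\bigl(r^{1/2}\Gamma_c\bigr) \subset r^{-1/2}\Gamma(\hat X,\hat S\otimes\hat\fl)$ together with \autoref{Lem_BorderlineHardyInequality_Consequences}~\autoref{Lem_BorderlineHardyInequality_Consequences_H1=H10} for the minimal part.
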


\begin{proof}
  \autoref{Thm_ResidueAndExtensionMap_Symplectomorphism} is an immediate consequence of
  \autoref{Prop_SymplectomorphismOfGelfandRobbinQuotients},  
  \autoref{Cor_SpectralDecomposition_GelfandRobbinQuotient},
  \autoref{Prop_Identification_Symplectic},
  \autoref{Prop_Identification_UniformNorms}, and
  \autoref{Prop_Reassembly}.
  
  \autoref{Thm_ResidueAndExtensionMap_Characterisation} is a consequence of \autoref{Lem_DomDMaxVsDomMathringDMax} and \autoref{Prop_SpectralDecomposition_DomDMax}.
  
  \autoref{Thm_ResidueAndExtensionMap_RightInverse} holds by construction and   
  \autoref{Thm_ResidueAndExtensionMap_L2} follows from \autoref{Lem_UniformEstimatesForRestrictionAndExtension}~\autoref{Lem_UniformEstimatesForRestrictionAndExtension_2}.
\end{proof}

\begin{remark}
  \label{Rmk_GeometricRealisation_Chirality}
  If $\epsilon$ is a chirality operator,
  then
  $S$ orthogonally decomposes as $\check S = \check S^+ \oplus \check S^-$,
  $\check S^\pm \subset \check S$ are Lagrangian subbundles,
  $A$ preserves this splitting,
  $\check H\Gamma\paren{Z,\check S}$ orthogonally decomposes as $\check H\Gamma\paren{Z,\check S} = \check H\Gamma\paren{Z,\check S^+} \oplus \check H\Gamma\paren{Z,\check S^-}$,
  and
  the residue map restricts to isomorphisms
  \begin{equation*}
    \res \co \GelfandRobbinQuotient^\pm \iso \check H\Gamma\paren{Z,\check S^\pm}.
    \qedhere
  \end{equation*}  
\end{remark}


\subsection{Spectral and local residue conditions}
\label{Sec_SpectralAndLocalResidueConditions}

\autoref{Thm_ResidueAndExtensionMap} makes it possible to define a wider variety of residue conditions than those considered in \autoref{Sec_GelfandRobbinQuotient}.
Here are some examples.

\begin{example}
  \label{Ex_APSResidueCondition}
  The \defined{APS residue condition} is defined by
  \begin{equation*}
    \ResidueCondition_\APS
    \coloneq
    \one_{(-\infty,0)}(A)H^{1/2}\Gamma\paren{Z,\check S}
    \subset
    \check H\Gamma\paren{Z,\check S};
  \end{equation*}
  cf.~\citet[(2.3)]{Atiyah1975a}.
\end{example}

\begin{prop}[Criterion for left semi-Fredholmness]
  \label{Prop_BH-1/2CompactLeftSemiFredholm}
  Let $\ResidueCondition \subset \check H\Gamma(Z,\check S)$ be a residue condition.
  If $\ResidueCondition \incl H^{-1/2}\Gamma\paren{Z, \check S}$ is compact,
  then
  $\dom(D_\ResidueCondition) \incl L^2\Gamma\paren{X\setminus Z, S\otimes\fl}$ is compact and $D_\ResidueCondition$ is left semi-Fredholm.
\end{prop}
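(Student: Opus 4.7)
The plan is to first establish compactness of the inclusion $\dom(D_B) \incl L^2\Gamma\paren{X\setminus Z, S\otimes\fl}$, from which left semi-Fredholmness will follow by standard functional analysis of closed operators.

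The key device for compactness is to decompose every $\phi \in \dom(D_B)$ as
\begin{equation*}
  \phi = \ext(\res[\phi]) + \psi \qwithq \psi \coloneqq \phi - \ext(\res[\phi]).
\end{equation*}
Because $\ext$ is a right inverse of $\res \circ [-]$ by \autoref{Thm_ResidueAndExtensionMap}~\autoref{Thm_ResidueAndExtensionMap_RightInverse}, one has $[\psi] = 0$; that is, $\psi \in \dom(\Dmin) = H^1\Gamma\paren{X\setminus Z, S\otimes\fl}$ thanks to \autoref{Lem_BorderlineHardyInequality_Consequences}~\autoref{Lem_BorderlineHardyInequality_Consequences_H1=H10}. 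Now let $(\phi_n)$ be a sequence in $\dom(D_B)$ bounded in $\Abs{-}_D$, and set $v_n \coloneqq \res[\phi_n] \in B$. Since $\res$ is a Hilbert space isomorphism by \autoref{Thm_ResidueAndExtensionMap}~\autoref{Thm_ResidueAndExtensionMap_Symplectomorphism}, the sequence $(v_n)$ is uniformly bounded in $\check H\Gamma\paren{Z,\check S}$, hence in $B$. The hypothesis that $B \incl H^{-1/2}\Gamma\paren{Z,\check S}$ is compact extracts a subsequence with $(v_{n_k})$ convergent in $H^{-1/2}$, and then \autoref{Thm_ResidueAndExtensionMap}~\autoref{Thm_ResidueAndExtensionMap_L2} promotes this to $L^2$-convergence of $\ext(v_{n_k})$.

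To control the remainder $\psi_{n_k}$, I would use that $\ext \co \check H\Gamma\paren{Z,\check S} \to \dom(\Dmax)$ is also bounded with respect to the graph norm, which follows by combining the explicit estimates of \autoref{Lem_UniformEstimatesForRestrictionAndExtension}~\autoref{Lem_UniformEstimatesForRestrictionAndExtension_2} (uniform $\mathring D$-boundedness of the summands $\ext_\mu$) with \autoref{Lem_DomDMaxVsDomMathringDMax}~\autoref{Lem_DomDMaxVsDomMathringDMax_1} (passage from the model $\mathring D$ to $D$ via the cut-off). This yields
\begin{equation*}
  \Abs{\ext(v_{n_k})}_D \lesssim \Abs{v_{n_k}}_{\check H} \lesssim \Abs{\phi_{n_k}}_D,
\end{equation*}
so $(\psi_{n_k})$ is bounded in $\Abs{-}_D$ and, since $\psi_{n_k} \in H^1\Gamma$, bounded in $H^1$ by the elliptic estimate \autoref{Eq_D_EllipticEstimate}. \autoref{Lem_BorderlineHardyInequality_Consequences}~\autoref{Lem_BorderlineHardyInequality_Consequences_H1L2Compact} then supplies a further $L^2$-convergent subsequence, and adding back $\ext(v_{n_k})$ finishes the compactness argument.

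Left semi-Fredholmness is then standard: the $L^2$-norm and the graph norm agree on $\ker D_B$, so its $L^2$-unit ball is precompact, forcing $\dim \ker D_B < \infty$. Closedness of $\im D_B$ follows by ruling out sequences $\phi_n \in \dom(D_B) \cap \paren{\ker D_B}^\perp$ with $\Abs{\phi_n}_{L^2} = 1$ and $\Abs{D_B\phi_n}_{L^2} \to 0$: compactness extracts an $L^2$-convergent subsequence, closedness of $D_B$ produces an element of $\ker D_B$ of unit $L^2$-norm orthogonal to itself, which is a contradiction. I expect the only delicate point to be the double-sided control on $\ext$: the weighted norm $\Abs{-}_{\check H}$ has been calibrated in \autoref{Def_BranchingLocusOperator_Mu} precisely so that $\ext$ is bounded into $L^2$ from $H^{-1/2}$ \emph{and} bounded into the graph norm from $\check H$, and this two-sided control is exactly what converts the compactness hypothesis on the branching locus $Z$ into compactness of the whole domain of $D_B$.
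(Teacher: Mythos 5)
Your proof is correct and follows essentially the same route as the paper's: the decomposition $\phi = (\phi - \ext\res[\phi]) + \ext\res[\phi]$ with the first part landing in $\dom(\Dmin) = H^1$ (compactly embedded in $L^2$), the second controlled through $\res[\phi] \in B$ via the compactness of $B \incl H^{-1/2}$ and the $L^2$-boundedness of $\ext$ on $H^{-1/2}$. The paper phrases this more compactly as the observation that $\phi \mapsto (\phi-\ext\res[\phi],\res[\phi])$ is a compact map into $L^2 \oplus H^{-1/2}$, and then invokes the standard Fredholm-estimate argument that you spell out sequentially; the content is the same.
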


\begin{proof}
  By \autoref{Lem_BorderlineHardyInequality_Consequences}~\autoref{Lem_BorderlineHardyInequality_Consequences_H1L2Compact}
  and the assumption,
  the composition
  \begin{align*}
    \dom(D_\ResidueCondition) &\to H^1\Gamma\paren{X\setminus Z,S \otimes \fl} \oplus \ResidueCondition \incl L^2\Gamma\paren{X\setminus Z,S \otimes \fl} \oplus H^{-1/2}\Gamma\paren{Z, \check S} \\
    \phi &\mapsto (\phi-\ext\res[\phi],\res[\phi])
  \end{align*}
  is compact.
  Therefore,
  by \autoref{Thm_ResidueAndExtensionMap}~\autoref{Thm_ResidueAndExtensionMap_L2},
  $\dom(D_\ResidueCondition) \incl L^2\Gamma\paren{X\setminus Z, S\otimes\fl}$ is compact.
  Since for every $\phi \in \dom(D_\ResidueCondition)$
  \begin{align*}
    \Abs{\phi}_D
    \lesssim
      \Abs{D\phi}_{L^2}
      + \Abs{\phi}_{L^2},
  \end{align*}
  $D_\ResidueCondition$ is left semi-Fredholm.
\end{proof}

\begin{example}
  \label{Ex_APSResidueCondition_Fredholm}
  Since $\ResidueCondition_\APS^G = \ResidueCondition_\APS \oplus \ker A$,  
  by \autoref{Prop_FredholmResidueConditionComparison} and \autoref{Prop_BH-1/2CompactLeftSemiFredholm},
  $D_{\ResidueCondition_\APS}$ is Fredholm of index $-\frac12 \dim\ker A$.  
  In particular, $\dim\ker A$ is even and inherits a symplectic structure from $G$.
  If $L \subset \ker A$ is Lagrangian,
  then $\ResidueCondition_\APS  \oplus L \incl H^{-1/2}\Gamma\paren{Z,\check S}$ is compact and Lagrangian.
  In particular,
  \autoref{Prop_SpectralTheory} applies.
\end{example}

\begin{definition}
  \label{Def_LocalBoundaryCondition}  
  Let $V \subset \check S$ be a subbundle.
  The \defined{local residue condition} associated with $V$ is
  \begin{equation*}
    \ResidueCondition_V
    \coloneq
    \check H\Gamma(Z,V) \subset \check H\Gamma\paren{Z,\check S}.
    \qedhere   
  \end{equation*}
\end{definition}

\begin{prop}
  \label{Prop_AdjointLocalBoundaryCondition}
  Let $V \subset \check S$ be a subbundle.
  If $V^{\check\Omega}$ denotes the symplectic complement of $V \subset \check S$,
  then
  \begin{equation*}
    \ResidueCondition_V^G = \ResidueCondition_{V^{\check\Omega}}.
  \end{equation*}
\end{prop}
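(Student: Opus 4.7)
The plan is to use the symplectomorphism $\res$ of \autoref{Thm_ResidueAndExtensionMap}\autoref{Thm_ResidueAndExtensionMap_Symplectomorphism} to transfer the entire computation to $\paren{\check H\Gamma\paren{Z,\check S},\check\Omega}$. Since $\res$ is a symplectic isomorphism and $B_V$ is by definition already a subspace of $\check H\Gamma(Z,\check S)$, proving the proposition is equivalent to showing that the $\check\Omega$-complement of $\check H\Gamma(Z,V)$ inside $\check H\Gamma(Z,\check S)$ coincides with $\check H\Gamma(Z,V^{\check\Omega})$. The crucial input for this is the pointwise formula
\begin{equation*}
  \check\Omega(\phi,\psi) = -2\pi\int_Z \Inner{J\phi,\psi}\,\vol_{g|_Z},
\end{equation*}
which holds verbatim on smooth sections (by definition of $\check\Omega$ on the bundle $\check S$ together with \autoref{Prop_Reassembly}) and extends continuously to $\check H\Gamma(Z,\check S)\times \check H\Gamma(Z,\check S)$.

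For the forward inclusion $B_{V^{\check\Omega}}\subset B_V^G$, I would start with smooth $\phi\in\Gamma(Z,V^{\check\Omega})$ and $\psi\in\Gamma(Z,V)$: the integrand $\Inner{J\phi(z),\psi(z)}$ vanishes fiberwise by the very definition of fiberwise symplectic complement, so $\check\Omega(\phi,\psi)=0$. Density of smooth sections in $\check H\Gamma(Z,V)$ and $\check H\Gamma(Z,V^{\check\Omega})$ (proved via partitions of unity subordinate to charts in which $V$, $V^{\check\Omega}$ are trivialised, followed by mollification that respects the fractional Sobolev structure of $\check H$) then propagates the identity to all of $B_{V^{\check\Omega}}\times B_V$.

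For the reverse inclusion, I would take $\phi\in B_V^G$ and test against arbitrary $\psi\in\Gamma_c(Z,V)\subset B_V$. This gives
$\int_Z\Inner{J\phi(z),\psi(z)}\,\vol_{g|_Z}=0$
for every such $\psi$, so by the fundamental lemma of the calculus of variations, applied within local orthonormal trivialisations of $V\subset\check S$, the distributional section $J\phi$ is annihilated by every smooth compactly supported section of $V$. Pointwise-a.e., this is the condition $J\phi(z)\perp V_z$, equivalently $\phi(z)\in V^{\check\Omega}_z$. Since $\phi$ already lies in $\check H\Gamma(Z,\check S)$, the same mollification argument used above identifies $\phi$ with an element of $\check H\Gamma(Z,V^{\check\Omega})=B_{V^{\check\Omega}}$.

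The main obstacle will be the last step: upgrading the a.e.-pointwise membership $\phi(z)\in V^{\check\Omega}_z$ to membership in the closed subspace $B_{V^{\check\Omega}}$, given that the $\check H$ norm is a mixed fractional Sobolev norm (with $H^{-1/2}$ regularity on the non-negative spectral part of $A$). The cleanest route is to note that the smooth $L^2$-orthogonal projections $P_{V^{\check\Omega}},\, P_{(V^{\check\Omega})^\perp}\in\Gamma(Z,\End(\check S))$ act as bounded multiplication operators on $\check H\Gamma(Z,\check S)$ (composition with a smooth bundle endomorphism commutes with spectral truncation of $A$ up to a bounded commutator and preserves both $H^{\pm 1/2}$), so $\phi = P_{V^{\check\Omega}}\phi \in P_{V^{\check\Omega}}\check H\Gamma(Z,\check S) = \check H\Gamma(Z,V^{\check\Omega})$, closing the argument.
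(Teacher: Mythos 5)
Your strategy coincides with the paper's (transfer via $\res$ to the computation of the $\check\Omega$--complement of $\check H\Gamma(Z,V)$ inside $\check H\Gamma(Z,\check S)$), and your forward inclusion $B_{V^{\check\Omega}}\subset B_V^G$ is handled correctly by the pointwise vanishing of the fibrewise symplectic form together with density of smooth sections. The gap is in the very last step of the reverse inclusion, namely the claim that the fibrewise orthogonal projection $P_{V^{\check\Omega}}$ is a bounded operator on the mixed-regularity space $\check H\Gamma(Z,\check S)$. For a bundle endomorphism $P$ to be bounded on $\check H$, the cross term $\one_{(-\infty,0)}(A)\,P\,\one_{[0,\infty)}(A)$ must gain a full derivative, i.e.\ be a pseudodifferential operator of order at most $-1$ mapping $H^{-1/2}$ to $H^{1/2}$. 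But its principal symbol is $\pi^-(\xi)\,P\,\pi^+(\xi)$, where $\pi^\pm(\xi)$ are the spectral projections of the symbol $-J\gamma(\xi)$ of $A$; this is of order $0$ and vanishes only if $J\gamma(\xi)$ preserves both $V^{\check\Omega}$ and its orthogonal complement for every $\xi\in T^*Z\setminus\set{0}$. That commutation almost never holds for the local residue conditions of interest: for the MIT bag bundles of \autoref{Ex_MITBag} one checks that $J\gamma(\xi)$ swaps $\check S^+$ and $\check S^-$, and the symbolic $\infty$--regularity criterion of \autoref{Prop_SymbolicCriterionForInftyRegularity} asks for $J\gamma(\xi)V\subseteq V^\perp$, which is exactly the opposite of the commutation you would need. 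So the commutator $[P_{V^{\check\Omega}},\one_{[0,\infty)}(A)]$ is bounded (it has order $0$) but does not smooth, and invoking it does not yield boundedness on $\check H$.

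Consequently your argument establishes only that $\phi$ is distributionally $V^{\check\Omega}$--valued; it does not show that $\phi$ is a $\check H$--limit of smooth $V^{\check\Omega}$--valued sections, which is what membership in $B_{V^{\check\Omega}}=\check H\Gamma(Z,V^{\check\Omega})$ means. Neither spatial mollification (which respects the fibrewise constraint but does not commute with the spectral projections entering the $\check H$--norm) nor spectral truncation by $A$ (which respects the spectral splitting but destroys the fibrewise constraint) produces on its own approximants converging in $\check H$. This density statement is precisely what the paper's one-line citation of \autoref{Prop_AdjointExtension} and \autoref{Thm_ResidueAndExtensionMap} leaves implicit, and it is the step for which you would need to supply a correct argument.
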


\begin{proof}
  This is an immediate consequence of \autoref{Prop_AdjointExtension} and \autoref{Thm_ResidueAndExtensionMap}.
\end{proof}

\begin{example}
  \label{Ex_NeumannDirichlet}
  Consider the Dirac bundle $(S,\gamma,\nabla)$ corresponding to the Hodge--de Rham operator $\rd + \rd^*$;
  that is: $S \coloneq \Wedge T^*X$ with $\gamma(\xi)\phi \coloneq \xi \wedge \phi - i_{\xi^\sharp}\phi$.
  Decompose
  \begin{equation*}
    \check S = \check S_N \oplus \check S_D
  \end{equation*}
  with
  $\check S_N \coloneq S_N \otimes_\C NZ^{-1/2}$ and
  $\check S_D \coloneq S_D \otimes_\C NZ^{-1/2}$, and
  \begin{equation*}
    S_N
    \coloneq \paren{\R \oplus \Wedge^2 N^*Z} \otimes \Wedge T^*Z
    \qandq
    S_D
    \coloneq N^*Z \otimes \Wedge T^*Z .
  \end{equation*}
  The corresponding residue conditions are Lagrangian.
\end{example}

\begin{example}
  \label{Ex_MITBag}
  Assume that $(S,\gamma,\nabla,\Tor)$ is a complex Dirac bundle with skew torsion.
  The \defined{MIT bag residue conditions} are the local residue conditions $\ResidueCondition_\bag^\pm$ arising from the decomposition
  \begin{equation*}
    \check S = \check S_\bag^+ \oplus \check S_\bag^-
    \qwithq
    \check S_\bag^\pm \coloneq \ker \paren{\one\mp iJ};
  \end{equation*}
  cf.~\cite{Johnson1975:MITBag}.  
  Since $J$ is an isometry, and $i$ and $J$ commute,
  $ \check S_\bag^+$ and $ \check S_\bag^-$ are perpendicular complex subbundles.
  In particular, $(\ResidueCondition_\bag^\pm)^G = \ResidueCondition_\bag^\mp$.
\end{example}

By the usual argument,
the above leads to the following variation of the bordism theorem;
see, e.g, \cite[§8.5]{BaerBallmann2012:BVP}.
The significance of this result in the present context, however, is somewhat mysterious.

\begin{prop}
  \label{Prop_BordismTheorem}
  Assume the situation of \autoref{Ex_MITBag}.
  The components $A^\pm$ of $A$ in the decompsition
  \begin{equation*}
    A\eqcolon
    \begin{pmatrix}
      0 &  A^-\\
      A^+ & 0 
    \end{pmatrix}
  \end{equation*}
  satisfy
  \begin{equation*}
    \ind A^\pm=0.
  \end{equation*}
\end{prop}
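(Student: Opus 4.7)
The plan is to combine two independent identities that together force $\ind A^\pm = 0$. The first identity, $\ind A^- = -\ind A^+$, is immediate from the formal self-adjointness of $A$: since $A$ is self-adjoint and anti-commutes with the chirality $iJ$, its off-diagonal form exhibits $A^-$ as the Hilbert-space adjoint of $A^+$. The second identity, $\ind A^- = \ind A^+$, will be established by producing an intertwining of $A^+$ with $A^-$ up to a compact perturbation, using the complex structure $I = \gamma(\vol_{NZ}) = JK$ inherited on $\check S$ from the quaternionic structure on $\underline S$.

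First, I verify that $I$ anti-commutes with the chirality $iJ$: the relation $IJ = -JI$ follows directly from $I = JK$ and the anti-commutation of $J,K$, while $iI = Ii$ follows from $i$ commuting with all of $\gamma$. Consequently the restrictions $I_\pm \coloneqq I|_{\check S^\pm}$ are bundle isomorphisms $\check S^\pm \iso \check S^\mp$. Next, I observe that $I$ commutes with $\gamma(v)$ for every $v \in TZ$, because $I$ is the Clifford product of two vectors normal to $Z$ and the two anti-commutations compose to a commutation. This kills the principal symbol $[I,\gamma(\xi^\sharp)]$ of $[I, D_{\check S}]$, so $[I, D_{\check S}]$ is a zero-th order operator on the closed manifold $Z$, and hence a compact perturbation between the natural Sobolev pairs used to make $A^\pm$ Fredholm.

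Putting these together, $\{I, A\} = \{I, -JD_{\check S}\} = -J[I, D_{\check S}]$ (using $IJ = -JI$) is compact. Restricting to $\check S^+$ yields $I_- A^+ + A^- I_+ \equiv 0$ modulo compact, equivalently $A^- = -I_- A^+ I_+^{-1}$ modulo a compact operator. Since $I_\pm$ are parallel bundle isomorphisms inducing isomorphisms of Sobolev spaces and Fredholm index is invariant under compact perturbations, $\ind A^- = \ind A^+$. Combining with the first identity gives $\ind A^\pm = 0$.

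The only subtlety I anticipate is the compactness of $[I, D_{\check S}]$, which relies on zero-th order operators on the closed manifold $Z$ being bounded $H^s \to H^s$ and thereby compact $H^s \to H^{s-1}$ via Rellich. The precise form of $[I, D_{\check S}]$ involves the second fundamental form of $Z \subset X$, but no explicit computation of it is needed for the index argument; the algebraic vanishing of the principal symbol suffices.
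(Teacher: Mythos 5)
Your proof is correct, and it takes a genuinely different route from the paper's. The paper (explicitly following \cite[\S 8.5]{BaerBallmann2012:BVP}) transports the problem into the bulk: after noting $(A^+)^* = A^-$, it introduces the one-parameter family of residue conditions $B_t^\pm = \ker A^\pm \oplus (\one \mp tiJ)B_\APS$, uses \autoref{Prop_FredholmBranchingConditionComparison} and \autoref{Prop_BH-1/2CompactLeftSemiFredholm} to convert $\ind A^+$ into $2\ind D_{B_0^+}$, and then invokes deformation invariance (\autoref{Prop_VariationOfBoundaryConditions}) to slide to $t=1$, where $B_1^\pm = H^{1/2}\Gamma(Z,\check S^\pm)$ is the MIT bag condition and an integration-by-parts identity à la \autoref{Ex_AbstractMITBag_Fredholm} forces $\ker D_{B_1^\pm} = \ker\Dmin$, hence $\ind D_{B_1^\pm} = 0$. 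Your argument instead stays entirely on the closed manifold $Z$: you use the quaternionic generator $I = \gamma(\vol_{NZ})$, which anti-commutes with the chirality $iJ$ (so $I_\pm\co\check S^\pm \iso \check S^\mp$) and commutes with $\gamma(v)$ for $v \in TZ$ (so $[I,D_{\check S}]$ is a zero-th order, hence compact, operator), to intertwine $A^+$ and $A^-$ up to a compact perturbation. Combined with the adjointness relation this yields $\ind A^+ = \ind A^- = -\ind A^+$. This is the classical cobordism-invariance argument via Clifford multiplication; it is shorter, self-contained, and does not need the machinery of \autoref{Sec_FredholmExtension} at all, whereas the paper's route has the advantage of staying within the residue-condition formalism already set up and exhibiting the deformation $B_t^\pm$ that connects the APS-type and MIT-bag-type conditions. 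One minor remark: your sign in $\{I,A\} = -J[I,D_{\check S}]$ should be $\{I,A\} = J[I,D_{\check S}]$ (compute $IA = -IJD_{\check S} = JID_{\check S}$ and $AI = -JD_{\check S}I$, then subtract), but this is immaterial to the compactness conclusion and the index identity.
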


The proof relies on the following observation.

\begin{lemma}
  \label{Lem_MITBag}
  ~
  \begin{enumerate}
  \item
    \label{Lem_MITBag_APS}
    $\ResidueCondition_\bag^\pm = \ker A^\pm \oplus (\one\mp  i J)\ResidueCondition_\APS$.
  \item
    \label{Lem_MITBag_Ker}
    $\ker D_{\ResidueCondition_\bag^\pm} = \ker \Dmin$.
  \end{enumerate}
\end{lemma}

\begin{proof}
  Since $A$ and $J$ anti-commute,
  if $\phi \in \ResidueCondition_\bag^\pm$,
  then 
  \begin{equation*}
    \one_{(0,\infty)}(A)\phi
    =
    \pm \one_{(0,\infty)}(A) iJ\phi
    =
    \mp iJ \one_{(-\infty,0)}(A)\phi.
  \end{equation*}
  This implies \autoref{Lem_MITBag_APS};
  in particular: $\ResidueCondition_\bag^\pm = H^{1/2}\Gamma\paren{Z,\check S_\bag^\pm}$.

  The argument for \autoref{Lem_MITBag_Ker} is as in \autoref{Ex_AbstractMITBag_Fredholm};
  indeed:
  for every $\phi \in \ker D_{\ResidueCondition_\bag^\pm}$
  \begin{equation*}
    0
    = 2\Inner{D\phi,i\phi}_{L^2}
    = -2\pi\Inner{J \res \phi,i \res \phi}
    = \mp 2\pi \Abs{\res \phi}_{L^2}^2.
    \qedhere
  \end{equation*}
\end{proof}

\begin{proof}[Proof of \autoref{Prop_BordismTheorem}]
  The following proof is essentially identical to the one presented in \cite[§8.5]{BaerBallmann2012:BVP}.
  Since $(A^+)^*=A^-$,
  \begin{equation*}
    -\ind A^-=\ind A^+=\dim \ker A^+-\dim \ker A^-.
  \end{equation*}
  For every $t\in [0,1]$,
  set
  \begin{equation*} 
    \ResidueCondition_t^\pm
    \coloneq \ker A^\pm \oplus (\one\mp t i J)\ResidueCondition_\APS.
  \end{equation*}
  Since $(\ResidueCondition_t^+)^G=\ResidueCondition_t^-$, by \autoref{Prop_BH-1/2CompactLeftSemiFredholm}, $D_{\ResidueCondition_t^\pm}$ is Fredholm.
  Moreover, by \autoref{Prop_FredholmResidueConditionComparison},
  \begin{equation*}
    -\ind D_{\ResidueCondition_t^\mp}=\ind D_{\ResidueCondition_t^\pm}=\ind D_{ (\one\mp t i J)\ResidueCondition_\APS}+ \dim \ker A^\pm.
  \end{equation*}
  In particular,
  \begin{equation*}
    \ind A^+ = 2\ind D_{\ResidueCondition_0^+}.
  \end{equation*}
  Therefore, it remains to prove that $\ind D_{\ResidueCondition_0^+} = 0$.
  By \autoref{Prop_VariationOfBoundaryConditions},
  it suffices to prove that $\ind D_{\ResidueCondition_1^+} =  0$.
  By \autoref{Lem_MITBag},
  $\ResidueCondition_1^+ = \ResidueCondition_\bag^+$ and $\ind \ResidueCondition_\bag^+ = 0$.
\end{proof}



\section{Regularity theory}
\label{Sec_RegularityTheory}

This section continues to assume \autoref{Hyp_CodimensionTwoCooriented} throughout.
The geometric realisation $\check H\Gamma\paren{Z,\check S}$ of $\GelfandRobbinQuotient$ developed in \autoref{Sec_GeometricRealisation} leads to the $L^2$ regularity theory laid out in the following.

\subsection{Adapted Sobolev spaces, I: definition}
\label{Sec_SobolevSpaces_1}

Here is the scale of Sobolev spaces for which the regularity theory is developed.

\begin{definition}[conormal differential operators]
  \label{Def_DifferentialOperators}
  Denote by
  $\DiffOp^\bullet(S\otimes\fl)$ the $\N_0$--filtered ring of differential operators acting on $S\otimes\fl$.
  \begin{enumerate}
  \item
    A vector field $v \in \Vect(\hat X)$ is \defined{conormal} if $v|_{\del \hat X} \in \Vect(\del \hat X)$.
    Denote the subspace of conormal vector fields by $\Vect_b(\hat X)$.
  \item
    The space of $\DiffOp_b^\bullet(S \otimes \fl) \subset \DiffOp^\bullet(S \otimes \fl)$ of \defined{conormal} differential operators is the filtered subring generated by $\Gamma\paren{\hat X,\End(\hat S \otimes \hat\fl)}$ and
    differential operators of the form $\nabla_v$ with $v \in \Vect_b(\hat X)$.    
    \qedhere
  \end{enumerate}
\end{definition}

\begin{definition}[conormal and adapted Sobolev spaces]
  \label{Def_ConormalAdaptedSobolevSpaces}
  Let $k \in \N_0$.
  \begin{enumerate}
  \item[(b)]
    The \defined{conormal Sobolev space} $H_b^k\Gamma\paren{X\setminus Z, S\otimes\fl}$ is defined by
    \begin{equation*}
      H_b^k\Gamma\paren{X\setminus Z, S\otimes\fl}
      \coloneq
      \set*{
        \phi \in H_\loc^k\Gamma\paren{X\setminus Z, S\otimes\fl}
        :
        \begin{aligned}
          & P \phi \in L^2\Gamma\paren{X\setminus Z, S\otimes\fl} \text{ for} \\
          & \text{every }
            P \in \DiffOp_b^k(S\otimes\fl)
        \end{aligned}
      }.
    \end{equation*}
    \emph{Choose} a finite subset $\sP_b^k \subset \DiffOp_b^k(S\otimes\fl)$ which spans $\DiffOp_b^k(S\otimes\fl)$ over $\Gamma\paren{\hat X,\End(\hat S\otimes\hat\fl)}$.
    Define the norm $\Abs{-}_{H_b^k} \co  H_b^k\Gamma\paren{X\setminus Z, S\otimes\fl} \to [0,\infty)$ by
    \begin{equation*}
      \Abs{\phi}_{H_b^k}^2
      \coloneq
      \sum_{P \in \sP_b^k} \Abs{P\phi}_{L^2}^2.
    \end{equation*}
  \item[(a)]
    The \defined{adapted Sobolev space} $H_\adm^{k+1}\Gamma\paren{X\setminus Z, S\otimes\fl}$ is defined by
    \begin{equation*}
      H_\adm^{k+1}\Gamma\paren{X\setminus Z, S\otimes\fl}
      \coloneq
      \set*{
        \phi \in H_\loc^{k+1}\Gamma\paren{X\setminus Z, S\otimes\fl}
        :
        \begin{aligned}
          &\phi \in H_b^{k+1}\Gamma\paren{X\setminus Z, S\otimes\fl} ~\text{and} \\
          &D\phi \in H_b^k\Gamma\paren{X\setminus Z, S\otimes\fl} 
        \end{aligned}
      }.
    \end{equation*}
    Define the norm $\Abs{-}_{H_\adm^{k+1}} \co  H_\adm^{k+1}\Gamma\paren{X\setminus Z, S\otimes\fl} \to [0,\infty)$ by
    \begin{equation*}
      \Abs{\phi}_{H_\adm^{k+1}}
      \coloneq
      \Abs{\phi}_{H_b^{k+1}} + \Abs{D\phi}_{H_b^k}.
      \qedhere
    \end{equation*}
  \end{enumerate}
\end{definition}

$\paren{H_b^k\Gamma\paren{X\setminus Z, S\otimes\fl},\Abs{-}_{H_b^k}}$ and
$\paren{H_\adm^{k+1}\Gamma\paren{X\setminus Z, S\otimes\fl},\Abs{-}_{H_\adm^{k+1}}}$ are Hilbert spaces.
Evidently,
different choices of $\sP_b^k$ lead to equivalent norms.
The following discussion leads to a particularly convenient choice.

\begin{definition}[Convenient vector fields]
  \label{Def_ConvenientVectorFields}
  ~
  \begin{enumerate}
  \item
    Denote by $\Vect_c(X\setminus Z) \subset \Vect_b(\hat X)$ the subspace of vector fields supported in $X \setminus Z \subset \hat X$.
  \item
    Denote by $\Vect_{b;c}(\hat U) \subset \Vect_b(\hat X)$ the subspace of vector fields supported in $\hat U \subset \hat X$.
    For $v \in \Vect_{b;c}(\hat U)$,
    $\mathring\nabla_v \in \DiffOp_b^1(S\otimes\fl)$.
  \item
    Denote by $\Vect_{b;c,0}(\hat U)$ the subspace of those $v \in \Vect_{b;c}(\hat U)$ which are $\U(1)$--invariant on $\del \hat X$; that is: $[\del_\alpha,v|_{\del\hat X}] = 0$.
    \qedhere
  \end{enumerate}
\end{definition}

\begin{remark}
  \label{Rmk_ConvenientVectorFields}
  $\Vect_{b;c}(\hat U)$ is generated by
  $\chi(r) \del_\alpha$, $\chi(r) r\del_r$, vector fields of the form $\chi(r) v$ where $v$ is lifted from $Z$, and vector fields vanishing near $Z$.
  In particular, these are elements of $\Vect_{b;c,0}(\hat U)$.
\end{remark}

\begin{lemma}[Commutation relations]
  \label{Lem_CommutationRelations}
  The following commutation relations hold:
  \begin{enumerate}
  \item
    \label{Lem_CommutationRelations_1}
    For every $v \in \Vect_c(X\setminus Z)$, $k \in \N$, and $P \in \DiffOp_b^k(S\otimes\fl)$
    \begin{equation*}
      [\nabla_v,P] \in \DiffOp_b^k(S\otimes\fl).
    \end{equation*}    
  \item    
    \label{Lem_CommutationRelations_2}
    For every $v \in \Vect_{b;c}(\hat U)$, $k \in \N$, and $P \in \DiffOp_b^k(S\otimes\fl)$
    \begin{equation*}
      [\mathring\nabla_v,P] \in \DiffOp_b^k(S\otimes\fl).
    \end{equation*}
  \item
    \label{Lem_CommutationRelations_3}
    For every $v \in \Vect_{b;c,0}(\hat U)$
    \begin{equation*}
      [\mathring\nabla_v,\mathring D], [\mathring\nabla_v,D]
      \in \DiffOp_b^1(S\otimes\fl) + \DiffOp_b^0(S\otimes\fl) D.
    \end{equation*}
  \end{enumerate}
\end{lemma}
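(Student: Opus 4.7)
The plan is to prove all three parts together by induction on the order $k$ of $P$, reducing each assertion to commutators with the generators of the relevant filtered ring via the Leibniz rule $[A,BC]=[A,B]C+B[A,C]$.

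For Part (1), the key point is that for $v\in\Vect_c(X\setminus Z)$ the commutator $[\nabla_v,P]$ is supported in $\supp v$, a compact subset of $X\setminus Z$. Any differential operator of order $\leq k$ with support compact in $X\setminus Z$ belongs to $\DiffOp_b^k$: locally it is a polynomial in $\nabla_w$ with endomorphism coefficients, and multiplying each $w$ by a cutoff supported in $X\setminus Z$ puts $w$ into $\Vect_c(X\setminus Z)\subset\Vect_b(\hat X)$.

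For Part (2), I would first show $\mathring\nabla_v\in\DiffOp_b^1(S\otimes\fl)$ for every $v\in\Vect_{b;c}(\hat U)$. Tracing through the identifications in the proof of \autoref{Prop_ModelOperatorEstimate}, the difference $\nabla-\mathring\nabla-\frac12\mathring\gamma(\rII)$ vanishes along $Z$, so $\nabla-\mathring\nabla$ extends to a smooth endomorphism-valued one-form on $\hat U$ and $\mathring\nabla_v=\nabla_v+(\textnormal{smooth endomorphism})\in\DiffOp_b^1$. Since $\DiffOp_b^\bullet$ is a filtered subring of $\DiffOp^\bullet$ in which commutators drop order by one, $[\mathring\nabla_v,P]\in[\DiffOp_b^1,\DiffOp_b^k]\subset\DiffOp_b^k$.

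For Part (3), write $D=\mathring D+\ErrorTerm$ using \autoref{Prop_ModelOperatorEstimate}, so that $[\mathring\nabla_v,D]=[\mathring\nabla_v,\mathring D]+[\mathring\nabla_v,\ErrorTerm]$. Each summand of $\ErrorTerm=a\mathring\nabla+b+\mathring\gamma(\tilde H_Z)$ lies in $\DiffOp_b^1$ (the vanishing of $a$ on $Z$ lets $a\mathring\nabla$ be rewritten as $\mathring\nabla$ along conormal vector fields of the form $r\tilde a^i$), so $[\mathring\nabla_v,\ErrorTerm]\in\DiffOp_b^1$ by Part (2). For $[\mathring\nabla_v,\mathring D]$, I would expand in a local $\mathring g$-orthonormal frame $(\hat e_j)$ containing $\hat e_1=\del_r$ and $\hat e_2=r^{-1}\del_\alpha$; using $\mathring\nabla^T$-parallelism of $\mathring\gamma$ (\autoref{Prop_Model}) and antisymmetry of the $\mathring\nabla^T$ connection matrix, the leading-symbol terms cancel and one obtains
\begin{equation*}
[\mathring\nabla_v,\mathring D]=-\sum_j\mathring\gamma(\hat e_j)\mathring\nabla_{\mathring\nabla^T_{\hat e_j}v}+\sum_j\mathring\gamma(\hat e_j)\mathring\nabla_{\Tor(\hat e_j,v)}+(\textnormal{zeroth-order term}).
\end{equation*}
The torsion term lies in $\DiffOp_b^1$ because $\Tor$ takes values in $\R\del_\alpha$ with $\del_\alpha\in\Vect_b$. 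The main obstacle is the first sum: in the orthonormal frame the Christoffel symbols of $\mathring\nabla^T$ carry $r^{-1}$-singularities (e.g.\ $\mathring\nabla^T_{\hat e_2}\hat e_1=r^{-1}\hat e_2$), so individual summands may exhibit apparent singular behaviour such as $r^{-1}\mathring\nabla_{\del_r}$ or $r^{-1}\mathring\nabla_{\del_\alpha}$. The hypothesis $[\del_\alpha,v|_{\del\hat X}]=0$ is precisely what keeps the differentiated components $r^{-1}\del_\alpha v^j$ smooth on $\hat U$, and the surviving singular contributions then reassemble into $-\mathring D$ modulo conormal operators. The mechanism is already visible for $v=r\del_r$: direct computation gives $[\mathring\nabla_{r\del_r},\mathring D]=-\mathring D+D_Z+(\textnormal{zeroth order})$; substituting $-\mathring D=-D+\ErrorTerm$ with $D_Z,\ErrorTerm\in\DiffOp_b^1$ yields the required decomposition in $\DiffOp_b^0\cdot D+\DiffOp_b^1$. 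Verifying this cancellation in full generality is the technical heart of the lemma.
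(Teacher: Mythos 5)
Parts (1) and (2) are correct and track the paper's argument closely: (1) reduces to the observation that $[\nabla_v,P]$ is supported in $\supp v \Subset X\setminus Z$, and (2) uses that $\DiffOp_b^\bullet$ is a filtered ring in which commutation with a first--order generator preserves the filtration (because $\Vect_b(\hat X)$ is a Lie algebra).

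Part (3), however, is a genuine gap. You set up the computation by expanding $\mathring D=\sum_j\mathring\gamma(\hat e_j)\mathring\nabla_{\hat e_j}$ in a local $\mathring g$--orthonormal frame and using $\mathring\nabla^T$--parallelism of $\mathring\gamma$, which produces the term $-\sum_j\mathring\gamma(\hat e_j)\mathring\nabla_{\mathring\nabla^T_{\hat e_j}v}$ whose coefficients $\mathring\nabla^T_{\hat e_j}v$ carry $r^{-1}$--singularities. You then state that ``the surviving singular contributions then reassemble into $-\mathring D$ modulo conormal operators'' but immediately acknowledge that ``verifying this cancellation in full generality is the technical heart of the lemma'' and only check the single case $v=r\del_r$. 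That is not a proof: the claimed cancellation is precisely the assertion to be proved, and it must be established for \emph{all} $v\in\Vect_{b;c,0}(\hat U)$, not only for one generator. Moreover, your formulation is slightly misleading: for $v=\chi(r)\del_\alpha$ or for lifts from $Z$ there is no $-\mathring D$ contribution at all (the singular commutators vanish), so ``reassembles into $-\mathring D$'' is not what happens for those generators.

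The paper avoids this difficulty entirely by not expanding in a full orthonormal frame. Instead it exploits the structural decomposition $\mathring D = J\,\fd_z + D_Z$ with $\fd_z\coloneqq \del_r - r^{-1}I\mathring\nabla_{\del_\alpha}$ (from \autoref{Rmk_ModelDiracOperator}), notes that $\chi(r)\cdot D_Z\in\DiffOp_b^1$ so its commutator with $\mathring\nabla_v$ is handled by part (2), and then computes the commutator $[\mathring\nabla_v,\fd_z]$ directly against the three families of generators of $\Vect_{b;c,0}(\hat U)$ from \autoref{Rmk_X}: it equals $-\fd_z$ for $v=r\del_r$, and vanishes for $v=\del_\alpha$ and for lifts $v$ of vector fields on $Z$. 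These are three one--line computations, and the $\U(1)$--invariance hypothesis $[\del_\alpha,v|_{\del\hat X}]=0$ is exactly what ensures a general $v\in\Vect_{b;c,0}(\hat U)$ is a $\Gamma(\hat X)$--linear combination of these generators plus a vector field vanishing near $Z$. The net effect is that the ``technical heart'' you flagged dissolves into three closed--form identities once one uses the singular-part/tangential-part split of $\mathring D$ rather than a generic frame expansion. If you want to complete your proof along your own lines you would need to carry out the frame computation for every generator of $\Vect_{b;c,0}(\hat U)$ and show the cancellation case by case, which amounts to rederiving the paper's three identities in a less transparent coordinate system.
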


\begin{proof}
  If $v,w \in \Vect(X\setminus Z)$ and $T \in \Gamma\paren{X\setminus Z,S\otimes\fl}$,
  then
  \begin{equation*}
    [\nabla_v,T] = \nabla_vT 
    \qandq
    [\nabla_v,\nabla_w] = \nabla_{[v,w]} + F_\nabla(v,w).
  \end{equation*}
  Therefore, for every $k \in \N$ and $P \in \DiffOp^k(S\otimes \fl)$, $[\nabla_v,P] \in \DiffOp^k(S\otimes\fl)$.

  If $v \in \Vect_c(X\setminus Z)$, $k \in \N$, and $P \in \DiffOp_b^k(S\otimes\fl)$,
  then $\supp([\nabla_v,P]) \subset X \setminus Z$;
  therefore and by the above observation,
  $[\nabla_v,P] \in \DiffOp_b^k(S\otimes\fl)$.
  This proves \autoref{Lem_CommutationRelations_1}.

  \autoref{Lem_CommutationRelations_2} is immediate from the above observation. 

  Let $v \in \Vect_{b;c,0}(\hat U)$.
  By \autoref{Prop_ModelOperatorEstimate},
  \begin{equation*}
    D - \chi(r) \mathring D \in \DiffOp_b^1(S\otimes\fl)
  \end{equation*}
  Therefore, it suffices to prove that
  \begin{equation*}
    [\mathring\nabla_v,\chi(r) \mathring D] \in \DiffOp_b^1(S\otimes\fl) + \DiffOp_b^0(S\otimes\fl) \chi(r) \mathring D.
  \end{equation*}
  By direct computation,
  \begin{equation*}
    [\mathring\nabla_{r\del_r}, \mathring\nabla_{\del_r} - r^{-1}I\mathring\nabla_{\del_\alpha}]
    =
    -\paren{\mathring\nabla_{\del_r} - r^{-1}I\mathring\nabla_{\del_\alpha}}
    \qandq
    [\mathring\nabla_{\del_\alpha}, \mathring\nabla_{\del_r} - r^{-1}I\mathring\nabla_{\del_\alpha}]
    =
    0;
  \end{equation*}
  moreover, if $v$ is the lift of a vector field along $Z$,
  then
  \begin{equation*}
    [\mathring\nabla_v, \mathring\nabla_{\del_r} - r^{-1}I\mathring\nabla_{\del_\alpha}]
    =
    0.
  \end{equation*}
  By \autoref{Rmk_ModelDiracOperator}, \autoref{Rmk_ConvenientVectorFields} and since $\chi(r) D_Z \in \DiffOp_b^1(S\otimes\fl)$ and $\chi(r) J \in \DiffOp_b^0(S\otimes\fl)$,
  this implies \autoref{Lem_CommutationRelations_3}.
\end{proof}

\begin{remark}
  The reader should be warned that a number of variations of \autoref{Lem_CommutationRelations} that one might naively guess are false.
  In particular,
  if $T \in \Gamma\paren{\hat X,\End(\hat S \otimes \hat\fl)}$,
  then $[T,D]$ need not be admissible:
  e.g., $[\gamma(v),D] = 2\gamma(v) D - \sum_{i=1}^n \gamma(e_i)\gamma(\nabla_{e_i}v) - 2\nabla_v$.
\end{remark}

\begin{cor}[Convenient choice of $\sP_b^k$]
  \label{Cor_ChoiceOfGenerators}
  Set $\sP_b^0 \coloneq \set{\id_{S\otimes\fl}}$.
  For every $k \in \N$
  there is a finite subset $\sP_b^k \subset \DiffOp_b^k(S\otimes\fl)$ which spans $\DiffOp_b^k(S\otimes\fl)$ over $\Gamma\paren{\hat X,\End(\hat S\otimes\hat\fl)}$
  such that
  $\sP_b^{k-1} \subset \sP_b^k$ and
  every $P \in \sP_b^k\setminus\sP_b^{k-1}$ is of the form
  \begin{equation*}
    P = \nabla_{u_1} \cdots \nabla_{u_\ell} \mathring\nabla_{v_1} \cdots \mathring\nabla_{v_{k-\ell}} 
  \end{equation*}
  with
  \begin{equation*}
    u_1,\ldots,u_\ell \in \Vect_c(X\setminus Z)
    \qandq
    v_1,\ldots,v_{k-\ell} \in \Vect_{b;c,0}(\hat U).
    \qedhere
  \end{equation*}
\end{cor}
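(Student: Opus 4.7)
The strategy is induction on $k$ in both parts, using \autoref{Lem_CommutationRelations} to rearrange arbitrary monomials in $\DiffOp_b^k$ (respectively $\DiffOp_\adm^k$) into the prescribed normal form, with error terms of strictly lower order absorbed by the inductive hypothesis. The base case $k=0$ is immediate from $\sP_b^0 = \set{\id_{S\otimes\fl}}$.

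For (1), I would first decompose a general $v \in \Vect_b(\hat X)$ via a partition of unity subordinate to the open cover $\hat X = \hat U \cup (X \setminus Z)$ as a sum of an element of $\Vect_{b;c}(\hat U)$ and an element of $\Vect_c(X \setminus Z)$; combined with \autoref{Rmk_X} this further decomposes the first summand into an element of $\Vect_{b;c,0}(\hat U)$ plus something vanishing near $Z$, which itself lies in $\Vect_c(X \setminus Z)$. Because $\hat X$ is compact, one can fix a \emph{finite} generating set for $\Vect_c(X \setminus Z)$ together with $\Vect_{b;c,0}(\hat U)$ over $\Gamma(\hat X, \End(\hat S \otimes \hat \fl))$. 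The inductive step writes a general monomial of order $k$ in $\DiffOp_b$ in terms of these generators (each first-order factor of the form $\nabla_u$ with $u \in \Vect_c(X\setminus Z)$ or $\mathring \nabla_v$ with $v \in \Vect_{b;c,0}(\hat U)$), and then applies \autoref{Lem_CommutationRelations}~\autoref{Lem_CommutationRelations_1} and~\autoref{Lem_CommutationRelations_2} to swap adjacent factors until all the $\nabla_{u_j}$'s stand to the left of all the $\mathring \nabla_{v_j}$'s. Each swap costs a commutator in $\DiffOp_b^{k-1}$, already spanned by $\sP_b^{k-1}$ by the inductive hypothesis. Defining $\sP_b^k$ to be $\sP_b^{k-1}$ together with all length-$k$ monomials in the fixed generators arranged in normal form then produces a finite set with the required spanning property.

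For (2) the argument is analogous. Any monomial in $\DiffOp_\adm^k$ is a product of first-order factors each equal to $\nabla_u$, $\mathring \nabla_v$, or $D$. To push every $D$ to the right I invoke \autoref{Lem_CommutationRelations}~\autoref{Lem_CommutationRelations_1} with $P = D \in \DiffOp_\adm^1$, which gives $[\nabla_u, D] \in \DiffOp_b^1$, and \autoref{Lem_CommutationRelations}~\autoref{Lem_CommutationRelations_3}, which gives $[\mathring \nabla_v, D] \in \DiffOp_b^1 + \DiffOp_b^0 \cdot D$. Each commutation therefore replaces a product $(\text{first-order conormal}) \cdot D$ by $D \cdot (\text{first-order conormal}) + (\text{strictly lower-order adapted})$. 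Iterating this move pushes all $D$'s to the right, and part (1) then brings each accumulated conormal factor into the shape of $\sP_b^j$, yielding the representation $\sum_{\ell = 0}^k P_\ell D^\ell$ with $P_\ell \in \DiffOp_b^{k-\ell}$, exactly the claim.

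The main obstacle I expect is the bookkeeping needed to ensure both finiteness and consistency of the construction -- in particular, tracking that every commutator generated during the reordering lies in $\DiffOp_b^{k-1}$ rather than merely $\DiffOp^{k-1}$. This is precisely the content of \autoref{Lem_CommutationRelations}, and relies crucially on the $\U(1)$--invariance built into $\Vect_{b;c,0}(\hat U)$ (otherwise the commutator with $D$ would only be controlled modulo $r^{-1}\DiffOp_b^0$, which is not conormal). Once the finite generating sets for $\Vect_c(X \setminus Z)$ and $\Vect_{b;c,0}(\hat U)$ are chosen at the outset, both assertions reduce to formal manipulations in the filtered ring structure.
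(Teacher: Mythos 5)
Part (1) of your argument is correct: after fixing finite generating sets for $\Vect_c(X\setminus Z)$ and $\Vect_{b;c,0}(\hat U)$, the observation that $[\nabla_w, T] = \nabla_w T$ is again a bounded section for $w$ conormal and $T \in \Gamma\paren{\hat X, \End(\hat S\otimes\hat\fl)}$ lets you pull all zeroth-order coefficients to the left, and \autoref{Lem_CommutationRelations}~\autoref{Lem_CommutationRelations_1} and \autoref{Lem_CommutationRelations_2} then normal-order the surviving first-order factors modulo lower order, which the inductive hypothesis absorbs.

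Part (2) has a gap. You claim that a monomial in $\DiffOp_\adm^k$ is a product of first-order factors $\nabla_u$, $\mathring\nabla_v$, or $D$. But $\DiffOp_b^\bullet$ is itself generated by $\DiffOp_b^0 = \Gamma\paren{\hat X, \End(\hat S\otimes\hat\fl)}$ together with the $\nabla_v$'s, so a monomial in $\DiffOp_\adm^k$ may have zeroth-order factors interspersed, for example $T_1 D T_2$ with $T_i \in \DiffOp_b^0$. To reach the claimed normal form you must commute $D$ past $T_2$, and the commutator $[D, T_2]$ is \emph{not} addressed by \autoref{Lem_CommutationRelations}. It is also not harmless: since $T_2$ need not commute with Clifford multiplication, $[D, T_2]$ is a genuinely first-order operator with principal symbol $\xi \mapsto [\gamma(\xi), T_2]$, and near $Z$ it contains the term $[\gamma(\rd r), T_2]\nabla_{\del_r}$ involving the non-conormal derivative $\nabla_{\del_r}$. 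Subtracting the best possible $T' D$ to cancel that term, one checks that $[D, T_2] \in \DiffOp_b^1 + \DiffOp_b^0 \cdot D$ holds precisely when $T_2|_{\del\hat X}$ commutes with $I = \gamma(\vol_{NZ})$, which fails for a generic section. So the reduction to products of first-order factors does not close as stated. What your argument does prove is that $\sP_a^k$ spans the left $\Gamma\paren{\hat X, \End(\hat S\otimes\hat\fl)}$-module $\sum_{\ell=0}^k \DiffOp_b^{k-\ell} D^\ell$---which is what \eqref{Eq_HakHbK} actually uses and is presumably the intended reading---rather than all of $\DiffOp_\adm^k$ in the literal ring-theoretic sense. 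You should either restrict the admissible zeroth-order coefficients, or make precise that the corollary is a statement about this left module.
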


\emph{Henceforth},
for every $k \in \N$,
$\sP_b^k$ is assumed to be chosen as in \autoref{Cor_ChoiceOfGenerators}.
This leads to the following convenient description of the above spaces based on the spectral decomposition from \autoref{Sec_SpectralDecomposition}.

\begin{prop}
  \label{Prop_HbkHakSpectralDescription}
  Let $k \in \N_0$ and $\phi \in L^2\Gamma\paren{X \setminus Z,S\otimes \fl}$.
  \label{Prop_HbkHakSpectralDescription_SuppU}
  If $\supp(\phi) \subset r^{-1}((0,3/4]) \subset U$ and $\phi$ is decomposed into
  \begin{equation*}
    L^2\Gamma\paren{U\setminus Z, \mathring S \otimes \mathring\fl} \ni
    \phi = \sum_{(\lambda,\mu) \in \Spectrum} \phi_{\lambda,\mu} \in  
    \bigoplus_{(\lambda,\mu) \in \Spectrum}
    L^2\paren{(0,1),r\rd r;E_{\lambda,\mu}}
  \end{equation*}
  as in \autoref{Sec_SpectralDecomposition},
  then:
  \begin{enumerate}
  \item
    \label{Prop_HbkHakSpectralDescription_SuppU_Hbk}
    $\phi \in H_b^k\Gamma\paren{X\setminus Z, S\otimes\fl}$ if and only if
    \begin{equation*}
      \sum_{\ell=0}^k \sum_{(\lambda,\mu) \in \Spectrum}
      \int_0^1 \paren{\bracket{\lambda}+\bracket{\mu}}^{2(k-\ell)} \abs{ (r\del_r)^\ell \phi_{\lambda,\mu}(r)}^2 \,r\rd r < \infty;
    \end{equation*}
    moreover, the above is uniformly equivalent to $\Abs{\phi}_{H_b^k}^2$.
  \item
    \label{Prop_HbkHakSpectralDescription_SuppU_Hak}
    $\phi \in H_a^{k+1}\Gamma\paren{X\setminus Z, S\otimes\fl}$ if and only if
    \begin{equation*}
      \sum_{m=0}^1 \sum_{\ell=0}^{k+1-m} \sum_{(\lambda,\mu) \in \Spectrum}
      \int_0^1 \paren{\bracket{\lambda}+\bracket{\mu}}^{2(k+1-m-\ell)} \abs{ (r\del_r)^\ell  (\del_r-\tfrac {\lambda} {r})^m\phi_{\lambda,\mu}(r)}^2 \,r\rd r < \infty;
    \end{equation*}
    moreover, the above is uniformly equivalent to $\Abs{\phi}_{H_a^{k+1}}^2$.      
  \end{enumerate}
\end{prop}

The proof uses the following amplification of \autoref{Lem_DomDMaxVsDomMathringDMax}.

\begin{lemma}
  \label{Lem_DHbkVsMathringDHbk}
  Let $k \in \N_0$ and $\phi \in H_\loc^{k+1}\Gamma\paren{X\setminus Z,S\otimes\fl}\cap H_b^k\Gamma\paren{X\setminus Z,S\otimes\fl}$.
  If $\supp(\phi) \subset r^{-1}((0,3/4]) \subset U$, then $D \phi\in H_b^k\Gamma\paren{X\setminus Z,S\otimes\fl}$ if and only if $\mathring D \phi\in H_b^k\Gamma\paren{X\setminus Z,S\otimes\fl}$; moreover,  in either case
  \begin{equation*}
   \Abs{D\phi}_{H_b^k}+  \Abs{\phi}_{H_b^k}  \asymp_k \Abs{\mathring D\phi}_{H_b^k} + \Abs{\phi}_{H_b^k}.
  \end{equation*}
\end{lemma}

\begin{proof}
  By \autoref{Lem_DomDMaxVsDomMathringDMax}, the assertion holds for $k=0$.
  
  Assume that $k \in \N$ and that the assertion holds for $k-1$ instead of $k$.
  Let $\phi \in H_\loc^{k+1}\Gamma\paren{X\setminus Z,S\otimes\fl}\cap H_b^k\Gamma\paren{X\setminus Z,S\otimes\fl}$.
  For every $v \in \Vect_{b;c,0}(\hat U)$,
  by assumption and \autoref{Lem_CommutationRelations},
  \begin{align*}
    \Abs{\mathring\nabla_v D\phi}_{H_b^{k-1}}
    &\leq
      \Abs{D\mathring\nabla_v\phi}_{H_b^{k-1}}
      + \Abs{[\mathring\nabla_v,D\phi]}_{H_b^{k-1}}\\
    &\lesssim
      \Abs{D\mathring\nabla_v\phi}_{H_b^{k-1}}
      + \Abs{D\phi}_{H_b^{k-1}}
      + \Abs{\phi}_{H_b^k} \\
    &\lesssim_k
      \Abs{\mathring D \mathring\nabla_v\phi}_{H_b^{k-1}}
      + \Abs{\mathring D\phi}_{H_b^{k-1}}
      + \Abs{\phi}_{H_b^k} \\
    &\leq
      \Abs{\mathring D \phi}_{H_b^k}
      + \Abs{[\mathring D, \mathring\nabla_v]\phi}_{H_b^{k-1}}
      + \Abs{\phi}_{H_b^k}
      \lesssim
      \Abs{\mathring D \phi}_{H_b^k}
      + \Abs{\phi}_{H_b^k}. 
  \end{align*}
  Therefore,
  \begin{equation*}
    \Abs{D\phi}_{H_b^k} \lesssim_k \Abs{\mathring D\phi}_{H_b^k} + \Abs{\phi}_{H_b^k}.
  \end{equation*}
  The analogous inequality with the roles of $D$ and $\mathring D$ exchanged is proved similarly.
\end{proof}

\begin{proof}[Proof of \autoref{Prop_HbkHakSpectralDescription}]
  If $v$ is a vector field lifted from $Z$ to $F$,
  then for every $\phi \in E_{\lambda,\mu}$
  \begin{equation}
    \label{Eq_NablaVPsi}
    \Abs{\mathring\nabla_v\phi} \lesssim_v \bracket{\mu}\Abs{\phi}.
  \end{equation}
  This implies \autoref{Prop_HbkHakSpectralDescription_SuppU_Hbk}.  
  This combined with \autoref{Lem_DHbkVsMathringDHbk} implies \autoref{Prop_HbkHakSpectralDescription_SuppU_Hak}.
\end{proof}


\subsection{Elliptic regularity and estimates}
\label{Sec_EllipticRegularityAndEstimates}

Here is the fundamental regularity result.

\begin{theorem}[elliptic regularity and estimates, I]
  \label{Thm_EllipticRegularityAndEstimates_1}
  For every $k \in \N_0$
  \begin{equation*}
    H_\adm^{k+1}\Gamma\paren{X\setminus Z, S\otimes\fl}
    =
    \set*{
      \phi \in H_\loc^{k+1}\Gamma\paren{X\setminus Z, S\otimes\fl}
      :
      \begin{aligned}
        & \phi,D\phi \in H_b^k\Gamma\paren{X\setminus Z,S\otimes\fl} \\
        & \textnormal{and}~
          \res[\phi] \in H^{k+\frac12}\Gamma\paren{Z,\check S}
      \end{aligned}
    };
  \end{equation*}
  moreover:
  for every $\phi \in H_\adm^{k+1}\Gamma\paren{X\setminus Z, S\otimes\fl}$
  \begin{equation*}
    \Abs{\phi}_{H_\adm^{k+1}}
    \asymp_k
    \Abs{D\phi}_{H_b^k}
    + \Abs{\phi}_{H_b^k}
    + \Abs{\res [\phi]}_{H^{k+1/2}}.
  \end{equation*}
\end{theorem}

The proof relies on the following observations.

\begin{lemma}
  \label{Lem_ExtensionMap_Higher_Regularity}
  For every $k \in \N_0$
  the extension map $\ext \co \check H\Gamma\paren{Z,\check S} \to \dom(\Dmax)$ restricts to a bounded injective linear map with closed image
  \begin{equation*}
    \ext \co H^{k+1/2}\Gamma(Z,\check S) \to H_a^{k+1}\Gamma\paren{X\setminus Z,S\otimes\fl}.
  \end{equation*}
\end{lemma}

\begin{proof}
  By  \autoref{Prop_HbkHakSpectralDescription},
  it suffices to prove that for every
  $(-1/2,\mu) \in \check\Spectrum$ and $\phi \in V_{-1/2,\mu} \subset \check H\Gamma\paren{Z,\check S}$
  \begin{align*}
    \sum_{m=0}^1 \sum_{\ell=0}^{k+1-m}
    \int_0^1 \bracket{\mu}^{2(k+1-m-\ell)} \abs{ (r\del_r)^\ell  (\del_r+\tfrac{1}{2r})^m \paren{\chi(r)r^{-1/2}e^{-\abs{\mu}r}\phi}}^2 \,r\rd r
    &\asymp_k
      \bracket{\mu}^{2k+1}\Abs{\phi}^2 \\
    &\asymp_k
      \Abs{\phi}_{H^{k+1/2}}^2.
  \end{align*}
  By direct computation,
  \begin{equation*}
    (r\del_r)^\ell  (\del_r+\tfrac{1}{2r})^m \paren{\chi(r)r^{-1/2}e^{-\abs{\mu}r}\phi}
    = f_{\ell,m}(r) r^{-1/2}e^{-\abs{\mu}r} \phi
  \end{equation*}
  with $f_{\ell,m}(r)$ recursively defined by
  \begin{equation*}
    f_{\ell,m}(r)
    \coloneq
    \begin{cases}
      \chi(r) & \text{if}~\ell=m=0 \\
      \chi'(r) - \abs{\mu}\chi(r) & \text{if}~ \ell = 0 ~\text{and}~ m = 1 \\
      rf_{\ell,m-1}'(r) - \paren*{\tfrac12+\abs{\mu}} f_{\ell,m-1}(r) & \text{if}~ \ell \geq 1.
    \end{cases}
  \end{equation*}
  A brief computation using scaling considerations shows that
  \begin{equation*}
    \int_0^1 f_{\ell,m}^2(r) e^{-2\abs{\mu}r} \,\rd r \lesssim_{\ell,m} \bracket{\mu}^{2(\ell+m)-1};
  \end{equation*}
  moreover,
  \begin{equation*}
    \int_0^1 f_{0,0}^2(r) e^{-2\abs{\mu}r} \,\rd r \gtrsim \bracket{\mu}^{-1}.
  \end{equation*}
  This proves the assertion.
\end{proof}

\begin{lemma}
  \label{Lem_ResidueMap_Higher_Regularity}
  For every $k \in \N_0$
  the residue map $\res \co \dom(\Dmax) \to \check H\Gamma\paren{Z,\check S}$ restricts to a bounded surjective linear map
  \begin{equation*}
    \res \co H_a^{k+1}\Gamma\paren{X\setminus Z,S\otimes\fl} \to H^{k+1/2}\Gamma\paren{Z,\check S}.
  \end{equation*}
\end{lemma}

\begin{proof}
  Let $\phi \in \chi(r)\dom\paren{\mathringDmax^{-1/2,\mu}}$ with $\supp \phi \in r^{-1}((0,1/2])$.
  As in the proof of \autoref{Lem_UniformEstimatesForRestrictionAndExtension},
  \begin{equation*}
    \res_\mu([\phi])
    =
    - \int_0^1 \del_r (r^{1/2} e^{-\abs{\mu}r} \phi) \,\rd r 
    =
    - \int_0^1 r^{-1/2} e^{-\abs{\mu}r} \paren{\del_r + \tfrac{1}{2r} - \abs{\mu}} \phi \,r\rd r.
  \end{equation*}
  Since
  \begin{equation*}
    \int_0^1 e^{-2\abs{\mu}r} \,\rd r \asymp \bracket{\mu}^{-1},
  \end{equation*}
  by Cauchy--Schwarz 
  \begin{equation*}    
    \bracket{\mu}\Abs{\res_\mu([\phi])}^2
    \lesssim \int_0^1 \abs*{\paren{\del_r + \tfrac{1}{2r}}\phi}^2 + \bracket{\mu}^2\abs{\phi}^2 \,r\rd r.
  \end{equation*}
  In light of \autoref{Prop_HbkHakSpectralDescription} this implies the assertion.  
\end{proof}

\begin{lemma}
  \label{Lem_ResDelB}
  Let $k \in \N$,
  $\phi \in H_\loc^{k+1}\Gamma\paren{X\setminus Z,S\otimes\fl}$ with
  $\phi,D\phi \in H_b^k\Gamma\paren{X\setminus Z,S\otimes\fl}$,
  and $P \in \DiffOp_b^k(S\otimes\fl)$.
  If $\res(\phi) = 0$,
  then $P\phi \in \dom(\Dmax)$ and $\res(P\phi) = 0$.
\end{lemma}

\begin{proof}
  By induction, it suffices to prove this for $k = 1$. 
  In fact,
  it suffices to assume $\supp(\phi) \subset r^{-1}((0,1/2]) \subset U$ and consider $P$ of the form $\chi(r) \mathring \nabla_{r\del_r}$, $\chi(r) \mathring\nabla_{\del_\alpha}$ or $\chi(r) \mathring\nabla_v$ where $v$ is a vector field lifted from $Z$.
  In either case it suffices to prove that
  \begin{equation*}
    DP\phi, r^{-1}P\phi \in L^2\Gamma\paren{X\setminus Z,S\otimes\fl}.
  \end{equation*}
  By \autoref{Lem_CommutationRelations},
  $DP\phi = [D,P]\phi + PD\phi \in L^2\Gamma\paren{X\setminus Z,S\otimes\fl}$.   

  Since $H^1\Gamma\paren{X\setminus Z, S\otimes\fl} = \ker \res \subset H_a^1\Gamma\paren{X\setminus Z, S\otimes\fl}$ and by \autoref{Lem_BorderlineHardyInequality},  
  \begin{equation*}
    \chi(r) r^{-1}\phi
    \in
    L^2\Gamma\paren{X\setminus Z,S\otimes\fl}
    \qandq
    \nabla \phi
    \in
    L^2\Omega^1\paren{X\setminus Z,S\otimes\fl}.
  \end{equation*}  
  This immediately implies the assertion for $P = \chi(r)\mathring\nabla_{r\del_r}$ and $P = \chi(r)\mathring\nabla_{\del_\alpha}$ because $\mathring g(\del_r,\del_r) = r^{-2} \mathring g(\del_\alpha,\del_\alpha) = 1$.
  
  It remains to deal with $P = \chi(r)\mathring\nabla_v$ where $v$ is a vector field lifted from $Z$.
  Since $[\mathring\nabla_{\del_\alpha},\mathring\nabla_v] = 0$,
  with respect to the decomposition
  \begin{equation*}
    L^2\Gamma\paren{U\setminus Z, \mathring S \otimes \mathring\fl}
    =
    \bigoplus_{(\lambda,\mu) \in \Spectrum}
    L^2\paren{(0,1),r\rd r;E_{\lambda,\mu}}
  \end{equation*}
  from \autoref{Sec_SpectralDecomposition}
  only those components of $\phi$ with $\lambda = -1/2$ contribute to $\res(\mathring\nabla_v \phi)$.
  Therefore, it can be assumed that
  \begin{equation*}
    \phi =
    \sum_{(-1/2,\mu) \in \check\Spectrum} \phi_{-1/2,\mu}
    \in  
    \bigoplus_{(-1/2,\mu) \in \check\Spectrum}
    L^2\paren{(0,1),r\rd r;V_{\lambda,\mu}}.
  \end{equation*}
  Since $\res(\phi) = 0$,
  for every $(-1/2,\mu) \in \check\Spectrum$
  \begin{equation*}
    \int_0^1 r^{-2} \Abs{\phi_{-1/2,\mu}}^2 \,r \rd r
    \lesssim
    \int_0^1 \Abs{\mathring D\phi_{-1/2,\mu}}^2 + \Abs{\phi_{-1/2,\mu}}^2 \,r \rd r.
  \end{equation*}
  By \autoref{Lem_DHbkVsMathringDHbk},
  $\phi,\mathring D\phi \in H_b^1\Gamma\paren{X\setminus Z, S\otimes \fl}$ and therefore by \autoref{Prop_HbkHakSpectralDescription},
  \begin{equation*}
    \sum_{(-1/2,\mu) \in \check\Spectrum}
    \int_0^1 r^{-2} \bracket{\mu}^2 \Abs{\phi_{-1/2,\mu}}^2 \,r \rd r
    \lesssim
    \sum_{(-1/2,\mu) \in \check\Spectrum}
    \int_0^1 \bracket{\mu}^2 \Abs{\mathring D\phi_{-1/2,\mu}}^2 + \bracket{\mu}^2 \Abs{\phi_{-1/2,\mu}}^2 \,r \rd r < \infty.
  \end{equation*}
  Finally,
  by \autoref{Eq_NablaVPsi},
  \begin{equation*}
    \Abs{r^{-1} \mathring\nabla_v \phi}_{L^2}^2
    \lesssim
    \sum_{(-1/2,\mu) \in \check\Spectrum}
    \int_0^1 r^{-2} \bracket{\mu}^2 \Abs{\phi_{-1/2,\mu}}^2 \,r \rd r
    <
    \infty.
    \qedhere
  \end{equation*}
\end{proof}

\begin{proof}[Proof of \autoref{Thm_EllipticRegularityAndEstimates_1}]
  Let $k \in \N_0$.
  By \autoref{Lem_ResidueMap_Higher_Regularity},
  it suffices to prove that for every
  $\phi \in H_\loc^{k+1}\Gamma\paren{X\setminus Z,S\otimes\fl}$
  with
  $\phi,D\phi \in H_b^k\Gamma\paren{X\setminus Z,S\otimes\fl}$
  and $\res[\phi] \in H^{k+1/2}\Gamma\paren{Z,\check S}$
  \begin{equation*}
    \Abs{\phi}_{H_b^{k+1}}
    \lesssim_k
    \Abs{D\phi}_{H_b^k}
    + \Abs{\phi}_{H_b^k}
    + \Abs{\res [\phi]}_{H^{k+1/2}}.
  \end{equation*}
  Since
  \begin{equation*}
    \phi = (\phi - \ext\res[\phi]) + \ext\res[\phi]
  \end{equation*}
  and by \autoref{Lem_ExtensionMap_Higher_Regularity},
  it suffices to prove the above assuming $\res[\phi] = 0$.

  Since $H^1\Gamma\paren{X\setminus Z, S\otimes\fl} \incl H_a^1\Gamma\paren{X\setminus Z, S\otimes\fl}$ and by \autoref{Eq_D_EllipticEstimate},
  the assertion holds for $k = 0$.  
  Suppose that $k \in \N$.
  By \autoref{Lem_ResDelB},
  for every $P \in \sP_b^k$,
  $\res(P\phi) = 0$.
  Therefore, by the preceding and \autoref{Lem_CommutationRelations},
  \begin{equation*}    
    \Abs{P\phi}_{H_b^1}
    \lesssim
    \Abs{DP\phi}_{L^2} + \Abs{P\phi}_{L^2}
    \lesssim_P
    \Abs{D\phi}_{H_b^k} + \Abs{\phi}_{H_b^k}.
  \end{equation*}
  This implies the assertion.
\end{proof}

For suitable residue conditions $\ResidueCondition \subset \check H\Gamma\paren{Z,\check S}$,
the term $\res[\phi]$ in \autoref{Thm_EllipticRegularityAndEstimates_1} can be absorbed provided $\res[\phi] \in \ResidueCondition$.

\begin{definition}
  \label{Def_K+1/2Regular}
  Let $\ResidueCondition \subset \check H\Gamma\paren{Z,\check S}$ be a residue condition.
  \begin{enumerate}
  \item
    Let $k \in \N_0$.
    $\ResidueCondition$ is \defined{$(k+\tfrac12)$--regular} if for every $\phi \in \ResidueCondition$
    \begin{equation*}
      \Abs{\phi}_{H^{k+1/2}}
      \lesssim_{\ResidueCondition,k}
      \Abs{\one_{(-\infty,0)}(A)\phi}_{H^{k+1/2}}
      +
      \Abs{\phi}_{\check H}.
    \end{equation*}
  \item
    $\ResidueCondition$ is \defined{$\infty$--regular} if it is $(k+1/2)$--regular for every $k \in \N_0$.
    \qedhere
  \end{enumerate}
\end{definition}

\begin{example}
  \label{Ex_APSResidueCondition_InftyRegular}
  The APS residue condition $\ResidueCondition_\APS$ defined in \autoref{Ex_APSResidueCondition} is $\infty$--regular.  
\end{example}

\begin{theorem}[elliptic regularity and estimates, II]
  \label{Thm_EllipticRegularityAndEstimates_2}
  Let $k \in \N_0$.
  Let $\ResidueCondition$ be a $(k+\frac12)$--regular residue condition.
  If $\phi \in H_\loc^{k+1}\Gamma\paren{X\setminus Z, S\otimes\fl}$ satisfies $\phi, D\phi \in H_b^k\Gamma\paren{X\setminus Z,S\otimes\fl}$ and $\res[\phi] \in \ResidueCondition$,
  then $\phi \in H_a^{k+1}\Gamma\paren{X\setminus Z, S\otimes\fl}$ and
  \begin{equation*}
    \Abs{\phi}_{H_\adm^{k+1}}
    \asymp_{\ResidueCondition,k}
    \Abs{D\phi}_{H_b^k}
    + \Abs{\phi}_{L^2}.
  \end{equation*}
\end{theorem}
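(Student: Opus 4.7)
The plan is to bootstrap from Theorem~\ref{Thm_EllipticRegularityAndEstimates_1}, which already provides
\begin{equation*}
  \|\phi\|_{H_\adm^{k+1}} \lesssim_k \|D\phi\|_{H_\adm^k} + \|\phi\|_{L^2} + \|\res[\phi]\|_{H^{k+1/2}},
\end{equation*}
so that the task reduces to absorbing the trace term $\|\res[\phi]\|_{H^{k+1/2}}$. First, the $(k+\tfrac{1}{2})$-regularity of $B$ combined with the continuity of the residue map from Theorem~\ref{Thm_ResidueAndExtensionMap} will give
\begin{equation*}
  \|\res[\phi]\|_{H^{k+1/2}} \lesssim \|\mathbf{1}_{(-\infty,0)}(A)\res[\phi]\|_{H^{k+1/2}} + \|\phi\|_D,
\end{equation*}
so only the negative-spectral part of the residue requires attention.

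The heart of the argument will be the estimate
\begin{equation*}
  \|\mathbf{1}_{(-\infty,0)}(A)\res[\phi]\|_{H^{k+1/2}} \lesssim \|D\phi\|_{H_\adm^k} + \|\phi\|_{H_\adm^k}.
\end{equation*}
I plan to establish it via the spectral decomposition of Corollary~\ref{Cor_SpectralDecomposition_GelfandRobbinQuotient}. On each $V_{-1/2,\mu}$, combining $\mathring D^{-1/2,\mu} = J(A_\mu + \del_r + \tfrac{1}{2r})$ from Definition~\ref{Def_BranchingLocusOperator_Mu} with the identity $(\del_r + \tfrac{1}{2r})(r^{-1/2}e^{-|\mu|r}) = -|\mu| r^{-1/2}e^{-|\mu|r}$ yields $\mathring D\ext_\mu v = -2|\mu|\ext_\mu(Jv^-)$, where $v^- \coloneqq \mathbf{1}_{(-\infty,0)}(A_\mu)v$. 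In the model this makes $\|\mathring D\phi\|_{H_b^k}$ comparable to $\|\mathbf{1}_{(-\infty,0)}(A)\res[\phi]\|_{H^{k+1/2}}$, because each additional conormal tangential derivative on $\mathring D\phi$ contributes a factor of $(1+|\mu|)^{1/2}$, matching the Sobolev scale on $Z$. Passing from $\mathring D$ to $D$ via $\ErrorTerm = D - \mathring D \in \DiffOp_b^1(S\otimes\fl)$ from Proposition~\ref{Prop_ModelOperatorEstimate} together with the commutation relations of Lemma~\ref{Lem_CommutationRelations} will produce the displayed estimate with the admissible remainder $\|\phi\|_{H_\adm^k}$.

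Assembling yields
\begin{equation*}
  \|\phi\|_{H_\adm^{k+1}} \lesssim_{B,k} \|D\phi\|_{H_\adm^k} + \|\phi\|_{L^2} + \|\phi\|_{H_\adm^k},
\end{equation*}
and the surplus $\|\phi\|_{H_\adm^k}$ will be absorbed by an interpolation inequality $\|\phi\|_{H_\adm^k} \leq \epsilon\|\phi\|_{H_\adm^{k+1}} + C_\epsilon\|\phi\|_{L^2}$ inside the adapted Hilbert scale.

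The main obstacle is the key estimate above: the clean model spectral identification must be transferred to the true operator $D$. The commutators and error terms should contribute only lower-order corrections, thanks to Proposition~\ref{Prop_ModelOperatorEstimate} (the error-term coefficients vanish along $Z$) and Lemma~\ref{Lem_CommutationRelations} (commutators land in strictly lower strata of $\DiffOp_\adm^\bullet$), but keeping track of the polynomial weights in $\mu$ and of the mixed $D$--tangential derivatives through the transfer is the main technical labor.
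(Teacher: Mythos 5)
Your high-level structure matches the paper's: bootstrap from Theorem~\ref{Thm_EllipticRegularityAndEstimates_1}, use $(k+\tfrac12)$--regularity plus the continuity of $\res \circ [\cdot]$ to reduce to controlling $\Abs{\one_{(-\infty,0)}(A)\res[\phi]}_{H^{k+1/2}}$, and then absorb the leftover lower-order term. This reduction is exactly right, and the paper isolates the key estimate as Lemma~\ref{Lem_Regularity_RestrictionMap_NegativePart}.

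Where you diverge is in the mechanism for the key estimate, and I think your version has a gap. The paper's argument is a one-liner: since $\chi(r) \cdot A^k \in \DiffOp_b^k(S\otimes\fl)$ and the residue map intertwines this conormal operator with $A^k$ acting on $\check H\Gamma(Z,\check S)$ (that is, $A^k\res[\phi] = \res[\chi(r)\cdot A^k\phi]$), one gets
\begin{equation*}
  \Abs{\one_{(-\infty,0)}(A)\res[\phi]}_{H^{k+1/2}}
  \lesssim
  \Abs{A^k\res[\phi]}_{\check H}
  =
  \Abs{\res[\chi(r)\cdot A^k\phi]}_{\check H}
  \lesssim
  \Abs{\chi(r)\cdot A^k\phi}_{D}
  \lesssim_k
  \Abs{D\phi}_{H_b^k} + \Abs{\phi}_{H_b^k}.
\end{equation*}
Your route instead computes $\mathring D\ext_\mu v = -2\abs{\mu}\ext_\mu(Jv^-)$ (a correct identity) and asserts that this ``makes $\Abs{\mathring D\phi}_{H_b^k}$ comparable to $\Abs{\one_{(-\infty,0)}(A)\res[\phi]}_{H^{k+1/2}}$.'' But this identity only tells you what $\mathring D$ does to the \emph{extension} of a boundary datum, not to a general $\phi \in \dom(\mathringDmax)$. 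Decomposing $\phi = \ext\res[\phi] + (\phi - \ext\res[\phi])$ leaves a remainder in $\dom(\mathringDmin)$ whose image under $\mathring D$ is merely $L^2$ with no residue structure, so the asserted comparability does not follow. Also, the claim that ``each additional conormal tangential derivative contributes a factor of $(1+\abs{\mu})^{1/2}$'' is off: tangential derivatives contribute factors of $(1+\abs{\mu})$, and it is the half-loss at the boundary, not the tangential derivative count, that produces the $+\tfrac12$. The paper's observation that $A$-powers can be pushed through $\res$ is precisely what sidesteps all of this.

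A second, smaller issue: your final absorption $\Abs{\phi}_{H_a^k} \leq \epsilon\Abs{\phi}_{H_a^{k+1}} + C_\epsilon\Abs{\phi}_{L^2}$ invokes interpolation on the adapted scale. That interpolation inequality is only established in \autoref{Prop_SpectralDescriptionOfAdaptedSobolevSpace}, which in turn relies on the present theorem, creating a circularity. The paper instead removes the surplus $\Abs{\phi}_{H_a^k}$ by induction on $k$ (the base case $k=0$ being immediate from Theorem~\ref{Thm_EllipticRegularityAndEstimates_1} together with the boundedness of $\res\circ[\cdot]$), which requires no interpolation and no circularity.
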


The proof requires the following preparation.

\begin{lemma}
  \label{Lem_Regularity_RestrictionMap_NegativePart}
  For every $k \in \N_0$ and $\phi\in \dom(\Dmax)$
  \begin{equation*}
    \Abs{\one_{(-\infty,0)}(A)\res[\phi]}_{H^{k+1/2}}
    \lesssim_k
    \Abs{D\phi}_{H_b^k}
    +
    \Abs{\phi}_{H_b^k}.
  \end{equation*}
\end{lemma}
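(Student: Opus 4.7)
The plan is to reduce the estimate to a fiberwise statement via the spectral decomposition of $\mathringDmax$ and to bootstrap the uniform $k=0$ bound from \autoref{Lem_UniformEstimatesForRestrictionAndExtension}~\autoref{Lem_UniformEstimatesForRestrictionAndExtension_1}. By \autoref{Rmk_GelfandRobbinQuotient}, the residue depends only on the germ of $\phi$ near $Z$, so one may replace $\phi$ by $\chi(r) \cdot \phi$; after this reduction \autoref{Lem_DomDMaxVsDomMathringDMax} places $\phi$ in $\dom(\mathringDmax)$ and \autoref{Prop_SpectralDecomposition_DomDMax} yields a decomposition $\phi = \sum_{(\lambda,\mu)\in\check\Spectrum}\phi_{\lambda,\mu}$ with $\mathring D = \bigoplus \mathring D^{\lambda,\mu}$. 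Only the $\lambda = -1/2$ summands contribute to $\res[\phi]$.

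For the base case, $\Abs{v}_{\check H}^2 = (1+\abs{\mu})\Abs{v}^2 \asymp \Abs{v}_{H^{1/2}}^2$ on the negative eigenspace of $A_\mu$, so summing \autoref{Lem_UniformEstimatesForRestrictionAndExtension}~\autoref{Lem_UniformEstimatesForRestrictionAndExtension_1} over $\mu$ settles $k = 0$. For higher $k$, use the spectral characterization $\Abs{v}_{H^{k+1/2}} \asymp \Abs{(1+A^2)^{(2k+1)/4}v}_{L^2}$ together with the identity $A^2 = D_{\check S}^2$, which follows from $JD_{\check S} + D_{\check S}J = 0$. On the $\mu$-eigenspace of $D_{\check S}$ this replaces $\Abs{-}_{H^{k+1/2}}$ by $(1+\abs{\mu})^k\Abs{-}_{H^{1/2}}$, so the componentwise $k = 0$ estimate gives
\begin{equation*}
  \Abs{\one_{(-\infty,0)}(A)\res[\phi]}_{H^{k+1/2}}^2
  \lesssim \sum_\mu (1+\abs{\mu})^{2k}\Abs{\phi_{-1/2,\mu}}_{\mathring D}^2.
\end{equation*}

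Since $D_Z$ acts by $\pm\mu$ on $V_{-1/2,\mu}$ (cf.\ \autoref{Prop_SpectralDecomposition}), Parseval converts the weight $(1+\abs{\mu})^{2k}$ into $(1+D_Z^2)^k$, bounding the right hand side by $\Abs{(1+D_Z^2)^{k/2}\phi_{-1/2}}_{L^2}^2 + \Abs{(1+D_Z^2)^{k/2}\mathring D\phi_{-1/2}}_{L^2}^2$; here $\phi_{-1/2}$ denotes the $\lambda = -1/2$ component of $\phi$, obtained by an $S^1$-Fourier projection that commutes with both $D_Z$ and $\mathring D$ and is bounded on every $H_b^k$. Because $D_Z = \sum_i \mathring\gamma(\tilde e_i)\mathring\nabla_{\tilde e_i}$ is assembled from horizontal lifts of frames of $TZ$, which lie in $\Vect_{b;c,0}(\hat U)$, each $D_Z^\ell$ belongs to $\DiffOp_b^\ell(S\otimes\fl)$; hence $\Abs{(1+D_Z^2)^{k/2}\phi_{-1/2}}_{L^2} \lesssim \Abs{\phi}_{H_b^k}$. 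For the remaining term, \autoref{Lem_CommutationRelations}~\autoref{Lem_CommutationRelations_3} lets one commute $D_Z^\ell$ past $\mathring D$ at the cost of terms in $\DiffOp_b^\ell + \DiffOp_b^{\ell-1}\cdot\mathring D$, and \autoref{Prop_ModelOperatorEstimate} replaces $\mathring D$ by $D$ modulo the error term $\ErrorTerm$, whose coefficients vanish on $Z$ and which is therefore bounded $H_b^k \to H_b^k$. The main obstacle is exactly this last step: keeping uniform $\Abs{-}_{H_b^k}$-control through the iterated commutators of the tangential operator $D_Z^\ell$ with the full model Dirac operator, and verifying that the $\ErrorTerm$-contributions do not generate terms that fall outside the conormal calculus.
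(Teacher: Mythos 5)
Your approach is genuinely different from the paper's, and it contains a gap precisely at the step you yourself flag as the main obstacle. The paper never routes the estimate through $\mathring D$: it observes that $\Abs{\one_{(-\infty,0)}(A)\res[\phi]}_{H^{k+1/2}} \lesssim \Abs{A^k\res[\phi]}_{\check H}$, uses the identity $A^k\res[\phi] = \res[\chi(r)\cdot A^k\phi]$ (so the higher Sobolev norm on $Z$ is converted into a conormal operator $\chi(r)A^k \in \DiffOp_b^k$ applied \emph{upstairs} to $\phi$), then applies the boundedness $\Abs{\res[\psi]}_{\check H} \lesssim \Abs{\psi}_D$ in the $D$--graph norm, and finally bounds $\Abs{\chi(r)A^k\phi}_D$ by $\Abs{D\phi}_{H_b^k}+\Abs{\phi}_{H_b^k}$ via \autoref{Lem_CommutationRelations} and \autoref{Cor_ChoiceOfGenerators}. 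The whole chain involves only $D$ and conormal operators, so the error term $\ErrorTerm = D - \mathring D$ never appears.

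The gap in your argument is the claim that $\ErrorTerm$, having coefficients vanishing on $Z$, is bounded $H_b^k \to H_b^k$. By \autoref{Prop_ModelOperatorEstimate}, $\ErrorTerm = a\mathring\nabla + b + \mathring\gamma(\tilde H_Z)$ with $a$ vanishing along $Z$. The vanishing of $a$ makes $a\mathring\nabla$ \emph{conormal}, i.e.\ $\ErrorTerm \in \DiffOp_b^1(S\otimes\fl)$, but it is genuinely first order; it is bounded $H_b^{k+1} \to H_b^k$, not $H_b^k \to H_b^k$. Consequently, substituting $\mathring D = D - \ErrorTerm$ in $\Abs{(1+D_Z^2)^{k/2}\mathring D\phi_{-1/2}}_{L^2}$ produces a term $\Abs{(1+D_Z^2)^{k/2}\ErrorTerm\phi_{-1/2}}_{L^2} \lesssim \Abs{\phi}_{H_b^{k+1}}$, which loses a derivative compared to the target bound. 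A related but smaller issue: the Fourier projection onto the $\lambda=-1/2$ sector commutes with $\mathring D$ (and $D_Z$) but not with $D$, since it does not commute with $\ErrorTerm$; so $\Abs{D\phi_{-1/2}}_{H_b^k}$ is not controlled by $\Abs{D\phi}_{H_b^k}$ without a further commutator argument, which again meets the same $\ErrorTerm$ loss. Your spectral bootstrap of the base case from \autoref{Lem_UniformEstimatesForRestrictionAndExtension} is sound; the difficulty is entirely in the return trip from $\mathring D$ to $D$. If you want to keep the spectral route, the fix is to absorb the cutoff and the $\mathring D$--to--$D$ comparison \emph{once}, at the $k=0$ level, into the statement that $\res\circ[\cdot] \co \dom(\Dmax) \to \check H\Gamma(Z,\check S)$ is bounded in the $D$--graph norm (this is already established via \autoref{Lem_DomDMaxVsDomMathringDMax}), and then run the higher-$k$ argument purely with $D$ and the conormal calculus, as the paper does.
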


\begin{proof}
  It suffices to prove this for $\phi \in \chi(r)\dom(\mathringDmax^{-1/2,\mu})$.
  In view of \autoref{Prop_HbkHakSpectralDescription} it suffices to prove this for $k = 0$.
  This case, however, is established in \autoref{Lem_UniformEstimatesForRestrictionAndExtension}.
\end{proof}

\begin{proof}[Proof of \autoref{Thm_EllipticRegularityAndEstimates_2}]  
  By \autoref{Lem_Regularity_RestrictionMap_NegativePart} and since $\ResidueCondition$ is $(k+\frac12)$--regular,
  for every $\phi \in \dom(D_\ResidueCondition)$
  \begin{equation*}
    \Abs{\res[\phi]}_{H^{k+1/2}}
    \lesssim_{\ResidueCondition,k}
    \Abs{\one_{(-\infty,0)}(A)\res[\phi]}_{H^{k+1/2}}
    +
    \Abs{\res[\phi]}_{\check H}\lesssim _k \Abs{D\phi}_{H_b^k}
    +
    \Abs{\phi}_{H_b^k}.
  \end{equation*}
  This together with \autoref{Thm_EllipticRegularityAndEstimates_1} implies the assertion.
\end{proof}


\subsection{Fredholm extensions in higher regularity}
\label{Sec_K+1/2RegularResidueConditions}

The following is a consequence of \autoref{Prop_BH-1/2CompactLeftSemiFredholm}.

\begin{cor}[$\frac12$--regular implies left semi-Fredholm]
  \label{Cor_1/2RegularLeftSemiFredholm}
  Let $\ResidueCondition \subset \check H\Gamma(Z,\check S)$ be a residue condition.
  If $\ResidueCondition$ is $\tfrac12$--regular,
  then $D_\ResidueCondition$ is left semi-Fredholm.
\end{cor}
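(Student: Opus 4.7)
The plan is to reduce this corollary to \autoref{Prop_BH-1/2CompactLeftSemiFredholm}, which already supplies left semi-Fredholmness from the compactness of the inclusion $B \incl H^{-1/2}\Gamma(Z,\check S)$. Thus the entire task is to verify that $\tfrac12$--regularity upgrades the inclusion $B \incl \check H\Gamma(Z,\check S) \incl H^{-1/2}\Gamma(Z,\check S)$ to a compact operator.

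The first step is to observe that, directly from the definition of the $\check H$--norm, one has the trivial bound
\begin{equation*}
  \Abs{\one_{(-\infty,0)}(A)\phi}_{H^{1/2}} \leq \Abs{\phi}_{\check H}
  \qforq \phi \in \check H\Gamma(Z,\check S).
\end{equation*}
Combining this with the $\tfrac12$--regularity hypothesis of $B$ yields, for every $\phi \in B$,
\begin{equation*}
  \Abs{\phi}_{H^{1/2}} \lesssim_B \Abs{\one_{(-\infty,0)}(A)\phi}_{H^{1/2}} + \Abs{\phi}_{\check H} \lesssim_B \Abs{\phi}_{\check H}.
\end{equation*}
Hence the inclusion $B \incl H^{1/2}\Gamma(Z,\check S)$ is bounded with the $\check H$--topology on the source.

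The second step is to invoke the Rellich--Kondrachov theorem on the closed manifold $Z$, which provides a compact inclusion $H^{1/2}\Gamma(Z,\check S) \incl H^{-1/2}\Gamma(Z,\check S)$. Composing with the bounded inclusion from the first step shows that $B \incl H^{-1/2}\Gamma(Z,\check S)$ is compact.

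The final step is to apply \autoref{Prop_BH-1/2CompactLeftSemiFredholm} to conclude that $\dom(D_B) \incl L^2\Gamma(X\setminus Z, S\otimes\fl)$ is compact and $D_B$ is left semi-Fredholm. There is no essential obstacle here: the content of the corollary is entirely contained in the definition of $\tfrac12$--regularity and the Rellich compactness of $H^{1/2} \incl H^{-1/2}$ on the compact manifold $Z$; the hard analytic work has already been done in establishing \autoref{Prop_BH-1/2CompactLeftSemiFredholm}.
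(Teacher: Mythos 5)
Your proposal is correct and follows exactly the route the paper intends: the paper states the corollary is ``a consequence of \autoref{Prop_BH-1/2CompactLeftSemiFredholm}'' and leaves the verification that $\tfrac12$--regularity implies compactness of $B \incl H^{-1/2}\Gamma(Z,\check S)$ implicit, which is precisely the chain $B \incl H^{1/2} \incl H^{-1/2}$ (bounded followed by Rellich--compact) that you spell out.
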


The discussion in \autoref{Sec_EllipticRegularityAndEstimates} leads to the following observation.

\begin{definition}
  Let $k \in \N_0$.
  Let $\ResidueCondition \subset \check H\Gamma\paren{Z,\check S}$ be a residue condition.
  Consider the closed subspaces
  \begin{equation*}
    H_a^{k+1}\Gamma\paren{X\setminus Z,S\otimes\fl;\ResidueCondition}
    \coloneq
    \set{ \phi \in H_a^{k+1}\Gamma\paren{X\setminus Z,S\otimes\fl} : \res[\phi] \in \ResidueCondition }
  \end{equation*}
  and the  restriction of $D$ to
  \begin{equation*}
    D_\ResidueCondition^{(k)} \co H_a^{k+1}\Gamma\paren{X\setminus Z,S\otimes\fl;\ResidueCondition} \to H_b^k\Gamma\paren{X\setminus Z,S\otimes\fl}.
    \qedhere
  \end{equation*}
\end{definition}

\begin{prop}
  \label{Prop_DBkFredholm}
  Let $k \in \N_0$.
  Let $\ResidueCondition \subset \check H\Gamma\paren{Z,\check S}$ be a $(k+\frac12)$--regular residue condition.
  The following hold:
  \begin{enumerate}
  \item
    \label{Prop_DBkFredholm_LeftSemiFredholm}
    $D_\ResidueCondition^{(k)}$ is left semi-Fredholm;
    in fact:
    \begin{equation*}
      \ker D_\ResidueCondition^{(k)} = \ker D_\ResidueCondition
    \end{equation*}
    and the canonical map
    \begin{equation*}
      \coker D_\ResidueCondition^{(k)} \to \coker D_\ResidueCondition
      \iso \paren{\ker D_{\ResidueCondition^G}}^*
    \end{equation*}
    is injective.
  \item
    \label{Prop_DBkFredholm_Fredholm}
    If $D_\ResidueCondition$ is Fredholm,
    then $D_\ResidueCondition^{(k)}$ is Fredholm and the canonical map $\coker D_\ResidueCondition^{(k)} \to \coker D_\ResidueCondition$ is an isomorphism.
  \item
    \label{Prop_DBkFredholm_HodgeDecomposition}
    If $\ker D_{\ResidueCondition^G} \subset H_b^k\Gamma\paren{X\setminus Z,S\otimes\fl}$,
    then the latter $L^2$ orthogonally decomposes as
    \begin{equation*}
      H_b^k\Gamma\paren{X\setminus Z,S\otimes\fl}
      =
      \im D_\ResidueCondition^{(k)}
      \oplus
      \ker D_{\ResidueCondition^G}.
    \end{equation*}
  \end{enumerate}
\end{prop}

\begin{proof}
  The proof is identical to the one of \cite[Theorem 3.58]{Doan2024},
  but repeated here for the readers' convenience.
  By \autoref{Thm_EllipticRegularityAndEstimates_2},
  $D_\ResidueCondition^{(k)}$ is left semi-Fredholm,
  $\ker D_\ResidueCondition^{(k)} = \ker D_\ResidueCondition$;
  moreover: the linear map
  \begin{equation*}
    \frac{\dom(D_\ResidueCondition)}{H_a^{k+1}\Gamma\paren{X\setminus Z,S\otimes\fl;\ResidueCondition}}
    \to
    \frac{L^2\Gamma\paren{X\setminus Z,S\otimes\fl}}{H_b^k\Gamma\paren{X\setminus Z,S\otimes\fl}}
  \end{equation*}
  induced by $D_\ResidueCondition$ is injective.
  Therefore, by the Snake Lemma,
  the canonical map
  \begin{equation*}
    \coker D_\ResidueCondition^{(k)} \to \coker D_\ResidueCondition
  \end{equation*}
  is injective.
  This proves \autoref{Prop_DBkFredholm_LeftSemiFredholm}

  $H_b^k\Gamma\paren{X\setminus Z,S\otimes\fl}$ is dense in $L^2\Gamma\paren{X\setminus Z,S\otimes\fl}$.
  Therefore, if $\ker D_{\ResidueCondition^G}$ is finite-dimensional,
  then  the map
  \begin{equation*}
    H_b^k\Gamma\paren{X\setminus Z,S\otimes\fl} \to \paren{\ker D_{\ResidueCondition^G}}^* \iso \coker D_\ResidueCondition
  \end{equation*}
  is surjective.
  Since it factors through $\coker D_\ResidueCondition^{(k)} \to \coker D_\ResidueCondition$,
  the latter must be surjective.
  This proves \autoref{Prop_DBkFredholm_Fredholm}.

  \autoref{Prop_DBkFredholm_HodgeDecomposition} is obvious.
\end{proof}

\begin{example}
  Let $\ResidueCondition_L \subset \check H\Gamma\paren{Z,\check S}$ be an $\infty$--regular Lagrangian residue condition and $\psi \in \check H\Gamma\paren{Z,\check S}$ but $\rho \notin H^{1/2}\Gamma\paren{Z,\check S}$.
  The residue condition $\ResidueCondition \coloneq \paren{\ResidueCondition_L + \R\rho}^G \subset \ResidueCondition_L$ is $\infty$--regular and $D_\ResidueCondition$ is Fredholm and, by \autoref{Lem_FredholmResidueConditionComparison}, $\ind D_\ResidueCondition = -1$;
  however, $\ResidueCondition^G = \ResidueCondition_L + \R\rho$ is not $\tfrac12$--regular.
  If $\rho = \res[\phi] \in \Lambda$, the Calderón subspace defined in \autoref{Ex_CalderonSubspace_Lagrangian},
  then $\phi \in \ker D_{\ResidueCondition^G}$ but $\phi \notin H_b^1\Gamma\paren{X\setminus Z,S\otimes\fl}$ by \autoref{Lem_ResidueMap_Higher_Regularity}.
\end{example}

\begin{remark}
  The applications of the Snake Lemma in \autoref{Sec_FredholmExtension} carry over to the higher regularity setting with minor cosmetic modifications.
\end{remark}

\autoref{Prop_DBkFredholm} can be employed to obtain higher regularity analogues of the the $L^2$ orthogonal decompositions
\begin{equation*}
  L^2\Gamma\paren{X\setminus Z, S\otimes \fl} = \im \Dmax \oplus \ker \Dmin
  \qandq
  L^2\Gamma\paren{X\setminus Z, S\otimes \fl} = \im \Dmin \oplus \ker \Dmax.
\end{equation*}

\begin{prop}
  \label{Prop_DkHodgeDecomposition}
  For every $k \in \N_0$ the following hold:
  \begin{enumerate}
  \item
    \label{Prop_DkHodgeDecomposition_1}
    $H_b^k\Gamma\paren{X\setminus Z,S\otimes\fl}$
    $L^2$ orthogonally decomposes as
    \begin{equation*}
      H_b^k\Gamma\paren{X\setminus Z,S\otimes\fl}
      =
      \im D^{(k)}
      \oplus
      \ker \Dmin.
    \end{equation*}
  \item
    \label{Prop_DkHodgeDecomposition_2}
    $H_b^{k+1}\Gamma\paren{X\setminus Z,S\otimes\fl}$
    $L^2$ orthogonally decomposes as
    \begin{equation*}
      H_b^{k+1}\Gamma\paren{X\setminus Z,S\otimes\fl}
      =
      \im D_0^{(k+1)}
      \oplus
      \ker D^{(k)}.
    \end{equation*}
  \end{enumerate}
\end{prop}

\begin{proof}
  Since $\ker D_{\ResidueCondition_\APS}$ is finite-dimensional,
  there is a $\tau \leq 0$ such that the projection
  \begin{equation*}
    \res(\ker D_{\ResidueCondition_\APS}) \to \bigoplus_{\lambda \in [\tau,0)} \ker (A-\lambda\one)
  \end{equation*}
  is injective.
  The residue conditions
  \begin{equation*}
    \ResidueCondition_\tau
    \coloneq
    \one_{(-\infty,\tau)}(A)H^{1/2}\Gamma\paren{Z,\check S}
    \subset
    \check H\Gamma\paren{Z,\check S}
  \end{equation*}
  and $\ResidueCondition_\tau^G = \one_{(-\infty,-\tau]}(A)H^{1/2}\Gamma\paren{Z,\check S} $ are $\infty$--regular.
  Therefore,
  by \autoref{Prop_DBkFredholm},
  \begin{equation*}
    H_b^k\Gamma\paren{X\setminus Z,S\otimes\fl}
    =
    \im D_{\ResidueCondition_\tau^G}^{(k)}
    \oplus
    \ker D_{\ResidueCondition_\tau}.
  \end{equation*}
  By construction,
  $\ker D_{\ResidueCondition_\tau} = \ker \Dmin$.
  Moreover, since $\im D^{(k)}$ and $\ker \Dmin$ are $L^2$ orthogonal,
  $\im D_{\ResidueCondition_\tau^G}^{(k)} = \im D^{(k)}$.
  This proves \autoref{Prop_DkHodgeDecomposition_1}.

  Since $\im D_0^{(k)}$ and $\ker D^{(k)}$ are $L^2$ orthogonal,
  to prove \autoref{Prop_DkHodgeDecomposition_2} it suffices to prove that for every $\phi \in H_b^k\Gamma\paren{X\setminus Z,S\otimes\fl}$ which is $L^2$ orthogonal to $\ker D^{(k)}$, there is a $\psi \in H_a^{k+1}\Gamma\paren{X\setminus Z,S\otimes\fl;0}$ with $D\psi = \phi$.
  For every $\sigma \leq \tau$,
  by \autoref{Prop_DBkFredholm} and
  since $\phi$ is $L^2$ orthogonal to $\ker D^{(k)} \supset \ker D_{\ResidueCondition_\sigma^G}$,
  there is a unique $\psi_\sigma \in H_a^{k+1}\Gamma\paren{X\setminus Z,S\otimes\fl;\ResidueCondition_\sigma}$ such that $D\psi_\sigma = \phi$ and $\psi_\sigma$ is $L^2$ orthogonal to $\ker D_{\ResidueCondition_\sigma} = \ker \Dmin$.
  Since $\psi_\sigma \in H_a^{k+1}\Gamma\paren{X\setminus Z,S\otimes\fl;\ResidueCondition_\tau}$,  
  $\psi_\sigma = \psi_\tau \eqcolon \psi$ is independent of $\sigma \leq \tau$.
  By construction,
  $D\psi = \phi$
  and $\res[\psi] \in \bigcap_{\sigma \leq \tau} \ResidueCondition_\sigma = 0$.
\end{proof}

This has the following noteworthy consequence.

 \begin{cor}
  \label{Cor_L2KernelProjectionRegularity}
  Denote by $\Pi \co L^2\Gamma\paren{X\setminus Z,S\otimes \fl} \to \ker \Dmax$ the $L^2$ orthogonal projection onto $\ker \Dmax$.
  For every $k \in \N_0$
  the restriction of $\Pi$ to $H_a^{k+1}\Gamma\paren{X\setminus Z,S\otimes \fl}$ factors through the inclusion $\ker D^{(k)} \subset \ker \Dmax$.
\end{cor}


\subsection{Adapted Sobolev spaces, II: $L^\infty$ bound}
\label{Sec_SobolevSpaces_2}

The purpose of the upcoming two subsections is to further understand the scale of adapted Sobolev spaces $\paren{H_a^k\Gamma\paren{X\setminus Z,S\otimes \fl},\Abs{-}_{H_a^k}}_{k \in \N_0}$.

\begin{prop}
  \label{Prop_HakSubleadingTerm}
  For every $k \in \N$ with $k > n/2$
  \begin{equation*}
    H_a^{k+1}\Gamma\paren{X\setminus Z,S\otimes\fl} \subset r^{-1/2}L^\infty\Gamma\paren{X\setminus Z,S\otimes\fl};
  \end{equation*}
  in fact,
  every $\phi \in H_a^{k+1}\Gamma\paren{X\setminus Z,S\otimes\fl}$ is of the form
  \begin{equation*}
    \phi = \ext(\res(\phi)) + \psi.
  \end{equation*}
  with $\psi \in L^\infty\Gamma\paren{X\setminus Z,S\otimes\fl}$
  and $\Abs{\psi}_{L^\infty} \lesssim_k \Abs{\phi}_{H_a^{k+1}}$.
\end{prop}

The proof relies on the following observations.

\begin{prop}
  \label{Prop_HbkBorderlineHardyInequality}
  For every $k \in \N_0$ and $\phi \in H_a^{k+1}\Gamma\paren{X\setminus Z,S\otimes\fl}$ with $\res(\phi) = 0$
  \begin{equation*}
    \Abs{\nabla \phi}_{H_b^k} + \Abs{r^{-1}\phi}_{H_b^k}
    \lesssim_k
    \Abs{D\phi}_{H_b^k} + \Abs{\phi}_{H_b^k}.
  \end{equation*}
\end{prop}

\begin{proof}
  For $k = 0$ this is an immediate consequence of \autoref{Hyp_BorderlineHardyInequality} and \autoref{Eq_D_EllipticEstimate}.
  The general case is a consequence of the case $k = 0$, \autoref{Lem_CommutationRelations} and  \autoref{Lem_ResDelB}.
\end{proof}

\begin{prop}
  \label{Prop_HbkLinfty}
  For every $k \in \N$ with $k > n/2$ and $\phi \in H_b^k\Gamma\paren{X\setminus Z,S\otimes\fl}$  
  \begin{equation*}
    \Abs{r \phi}_{L^\infty} \lesssim_k \Abs{\phi}_{H_b^k}.
  \end{equation*}
\end{prop}

\begin{proof}
  This is a consequence of the usual Sobolev embedding theorem and a scaling consideration.
\end{proof}

\begin{proof}[Proof of \autoref{Prop_HakSubleadingTerm}]
  Since $\psi \coloneq \phi - \ext(\res(\phi)) \in \ker \res$,
  this is a consequence of \autoref{Prop_HbkBorderlineHardyInequality} and \autoref{Prop_HbkLinfty}.
\end{proof}

The above observation leads to the following ``poor man's Weyl law''.

\begin{prop}[Growth of eigenvalues]
  \label{Prop_GrowthOfEigenValues}
  Let $k \in \N_0$ with $k > n/2$.
  If $\ResidueCondition \subset \check H\Gamma\paren{X\setminus Z,S \otimes \fl}$ is a $\paren{k+\tfrac12}$--regular Lagrangian residue condition,
  then the \defined{counting function} $N \co [0,\infty) \to \N_0$ defined by
  \begin{equation*}
    N(\Lambda) \coloneq \dim E_{\leq \Lambda} \qwithq E_{\leq \Lambda} \coloneq \bigoplus_{\lambda \in [-\Lambda,\Lambda]} \ker\paren{D_\ResidueCondition - \lambda\one}
  \end{equation*}
  satisfies
  \begin{equation*}
    N(\Lambda) \lesssim_{\ResidueCondition,k} \bracket{\Lambda}^{2(k+1)}.
  \end{equation*}
\end{prop}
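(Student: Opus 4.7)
The plan is to carry out a standard ``poor man's Weyl law'' trace argument. The three ingredients are iterative elliptic regularity, the Sobolev-type embedding of \autoref{Cor_LInftyEstimate}, and integrability of $r^{-1}$ against the Riemannian volume when $Z$ has codimension two.

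First, I bootstrap the spectral information into $H_\adm^k$ control. Let $\phi \in E_{\leq \Lambda}$ be $L^2$--normalised. Since $E_{\leq\Lambda}\subset\dom(D_B)$ is $D_B$--invariant, $D^j\phi \in E_{\leq\Lambda}$ and in particular $\res[D^j\phi]\in B$; by the spectral decomposition $\Abs{D^j\phi}_{L^2}\leq\bracket{\Lambda}^j$. Applying \autoref{Thm_EllipticRegularityAndEstimates_2} successively at regularity indices $0,1,\ldots,k-1$---the iteration tacitly uses $(j+\tfrac12)$--regularity at each intermediate level, which I expect is bundled into the hypothesis (it is automatic for $B_\APS$ and more generally for any $\infty$--regular $B$, including the local conditions of \autoref{Prop_AdjointLocalBoundaryCondition})---yields
\begin{equation*}
\Abs{\phi}_{H_\adm^k} \lesssim_{B,k} \sum_{j=0}^{k} \Abs{D^j\phi}_{L^2} \lesssim_{B,k} \bracket{\Lambda}^k.
\end{equation*}
Since $k > n/2$, \autoref{Cor_LInftyEstimate} upgrades this to the pointwise estimate
\begin{equation*}
\abs{\phi(x)} \lesssim_{B,k} r(x)^{-1/2}\bracket{\Lambda}^k \qforall x \in X\setminus Z.
\end{equation*}

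I then rephrase this bound as the statement that for every $x \in X\setminus Z$ and every unit vector $v \in (S\otimes\fl)_x$ the linear functional $\psi\mapsto\Inner{\psi(x),v}$ on $E_{\leq\Lambda}$ has operator norm $\lesssim_{B,k}r(x)^{-1/2}\bracket{\Lambda}^k$. Let $(\phi_i)_{i=1}^{N(\Lambda)}$ be an $L^2$--orthonormal basis of $E_{\leq\Lambda}$ and $(v_j)_{j=1}^m$ an orthonormal frame of $(S\otimes\fl)_x$, where $m=\rk(S\otimes\fl)$. Riesz representation identifies the square of each evaluation-functional norm with $\sum_i\abs{\Inner{\phi_i(x),v_j}}^2$; summing over $j$ then gives the pointwise projector-kernel estimate
\begin{equation*}
\sum_{i=1}^{N(\Lambda)}\abs{\phi_i(x)}^2 \lesssim_{B,k} r(x)^{-1}\bracket{\Lambda}^{2k}.
\end{equation*}

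Finally, I integrate over $X\setminus Z$. In a tubular neighbourhood of $Z$ the volume factorises as $r\,\rd r\wedge\rd\alpha\wedge\vol_{g|_Z}$, so $\int_{X\setminus Z} r^{-1}\vol_g<\infty$ precisely because $Z$ has codimension two. Orthonormality gives $\int_{X\setminus Z}\abs{\phi_i}^2\,\vol_g = 1$, so Fubini yields
\begin{equation*}
N(\Lambda) = \int_{X\setminus Z} \sum_{i=1}^{N(\Lambda)}\abs{\phi_i(x)}^2\,\vol_g \lesssim_{B,k} \bracket{\Lambda}^{2k},
\end{equation*}
as required. The main obstacle I anticipate is the bookkeeping behind the iterative regularity in the first step; the pointwise-bound-plus-trace argument that drives the rest is an entirely classical manoeuvre, here adapted to the singular geometry of $X\setminus Z$ through the adapted Sobolev scale and the codimension-two volume factorisation.
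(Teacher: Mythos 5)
Your proof is correct and is essentially the paper's argument: both implement Li's trace estimate, bootstrapping the eigenvalue bound through \autoref{Thm_EllipticRegularityAndEstimates_2} into an $H_\adm^k$ bound, passing to a pointwise bound $\abs{\phi}\lesssim r^{-1/2}\bracket{\Lambda}^k$ via \autoref{Cor_LInftyEstimate}, bounding the spectral density $d=\sum_i\abs{\phi_i}^2$ by $\rk(S)\cdot r^{-1}\bracket{\Lambda}^{2k}$ using the finite rank of the evaluation map, and integrating against the finite measure $r^{-1}\vol_g$. The only cosmetic difference is that you bound $rd$ uniformly by a Riesz-representation argument and then integrate, whereas the paper first writes $N(\Lambda)=\int d\lesssim\Abs{rd}_{L^\infty}$ and then bounds $\Abs{rd}_{L^\infty}$ at a single near-maximal point by choosing the orthonormal basis adapted to $\ker\ev_x$; your caveat about the intermediate $(j+\tfrac12)$--regularity being needed for the iteration is a fair observation that the paper's proof shares implicitly.
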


\begin{proof}
  The following argument is due to \citet[Lemma 11]{Li1980}.
  Choose an $L^2$ orthonormal basis $(\phi_1,\ldots,\phi_{N(\Lambda)})$ of $E_{\leq \Lambda}$.
  The density $d \in C^\infty(X\setminus Z,[0,\infty))$ defined by
  \begin{equation*}
    d \coloneq \sum_{i=1}^{N(\Lambda)} \abs{\phi_i}^2
  \end{equation*}
  does not depend on the choice of $L^2$ orthonormal basis.
  By construction
  \begin{equation*}
    N(\Lambda) = \frac{1}{\vol(X)} \int_X d \lesssim \Abs{rd}_{L^\infty}.
  \end{equation*}
  
  Choose an $x \in X$ with $\Abs{rd}_{L^\infty} \leq 2\abs{rd}(x)$.
  Since $\ev_x \co E_{\leq \Lambda} \to \paren{S\otimes\fl}_x$ has rank at most $\rk S$,
  without loss of generality,
  \begin{equation*}
    \abs{rd}(x)
    =
    \sum_{i=1}^{\rk S} r\abs{\phi_i}^2(x).
  \end{equation*}
  By \autoref{Thm_EllipticRegularityAndEstimates_2}
  and \autoref{Prop_HakSubleadingTerm}
  \begin{equation*}
    \Abs{r^{1/2}\phi_i}_{L^\infty}
    \lesssim \Abs{\phi_i}_{H_a^{k+1}}
    \lesssim_{\ResidueCondition,k} 
    \Abs{D^{k+1}\phi_i}_{L^2} + \Abs{\phi_i}_{L^2}
    \lesssim
    \bracket{\Lambda}^{k+1}.
  \end{equation*}
  This implies the assertion.
\end{proof}

\begin{cor}
  \label{Cor_HeatKernelTraceClass}
  Let $k \in \N_0$ with $k > n/2 + 1$.
  If $\ResidueCondition \subset \check H\Gamma\paren{X\setminus Z,S \otimes \fl}$ is a $\paren{k+\tfrac12}$--regular Lagrangian residue condition,
  then for every $t \in (0,\infty)$ the heat operator $h_t \coloneq \exp(-tD_\ResidueCondition^2)$ is trace class.
\end{cor}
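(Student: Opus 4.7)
The plan is to combine the eigenvalue growth bound of \autoref{Prop_GrowthOfEigenValues} with the spectral decomposition of a self-adjoint operator with compact resolvent, and then estimate the trace of $h_t$ termwise.

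First, observe that a $(k+\tfrac12)$--regular residue condition is in particular $\tfrac12$--regular, and since $\Abs{\one_{(-\infty,0)}(A)\phi}_{H^{1/2}} \leq \Abs{\phi}_{\check H}$ by definition of $\Abs{-}_{\check H}$, this means the inclusion $B \hookrightarrow H^{1/2}\Gamma\paren{Z,\check S}$ is bounded. By the Rellich--Kondrachov theorem, the inclusion $H^{1/2}\Gamma\paren{Z,\check S} \hookrightarrow H^{-1/2}\Gamma\paren{Z,\check S}$ is compact, so $B \hookrightarrow H^{-1/2}\Gamma\paren{Z,\check S}$ is compact. By \autoref{Prop_BH-1/2CompactLeftSemiFredholm}, $\dom(D_B) \hookrightarrow L^2\Gamma\paren{X\setminus Z, S\otimes\fl}$ is compact. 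Since $B$ is Lagrangian, \autoref{Prop_AdjointExtension} gives that $D_B$ is self-adjoint, and hence \autoref{Prop_SpectralTheory} applies: $\spec(D_B)$ is a discrete subset of $\R$ consisting of eigenvalues of finite multiplicity, and $L^2\Gamma\paren{X\setminus Z, S\otimes\fl}$ decomposes as the Hilbert direct sum of the corresponding eigenspaces.

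Next, $h_t = \exp(-tD_B^2)$ is diagonalised by the same eigenbasis, acting on $\ker(D_B - \lambda\cdot\one)$ by multiplication by $e^{-t\lambda^2}$. Since $h_t$ is a positive self-adjoint operator, being trace class is equivalent to
\begin{equation*}
  \tr(h_t)
  =
  \sum_{\lambda \in \spec(D_B)} \dim\ker(D_B-\lambda\cdot\one) \cdot e^{-t\lambda^2}
  <
  \infty.
\end{equation*}

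Grouping eigenvalues by integer shells and using \autoref{Prop_GrowthOfEigenValues},
\begin{equation*}
  \tr(h_t)
  \leq
  N(1)
  +
  \sum_{j=1}^\infty \paren{N(j+1)-N(j)} e^{-tj^2}
  \lesssim_{B,k}
  1
  +
  \sum_{j=1}^\infty \bracket{j+1}^{2k} e^{-tj^2},
\end{equation*}
where the last bound follows from Abel summation, writing
\begin{equation*}
  \sum_{j=1}^J \paren{N(j+1)-N(j)} e^{-tj^2}
  =
  N(J+1)e^{-tJ^2} - N(1)e^{-t} + \sum_{j=2}^J N(j)\paren{e^{-t(j-1)^2}-e^{-tj^2}},
\end{equation*}
and using $N(j) \lesssim_{B,k} \bracket{j}^{2k}$ together with $e^{-t(j-1)^2}-e^{-tj^2} \leq e^{-t(j-1)^2}$. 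The right-hand side is finite for every $t \in (0,\infty)$, which proves the assertion.

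The only non-routine step is verifying that $B \hookrightarrow H^{-1/2}$ is compact from $(k+\tfrac12)$--regularity; everything else is direct bookkeeping on the spectral side. No new analytic input beyond what is already available in \autoref{Prop_GrowthOfEigenValues} and \autoref{Prop_SpectralTheory} appears to be needed.
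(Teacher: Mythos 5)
Your argument is correct and is essentially the argument the paper leaves implicit: the corollary carries no written proof (it is marked as following directly from \autoref{Prop_GrowthOfEigenValues}), and the Abel summation of $\sum_\lambda m_\lambda e^{-t\lambda^2}$ against the counting-function bound $N(\Lambda) \lesssim_{B,k} \bracket{\Lambda}^{2k}$ is exactly the calculation the authors intend. Your telescoping identity and the bound $e^{-t(j-1)^2}-e^{-tj^2}\leq e^{-t(j-1)^2}$ are right, and $N(J+1)e^{-tJ^2}\to 0$ handles the boundary term. So the core of the proof is sound.

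One quibble about your preliminary step. You assert that $(k+\tfrac12)$--regular implies $\tfrac12$--regular, but this is not immediate from \autoref{Def_SRegular}. The $(k+\tfrac12)$--regularity estimate
\begin{equation*}
  \Abs{\phi}_{H^{k+1/2}} \lesssim \Abs{\one_{(-\infty,0)}(A)\phi}_{H^{k+1/2}} + \Abs{\phi}_{\check H}
\end{equation*}
is vacuous for those $\phi \in B$ whose negative spectral part lies in $H^{1/2}$ but not in $H^{k+1/2}$, so for $k\geq 1$ the condition is, formally, a constraint on \emph{fewer} elements of $B$ than $\tfrac12$--regularity is; the implication you invoke therefore needs more justification than the bare definitions provide. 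However, you do not actually need this step: \autoref{Prop_GrowthOfEigenValues} is stated in terms of eigenspaces $\ker(D_B-\lambda\cdot\one)$, so its hypotheses must already guarantee (and its proof already uses) that \autoref{Prop_SpectralTheory} applies. Citing \autoref{Prop_GrowthOfEigenValues} therefore buys you the spectral decomposition for free, and you can delete the paragraph verifying compactness of $B\hookrightarrow H^{-1/2}$ entirely. With that paragraph removed, the remainder is a clean and correct proof.
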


\begin{remark}
  \label{Rmk_APSIndexTheorem}
  Assume the situation of \autoref{Cor_HeatKernelTraceClass}.
  If $\epsilon$ is a chirality operator,
  then for every $t > 0$
  \begin{equation*}
    \ind D_\ResidueCondition^+
    = \dim \ker D_\ResidueCondition^+ - \dim \ker D_\ResidueCondition^-
    = \str_\epsilon h_t.
  \end{equation*}
  Here $\str_\epsilon$ denotes the super trace with respect to $\epsilon$ of the heat operator $h_t$; cf.~\cite[§1.3]{BerlineGetzlerVergne1992}.
  For suitable choices of $\ResidueCondition$ an analysis of the asymptotic behaviour of the kernel attached to $h_t$ as $t \downarrow 0$ should result in index formulae analogous to the one established by \citet[Theorems 3.10 and 4.2]{Atiyah1975a}.  
  It would be interesting to work this out in detail.
  Also, it should be mentioned that part of the unpublished PhD thesis \cite[Theorems 1.0.3 and 2.3.4]{Yang2007:MITPhDThesis} discusses such index formulae.
\end{remark}


\subsection{Adapted Sobolev spaces, III: tameness}
\label{Sec_SobolevSpaces_3}

This subsection establishes that the graded Fréchet spaces $\paren{H_b^\infty\Gamma\paren{X\setminus Z,S\otimes \fl},\paren{\Abs{-}_{H_b^k}}_{k \in \N_0}}$ and $\paren{H_a^\infty\Gamma\paren{X\setminus Z,S\otimes \fl},\paren{\Abs{-}_{H_a^k}}_{k \in \N_0}}$ are tame in the sense of \cite[Part II Definition 1.3.2]{Hamilton1982:NashMoser}.

\begin{prop}
  \label{Prop_TameFrechetHb}
  The graded Fréchet space
  $\paren{H_b^\infty\Gamma\paren{X\setminus Z,S\otimes \fl},\paren{\Abs{-}_{H_b^k}}_{k \in \N_0}}$
  is tame.
\end{prop}

The proof requires the following lemma.
By a slight abuse of notation,
also denote by $\mathring S \otimes \mathring\fl \to NZ \setminus Z$ the pullback of $\underline S \otimes \underline \fl \to F$ along the canonical projection $NZ \setminus Z \to F$.

\begin{lemma}
  \label{Lem_TameFrechetHbNZ}
  The graded Fréchet space
  $\paren{H_b^\infty\Gamma\paren{NZ \setminus Z, \mathring S \otimes \mathring\fl},\paren{\Abs{-}_{H_b^k}}_{k \in \N_0}}$
  is tame.
\end{lemma}

\begin{proof}[Proof of \autoref{Prop_TameFrechetHb}]
  The graded Fréchet space
  $\paren{H_b^\infty\Gamma\paren{X\setminus Z,S\otimes \fl},\paren{\Abs{-}_{H_b^k}}_{k \in \N_0}}$
  is a tame direct summand in the sense of \cite[Part II Definition 1.3.1]{Hamilton1982:NashMoser} of the direct sum of graded Fréchet spaces
  $\paren{H^\infty\paren{\hat X, \hat S \otimes \hat \fl},\paren{\Abs{-}_{H^k}}_{k \in \N_0}}$
  and 
  $\paren{H_b^\infty\Gamma\paren{NZ \setminus Z, \mathring S \otimes \mathring\fl},\paren{\Abs{-}_{H_b^k}}_{k \in \N_0}}$.
  Indeed, define
  $i \co H_b^\infty\Gamma\paren{X\setminus Z,S\otimes \fl} \to H^\infty\paren{\hat X, \hat S \otimes \hat \fl} \oplus H_b^\infty\Gamma\paren{NZ \setminus Z, \mathring S \otimes \mathring\fl}$
  and
  $p \co H^\infty\paren{\hat X, \hat S \otimes \hat \fl} \oplus H_b^\infty\Gamma\paren{NZ \setminus Z, \mathring S \otimes \mathring\fl} \to H_b^\infty\Gamma\paren{X\setminus Z,S\otimes \fl}$
  by
  \begin{equation*}
    i (\phi) \coloneq \paren{(1-\chi(2r))\phi, \chi (2r)\phi}
    \qandq 
    p(\hat \psi, \mathring \psi) \coloneq  (1-\chi(4r))\hat \psi + \chi (r) \mathring \psi.
  \end{equation*}
  Evidently, $i$ and $p$ are tame linear maps.
  A direct computation shows that $p \circ i = \one$; indeed, 
  \begin{equation*}
    p \circ i(\phi)=  (1-\chi(4r))((1-\chi(2r))\phi+ \chi (r) \chi (2r) \phi= ((1-\chi(2r))\phi+ \chi (2r) \phi=\phi.
  \end{equation*}
  Therefore, by \cite[Part II, Corollary 1.3.7]{Hamilton1982:NashMoser} together with the usual Sobolev embedding theorems, and by \autoref{Lem_TameFrechetHbNZ}, 
  $\paren{H_b^\infty\Gamma\paren{X\setminus Z,S\otimes \fl},\paren{\Abs{-}_{H_a^k}}_{k \in \N_0}}$ is tame; cf.~\cite[Part II Lemmas 1.3.3 and 1.3.4]{Hamilton1982:NashMoser}.
\end{proof}

The proof of \autoref{Lem_TameFrechetHbNZ} relies on the following observation.

\begin{lemma}
  \label{Lem_TamenessCriterionSmootingOperator}  
  Let $\paren{F,\paren{\Abs{-}_{k}}_{k \in \N_0}}$ be a graded Fréchet space.
  If there is a family of smoothing operators $\paren{\sS_\beta}_{\beta \in \N}$ on $F$ such that for every $\phi \in F$, $\beta \in \N$ and $k,\ell \in \N_0$ 
  \begin{equation}
    \label{Eq_SmoothingOperator}
    \Abs{\sS_\beta \phi }_{k+\ell}\lesssim_{k,\ell} e^{\ell \beta} \Abs{\phi }_{k},
    \quad
    \Abs{\phi-\sS_\beta \phi }_{k}\lesssim_{k,\ell} e^{-\ell \beta} \Abs{\phi }_{k+\ell}
  \end{equation}
  and
  \begin{equation}
    \label{Eq_ExtraCondtionSmoothingOperator}
    \sS_ {\beta+1}\sS_{\beta}=\sS_{\beta},
  \end{equation}
  then $\paren{F,\paren{\Abs{-}_{k}}_{k \in \N_0}}$ is tame in the sense of \cite[Part II Definition 1.3.2]{Hamilton1982:NashMoser}.
\end{lemma}

\begin{proof}
  As a consequence of \autoref{Eq_SmoothingOperator} the seminorms $\Abs{-}_{k}$ ($k \in \N_0$) are norms.
  Denote by
  $\paren{B, \Abs{-}_0}$,
  the Banach space obtained by completing $\paren{F, \Abs{-}_0}$.
  Consider the graded Fréchet space
  $\paren{\Sigma\paren{B},\paren{\Abs{-}_k}_{k \in \N_0}}$
  of exponentially decreasing sequences defined by  
  \begin{equation*}
    \Sigma\paren{B}
    \coloneq
    \set*{
      (\psi_\beta) \in B^{\N_0}
      :
      \Abs{\psi_\beta}_k < \infty
      ~\text{for every}~
      k \in \N_0
    }
    \qwithq
    \Abs{(\psi_\beta)}_k
    \coloneq
    \sup_\beta e^{k \beta} \Abs{\psi_\beta}_{0};
  \end{equation*}
  cf.~\citet[Part II Example 1.1.4(c)]{Hamilton1982:NashMoser}. 
  
  By \autoref{Eq_SmoothingOperator}, $\sS_\beta$ uniquely extends to a bounded linear operator $\sS_\beta \co B\to F \subseteq B$.
  Define $ i \co F \to \Sigma\paren{B}$  and $ p \co \Sigma\paren{B} \to F$ by
  \begin{equation*}
    i(\phi)_\beta \coloneq \paren{\sS_{\beta+1} -  \sS_{\beta}} \phi \qwithq \sS_{0}\coloneq 0,
    \qandq 
    p(\psi_\beta) \coloneq \sum_{\beta=0}^\infty \sS_{\beta+2} \psi_\beta.
  \end{equation*}
  The linear maps $ i$ and $ p$ are tame; indeed, by \autoref{Eq_SmoothingOperator}
  \begin{align*}
    \Abs{i(\phi)}_k
    \leq
    \sup_\beta e^{k \beta} \paren*{\Abs{\paren{\one - \sS_{\beta+1}} \phi}_{0} + \Abs{\paren{\one - \sS_{\beta}} \phi}_{0}}
    \lesssim_k
    \Abs{\phi}_k
  \end{align*}
  and 
  \begin{align*}
    \Abs{  p(\psi_\beta)}_k\leq \sum_{\beta=0}^\infty \Abs{ \sS_{{\beta+2}} \psi_\beta}_k \lesssim_k \sum_{\beta=0}^\infty e^{k \beta} \Abs{\psi_\beta}_{0} \lesssim \Abs{(\psi_\beta)}_{k+1}.
  \end{align*}
  A  direct computation shows that $ p \circ  i = \one$; indeed, by  \autoref{Eq_SmoothingOperator} and \autoref{Eq_ExtraCondtionSmoothingOperator}
  \begin{equation*}
    p\circ i(\phi)
    =
    \sum_{\beta=0}^\infty
    \sS_{\beta+2}\paren{\sS_{\beta+1} -  \sS_{\beta}}\phi
    =
    \sum_{\beta=0}^\infty
    \paren{\sS_{\beta+1} - \sS_{\beta}}\phi
    =
    \lim_{\beta \to \infty} \sS_{\beta+1} \phi
    =
    \phi.
  \end{equation*}
  Therefore,
  $\paren{F,\paren{\Abs{-}_{k}}_{k \in \N_0}}$ is a tame direct summand in the sense of \cite[Part II Definition 1.3.1]{Hamilton1982:NashMoser} of $\paren{\Sigma\paren{B},\paren{\Abs{-}_k}_{k \in \N_0}}$;
  that is: it is tame.
\end{proof}

\begin{proof}[Proof of \autoref{Lem_TameFrechetHbNZ}]
  Because of \autoref{Lem_TamenessCriterionSmootingOperator} it suffices to construct a suitable family of smoothing operators $\paren{\sS_\beta}_{\beta\in \N}$ on $H_b^\infty\Gamma\paren{NZ \setminus Z, \mathring S \otimes \mathring\fl}$.
  Here is some preparation for this construction.
  Choose a cut-off function $\rho \in C^\infty([0,\infty),[0,1])$ with $\rho|_{[0,1/e]} = 1$ and $\supp(\rho) \subseteq [0,1)$.
  Define the isometry $T \co L^2\paren{(0,\infty), r \rd r; \C} \to L^2\paren{\R;\C}$ by $T(f) (s) \coloneq e^sf(e^s)$ and denote by $\sF \co L^2\paren{\R;\C} \to L^2\paren{\R;\C}$ the Fourier transform.
  The Mellin transform $\sM \co L^2\paren{(0,\infty), r \rd r; \C}\to L^2\paren{\R;\C}$ is defined by $\sM \coloneq \sF \circ T$ and the Mellin convolution $*_{\sM}$ by $f * g \coloneq T^{-1}(Tf * Tg)$.
  In particular,
  for every $f,g \in L^2\paren{(0,\infty),r \rd r;\C}$
  \begin{equation*}
    \sM(f*_\sM g) = \sM(f)\sM(g).
  \end{equation*}

  Decompose every $\phi\in H_b^\infty\Gamma\paren{NZ \setminus Z, \mathring S \otimes \mathring\fl}$ as
  \begin{equation*}
    \phi = \sum_{(\lambda,\mu) \in \Spectrum} \phi_{\lambda,\mu} \in  
    \bigoplus_{(\lambda,\mu) \in \Spectrum}
    L^2\paren{(0,\infty),r\rd r;E_{\lambda,\mu}}.
  \end{equation*}
  as in \autoref{Sec_SpectralDecomposition} and for every $\beta \in \N$ define
  \begin{equation*}
    \sS_\beta \phi \coloneq \sum_{(\lambda,\mu) \in \Spectrum} \rho_\beta({\bracket{\lambda}+\bracket{\mu}}) \sM^{-1}(\rho_\beta)*_{\sM}  \phi_{\lambda,\mu}
  \end{equation*}
  with
  $\rho_\beta \coloneq \rho(e^{-\beta}\abs{\cdot})$.  

  Since $\rho_{\beta+1} \cdot \rho_{\beta}= \rho_{\beta}$,
  it follows that $\sS_ {\beta+1}\sS_{\beta}=\sS_{\beta}$; indeed:
  \begin{equation*}
    \sM(\sS_ {\beta+1}\sS_{\beta} \phi)
    =
    \sum_{(\lambda,\mu) \in \Spectrum} \rho_{\beta+1}\paren*{\bracket{\lambda}+\bracket{\mu}} \rho_{\beta+1} \cdot \rho_{\beta}\paren*{\bracket{\lambda}+\bracket{\mu}}  \rho_{\beta} \sM(\phi_{\lambda,\mu})
    =
    \sM(\sS_\beta\phi).
  \end{equation*}
  Furthermore, since $\bracket{s} \lesssim e^\beta$ for every $s\in \supp \rho_\beta$, and $T\paren{r\partial_r} T^{-1}=\partial_s-\one$
  \begin{align*}
    \Abs{\sS_\beta \phi }^2_{H_b^{k+\ell}} 
    &\asymp_{k,\ell}
      \sum_{(\lambda,\mu) \in \Spectrum}
      \int_\R \paren*{\bracket{s}+ \bracket{\lambda}+ \bracket{\mu}}^{2k+2\ell} \rho_\beta^2\paren*{{\bracket{\lambda}+\bracket{\mu}}}\rho_\beta(s)^2
      \abs{\sM(\phi_{\lambda,\mu})(s)}^2 \,\rd s \\
    & \lesssim_{k}
      e^{2\ell \beta} \Abs{\phi }^2_{H_b^{k}}.
  \end{align*}
  Similarly,
  \begin{align*}
    \Abs{\phi-\sS_\beta \phi }^2_{H_b^k} 
    &\asymp_{k}
      \sum_{(\lambda,\mu) \in \Spectrum}
      \int_\R \paren*{\bracket{s}+ \bracket{\lambda}+ \bracket{\mu}}^{2k} \paren*{1-\rho_\beta({\bracket{\lambda}+\bracket{\mu}})\rho_\beta(s)}^2 \abs{\sM(\phi_{\lambda,\mu})(s)}^2 \,\rd s \\
    &\lesssim_{k,\ell}
      e^{-2\ell \beta} \Abs{\phi}^2_{H_b^{k+\ell}}. 
  \end{align*}
  Therefore,
  by \autoref{Lem_TamenessCriterionSmootingOperator},
  $\paren{H_b^\infty\Gamma\paren{NZ \setminus Z, \mathring S \otimes \mathring\fl},\paren{\Abs{-}_{H_b^k}}_{k \in \N_0}}$ is tame. 
\end{proof}

\begin{prop}
  \label{Prop_TameFrechet}
  The graded Fréchet space
  $\paren{H_a^\infty\Gamma\paren{X\setminus Z,S\otimes \fl},\paren{\Abs{-}_{H_a^{k+1}}}_{k \in \N_0}}$
  is tame.
\end{prop}

\begin{proof}
  Consider the $\infty$--regular Lagrangian residue condition $\ResidueCondition\coloneq \ResidueCondition_\APS  \oplus L$ with $L \subseteq \ker A$ a Lagrangian subspace as discussed in \autoref{Ex_APSResidueCondition_Fredholm}.
  The operator $D_\ResidueCondition$ is self-adjoint and Fredholm.
  By \autoref{Prop_DBkFredholm}, the graded Fréchet space $\paren{\im D_\ResidueCondition^{(\infty)}\coloneq\bigcap_k \im D^{(k)}_\ResidueCondition,\paren{\Abs{-}_{H_b^k}}_{k \in \N_0}}$
  is a tame direct summand of $\paren{H_b^\infty\Gamma\paren{X\setminus Z,S\otimes \fl},\paren{\Abs{-}_{H_b^k}}_{k \in \N_0}}$ in the sense of \cite[Part II Definition 1.3.1]{Hamilton1982:NashMoser}. Therefore, by \autoref{Prop_TameFrechetHb}, it is tame; cf.~\cite[Part II Lemma 1.3.3]{Hamilton1982:NashMoser}.   
  
  Denote by $\Pi_\ResidueCondition \co L^2\Gamma\paren{X\setminus Z,S\otimes \fl} \to \ker D_\ResidueCondition$ the $L^2$ orthogonal projection onto $\ker D_\ResidueCondition$. The above further implies that the graded Fréchet space
  $\paren{H_a^\infty\Gamma\paren{X\setminus Z,S\otimes \fl;\ResidueCondition},\paren{\Abs{-}_{H_a^k}}_{k \in \N_0}}$ is tamely isomorphic, via the restriction of the map $D_\ResidueCondition \oplus \Pi_\ResidueCondition$, to
  \begin{equation*}
    H_a^\infty\Gamma\paren{X\setminus Z,S\otimes \fl;\ResidueCondition}\cong \im D_\ResidueCondition^{(\infty)} \oplus \ker D_\ResidueCondition.
  \end{equation*}
  Therefore, by the above it is tame; cf.~\cite[Part II Lemma 1.3.4]{Hamilton1982:NashMoser}.
  
  Finally,
  by \autoref{Lem_ExtensionMap_Higher_Regularity}, \autoref{Lem_ResidueMap_Higher_Regularity} and \autoref{Thm_EllipticRegularityAndEstimates_1}, 
  $\paren{H_a^\infty\Gamma\paren{X\setminus Z,S\otimes \fl},\paren{\Abs{-}_{H_a^{k+1}}}_{k \in \N_0}}$
  is tamely isomorphic to
  \begin{equation*}
    H_a^\infty\Gamma\paren{X\setminus Z,S\otimes \fl} \iso H_a^\infty\Gamma\paren{X\setminus Z,S\otimes \fl;\ResidueCondition} \oplus \paren{JR \cap \Gamma\paren{Z,\check S}}
  \end{equation*}
  and,
  therefore, it is tame, provided that the graded Fréchet space $\paren{J\ResidueCondition \cap \Gamma\paren{Z,\check S},\paren{\Abs{-}_{H^{k}}}_{k \in \N_0}}$ is tame.
  
  Consider the graded Fréchet space
  $\paren{\Sigma\paren{L^2\Gamma\paren{Z, \check S}},\paren{\Abs{-}_k}_{k \in \N_0}}$ of exponentially decreasing sequences defined by  
  \begin{gather*}
    \Sigma\paren{L^2\Gamma\paren{Z, \check S}}
    \coloneq
    \set*{
      (\psi_\beta) \in L^2\Gamma\paren{Z, \check S}^{\N_0}
      :
      \Abs{\psi_\beta}_k < \infty
      ~\text{for every}~
      k \in \N_0
    }
    \\
    \intertext{with}
    \Abs{(\psi_\beta)}_k^2
    \coloneq
    \sum_{\beta=0}^\infty e^{2k \beta} \Abs{\psi_\beta}_{L^2}^2;
  \end{gather*}
  cf.~\citet[Part II Example 1.1.4(b) with $q=2$]{Hamilton1982:NashMoser}.
  Let $(\phi_\alpha)_{\alpha \in \N_0}$ be an $L^2$ orthonormal basis of $J\ResidueCondition$ consisting of eigenspinors for $A$ and denote by $(\lambda_\alpha)_{\alpha \in \N_0}$ the corresponding sequence of eigenvalues.
  Define $i \co J\ResidueCondition \cap \Gamma\paren{Z,\check S} \to \Sigma\paren{L^2\Gamma\paren{Z, \check S}}$  and $p \co \Sigma\paren{L^2\Gamma\paren{Z, \check S}} \to J\ResidueCondition \cap \Gamma\paren{Z,\check S}$ by
  \begin{equation*}
    i (\phi)_\beta \coloneq \sum_{\alpha=0}^\infty \one_{[e^\beta,e^{\beta+1})}\paren{\bracket{\lambda_\alpha}} \Inner{\phi,\phi_\alpha}_{L^2} \phi_\alpha 
    \qandq 
    p(\psi_\beta) \coloneq \sum_{\alpha,\beta=0}^\infty \one_{[e^\beta,e^{\beta+1})}(\bracket{\lambda_\alpha}) \Inner{\psi_\beta,\phi_\alpha}_{L^2} \phi_\alpha.
  \end{equation*}
  A moment's thought shows that $p \circ i = \one$;
  moreover:
  \begin{equation*}
    \Abs{i(\phi)}_k^2 = \sum_{\beta=0}^\infty e^{2k\beta} \Abs{i(\phi)_\beta}_{L^2}^2
    \leq \sum_{\alpha=0}^\infty \bracket{\lambda_\alpha}^{2k} \Inner{\phi,\phi_\alpha}_{L^2}^2
    \lesssim_k \Abs{\phi}_{H^k}^2
  \end{equation*}
  and
  \begin{align*}
    \Abs{p(\psi_\beta)}_{H^k}^2
    &
      \lesssim_k
      \sum_{\alpha=0}^\infty
      \bracket{\lambda_\alpha}^{2k}
      \Inner{p(\psi_\beta),\phi_\alpha}^2 \\
    &
      =
      \sum_{\alpha,\beta=0}^\infty
      \bracket{\lambda_\alpha}^{2k}
      \one_{[e^\beta,e^{\beta+1})}(\bracket{\lambda_\alpha})
      \Inner{\psi_\beta,\phi_\alpha}^2
      \leq
      e^{2k} \sum_{\beta=0}^\infty
      e^{2k\beta} \Abs{\psi_\beta}_{L^2}^2. 
  \end{align*}
  Therefore,
  $\paren{J\ResidueCondition \cap \Gamma\paren{Z,\check S},\paren{\Abs{-}_{H^{k}}}_{k \in \N_0}}$ is a tame direct summand of $\paren{\Sigma\paren{L^2\Gamma\paren{Z,\check S}},\paren{\Abs{-}_k}_{k \in \N_0}}$; that is: it is tame in the sense of \cite[Part II Definition 1.3.2]{Hamilton1982:NashMoser}.
\end{proof}

\begin{prop}
  ~
  \begin{enumerate}
  \item 
    \label{Prop_TameFrechet0_1}
    The graded Fréchet space
    $\paren{H_a^\infty\Gamma\paren{X\setminus Z,S\otimes \fl;0},\paren{\Abs{-}_{H_a^k}}_{k \in \N_0}}$ is tame.
    
  \item 
    \label{Prop_TameFrechet0_2}
    $\paren{H_a^\infty\Gamma\paren{X\setminus Z,S\otimes \fl},\paren{\Abs{-}_{H_a^k}}_{k \in \N_0}}$
    is tamely isomorphic to
    \begin{equation*}
      H_a^\infty\Gamma\paren{X\setminus Z,S\otimes \fl} \iso H_a^\infty\Gamma\paren{X\setminus Z,S\otimes \fl;0} \oplus  \Gamma\paren{Z,\check S}. 
    \end{equation*}
  \end{enumerate}
\end{prop}

\begin{proof}
  Evidently the inclusion map  $i \co H_a^\infty\Gamma\paren{X\setminus Z,S\otimes \fl;0} \to  H_a^\infty\Gamma\paren{X\setminus Z,S\otimes \fl} $ is tame,
  and $p\co H_a^\infty\Gamma\paren{X\setminus Z,S\otimes \fl} \to  H_a^\infty\Gamma\paren{X\setminus Z,S\otimes \fl;0}$ defined by $p(\phi)\coloneq \phi-\ext \res [\phi]$ satisfies $p\circ i=\one$.
  By \autoref{Lem_ExtensionMap_Higher_Regularity}, \autoref{Lem_ResidueMap_Higher_Regularity} and \autoref{Thm_EllipticRegularityAndEstimates_1},
  the map $p\oplus \res[-]$ defines the tame isomorphism in \autoref{Prop_TameFrechet0_2}, and in particular, $p$ itself is tame.
  Therefore, $\paren{H_a^\infty\Gamma\paren{X\setminus Z,S\otimes \fl;0},\paren{\Abs{-}_{H_a^k}}_{k \in \N_0}}$ is a tame direct summand of $\paren{H_a^\infty\Gamma\paren{X\setminus Z,S\otimes \fl},\paren{\Abs{-}_{H_a^k}}_{k \in \N_0}}$, and hence by \autoref{Prop_TameFrechet} it is tame; cf.~\cite[Part II Lemma 1.3.3]{Hamilton1982:NashMoser}.
\end{proof}


\subsection{Symbolic criterion for \texorpdfstring{$\infty$}{infty}--regularity}
\label{Sec_SymbolicRegularityCriterion}

The following criterion is useful for verifying $\infty$--regularity.

\begin{prop}[symbolic criterion for $\infty$--regularity]
  \label{Prop_SymbolicCriterionForInftyRegularity}
  Let $V \subseteq \check S$ be a subbundle.
  If
  for every $\xi \in T^*Z \setminus \set{0}$
  \begin{equation*}
    V \oplus J\gamma(\xi) V = \check S,
  \end{equation*}
  then the local residue condition $R_V$ and $R_V^G$ defined in \autoref{Def_LocalBoundaryCondition} are $\infty$--regular.
\end{prop}

The proof relies on the following criterion due to \citeauthor{BaerBallmann2012:BVP}.

\begin{theorem}[{\cite[Theorem 7.20 and Proposition 7.24]{BaerBallmann2012:BVP}}]
  \label{Thm_SymbolicCriterionForInftyRegularity_Complex}
  Suppose that $(S,\gamma,\nabla,\Tor)$ is a complex Dirac bundle with skew torsion.
  Let $V \subset \check S$ be a complex subbundle.
  Denote by $\pr_V \co \check S \to \check S$ the orthogonal projection onto $V$.
  If
  for every $\xi \in T^*Z \setminus \set{0}$ the projection $\pr_V$ restricts to an isomorphism from the sum of the positive eigenspaces of $iJ\gamma(\xi)$ onto $V$,
  then the local residue condition $\ResidueCondition_V$ defined in \autoref{Def_LocalBoundaryCondition} is $\infty$--regular.
\end{theorem}

\begin{proof}[Proof sketch]
  The proof of \cite[Theorem 7.20 and Proposition 7.24]{BaerBallmann2012:BVP} carries over verbatim to the present situation.
  It might be helpful to remind the reader of the crucial point.
  Denote the symbol of the pseudo-differential operator $\one_{[0,\infty)}(A)$ by $i\sigma$.
  According to \cite[Proposition 14.2]{BoossBavnbekWojciechowski1993},
  for every $\xi \in T^*Z \setminus \set{0}$,
  $i\sigma(\xi)$ is the orthogonal projection onto the sum of the positive eigenspaces of $i\sigma_A(\xi) = -iJ\gamma(\xi)$.
  An exercise in linear algebra shows that $i\pr_V - i\sigma(\xi)$ is invertible (precisely) if the condition stated above holds.
  As a consequence, $\pr_V - \one_{[0,\infty)}(A)$ satisfies elliptic estimates
  that in turn implies that $R_V$ is $\infty$--regular.
\end{proof}

\begin{proof}[Proof of \autoref{Prop_SymbolicCriterionForInftyRegularity}]
  Let $x \in Z$ and $\xi \in T_x^*Z \setminus \set{0}$.
  $T \coloneq \abs{\xi}^{-1} J\gamma(\xi)$ defines an orthogonal complex structure on $\check S_x$.
  The complexification $\check S_x \otimes \C$ is isomorphic to $\check S_x \oplus \overline{\check S_x}$.
  The sum of the positive eigenspaces of $iT$ is precisely $\overline{\check S_x}$.
  The orthogonal projection induces an isomorphism $\overline{\check S_x} \to V\otimes \C$ precisely if $V \subset \check S_x$ is totally real;
  that is $V \oplus TV = \check S$.
  \qedhere  
\end{proof}

The following simpler version of \autoref{Prop_SymbolicCriterionForInftyRegularity} can be proved without pseudo-differential techniques.
This observation is already implicit in \cite[Lemma 2]{FarinelliSchwarz1998}.

\begin{prop}
  \label{Prop_SymbolicCriterionForInftyRegularity_Weak}
  Let $V \subset \check S$ be a subbundle.
  If
  \begin{equation*}
    J\gamma(\xi)V \subset V^\perp
  \end{equation*}
  for every $\xi \in T^*Z \setminus\set{0}$,
  then the local residue condition $\ResidueCondition_V$ defined in \autoref{Def_LocalBoundaryCondition} is $\infty$--regular.
\end{prop}

\begin{proof}
  Let $k \in \N_0$.  
  Denote by $\pr_V \co \check S \to \check S$ the orthogonal projection onto $V$.
  A priori $\pr_V A^{2k+1} \pr_V$ appears to be a a differential operator of order at most $2k+1$.
  As such its symbol satisfies
  \begin{equation*}
    \sigma_{\pr_V A^{2k+1} \pr_V}(\xi)
    = \pr_V \sigma_A(\xi)^{2k+1} \pr_V
    = (-1)^k \abs{\xi}^{2k} \pr_V J\gamma(\xi) \pr_V
    = 0.
  \end{equation*}
  Consequently, $\pr_V A^{2k+1} \pr_V$ is a differential operator of order at most $2k$.
  
  Therefore, for every $\phi \in \Gamma(Z,V)$ and $\epsilon > 0$, by interpolation and Cauchy--Schwarz,
  \begin{align*}
    \Inner{A^{2k+1}\phi,\phi}_{L^2} 
    =
    \Inner{\pr_V A^{2k+1} \pr_V \phi,\phi}_{L^2}
    &\lesssim_{\ResidueCondition_V,k}    
      \Abs{\phi}_{H^k}^2 \\
    &
      \lesssim
      \Abs{\phi}_{H^{k-1/2}}\Abs{\phi}_{H^{k+1/2}}
      \lesssim
      \epsilon^{-1}\Abs{\phi}_{H^{k-1/2}}^2 + \epsilon\Abs{\phi}_{H^{k+1/2}}^2;
  \end{align*}
  moreover, by direct inspection,
  \begin{equation*}
    \Inner{A^{2k+1}\phi,\phi}_{L^2}
    =
    -\Abs{\abs{A}^{k+1/2}\one_{(-\infty,0)}(A)\phi}_{L^2}^2
    +
    \Abs{\abs{A}^{k+1/2}\one_{[0,\infty)}(A)\phi}_{L^2}^2.
  \end{equation*}
  As a consequence,
  for every $\phi \in \ResidueCondition_V$,
  \begin{align*}
    \Abs{\phi}_{H^{k+1/2}}^2
    &\lesssim
      \Abs{\abs{A}^{k+1/2}\one_{(-\infty,0)}(A)\phi}_{L^2}^2
      + \Abs{\abs{A}^{k+1/2}\one_{[0,\infty)}(A)\phi}_{L^2}^2 \\
    &\lesssim_{\ResidueCondition_V,k}
      \Abs{\one_{(-\infty,0)}(A)\phi}_{H^{k+1/2}}^2
      + \Abs{\phi}_{H^{k-1/2}}^2.
  \end{align*}
  By induction, $\ResidueCondition_V$ is $\paren{k+1/2}$--regular for every $k \in \N_0$;
  hence: $\infty$--regular.
\end{proof}

\begin{example}
  The local residue conditions associated with $S_N$ and $S_D$ defined in \autoref{Ex_NeumannDirichlet} obviously satisfy the criterion in \autoref{Prop_SymbolicCriterionForInftyRegularity}.
\end{example}

\begin{example}
  The MIT bag residue conditions $\ResidueCondition_\bag^\pm$ defined in \autoref{Ex_MITBag} satisfy the criterion in \autoref{Prop_SymbolicCriterionForInftyRegularity} because $\gamma(\xi)$ and $J$ anti-commute.
\end{example}

\begin{example}
  \label{Ex_DeformZ}
  If
  \begin{equation*}
    \bL \in \Gamma\paren{Z,\Hom_\C(\overline{NZ},\check S)}
  \end{equation*}
  is nowhere-vanishing,
  then  
  \begin{equation*}
    V \coloneq \im \bL \subset \check S
  \end{equation*}
  is a rank one complex subbundle.
  Therefore and since $J\gamma(\xi)$ and $IJ\gamma(\xi)$ are skew-adjoint,
  $\gamma(\xi)V \subseteq JV^\perp$ for every $\xi \in T^*Z \setminus \set{0}$;
  that is:
  $V$ satisfies the criterion in \autoref{Prop_SymbolicCriterionForInftyRegularity}.  
  Moreover:
  if $\rk_\C \check S = 2$, then $J\gamma(\xi)V = V^\perp$ and $\ResidueCondition_V$ is Lagrangian.
\end{example}

\begin{remark}
  \label{Rmk_DeformZ}
  Suppose that $(Z,\fl;\Phi)$ is a $\Z/2\Z$ harmonic spinor whose branching locus $Z$ satisfies \autoref{Hyp_CodimensionTwoCooriented}.
  $\Phi$ defines an $\bL$ as in \autoref{Ex_DeformZ} as follows. 
  By \autoref{Thm_EllipticRegularityAndEstimates_1}, $\Phi\in H_a^\infty\Gamma\paren{X\setminus Z,S\otimes \fl}$. 
  Since $D\Phi=0$,
  by \cite[Lemma 1.2]{KimFriedrich2000},
  for every $v \in \Vect(X)$
  \begin{equation*}
    D(\nabla_v\Phi)
    =
    -[\nabla_v,D]\Phi = -\sum_{i=1}^n \gamma(e_i) \nabla_{\nabla_{e_i} v}\Phi - \frac12 \gamma(\Ric(v,-)^\sharp) \Phi - \sum_{i=1}^n \gamma(e_i)F_\nabla^\tw(v,e_i) \Phi
  \end{equation*}
  with $F_\nabla^{\tw} \in \Omega^2\paren{X,\fo_{\Cl}(S)}$ denoting the twisting curvature.
  In particular, by \autoref{Prop_HbkBorderlineHardyInequality}, $\nabla_v\Phi \in H_a^\infty\Gamma\paren{X\setminus Z,S\otimes \fl}$.
  A brief computation, together with \autoref{Lem_ResDelB}, shows that $\res(\nabla_v\Phi)$ depends only on $v|_Z$ and is tensorial;
  that is: it defines a section $\bL_\Phi \in \Gamma\paren{Z,\Hom(NZ,\check S)}$.
  Since $D\Phi = 0$, again by \autoref{Lem_ResDelB}, $\bL_\Phi$ is complex anti-linear.
  This is the residue condition behind the scenes in \cite{Takahashi2015,Parker2023:Deformation}. 
\end{remark}

\begin{remark}
\label{Rmk_ChiralityDeformZ}
  In the presence of a chirality operator $\epsilon$ the above discussion refines as follows:
  \begin{enumerate}
  \item 
    Let $V^+ \subset \check S^+$ be subbundle.
    If
    \begin{equation*}
      \gamma(\xi)V^+ = J(V^+)^\perp \cap \check S^-
    \end{equation*}
    for every $\xi \in T^*Z \setminus\set{0}$,
    then
    $V \coloneq V^+ \oplus V^-$ with $V^- \coloneq J(V^+)^\perp \cap \check S^-$ satisfies the condition in \autoref{Prop_SymbolicCriterionForInftyRegularity}.
  \item
    If $\rk S^+ = 4$ and
    \begin{equation*}
      \bL \in \Gamma\paren{Z,\Hom_\C(\overline{NZ},\check S^+)}
    \end{equation*}
    is nowhere vanishing,
    then $V^+ \coloneq \im \bL$ satisfies the above condition.
  \item
    As in \autoref{Rmk_DeformZ},
    a positive $\Z/2\Z$ harmonic spinor $\Phi$ determines a $\bL_\Phi \in \Gamma\paren{Z,\Hom_\C(\overline{NZ},\check S^+)}$.
    \qedhere
  \end{enumerate}
\end{remark}



\appendix
\section{Non-coorientable branching loci}
\label{Sec_NonCoorientableBranchingLoci}

The following discussion explains what changes need to be made in \autoref{Sec_GeometricRealisation} and \autoref{Sec_RegularityTheory} if $Z \subset X$ is a closed submanifold of codimension two,
but not cooriented or even coorientable.

The \defined{coorientation bundle}
\begin{equation*}
  \fo \coloneq \Wedge^2 NZ \to Z
\end{equation*}
is a Euclidean line bundle and its unique orthogonal connection is flat.
The Euclidean metric on $NZ$ identifies $\fo$ with the bundle  of skew-adjoint endomorphisms of $NZ$.
The evaluation map defines a isometry
\begin{equation*}
  I \co \fo \otimes NZ \to NZ.
\end{equation*}
A moment's thought shows that the diagram
\begin{equation*}
  \begin{tikzcd}
    NZ \ar{r}{\iso} \ar[swap,bend right]{rrr}{-\one} & \fo^{\otimes 2} \otimes NZ \ar{r}{\one\otimes I} & \fo \otimes NZ \ar{r}{I} & NZ
  \end{tikzcd}
\end{equation*}
commutes.
A trivialisation $\fo \iso \ubR$ enhances $I$ to an orthogonal almost complex structure on $NZ$;
that is: a coorientation of $Z$ enhances $NZ$ to a Hermitian line bundle.

The canonical isomorphism $\fo^{\otimes 2} \iso \R$ produces the flat bundle
\begin{equation*}
  \bA \coloneq \R \oplus i \fo \to Z
\end{equation*}
of normed $\R$--algebras whose fibres are isomorphic to $\C$,
canonically up to complex conjugation.
The above discussion reveals $NZ$ to be a bundle of Euclidean $\bA$--modules of rank one.
Systematically replacing $\C$ by $\bA$ and tracking the use of $\fo$ in \autoref{Sec_GeometricRealisation} and \autoref{Sec_RegularityTheory} removes the need for a coorientation of $Z$:
\begin{itemize}
\item
  The frame bundle $\pi \co F \to Z$ defined in \autoref{Def_FrameBundle} is not $\U(1)$--principal.
  Its vertical tangent bundle $\ker T\pi$ is canonically isomorphic to $i \pi^*\fo$.
  Therefore,
  the Levi-Civita connection defines $i\theta \in \Omega^1(F,i \pi^*\fo)$;
  moreover, $\del_\alpha \in \Gamma\paren{F,\pi^*\paren{\fo \otimes NZ}}$.
\item
  \autoref{Def_US} reveals $S|_Z$ to be an $\bA$--module and
  defines $J \in \Gamma\paren{F,\End(\underline S)}$ and $I,K=IJ \in \Gamma\paren{F,\pi^*\fo \otimes \End(\underline S)}$.
  The sign ambiguities in the term $I\mathring\nabla_{\del_\alpha}$ appearing in \autoref{Rmk_ModelDiracOperator} and
  \autoref{Prop_SpectralDecomposition} cancel.
\item
  \autoref{Def_NZLambda} constructs $NZ^\lambda$ as an $\bA$--module.
  \autoref{Rmk_NZLambda}, \autoref{Prop_NZLambda}, \autoref{Prop_SpectralDecomposition} hold with $\C$ replaced by $\bA$.
  This can be seen, e.g., by passing to the double cover $\tilde Z \to Z$ defined by $\fo$. 
\item
  In the definition of the residue bundle $\check S$ and the branching locus operator $A$ in  \autoref{Def_AssemblyOfResidueMap} the appearances of $\C$ need to be replaced by $\bA$.
  $\check S$ inherits $J \in \Gamma\paren{Z,\End(\check S)}$ and $I,K=IJ \in \Gamma\paren{Z,\fo\otimes\End(\check S)}$.
\item
  $\C$ needs to be replaced by $\bA$ in \autoref{Ex_NeumannDirichlet}.
\item
  In \autoref{Ex_DeformZ}, $\C$ needs to be replaced by $\bA$, and $V$ is a rank-one $\bA$--submodule.
  In \autoref{Rmk_DeformZ}, $L_\Phi$ is $\bA$--anti-linear.
  Moreover,
  $\C$ needs to be replaced by $\bA$ in \autoref{Rmk_ChiralityDeformZ}.
\end{itemize}


\printbibliography

\end{document}
